\newcommand{\blind}{0}
\renewcommand{\ALG@name}{Table}
\pgfplotsset{width=10cm}
\tikzset{declare function={gamma(\x)=sqrt(2*pi)*\x^(\x-0.5)*exp(-\x)*exp(1/(12*\x));}}
\tikzset{declare function={tpdf(\x,\nu)=gamma(0.5*(\nu+1))/(sqrt(pi*\nu)*gamma(\nu/2))*(1+\x^2/\nu)^(-(\nu+1)/2);}}
\tikzset{declare function={invgampdf(\x,\a,\b)=(\b/\x)^\a/\x/gamma(\a)*exp(-\b/\x);}}
\newcommand{\nhphantom}[1]{\ifmmode\settowidth{\dimen0}{$#1$}\else\settowidth{\dimen0}{#1}\fi\hspace*{-\dimen0}}
\tikzset{
	hatch distance/.store in=\hatchdistance,
	hatch distance=5pt,
	hatch thickness/.store in=\hatchthickness,
	hatch thickness=0.5pt,
}
\newcommand{\wh}[1]{\widehat{#1}}
\definecolor{pink}{rgb}{0.9, 0.17, 0.31}
\def\C {\,|\:}
\newcommand\mP{\mathcal P}
\newcommand\mX{\mathcal X}
\newcommand\Xm{\wt X^{(m)}}
\renewcommand\d{\mathrm d}
\newcommand\bG{\Gamma}
\renewcommand\P{\mathbb P}
\newcommand\mA{\mathcal A}
\newcommand\Xn{X^{(n)}}
\newcommand\R{\mathbb R}
\renewcommand\b{\bm{\beta}}
\newcommand\e{\mathrm e}
\newcommand\N{\mathbb N}
\renewcommand\bG{\mathbb G}
\newcommand{\wt}[1]{\widetilde{#1}}
\newtheorem{assumption}{Assumption}
\renewcommand{\nhphantom}[1]{\ifmmode\settowidth{\dimen0}{$#1$}\else\settowidth{\dimen0}{#1}\fi\hspace*{-\dimen0}}
\numberwithin{equation}{section}
\newtheorem{thm}{Theorem}[section]
\newtheorem{lem}[thm]{Lemma}
\newtheorem{coro}[thm]{Corollary}
\newtheorem{exa}{Example}
\newtheorem{asm}{Assumption}
\newtheorem{rem}{Remark}
\crefname{thm}{Theorem}{Theorems}
\crefname{prop}{Proposition}{Propositions}
\crefname{lem}{Lemma}{Lemmas}
\crefname{coro}{Corollary}{Corollaries}
\crefname{add}{Addendum}{Addendums}
\crefname{asm}{Assumption}{Assumptions}
\crefname{alg}{Algorithm}{Algorithms}
\crefname{proc}{Procedure}{Procedures}
\crefname{exe}{Exercise}{Exercises}
\crefname{exa}{Example}{Examples}
\crefname{prob}{Problem}{Problems}
\crefname{section}{Section}{Sections}
\crefname{subsection}{Section}{Sections}
\crefname{appendix}{Appendix}{Appendices}
\DeclareMathOperator*{\conv}{\mathchoice{%
	\,\longrightarrow\,}{
	\rightarrow}{
	\rightarrow}{
	\rightarrow}
}
\begin{document}

\def\spacingset#1{\renewcommand{\baselinestretch}%
{#1}\small\normalsize} \spacingset{1}


	\title{\sf Metropolis-Hastings via Classification}
	\author{
		Tetsuya Kaji\footnote{
			Assistant Professor in Econometrics and Statistics; Liew Family Junior Faculty Fellow and Richard N. Rosett Faculty Fellow at the  {\sl \small Booth School of Business, University of Chicago}}  \,\, and    Veronika Ro\v{c}kov\'{a}\footnote{  Associate Professor in Econometrics and Statistics and James S. Kemper Faculty Scholar at the {\sl \small Booth School of Business, University of Chicago}.} 
	} 
	\maketitle

\bigskip
\begin{abstract}
This paper develops a Bayesian computational platform at the interface between posterior sampling and optimization in models whose marginal likelihoods are difficult to evaluate.
Inspired by  {contrastive learning} and Generative Adversarial Networks  (GAN) \cite{gan}, we reframe the likelihood function estimation problem as a classification problem. 
Pitting a Generator, who simulates fake data,  against a Classifier, who tries to distinguish them from the real data, one obtains likelihood (ratio) estimators which can be plugged into the 
Metropolis-Hastings algorithm. The resulting Markov chains generate, at a steady state, samples from an approximate posterior whose asymptotic properties we characterize.
Drawing upon connections with empirical Bayes and Bayesian mis-specification,  we quantify the convergence rate in terms of the contraction speed of the actual posterior and the convergence rate of the Classifier. Asymptotic normality results are also provided which justify the 
inferential potential of our approach. We illustrate the usefulness of our approach on examples which have challenged for existing Bayesian likelihood-free approaches.
\end{abstract}

\noindent%
{\bf Keywords:} {\em Approximate Bayesian Computation, Classification, Generative Adversarial Networks, Likelihood\hyp{}free Inference, Metropolis\hyp{}Hastings Algorithm.}

\spacingset{1.4} 

\vspace{-0.5cm}
\section{Introduction}

Many contemporary statistical applications require inference for models  which are easy to simulate from but whose likelihoods are impossible to evaluate.
This includes  implicit (simulator-based) models  \citep{diggle1984monte}, defined through an underlying generating mechanism,
or models prescribed through intractable likelihood functions. 

Statistical inference for intractable models has traditionally relied on some form of likelihood  approximation (see \cite{gutmann2016bayesian} for a recent excellent survey).
For example, \cite{diggle1984monte} propose kernel log-likelihood estimates  obtained from simulated realizations of an implicit model.
Approximate Bayesian Computation (ABC)  \citep{beaumont2002approximate,pritchard1999population,tavare1997inferring} is another simulation-based approach which obviates the need for likelihood evaluations by (1)  generating fake data $\wt X_\theta$ for parameter values $\theta$ sampled from a prior, and (2)   weeding out those pairs $(\wt X_\theta,\theta)$ for which $\wt X_\theta$ has  low fidelity to  observed data. 
 The discrepancy between observed and fake data is evaluated by first reducing the two datasets  to a vector of summary statistics and then measuring the distance between them.   
 Both the distance function and the summary statistics are critical for inferential success.
While eliciting suitable summary statistics often requires expert knowledge, automated approaches have emerged \citep{blum,gdkc2018,wasser}.
Notably, \cite{fearnhead2011constructing} proposed a 
semi-automated approach that approximates the posterior mean (a summary statistic that guarantees first-order accuracy) using a linear model regressing parameter samples onto simulated data.
Subsequently, \cite{jiang2017learning} elaborated on  this strategy  using deep neural networks which are expected to yield better approximations to the posterior mean. Beyond subtleties associated with summary statistics elicitation,
ABC has to be deployed with caution for Bayesian model choice \citep{robert2011lack,abc_model_choice}.
 Synthetic likelihood (SL) \cite{wood2010statistical,BSL} is another approach for carrying out inference in intractable models by constructing a proxy Gaussian likelihood for a vector of summary statistics. Implicit in the success of both ABC and SL is the assumption that the generating process can produce simulated summary statistics that adequately represent the observed ones. If this compatibility is not satisfied (e.g. in misspecified models), both SL \cite{robust_SL} and ABC \cite{ABC_misspec} can provide unreliable estimates.
Besides SL, a wide range parametric surrogate likelihood models have been suggested including normalising flows, Gaussian processes or neural networks \citep{gutmann2016bayesian,blum2,durkan,papa16}.
Avoiding the need for summary statistics,  \cite{gdkc2018} proposed to use discriminability  of the observed and simulated data as a discrepancy measure in ABC. Their accepting/rejecting mechanism separates samples based on a discriminator's ability to tell the real and fake data apart.
Similarly as their work, our paper is motivated by the observation that distinguishing two data sets is usually easier if they were simulated with very different parameter values. However, instead of deploying this strategy inside ABC, we embed it directly inside the Metropolis-Hastings algorithm using likelihood approximations obtained from classification.


The Metropolis-Hastings (MH) method generates ergodic Markov chains  through an accept-reject mechanism which depends in part on likelihood ratios comparing proposed candidate moves and current states.  For many latent variable models, the marginal likelihood is {\em not} available in closed form, making direct application of MH impossible (see  \citep{ddp2018} for examples). The pseudo-marginal likelihood method \citep{andrieu2009pseudo} offers a remedy by replacing  likelihood evaluations with their  (unbiased) estimates.  Many variants of this approach have been proposed including the inexact MCWM method (described in \cite{neil} and \cite{andrieu2009pseudo}) and its elaborations that  correct for  bias \citep{neil}, reduce the variance of the likelihood ratio estimator \citep{ddp2018} or  make sure that the resulting chain produces samples from the actual (not only approximate) posterior \citep{beaumont2003estimation}.  The idea of using likelihood approximations within MH  dates back to at least \citep{neil} and has been implemented in a series of works  (see e.g \citep{o1998microsatellite} and \citep{beaumont2003estimation}  and references therein). 
Our approach is fundamentally different from many typical pseudo-marginal MH algorithms since it {\em does not} require a hierarchical model where likelihood estimates are obtained through simulation from conditionals of latent data.
 Our method can be thus applied in a wide range of generative models (where forward simulation is possible) and other scenarios (such as diffusion processes \cite{heston}) where PM methods would be cumbersome or time-consuming to implement (as will be seen later in our examples).

{Inspired by  contrastive learning (CL)} \citep{htf09,gh_12} we reframe the likelihood (ratio) estimation problem as a classification problem using the `likelihood-ratio trick' \citep{durkan,cranmer16,gutman2,density_ratio}. Similarly as with generative adversarial networks (GANs) \cite{gan}, we pit two agents (a Generator and a Classifier) against one another. Assessing the similitude between the fake data, outputted by the Generator,   and observed data,  the Classifier provides likelihood estimators which can be deployed inside MH. The resulting algorithm provides samples from an approximate posterior. 

Our contributions are both methodological and theoretical. We develop a personification of  Metropolis-Hastings (MH) algorithm  for intractable likelihoods based on Classification, further referred to as MHC. 
We consider two variants: (1) a fixed generator design which may yield biased samples, and (2) a random generator design which may yield unbiased samples with increased variance. 
We then describe how and when the two can be combined in order to provide posterior samples with an asymptotically correct location and spread. 
 {Contrastive learning has been suggested in the context of posterior simulation before \citep{hermans, pham}.  Our approach differs in the choice of the contrasting density and, in addition, we  develop theory which was previously unavailable.} Our theoretical analysis consists of 
new  convergence rate results  for a posterior residual (an approximation error) associated with the Classifier. 
These rates are then shown to affect the rate of convergence of the stationary distribution, in a similar way as the ABC tolerance level affects the convergence rate of ABC posteriors \cite{abc_theory}. 
{Theoretical developments for related pseudo-marginal (PM) methods have been  concentrating on convergence properties of the Markov chain such as mixing rates} \citep{andrieu2009pseudo, ddp2018}.
  Here, we provide a rigorous asymptotic study of the stationary distribution including convergence rates (drawing upon connections to empirical Bayes and Bayesian misspecification), asymptotic normality results and, in addition, polynomial mixing time characterizations of the Markov chain.
  
 To illustrate that our MHC procedure can be deployed in situations when sampling from   conditionals of latent data ({often needed} for PM) is not practical or feasible, we consider two examples.
The first one entails discretizations of continuous-time processes which are popular in finance \citep{heston, cir}.  The second one is  a population-evolution generative model   where PM is  not straightforward and where   ABC methods need strong informative priors and high-quality summaries. In both examples, we demonstrate that MHC offers a reliable practical inferential alternative which is straightforward to implement.  We also show very good performance on a  Bayesian model choice example (where ABC falls short) and on the famous Ricker model (Section \ref{sec:ricker} in the Appendix) \cite{ricker1954stock} analyzed  by multiple authors \cite{gutmann2016bayesian,wood2010statistical,fearnhead2011constructing}.

The paper is structured as follows. Section \ref{sec:GAN} and \ref{sec:MH} introduce the classification-based likelihood ratio estimator and the MHC sampling algorithm. Section \ref{sec:properties} then describes the asymptotic properties of the stationary distribution. Section \ref{sec:simul}  shows demonstrations on simulated data and, finally, Section \ref{sec:discuss} wraps up with a discussion.

\smallskip

\vspace{-0.5cm}
\section{Likelihood Estimation with a Classifier}\label{sec:GAN}
Our framework consists of a series of i.i.d. observations $\{X_i\}_{i=1}^n\in\mathcal X$  realized from a probability measure $P_{\theta_0}$  indexed by a parameter  $\theta_0\in\Theta$ which is endowed with a prior $\Pi_n(\cdot)$.
We assume that $P_{\theta}$, for each $\theta\in\Theta$, admits a density $p_\theta$. 
Our objective is to draw observations from the posterior density given $\Xn=(X_1,\dots, X_n)'$ defined through
\begin{equation}\label{eq:posterior}
	\pi_n(\theta\mid \Xn)=\frac{p_\theta^{(n)}(\Xn) \pi(\theta)}{\int_\Theta p_\vartheta^{(n)} (\Xn)\ d\Pi(\vartheta)},
\end{equation}
where $p_\theta^{(n)}=\prod_{i=1}^np_\theta(X_i)$.
Our focus is on situations where the likelihood $p_\theta^{(n)}$ is  too costly to evaluate but can be readily sampled from. 

We develop a Bayesian computational platform at the interface between sampling and optimization {inspired by contrastive learning (CL) \citep{htf09,gh_12}} and Generative Adversarial Networks  (GAN) \cite{gan}.
The premise of GANs is to discover rich distributions over complex objects arising in artificial intelligence applications through simulation. The learning procedure consists of two 
entities pitted against one another.
A Generator  aims to deceive an Adversary by simulating samples that resemble the observed data while, at the same time, the Adversary learns to tell the fake and real data apart.
This process iterates until the generated data are indistinguishable by the Adversary.
While GANs have found their usefulness in simulating from distributions over images, here we forge new connections to Bayesian posterior simulation.

Similarly as with GANs, we assume a Generator transforming a set of latent variables $\wt X\in\wt{\mathcal{X}}$  to collect samples from $P_\theta$ through a known deterministic mapping $T_\theta:\wt{\mathcal{X}}\to\mathcal{X}$, i.e. $T_\theta(\wt{X})\sim P_\theta$ for $\wt{X}\sim\wt{P}$ for some distribution $\wt P$ on $\wt\mX$.
This implies that we can draw a single set of $m$ observations $\wt X^{(m)}$ and then filter them through $T_{\theta}$ to obtain a sample  $\wt X^{(m)}_\theta=T_\theta(\wt X^{(m)})$ from $P_\theta$ for any $\theta\in\Theta$.
Being able to easily draw samples from the model suggests the intriguing  possibility of learning density ratios    `by-comparison'  \citep{mohamed}.
Indeed, the fact that density ratios can be computed by building a classifier that compares two data sets \citep{durkan,cranmer16} has  lead to an emergence of a rich ecosystem of   algorithms for model-free inference \citep{gutman2, hermans,papa16, pham}. 
Many of these machine learning procedures are based on variants of the `likelihood ratio trick' (LRT) which builds a surrogate classification model for the likelihood ratio.
{Similarly as \cite{pham} and \cite{hermans}, we embed  the LRT within a classical Bayesian sampling algorithm  and furnish our procedure  with rigorous frequentist-Bayesian inferential theory.}


Our approach relies on the simple fact that a  cross\hyp{}entropy classifier  can be deployed to obtain an estimator of the likelihood ratio \cite{htf09,gh_12,gutman2}.
Recall that the classification problem with the empirical cross\hyp{}entropy loss  is defined through
\begin{equation}\label{eq:discriminate}
	\max_{D\in\mathcal{D}}\left[\,\frac{1}{n}\sum_{i=1}^n\log D(X_i)+\frac{1}{m}\sum_{i=1}^m\log(1-D(X_i^\theta))\right],
\end{equation}
where $\mathcal{D}$ is a set of measurable classification functions $D:\mathcal{X}\to(0,1)$ ($1$ for `real' and $0$ for `fake' data)  and where  $X_i^\theta=T_\theta(\wt X_i)$ for $\wt X_i\sim \wt P$ for $i=1,\dots, m$ are the `fake' data outputted by the Generator. If an oracle were to furnish the true model $p_{\theta_0}$, it is known that the population solution to \eqref{eq:discriminate} is the `Bayes classifier'  {(see Section 14.2.4 in \cite{htf09} and Proposition 1 in \citep{gan})}
\begin{equation}\label{eq:oracle_dis}
	D_\theta(X)\vcentcolon=\frac{p_{\theta_0}(X)}{p_{\theta_0}(X)+p_\theta(X)}\quad\text{for $X\in\mX$}.
\end{equation}
Reorganizing the terms in \eqref{eq:oracle_dis}, the likelihood can be written (see e.g. \cite{gutman2}) in terms of the  discriminator function  as
\begin{equation}\label{eq:likelihood}
{p_\theta^{(n)}(\Xn)}=p_{\theta_0}^{(n)}(\Xn)\exp\left(\sum_{i=1}^n\log\frac{1-D_\theta(X_i)}{D_\theta(X_i)}\right).
\end{equation}
The oracle discriminator $D_\theta(\cdot)$ depends on $p_{\theta_0}$ but can be estimated by simulation. Indeed, one can deploy the Generator to simulate the fake data $\wt X^{(m)}_\theta=T_\theta(\wt X^{(m)})$ and train a Classifier to distinguish them from $\Xn$. The Classifier outputs an estimator $\hat{D}_{n,m}^\theta$, for which we will see examples, and  which can be plugged into \eqref{eq:likelihood} to obtain 
the following likelihood estimator {$\wh p_\theta^{(n)}(\Xn)=p_{\theta_0}^{(n)}(\Xn)\exp\left(\sum_{i=1}^n\log\frac{1-\hat{D}_{n,m}^\theta(X_i)}{\hat{D}_{n,m}^\theta(X_i)}\right)$, i.e.
\begin{equation}\label{eq:estimated_likelihood}
\wh p_\theta^{(n)}(\Xn)=
p_\theta^{(n)}(\Xn)\e^{u_\theta(\Xn)},
\end{equation}
}where
\begin{equation}\label{eq:u}
	u_\theta(\Xn)\vcentcolon=\sum_{i=1}^n\biggl(\log\frac{1-\hat{D}_{n,m}^\theta}{1-D_\theta}-\log\frac{\hat{D}_{n,m}^\theta}{D_\theta}\biggr)
\end{equation}
will be further referred to as the log-posterior residual.
 In other words, \eqref{eq:estimated_likelihood}   is a deterministic functional of auxiliary random variables $\wt X^{(m)}$ and the observed data $\Xn$, and can be computed (up to a norming constant) from 
 $\hat D^\theta_{n,m}$. The posterior density  $\pi_n(\theta\mid \Xn)$ can be then estimated    by replacing $D_\theta$ with $\hat{D}_{n,m}^\theta$ in the likelihood expression  to obtain
\begin{equation}\label{eq:post_approx}
	\wh{\pi}_{n,m}(\theta\mid \Xn)\vcentcolon=\exp\biggl(\sum_{i=1}^n\log\frac{1-\hat{D}_{n,m}^\theta(X_i)}{\hat{D}_{n,m}^\theta(X_i)}\biggr)\pi(\theta)\propto\pi_n(\theta\mid \Xn)\e^{u_\theta(\Xn)}.
\end{equation}

Two observations ought to be made. First, the estimator \eqref{eq:post_approx} targets the posterior density only up to a norming constant. This will not be an issue in Bayesian algorithms involving  posterior density ratios (such as the Metropolis-Hastings algorithm considered here). Second, the estimator \eqref{eq:post_approx}  performs  exponential tilting of the original posterior, where the quality of the approximation crucially depends on the statistical properties of $u_\theta(\Xn)$. Note that $u_\theta(\Xn)$ depends also on the latent data $\wt X_\theta^{(m)}$. We devote the entire Section \ref{sec:residual} to statistical properties of $u_\theta(\Xn)$. The idea of  estimating likelihood ratios via discriminative classifiers has emerged in various contexts including hypothesis testing \citep{cranmer16} and posterior density estimation 
 \cite{gutman2}. {An important distinguishing feature of our approach is that we contrast observed and fake data, using the truth $\theta_0$ as a fixed reference point. This is different from the marginal approach in \cite{gutman2} which contrasts two fake datasets generated from the marginal and conditional likelihoods.  We highlight the connections in Section \ref{sec:variants} in the Supplement.}


\vspace{-0.5cm}
\section{Metropolis Hastings via Classification}\label{sec:MH}
The Metropolis-Hastings  (MH) algorithm is one of the mainstays of Bayesian computation.
The deployment of unbiased likelihood estimators within MH has shown great promise in models whose likelihoods are not available \citep{beaumont2002approximate,andrieu2009pseudo,av2015}. In the previous section, we have suggested how classification may be deployed to obtain estimates of likelihood ratios. This
suggests a compelling question: {\em  Can we deploy these classification-based estimators within MH?} This section explores this intriguing possibility and formalizes an MH variant that we further refer to as MHC, Metropolis Hastings via Classification.



Our objective is to simulate values from an (approximate) posterior distribution $\Pi_n(\cdot\C\Xn)$ with a density  $\pi_n(\theta\C\Xn)\propto p_\theta^{(n)}(\Xn)\pi(\theta)$ over $(\Theta,\mathscr{B})$ using the MH routine. Recall that MH simulates a Markov chain according to the transition kernel
$
K(\theta,\theta')\vcentcolon=\rho(\theta,\theta')q(\theta'\mid\theta)+\delta_\theta(\theta')\int_\Theta(1-\rho(\theta,\tilde{\theta}))q(\tilde\theta\mid\theta)\d\tilde\theta,
$
where
\begin{equation}\label{eq:accept_ratio}
\rho(\theta,\theta')\vcentcolon=\min\biggl\{\frac{p_{\theta'}^{(n)}(\Xn)\pi(\theta')}{p_{\theta}^{(n)}(\Xn)\pi(\theta)}\frac{q(\theta\mid\theta')}{q(\theta'\mid\theta)},1\biggr\}.
\end{equation}
and
where $q(\cdot\mid\theta)$  is a proposal  density generating candidate values $\theta'$ for the next move. 

It is often the case in practice that  we cannot directly evaluate $p_\theta^{(n)}(\Xn)$ but have access to its (unbiased) estimator (see \cite{doucet_etal14} for a recent overview).
In Bayesian contexts, an unbiased likelihood estimator can be constructed using importance sampling \citep{beaumont2003estimation} or particle filters \cite{andrieu_particle, andrieu2009pseudo}
via data augmentation through the introduction of auxiliary latent variables, say  $\wt X^{(m)}_\theta$. 
{The perhaps simplest variant of such strategies is the Monte Carlo Within Metropolis (MCWM) algorithm \cite{neil,andrieu2009pseudo}, which requires independently simulating}  $m$ replicates of the auxiliary data for each likelihood evaluation at each iteration. Other, {so called pseudo-marginal \citep{andrieu2009pseudo}}, variants have been suggested with latent data  recycled from the previous iterations (Grouped Independence MH (GIMH) described in \cite{beaumont2002approximate}) or with correlated latent variables  for the numerator and the denominator of the acceptance ratio \cite{ddp2018}.   
In this work, we propose replacing $p_\theta^{(n)}$ in the acceptance ratio \eqref{eq:accept_ratio} with the classification-based likelihood estimator \eqref{eq:estimated_likelihood} outlined in Section \ref{sec:GAN}. 
This estimator, similarly as with  pseudo-marginal (PM) methods, also relies on the introduction of latent variables $\wt X^{(m)}_\theta$. 
However, unlike with {related MH methods \citep{neil,andrieu2009pseudo}}, we {\em do not} require  an explicit hierarchical model where sampling from the conditional distribution of the latent data is feasible. 
Later in Section \ref{sec:lotka} we show an example of a generative model, where our approach fares {favorably}  while the PM-style approaches are not straightforward, if at all possible.
As we have seen earlier, our likelihood estimator can be rewritten in terms of the estimated discriminator as
\begin{equation}\label{eq:estimator_lik}
\wh p_\theta^{(n)}(\Xn)\propto \exp\left(\sum_{i=1}^n\log \frac{1-\hat D_{n,m}^\theta(X_i)}{\hat D_{n,m}^\theta(X_i)}\right).
\end{equation}
The evaluation of $\wh p_\theta^{(n)}(\Xn)$ can be carried out by merely computing  $\hat D^\theta_{n,m}(X_i)$   where $\hat D^{\theta}_{n,m}$ is a trained  classifier distinguishing $\Xn$ from $\wt X^{(m)}_\theta$. Putting the pieces together, one can replace the intractable likelihood ratio in the acceptance probability \eqref{eq:accept_ratio} with  
\begin{equation}\label{eq:ratio2}
	\rho_u(\theta,\theta')\vcentcolon=\min\left\{\frac{\wh p_{\theta'}^{(n)}(\Xn)\pi(\theta')}{\wh p_\theta^{(n)}(\Xn)\pi(\theta)}\frac{q(\theta\mid\theta')}{q(\theta'\mid\theta)},1\right\}.
\end{equation}
 Note that the proportionality constant in the likelihood expression \eqref{eq:estimator_lik}  cancels out in \eqref{eq:ratio2}, allowing $\rho_u(\theta,\theta')$ to be directly computable. We consider two variants.
The first one, called a {\em fixed generator design},  assumes  that the randomness of $\hat D^\theta_{n,m}$, for each given $\theta$ and $\Xn$, is determined by latent variables $\wt X^{(m)}$ {\em shared} by all steps of the algorithm. This corresponds to the case  when $m$ auxiliary data points $\wt X_\theta^{(m)}=\{\wt X_i^\theta\}_{i=1}^m$ are obtained through a {\em deterministic} mapping $\wt X_i^\theta=T_\theta(\wt X_i)$ for some  $\wt X_i\sim\wt P$ that are not changed throughout the algorithm. 
The second version, called a {\em random generator design}, 
assumes that  the underlying latent variables   variables $\wt X^{(m)}=\{\wt X_i\}_{i=1}^m$  are refreshed at each step.
While the difference between these two versions is somewhat subtle, we will see important bias-variance implications (discussed in more detail below).
While technically our MHC sampling procedure follows the footsteps of a standard MH algorithm, we still find it useful to summarize the computations in an algorithm box (see Table \ref{alg:MHC}).



\begin{table}[!t]
\centering
\vspace{-1.5cm}
	\spacingset{1.1}
	\small
	\scalebox{0.9}{
		\begin{tabular}{l l}
			\hline 	\hline
			\multicolumn{2}{c}{\bf INPUT \cellcolor[gray]{0.6} }\\	
				\hline
			\multicolumn{2}{c}{Draw $\wt X=\{\wt X_i\}_{i=1}^m\sim\wt P$}\\
			\multicolumn{2}{c}{Initialize $\theta^{(0)}$ and generate $\wt X_{\theta^{(0)}}=\{\wt X_i^{\theta^{(0)}}\}_{i=1}^m$ according to $\wt X^{\theta^{(0)}}_i=T_{\theta^{(0)}}(\wt X_i)$ .} \\
				\hline
			\multicolumn{2}{c}{\bf LOOP \cellcolor[gray]{0.6}}\\			
	\hline
			\multicolumn{2}{c}{For $t=1,\dots, T$ repeat steps C(1)-(3), R and U.} \\
			\multicolumn{2}{c}{\bf  \cellcolor[gray]{0.9}Algorithm 1: Fixed Generator}\\
			C(1): Given $\theta^{(t)}$, generate $\theta'\sim q(\cdot\mid\theta^{(t)})$.  &\\
			C(2): Generate $\wt X_{\theta'}=\{\wt X^{\theta'}_i\}_{i=1}^m$ according to $\wt X^{\theta'}_i=T_{\theta'}(\wt X_i)$. &\\
			C(3): Compute $\hat D_{n,m}^{\theta'}$ from $\Xn$ and $\wt X_{\theta'}$ and compute $\wh p_\theta(\Xn)$ in \eqref{eq:estimator_lik}. &\\
			C(4)  With $\rho_u(\cdot\,,\,\cdot)$   in \eqref{eq:ratio2},  set
	$
		\theta^{(t+1)}=\begin{cases} \theta' \quad &\text{with probability $\rho_u(\theta^{(t)},\theta')$}, \\ \theta^{(t)} \quad &\text{with probability $1-\rho_u(\theta^{(t)},\theta')$}.\end{cases}
	$
	&\\
			\multicolumn{2}{c}{\bf \cellcolor[gray]{0.9} Algorithm 2: Random Generator}\\
			C(1): Given $\theta^{(t)}$, generate $\theta'\sim q(\cdot\mid\theta^{(t)})$ and $\wt X'\sim \wt q(\wt X'\C \wt X^{(t)})$ &\\
			C(2): Generate $\wt X_{\theta'}=\{\wt X^{\theta'}_i\}_{i=1}^m$ according to $\wt X^{\theta'}_i=T_{\theta'}(\wt X_i')$ &\\
			C(3): Compute $\hat D_{n,m}^{\theta'}$ from $\Xn$ and $\wt X_{\theta'}$ and compute   $\wh p_\theta(\Xn)$ defined in \eqref{eq:estimator_lik}. &\\
			C(4): With  $\rho_u(\cdot\,,\,\cdot)$   in \eqref{eq:accept_ratio2},  set  
				$
		(\theta^{(t+1)},\wt X^{(t+1)})=\begin{cases} (\theta',\wt X') & \text{with probability $\wt\rho_u(\theta,\wt X;\theta',\wt X')$}, \\ 
		(\theta^{(t)},\wt X^{(t)}) & \text{with probability $1-\wt\rho_u(\theta,\wt X;\theta',\wt X')$}. \end{cases}
	$
	&\\
	\hline
						\multicolumn{2}{c}{\bf OUTPUT \cellcolor[gray]{0.6}}\\			
\hline

		\multicolumn{2}{c}{Samples $\theta^{(1)},\dots, \theta^{(T)}$ }\\
			\hline 	\hline
	\end{tabular}}
	\caption{\em  Metropolis-Hastings via Classification.}\label{alg:MHC}
\end{table}

\vspace{-0.5cm}
\subsection{Fixed Generator MHC}\label{sec:fg}

We  inquire whether and how the likelihood approximation affects the stationary distribution of the resulting Markov chain.
Due to the exponential tilt $\e^{u_\theta(\Xn)}$ in the likelihood approximation \eqref{eq:estimated_likelihood}, Algorithm 1 (Table \ref{alg:MHC}) does {\em not} yield the correct posterior $\pi_n(\theta\C\Xn)$ at its steady state.
Indeed, under standard assumptions  (see Section 7.3.1 of \cite{rc2004}), the  stationary distribution of the Markov chain, conditional on $\wt X^{(m)}$, writes as (see e.g.  Theorem 7.2 in \cite{rc2004})
\begin{equation}\label{eq:posterior_density}
\pi^\star_n(\theta\C X^{(n)})=\frac{p^{(n)}_\theta(X^{(n)})\times \e^{u_\theta(X^{(n)})}\times\pi(\theta)}{\int_\Theta p^{(n)}_\theta(\Xn)\times \e^{u_\theta(X^{(n)})}\times\pi(\theta)\d\theta}.
\end{equation}
We do not view this property as unsurmountable. {Other approximate MH algorithms
(e.g the MCWM method) may also not yield $\pi_n(\theta\C X^{(n)})$ as their stationary distribution, provided that it in fact  exists  \citep{andrieu2009pseudo}.}
However, the samples generated by Algorithm 1 will be  distributed according  an approximate posterior \eqref{eq:posterior_density} whose statistical properties we describe in detail in Section \ref{sec:properties}. 
In Section \ref{sec:speed}, we further quantify the speed of MHC convergence  in large samples under the assumption of asymptotic normality. 
 As will be seen in Section \ref{sec:properties}, the exponential tilt induces certain bias where the pseudo-posterior \eqref{eq:posterior_density} concentrates around a projection of the true parameter $\theta_0$. Despite the bias,  
 the {limiting} curvature of the approximate posterior can be shown to match the {limiting}  curvature of the actual posterior (under differentiability assumptions in Section \ref{sec:residual}).  
 The random generator version, introduced in the next section, works the other way around.
{Under some assumptions, it can lead to a correct location (no bias) but, possibly, at the expense of an enlarged variance.}
\vspace{-0.5cm}
\subsection{Random Generator MHC}\label{sec:rg}
The  random generator variant proceeds as Algorithm 1 but  refreshes $\wt X^{(m)}\sim\wt P$ at each step  before computing the acceptance ratio. {We denote the density associated with $\wt P$ by $\wt \pi$.} For simplicity, we have dropped the subscript $m$ in $\wt X^{(m)}$ while describing the algorithm in Table \ref{alg:MHC}.
The acceptance probability now also involves $\wt X$ and writes as
\begin{equation}\label{eq:accept_ratio2}
	\wt\rho_u(\theta,\wt X;\theta',\wt X')=\min\biggl\{\frac{\wh p_{\theta'}^{(n)}(\Xn)\pi(\theta')\wt\pi(\wt X')}{\wh p_\theta^{(n)}(\Xn)\pi(\theta)\wt\pi(\wt X)}\frac{q(\theta\mid\theta')}{q(\theta' \mid\theta)}\frac{\wt q(\wt X\mid\wt X')}{\wt q(\wt X'\mid\wt X)},1\biggr\}.
\end{equation}
To glean more insights into this variant, it is helpful to regard $(\theta^{(t)},\wt X^{(t)})$ jointly as a Markov chain with  an augmented proposal density 
$q(\theta',\wt X'\mid\theta,\wt X)=q(\theta'\mid\theta)\wt q(\wt X'\C \wt X)$ where $\wt q(\wt X'\C \wt X)$ possibly depends on $\wt X$. 
In order to make the dependence on $\wt X$ in  $u_\theta(\Xn)$ more transparent, we will denote the posterior residual defined in \eqref{eq:u} with $u_\theta(\Xn,\wt X)$ going forward. 
It can be seen that the marginal stationary distribution of the augmented Markov chain under Algorithm 2 equals
\begin{equation}\label{eq:stationary2}
	\wt\pi^\star_n(\theta\C\Xn)\vcentcolon=\int \pi^\star_n(\theta\C\Xn)\d \wt P(\wt X),
\end{equation}
where $\pi^\star_n(\theta\C\Xn)$ was defined earlier in \eqref{eq:posterior_density} and depends on $\wt X$ through $u_\theta(\Xn,\wt X)$.
The following characterization will be useful for establishing statistical properties of  $\wt\pi^\star_n(\theta\C\Xn)$ later in Section \ref{sec:properties}. From \eqref{eq:posterior_density}, we can write
\begin{equation}\label{eq:stationary_random}
\wt\pi^\star_n(\theta\C\Xn)\propto p_\theta^{(n)}(\Xn)\times \e^{\wt u_\theta(\Xn)}\times\pi(\theta)
\end{equation}
where
\begin{equation}\label{eq:u2}
\wt u_\theta(\Xn)=\log \int\e^{ u_{\theta}(\Xn,\wt X)}\d \wt P(\wt X).
\end{equation}
Assuming almost-sure positivity of the joint proposal density $q(\theta',\wt X'\mid\theta,\wt X)$, 
it can be verified (e.g. from Corollary 4.1 in \cite{tha2017}) that the marginal distribution of $\theta^{(t)}$ after $t$ steps of Algorithm 1  converges  in total variation to   $\wt\pi^\star_n(\theta\C\Xn)$ as {$t\rightarrow\infty$.} 
Interestingly, from \eqref{eq:stationary_random} we can see that the stationary distribution  \eqref{eq:stationary2}  from Algorithm 2 has {\em the same functional form} as the stationary distribution \eqref{eq:posterior_density} from  Algorithm 1. The only difference is replacing $u_\theta(\Xn)$ with an averaged-out version $\wt u_\theta(\Xn)$ in   \eqref{eq:u2}. Integration may inflate the stationary distribution \eqref{eq:stationary2} by making it more spread-out compared to the fixed generator sampler. However, the exponential tilting factor $\wt u_\theta(\Xn)$ is averaged out.
 While  $u_\theta(\Xn)$ in \eqref{eq:u} is {\em fixed} in $\wt X$ (creating a non-vanishing bias term), $u_\theta(\Xn)$ in \eqref{eq:u2} can average out to $0$ (depending on $\wt q(\cdot\C\cdot)$), erasing the bias and yielding the actual posterior as the stationary distribution.
\vspace{-0.5cm}
\subsection{Debiasing}\label{sec:alg_three}
Algorithm 1 and 2 can be combined to produce a more realistic representation of the true posterior.
We mentioned that Algorithm 1, under the differentiability assumptions, has the same {asymptotic  curvature (as $n\rightarrow\infty$)} as the actual posterior but has a non-vanishing shift.
Algorithm 2, on the other hand, has a reduced bias due to the averaging aspect in \eqref{eq:u2}. We can thus diminish the bias of the fixed generator design by shifting the location towards the 
 mean of samples obtained with the random generator. This leads to a hybrid procedure summarized in Table \ref{alg:three}.
While Algorithms 1 and 2  can be deployed as a standalone, the de-biasing variant might increase the quality of the samples.
Note that if $\wt u_\theta(\Xn)=0$, Algorithm 2 will be {exact}, yielding the actual posterior as its stationary distribution. {If inexact, in Section \ref{sec:residual} we provide sufficient conditions under which Algorithm 3 yields samples from an object which, at least, has the same limit as the actual posterior.}
\begin{table}[!t]
\vspace{-1.5cm}
\centering
	\small
	\scalebox{0.9}{
		\begin{tabular}{| l l|}
\hline\hline
			 \multicolumn{2}{c}{\sl   \cellcolor[gray]{0.8} Algorithm 3: Bias Correction}\\
			\hline
			 (1) &{Generate a sample $\{\theta_1^{(t)}\}_{t=1}^T$ using Algorithm 1}\\
			 (2) &{Generate a sample $\{\theta_2^{(t)}\}_{t=1}^T$ using Algorithm 2} \\
			 (3) &{Debias $\{\theta_1^{(t)}\}$ using $\{\theta_2^{(t)}\}$,  i.e.  construct a sample $\{\theta^{(t)}\}$ by}\\
			       &$\theta^{(t)}\vcentcolon=\theta_1^{(t)}-\frac{1}{T}\sum_{s=1}^T\theta_1^{(s)}+\frac{1}{T}\sum_{s=1}^T\theta_2^{(s)}.$ \\
			 \hline\hline
	\end{tabular}}
	\caption{\em   Bias Correction with Algorithm 3.}\label{alg:three}
\end{table}
\vspace{-0.5cm}

\vspace{-0.2cm}
\section{Theory for MHC}\label{sec:properties}
We now shift attention from the computational aspects of MHC to its potential as a statistical inference procedure.
To understand the qualitative properties of the MHC scheme, we provide an asymptotic study of its stationary distribution (convergence rates in Section \ref{sec:conc_rate} and asymptotic normality in Section \ref{sec:bvm}), drawing upon its connections to empirical Bayes methods (Section \ref{sec:EB}) and Bayesian misspecification (Section \ref{sec:misspec}). Before delving into the stationary distribution, however, we first derive rates of convergence for the posterior residual $u_\theta(\Xn)$  in \eqref{eq:u} which plays a fundamental role. For additional theory showing fast mixing of our Markov chains (i.e. polynomial mixing times) see Section \ref{sec:speed} in the Appendix.

\vspace{-0.5cm}
\subsection{Convergence of the Posterior Residual}\label{sec:residual}
We denote the sample objective function in \eqref{eq:discriminate} with $\mathbb{M}_{n,m}^\theta(D)\vcentcolon=\mathbb{P}_n\log D+\mathbb{P}_m^\theta\log(1-D)$, where 
we employed the operator notation for expectation, e.g., $\mathbb{P}_n f=\frac{1}{m}\sum_{i=1}^m f(X_i)$ and $\mathbb{P}_m^\theta f=\frac{1}{m}\sum_{i=1}^m f(X_i^\theta)$ (see the notation Section \ref{sec:notation} in the Appendix for further details).
Throughout this section, we will use a simplified notation $u_\theta$ instead of $u_\theta(\Xn)$ and similarly for $p_\theta$ and $p_\theta^{(n)}$. We denote by $P$ the probability measure that encompasses all randomness, e.g., as $O_P(1)$.%
\footnote{We may think of this $P$ as the ``canonical representation'' \citep[Problem 1.3.4]{vw1996}.}
The estimated Classifier  is seen to satisfy
$$
\hat D_{n,m}^{\theta}:=\max_{D\in\mathcal D_n}\mathbb{M}_{n,m}^\theta(D)
$$
where $\mathcal{D}_n$ constitutes a sieve  of classifiers that expands with the sample size and that is not too rich (as measured by the bracketing entropy  $N_{[]}(\varepsilon,\mathcal{F},d)$).
In practice, the estimator $\hat{D}_{n,m}^\theta$ can be obtained by deploying  a variety of classifiers ranging from logistic regression to deep learning (see Assumption 3 in \cite{kmp2020} for a sieve construction using neural network classifiers). The discrepancy between two  classifiers will be measured by a Hellinger\hyp{}type distance (see \cite{kmp2020} and \cite{patilea} for more discussion)
$
	d_\theta(D_1,D_2)\vcentcolon=\sqrt{h_\theta(D_1,D_2)^2+h_\theta(1-D,1-D_\theta)^2},
$
where $h_\theta(D_1,D_2)=\sqrt{(P_{\theta_0}+P_\theta)(\sqrt{D_1}-\sqrt{D_2})^2}$.
The rate of convergence of the Classifier was previously established by \cite{kmp2020} under assumptions reviewed below.
In the following, we denote with $\mathcal{D}_{n,\delta}^\theta\vcentcolon=\{D\in\mathcal{D}_n:d_\theta(D,D_\theta)\leq\delta\}$ the neighborhood of the oracle classifier within the sieve. 

\begin{asm} \label{asm:1}
Assume that $n/m$ converges and that an estimator $\hat{D}_{n,m}^\theta$ exists that satisfies $\mathbb{M}_{n,m}^\theta(\hat{D}_{n,m}^\theta)\geq\mathbb{M}_{n,m}^\theta(D_\theta)-O_P(\delta_n^2)$ for a nonnegative sequence $\delta_n$. Moreover, assume that the bracketing entropy integral\footnote{See the notation Section \ref{sec:notation} in the Appendix.} satisfies
$J_{[]}(\delta_n,\mathcal{D}_{n,\delta_n}^\theta,d_\theta)\lesssim\delta_n^2\sqrt{n}$ and that there exists $\alpha<2$ such that $J_{[]}(\delta,\mathcal{D}_{n,\delta}^\theta,d_\theta)/\delta^\alpha$ \textcolor{blue}{has a majorant} decreasing in $\delta$.
\end{asm}

{
The assumption requires that the synthetic sample size $m$ is at least as large as the actual sample size $n$, including the case when $n/m$ converges to $0$.
The second assumption requires that the training algorithm for the discriminator can find a sufficiently good approximate maximizer.
The third assumption requires that the entropy of the sieve is not too large in order to avoid overfitting. For example, the bracketing entropy of a neural network sieve was shown to be bounded  \citep[Lemma 2]{kmp2020}.
}

Under Assumption \ref{asm:1},  for a given $\theta\in\Theta$, \cite{kmp2020}  conclude (see their Theorem 1) the following convergence rate result for the classifier: 
$d_\theta(\hat{D}_{n,m}^\theta,D_\theta)=O_P(\delta_n)$.
While  \cite{kmp2020} focused mainly on the convergence of $\hat{D}_{n,m}^\theta$, here we move the investigation further by establishing the rate of convergence of  $u_\theta(\cdot)/n$ as well as its  limiting shape. To this end, we assume the following support compatibility assumption, a refinement of the bounded likelihood ratio condition in nonparametric maximum likelihood (Theorem 3.4.4 in \cite{vw1996} and Lemma 8.7 in \cite{ggv2000}).

\begin{asm} \label{asm:2}
There exists $M>0$ such that for every $\theta\in\Theta$, $P_{\theta_0}(p_{\theta_0}/p_\theta)$ and $P_{\theta_0}(p_{\theta_0}/p_\theta)^2$ are bounded by $M$ and
\[
	\sup_{D\in\mathcal{D}_{n,\delta_n}^\theta}P_{\theta_0}\biggl(\frac{D_\theta}{D}\biggm|\frac{D_\theta}{D}\geq\frac{25}{16}\biggr)<M,\
	\sup_{D\in\mathcal{D}_{n,\delta_n}^\theta}P_{\theta_0}\biggl(\frac{1-D_\theta}{1-D}\biggm|\frac{1-D_\theta}{1-D}\geq\frac{25}{16}\biggr)<M
\]
for $\delta_n$ in \cref{asm:1}.
The brackets in \cref{asm:1} can be taken so that $P_{\theta_0}\Bigl(\sqrt{\frac{u}{\ell}}-1\Bigr)^2=O(d_\theta(u,\ell)^2)$ and $P_{\theta_0}\Bigl(\sqrt{\frac{1-\ell}{1-u}}-1\Bigr)^2=o(d_\theta(u,\ell))$.
\end{asm}

{
For the cross\hyp{}entropy loss, it is essential to control the tail behavior of the discriminator.
Assumption \ref{asm:2} restricts the tail of the discriminator so that the residual of the cross\hyp{}entropy can be bounded with the bracketing entropy.
For example, for a logistic discriminator, the tail of $D$ is proportional to an exponential function $e^{-x'\beta}$ for some $\beta$. Therefore, if $P_0$ has an exponential tail and $\mathcal{D}_{n,\delta_n}^\theta$ gives a compact support for $\beta$, \cref{asm:2} is satisfied.
By analogy, we see that \cref{asm:2} is reasonable for neural network discriminators that use sigmoid activation functions.
}

The following Theorem will be crucial for understanding theoretical properties of our MHC sampling algorithm, where the rate of convergence of $u_\theta(\cdot)/n$ will be seen to  affect the rate of convergence of the stationary distribution of our Markov chains.

\begin{thm}\label{thm:rate:res}
Let  \cref{asm:1,asm:2} hold for a given $\theta\in\Theta$, then
\[
u_\theta/n=	\mathbb{P}_n\biggl(\log\frac{1-\hat{D}_{n,m}^\theta}{1-D_\theta}-\log\frac{\hat{D}_{n,m}^\theta}{D_\theta}\biggr)=O_P(\delta_n).
\]
\end{thm}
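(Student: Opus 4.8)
The plan is to write $u_\theta/n = \mathbb{P}_n g_{\hat D}$ with $g_D := \log\frac{1-D}{1-D_\theta} - \log\frac{D}{D_\theta}$ and to split this into a drift (bias) part and an empirical-process part,
\[
\mathbb{P}_n g_{\hat D} = P_{\theta_0}g_{\hat D} + (\mathbb{P}_n - P_{\theta_0})g_{\hat D},
\]
and then show the first term is $O_P(\delta_n)$ and the second is $o_P(\delta_n)$. The classifier rate of \cite{kmp2020} gives $d_\theta(\hat D_{n,m}^\theta, D_\theta) = O_P(\delta_n)$, so the event $\{\hat D_{n,m}^\theta \in \mathcal{D}_{n,\delta_n}^\theta\}$ has probability tending to one; throughout I may therefore restrict to the shrinking neighborhood and replace bounds on $g_{\hat D}$ by suprema over $D \in \mathcal{D}_{n,\delta_n}^\theta$, each $D$ being sandwiched by a $d_\theta$-bracket $[\ell,u]$ supplied by \cref{asm:1}.

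For the drift term I would prove $\sup_{D\in\mathcal{D}_{n,\delta_n}^\theta}|P_{\theta_0}g_D| \lesssim \delta_n$, treating $P_{\theta_0}\log\frac{D}{D_\theta}$ and $P_{\theta_0}\log\frac{1-D}{1-D_\theta}$ separately. For the upper bounds I linearize with $\log t \le 2(\sqrt t - 1)$ into the square-root scale that matches $d_\theta$, and apply Cauchy--Schwarz against the combined measure $P_{\theta_0}+P_\theta$: the first gives $P_{\theta_0}\log\frac{D}{D_\theta} \le 2 h_\theta(D,D_\theta)\bigl(\int D_\theta\,d(P_{\theta_0}+P_\theta)\bigr)^{1/2} = 2h_\theta(D,D_\theta) \le 2 d_\theta(D,D_\theta)$ (using $\int D_\theta\,d(P_{\theta_0}+P_\theta)=1$), while the second gives $P_{\theta_0}\log\frac{1-D}{1-D_\theta} \le 2 h_\theta(1-D,1-D_\theta)\bigl(P_{\theta_0}(p_{\theta_0}/p_\theta)\bigr)^{1/2} \le 2\sqrt M\,d_\theta(D,D_\theta)$, where the moment bound $P_{\theta_0}(p_{\theta_0}/p_\theta)\le M$ from \cref{asm:2} is exactly what controls $\int D_\theta^2/(1-D_\theta)\,d(P_{\theta_0}+P_\theta)$. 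The matching lower bounds are the delicate direction: there I would split the domain at the threshold $D_\theta/D \ge 25/16$ (resp. $(1-D_\theta)/(1-D)\ge 25/16$), using a quadratic Taylor remainder controlled by the bracket conditions $P_{\theta_0}(\sqrt{u/\ell}-1)^2 = O(d_\theta(u,\ell)^2)$ and $P_{\theta_0}(\sqrt{(1-\ell)/(1-u)}-1)^2 = o(d_\theta(u,\ell))$ on the bulk, and the conditional-expectation tail bounds of \cref{asm:2} on its complement. Combining both sides yields $|P_{\theta_0}g_D| = O(d_\theta(D,D_\theta)) = O_P(\delta_n)$.

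For the empirical-process term I would invoke a bracketing maximal inequality (in the spirit of Lemma 3.4.2 of \cite{vw1996}) for the class $\{g_D : D \in \mathcal{D}_{n,\delta_n}^\theta\}$. The bracket variance conditions of \cref{asm:2} ensure that $d_\theta$-brackets of size $\delta_n$ translate into $L_2(P_{\theta_0})$ control of $g_D$, so that the $d_\theta$-entropy integral $J_{[]}(\delta_n, \mathcal{D}_{n,\delta_n}^\theta, d_\theta)$ governs the modulus of continuity of $\sqrt n(\mathbb{P}_n - P_{\theta_0})g_D$. The bound $J_{[]}(\delta_n, \mathcal{D}_{n,\delta_n}^\theta, d_\theta)\lesssim \delta_n^2\sqrt n$ from \cref{asm:1} then delivers $\sup_{D}|(\mathbb{P}_n - P_{\theta_0})g_D| = O_P(\delta_n^2) = o_P(\delta_n)$, with the condition $\alpha<2$ guaranteeing that the higher-order term of the maximal inequality is negligible as well. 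Combining the two estimates on $\{\hat D_{n,m}^\theta\in\mathcal{D}_{n,\delta_n}^\theta\}$ gives $\mathbb{P}_n g_{\hat D} = O_P(\delta_n) + o_P(\delta_n) = O_P(\delta_n)$, which is the claim.

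I expect the main obstacle to be the lower bound inside the drift term. Because the cross-entropy log-ratio is unbounded wherever $\hat D_{n,m}^\theta$ severely under- or over-shoots $D_\theta$, converting the raw $P_{\theta_0}$-integrals into the Hellinger-type distance $d_\theta$ (which is built on $P_{\theta_0}+P_\theta$) is only possible after isolating the tail region $\{D_\theta/D \ge 25/16\}$ and absorbing its contribution through the conditional-moment bounds of \cref{asm:2}. This tail accounting, rather than the empirical-process step, is where the assumptions do their real work.
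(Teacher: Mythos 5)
Your decomposition and your treatment of the drift term essentially reproduce the paper's proof: the paper also splits $\mathbb{P}_n g_{\hat D}$ into $P_{\theta_0}g_D$ plus $(\mathbb{P}_n-P_{\theta_0})g_D$ uniformly over $D\in\mathcal{D}_{n,\delta_n}^\theta$, bounds $|P_{\theta_0}\log\frac{D}{D_\theta}|\le 2(1\vee\sqrt{M})\delta_n$ via $\log x\le 2(\sqrt{x}-1)$ and Cauchy--Schwarz, and handles $P_{\theta_0}\log\frac{1-D}{1-D_\theta}$ by a second-order expansion in $W=\sqrt{(1-D)/(1-D_\theta)}-1$ with the split at $W\le-\tfrac{1}{5}$ (equivalently $(1-D_\theta)/(1-D)\ge\tfrac{25}{16}$) and the conditional tail bounds of \cref{asm:2}. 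One small misattribution: the quadratic term $P_{\theta_0}W^2$ in the drift is controlled by Cauchy--Schwarz against the moment bounds $P_{\theta_0}(p_{\theta_0}/p_\theta)^2\le M$, not by the bracket conditions; those enter only in the empirical-process step.

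The genuine gap is in that empirical-process step. You claim $\sup_D|(\mathbb{P}_n-P_{\theta_0})g_D|=O_P(\delta_n^2)=o_P(\delta_n)$, arguing that $d_\theta$-brackets of size $\delta_n$ translate into $L_2(P_{\theta_0})$ brackets of commensurate size, so that $J_{[]}(\delta_n,\mathcal{D}_{n,\delta_n}^\theta,d_\theta)\lesssim\delta_n^2\sqrt{n}$ applies directly. This works for the component $\log\frac{D}{D_\theta}$, where \cref{asm:2} gives $P_{\theta_0}\bigl(\sqrt{u/\ell}-1\bigr)^2=O(d_\theta(u,\ell)^2)$, and the paper indeed obtains $O_P(\delta_n^2)$ for this half. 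It fails for the component $\log\frac{1-D}{1-D_\theta}$: because of the unbounded weight $p_{\theta_0}/p_\theta$, the transfer guaranteed by \cref{asm:2} is only $P_{\theta_0}\bigl(\sqrt{(1-\ell)/(1-u)}-1\bigr)^2=o(d_\theta(u,\ell))$ --- first power, i.e.\ transferred brackets of size $o(\sqrt{\delta_n})$ rather than $O(\delta_n)$ --- and, for the same reason, the class radius is only $\rho^2\lesssim P_{\theta_0}W^2\lesssim\sqrt{2M}\,\delta_n$, i.e.\ $\rho\sim\sqrt{\delta_n}$. Moreover, since $g_D$ is unbounded, an $L_2$-type maximal inequality in the spirit of Lemma 3.4.2 of \citet{vw1996} is not applicable; the paper must use the Bernstein-norm version, Lemma 3.4.3, together with an analogue of \cref{lem:KL}. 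With these degraded inputs the maximal inequality delivers only $\sup_D|(\mathbb{P}_n-P_{\theta_0})\log\frac{1-D}{1-D_\theta}|=O_P(\delta_n)$ --- the same order as the drift, not negligible relative to it. This does not endanger the theorem, since $O_P(\delta_n)+O_P(\delta_n)=O_P(\delta_n)$ is all that is claimed, but your stated intermediate bound is unobtainable under the given assumptions, and your closing remark that the assumptions do their real work only in the drift term is off the mark: the asymmetric bracket conditions in \cref{asm:2} (squared scale for $D/D_\theta$, square-root scale for $(1-D_\theta)/(1-D)$) exist precisely because the unbounded likelihood ratio also degrades the empirical-process bound.
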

\proof Section \ref{sec:proof_thm:rate:res} in the Appendix.

One seemingly pessimistic conclusion from \cref{thm:rate:res} is that  $u_\theta(\cdot)$ {\em does not} vanish. \cite{kmp2020} shows that if the true likelihood ratio has a low\hyp{}dimensional representation and an appropriate neural network is used for the discriminator, the rate $\delta_n$ depends only on the underlying dimension and not on the original dimension of $X_i$.
In spite of the non-vanishing tilting term $u_\theta(\Xn)$, it turns out that Algorithm 1 can be refined (de-biased) to produce reasonable samples as long as $\hat{D}_{n,m}^\theta$ estimates the score well (see Section \ref{sec:alg_three}).
In the sequel, we show  quadratic approximability for $u_\theta$ at a much faster rate than \cref{thm:rate:res} when the model and the classifier are differentiable in some suitable sense.
\begin{asm}[Differentiability of $p_\theta$] \label{asm:dqm}
There exists $\theta_0\in\Theta\subset\mathbb{R}^d$ such that $P_0=P_{\theta_0}$.
The model $\{p_\theta\}$ is {\em differentiable in quadratic mean at $\theta_0$}, that is, there exists a measurable function $\dot{\ell}_{\theta_0}:\mathcal{X}\to\mathbb{R}^d$ such that%
\footnote{Integration is understood with respect to some dominating measure.}
$
	\int\biggl[\sqrt{p_{\theta_0+h}}-\sqrt{p_{\theta_0}}-\frac{1}{2}h'\dot{\ell}_{\theta_0}\sqrt{p_{\theta_0}}\biggr]^2=o(\|h\|^2).
$
\end{asm}
This is a classical assumption (see e.g. Section 5.5 of \cite{v1998})  which implies local asymptotic normality.
Going back to \eqref{eq:estimated_likelihood}, we write $\wh p_\theta(\Xn)=\prod_{i=1}^n\hat p_\theta(X_i)$, where
\begin{equation}\label{eq:phat}
\hat{p}_\theta=p_{\theta_0}\frac{1-\hat D_{n,m}^\theta}{\hat D_{n,m}^\theta}
\end{equation}
is an estimator of $p_\theta$ that is possibly unscaled so that $\int\hat{p}_\theta$ may not be one.
The scaling constant will be denoted by $c_\theta\vcentcolon=\int\hat{p}_\theta$.
In general, $\hat{p}_\theta$ is not observable since $p_{\theta_0}$ is not available.
From \eqref{eq:u}, we can see that  
$
	u_\theta=n\mathbb{P}_n\log\frac{1-\hat{D}_{n,m}^\theta}{\hat{D}_{n,m}^\theta}-n\mathbb{P}_n\log\frac{1-D_\theta}{D_\theta}=n\mathbb{P}_n\log\frac{\hat{p}_\theta}{p_{\theta_0}}-n\mathbb{P}_n\log\frac{p_\theta}{p_{\theta_0}}
$
and, under \cref{asm:dqm}, \citet[Theorem 7.2]{v1998} derives convergence of the second term above in the local neighborhood of $\theta_0$.
In \cref{thm:linear} below, we derive convergence of the first term under the a similar assumption.
\begin{asm}[Differentiability of $\hat{p}_\theta$] \label{asm:dqmp} \hfill
\vspace{-0.3cm}
\begin{enumerate}[(i)]
	\item \label{asm:dqmp1} {The estimator $\hat{p}_\theta$ is {\em differentiable in quadratic mean in probability at $\theta_0$ with a cubic rate}, which we define as $\hat{P}_{\theta_0}\dot{\ell}_{\theta_0}\dot{\ell}_{\theta_0}'\to^p I_{\theta_0}$ and
		\[
			(P_{\theta_0}+\hat{P}_{\theta_0})\biggl(\sqrt{\frac{\hat{p}_{\theta_0+h}}{\hat{p}_{\theta_0}}}-1-\frac{1}{2}h'\dot{\ell}_{\theta_0}\biggr)^2=O_P(\|h\|^3),
		\]
		where $\dot{\ell}_{\theta_0}:\mathcal{X}\to\mathbb{R}^d$ is the score function in \cref{asm:dqm}.}
	\item \label{asm:proj} Dependence of $\mathbb{P}_n$ and $\hat{p}_\theta$ is asymptotically ignorable in the sense that for every compact $K\subset\mathbb{R}^d$, in outer probability,
		\begin{gather*}
			\sup_{h\in K}\,\biggl|n(\mathbb{P}_n-P_{\theta_0})\biggl(\sqrt{\frac{\hat{p}_{\theta_0+h/\sqrt{n}}}{\hat{p}_{\theta_0}}}-1-\frac{h'\dot{\ell}_{\theta_0}}{2\sqrt{n}}\biggr)\biggr|\conv 0,\\
			\sup_{h\in K}\,\biggl|n(\mathbb{P}_n-P_{\theta_0})\biggl(\sqrt{\frac{\hat{p}_{\theta_0+h/\sqrt{n}}}{\hat{p}_{\theta_0}}}-1\biggr)^2\biggr|\conv 0.
		\end{gather*}
		
	\item \label{asm:const} The scaling factor is asymptotically linear in the sense that there exists a sequence of $\mathbb{R}^d$\hyp{}valued random variables $\dot{c}_{n,\theta_0}$ such that for every compact $K\subset\mathbb{R}^d$,
	in outer probability, 
		$
			\sup_{h\in K}\,\bigl|n\bigl(c_{\theta_0+h/\sqrt{n}}-c_{\theta_0}\bigr)-\sqrt{n}h'\dot{c}_{n,\theta_0}\bigr|\conv 0.
		$
		
\end{enumerate}
\end{asm}
\cref{asm:dqmp} (\ref{asm:dqmp1}) requires that $\hat{p}_\theta$ estimates the score well and is smoother than once differentiable. If $\hat{p}_\theta$ is twice differentiable in $\theta$, then it holds with $O_P(\|h\|^4)$.
\cref{asm:dqmp} (\ref{asm:proj}) requires that the dependence of $\mathbb{P}_n$ and $\hat{p}_\theta$ be ignored asymptotically. If $\mathbb{P}_n$ and $\hat{p}_\theta$ were independent, it would follow from Chebyshev's or Markov's inequality.
\cref{asm:dqmp} (\ref{asm:const}) requires that the quadratic curvature of the scaling constant vanishes asymptotically.
In general, \cref{asm:dqmp} is not verifiable since the likelihood is not available. To  develop intuition behind this assumption, we verify that it holds for a toy  normal location\hyp{}scale model example in Section \ref{sec:example_sup} in the Appendix.
With \cref{asm:dqmp}, the estimated log likelihood asymptotes to a quadratic function that has the oracle curvature but a different center.
\begin{thm} \label{thm:linear}
Let $p_\theta$ and $\hat{p}_\theta$ satisfy \cref{asm:dqm,asm:dqmp} and $\int(\sqrt{\hat{p}_{\theta_0}}-\sqrt{p_{\theta_0}})^2=O_P(\delta_n^2)$ for some $\delta_n=o(n^{-1/4})$.
Then, for every compact $K\subset\mathbb{R}^d$, in outer probability,
\begin{multline*}
	\sup_{h\in K}\,\biggl|n\mathbb{P}_n\log\frac{\hat{p}_{\theta_0+h/\sqrt{n}}}{\hat{p}_{\theta_0}}
	+\frac{1}{2}h'I_{\theta_0}h-\sqrt{n}\mathbb{P}_n h'\dot{\ell}_{\theta_0}
	+\sqrt{n}\hat{P}_{\theta_0}h'\dot{\ell}_{\theta_0}
	-\sqrt{n}h'\dot{c}_{n,\theta_0}\biggr|\conv 0
\end{multline*}
\end{thm}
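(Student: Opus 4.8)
The plan is to mirror the classical proof of local asymptotic normality (\citet[Theorem 7.2]{v1998}) in the square\hyp{}root parametrization, while carrying along the two extra terms that arise because $\hat{p}_\theta$ at $\theta_0$ is neither equal to $p_{\theta_0}$ nor normalized to integrate to one. Write $W_h\vcentcolon=\sqrt{\hat{p}_{\theta_0+h/\sqrt n}/\hat{p}_{\theta_0}}-1$ and $g_h\vcentcolon=h'\dot{\ell}_{\theta_0}/(2\sqrt n)$, and use $\log(\hat{p}_{\theta_0+h/\sqrt n}/\hat{p}_{\theta_0})=2\log(1+W_h)=2W_h-W_h^2+R(W_h)$ with $|R(x)|\lesssim|x|^3$ near $0$. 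The target then splits into a linear part $2n\mathbb{P}_n W_h$, a quadratic part $-n\mathbb{P}_n W_h^2$, and a remainder $n\mathbb{P}_n R(W_h)$. Throughout I would use that substituting $h\mapsto h/\sqrt n$ in part (i) of \cref{asm:dqmp} gives $(P_{\theta_0}+\hat{P}_{\theta_0})(W_h-g_h)^2=O_P(n^{-3/2})$ uniformly over the compact $K$.

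For the linear part, decompose $2n\mathbb{P}_n W_h=\sqrt n\mathbb{P}_n h'\dot{\ell}_{\theta_0}+2n(\mathbb{P}_n-P_{\theta_0})(W_h-g_h)+2nP_{\theta_0}(W_h-g_h)$. The centered empirical term vanishes uniformly on $K$ by the first display of part (ii) of \cref{asm:dqmp}, and $P_{\theta_0}g_h=0$ (the true score is centered under \cref{asm:dqm}), so the bias reduces to $2nP_{\theta_0}W_h$, which I would further split as $2n(P_{\theta_0}-\hat{P}_{\theta_0})W_h+2n\hat{P}_{\theta_0}W_h$. In the first piece, replacing $W_h$ by $g_h$ produces exactly the correction $\sqrt n(P_{\theta_0}-\hat{P}_{\theta_0})h'\dot{\ell}_{\theta_0}=-\sqrt n\hat{P}_{\theta_0}h'\dot{\ell}_{\theta_0}$ (again using $P_{\theta_0}\dot{\ell}_{\theta_0}=0$), while the residual $2n(P_{\theta_0}-\hat{P}_{\theta_0})(W_h-g_h)$ is bounded, via Cauchy--Schwarz after writing $p_{\theta_0}-\hat{p}_{\theta_0}=(\sqrt{p_{\theta_0}}-\sqrt{\hat{p}_{\theta_0}})(\sqrt{p_{\theta_0}}+\sqrt{\hat{p}_{\theta_0}})$, by a multiple of $n\bigl((P_{\theta_0}+\hat{P}_{\theta_0})(W_h-g_h)^2\bigr)^{1/2}\bigl(\int(\sqrt{\hat{p}_{\theta_0}}-\sqrt{p_{\theta_0}})^2\bigr)^{1/2}=O_P(n\cdot n^{-3/4}\delta_n)=O_P(n^{1/4}\delta_n)$, which is $o_P(1)$ precisely because $\delta_n=o(n^{-1/4})$. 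For the second piece I would use the affinity identity $\hat{P}_{\theta_0}W_h=\tfrac12(c_{\theta_0+h/\sqrt n}-c_{\theta_0})-\tfrac12\hat{P}_{\theta_0}W_h^2$; part (iii) of \cref{asm:dqmp} turns $n(c_{\theta_0+h/\sqrt n}-c_{\theta_0})$ into $\sqrt n\,h'\dot{c}_{n,\theta_0}$, and part (i) (with $\hat{P}_{\theta_0}\dot{\ell}_{\theta_0}\dot{\ell}_{\theta_0}'\to^p I_{\theta_0}$) gives $n\hat{P}_{\theta_0}W_h^2\to\tfrac14 h'I_{\theta_0}h$, so $2n\hat{P}_{\theta_0}W_h=\sqrt n\,h'\dot{c}_{n,\theta_0}-\tfrac14 h'I_{\theta_0}h+o_P(1)$.

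The quadratic part is routine: $-n(\mathbb{P}_n-P_{\theta_0})W_h^2\to0$ uniformly by the second display of part (ii), and expanding $W_h=g_h+(W_h-g_h)$ with part (i) and $P_{\theta_0}\dot{\ell}_{\theta_0}\dot{\ell}_{\theta_0}'=I_{\theta_0}$ yields $-nP_{\theta_0}W_h^2\to-\tfrac14 h'I_{\theta_0}h$. Collecting the linear and quadratic contributions already reproduces every term in the statement, with the two copies of $-\tfrac14 h'I_{\theta_0}h$ combining into $-\tfrac12 h'I_{\theta_0}h$; it then remains to show $n\mathbb{P}_n R(W_h)\to0$. I would handle this as in the standard \cref{asm:dqm} argument: since $n\mathbb{P}_n W_h^2=O_P(1)$ from the above and $\max_{i\le n}|W_h(X_i)|\to^p0$, the cubic bound on $R$ gives $n\mathbb{P}_n|R(W_h)|\lesssim\max_{i\le n}|W_h(X_i)|\cdot n\mathbb{P}_n W_h^2\to^p0$.

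The step I expect to be most delicate is the bias calculation $2nP_{\theta_0}W_h$: it is the only place where the unnormalized, estimated nature of $\hat{p}_\theta$ genuinely enters, and it is where the rate condition $\delta_n=o(n^{-1/4})$ is indispensable, since a slower $\delta_n$ would leave the non-vanishing drift $O_P(n^{1/4}\delta_n)$ in the location. A secondary difficulty is securing all of the above \emph{uniformly} over $h\in K$ and establishing $\max_{i\le n}|W_h(X_i)|\to^p0$ despite $\hat{p}_\theta$ depending on the same sample that $\mathbb{P}_n$ averages over; here I would lean on the $O_P(\|h\|^3)$ form of part (i) (uniform on compacta) together with part (ii), which is exactly the hypothesis that this sample dependence is asymptotically ignorable for the empirical\hyp{}process functionals that appear.
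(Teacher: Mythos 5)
Your proposal is correct and follows essentially the same route as the paper's proof: the same expansion $\log(\hat p_{\theta_n}/\hat p_{\theta_0})=2W-W^2+O(|W|^3)$, the same splitting of the bias $2nP_{\theta_0}W$ into $2n(P_{\theta_0}-\hat P_{\theta_0})W$ (handled by Cauchy--Schwarz against $\int(\sqrt{\hat p_{\theta_0}}-\sqrt{p_{\theta_0}})^2=O_P(\delta_n^2)$ with $\delta_n=o(n^{-1/4})$) and $2n\hat P_{\theta_0}W$ (handled by the affinity identity producing $n(c_{\theta_n}-c_{\theta_0})-n\hat P_{\theta_0}W^2$), Assumption \ref{asm:dqmp}(ii) for both empirical-process terms, and the $\max_i|W(X_i)|\cdot n\mathbb{P}_nW^2$ bound for the cubic remainder. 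The only difference is expository (you treat $h\in K$ directly where the paper argues along bounded sequences $h_n$, and you leave the $\max_i|W_h(X_i)|\to^p0$ step at sketch level, which the paper completes via the standard truncation argument for the score plus $n\mathbb{P}_nV_n^2=o_P(1)$), so no substantive gap.
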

\proof Section \ref{sec:proof_thm:linear} in the Appendix. 
\begin{rem}
Recall that the true log-likelihood ratio locally approaches  a quadratic curve $-\frac{1}{2}h'I_{\theta_0}h+\sqrt{n}\mathbb{P}_n h'\dot{\ell}_{\theta_0}$. The linear term $h'\sqrt{n}(\dot{c}_{n,\theta_0}-\hat{P}_{\theta_0}\dot{\ell}_{\theta_0})$  in \eqref{eq:linear_term} shifts the center of the quadratic curve but not the curvature. 
\end{rem}
One important implication of \cref{thm:linear} is linearity of $u_\theta$. 
\begin{coro}(Linear $u_\theta$)\label{rem:linear1}
Under assumptions of \cref{thm:linear} we have
\begin{equation}\label{eq:linear_term}
	u_{\theta_0+h/\sqrt{n}}-u_{\theta_0}
	=h'\sqrt{n}(\dot{c}_{n,\theta_0}-\hat{P}_{\theta_0}\dot{\ell}_{\theta_0})+o_P(1).
\end{equation}
\end{coro}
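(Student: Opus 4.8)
The plan is to reduce the claim to a difference of two local log-likelihood expansions — one supplied by \cref{thm:linear} and the other by the classical local asymptotic normality (LAN) expansion — and then to observe that the leading curvature and empirical-score terms cancel, leaving only the linear shift.

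First I would return to the identity recorded just before \cref{asm:dqm}, namely that, by \eqref{eq:u} together with \eqref{eq:phat},
\[
	u_\theta = n\mathbb{P}_n\log\frac{\hat{p}_\theta}{p_{\theta_0}} - n\mathbb{P}_n\log\frac{p_\theta}{p_{\theta_0}}.
\]
Evaluating this at $\theta=\theta_0+h/\sqrt n$ and at $\theta=\theta_0$ and subtracting, the baseline factor $p_{\theta_0}$ cancels inside both logarithms (and the term $n\mathbb{P}_n\log(p_{\theta_0}/p_{\theta_0})$ vanishes), leaving the clean decomposition
\[
	u_{\theta_0+h/\sqrt n} - u_{\theta_0} = n\mathbb{P}_n\log\frac{\hat{p}_{\theta_0+h/\sqrt n}}{\hat{p}_{\theta_0}} - n\mathbb{P}_n\log\frac{p_{\theta_0+h/\sqrt n}}{p_{\theta_0}}.
\]

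Next I would expand each term separately. For the first term I would invoke \cref{thm:linear} (whose hypotheses hold by assumption), which gives, uniformly over $h$ in compacts and hence in particular pointwise,
\[
	n\mathbb{P}_n\log\frac{\hat{p}_{\theta_0+h/\sqrt n}}{\hat{p}_{\theta_0}} = -\tfrac12 h'I_{\theta_0}h + \sqrt n\,\mathbb{P}_n h'\dot{\ell}_{\theta_0} - \sqrt n\,\hat{P}_{\theta_0}h'\dot{\ell}_{\theta_0} + \sqrt n\,h'\dot{c}_{n,\theta_0} + o_P(1).
\]
For the second term I would apply the standard LAN expansion under \cref{asm:dqm} (\citet[Theorem 7.2]{v1998}), which yields
\[
	n\mathbb{P}_n\log\frac{p_{\theta_0+h/\sqrt n}}{p_{\theta_0}} = \sqrt n\,\mathbb{P}_n h'\dot{\ell}_{\theta_0} - \tfrac12 h'I_{\theta_0}h + o_P(1).
\]

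Subtracting the two expansions, the curvature terms $\mp\tfrac12 h'I_{\theta_0}h$ cancel and the empirical score terms $\pm\sqrt n\,\mathbb{P}_n h'\dot{\ell}_{\theta_0}$ cancel, leaving exactly
\[
	u_{\theta_0+h/\sqrt n} - u_{\theta_0} = \sqrt n\,h'\dot{c}_{n,\theta_0} - \sqrt n\,\hat{P}_{\theta_0}h'\dot{\ell}_{\theta_0} + o_P(1) = h'\sqrt n\bigl(\dot{c}_{n,\theta_0} - \hat{P}_{\theta_0}\dot{\ell}_{\theta_0}\bigr) + o_P(1),
\]
which is \eqref{eq:linear_term}. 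Since this is essentially an algebraic combination of two already-established expansions, there is little genuine difficulty; the only point requiring care is the bookkeeping of the $o_P(1)$ remainders — in particular noting that \cref{thm:linear} supplies a uniform-in-$h$ remainder whereas \citet[Theorem 7.2]{v1998} supplies only a pointwise one, so the combined statement is claimed (and proved) pointwise in $h$, which is all the corollary asserts.
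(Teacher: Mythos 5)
Your proof is correct and is exactly the argument the paper intends: the paper's one-line proof ("Follows from \citet[Theorem 7.2]{v1998} and Theorem \ref{thm:linear}") relies on the same decomposition $u_\theta = n\mathbb{P}_n\log(\hat{p}_\theta/p_{\theta_0}) - n\mathbb{P}_n\log(p_\theta/p_{\theta_0})$ stated in the text just before \cref{asm:dqmp}, with the two expansions subtracted so that the curvature and empirical-score terms cancel. Your added remark on the pointwise-versus-uniform bookkeeping of the remainders is a correct refinement of what the paper leaves implicit.
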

\proof  Follows from \Citet[Theorem 7.2]{v1998} and Theorem \ref{thm:linear}.

We revisit linearity of $u_\theta$ later in Section \ref{sec:misspec} (Example \ref{ex:linear}) as one of the sufficient conditions for the Bernstein-von Mises theorem.
Corollary \ref{rem:linear1} has a very important consequence regarding the limiting shape of the stationary distribution $\pi_n^\star(\theta\C\Xn)$  for  Algorithm 1 defined in \eqref{eq:posterior_density}.  It shows that $\pi_n^\star(\theta\C\Xn)$ approaches a {\em biased} normal distribution with the {\em same variance} as the true posterior. In addition, we have seen in Section \ref{sec:rg} that the stationary distribution $\wt\pi^\star_n(\theta\C\Xn)$ of Algorithm 1 defined in \eqref{eq:stationary_random} is averaged over the bias. Therefore, if  $\mathbb{E}[\dot{c}_{n,\theta_0}-\hat{P}_{\theta_0}\dot{\ell}_{\theta_0}\mid \Xn]=0$, where {the expectation is taken over the latent data $ \Xm$}, then the stationary distribution of Algorithm 3 (in Table \ref{alg:three}) converges to the correct normal posterior, i.e. it has the same limit as the actual posterior $\pi_n(\theta\C\Xn)$. Theorem \ref{thm:linear} thus provides a theoretical justification for de-biasing suggested in Section \ref{sec:alg_three}.

\vspace{-0.5cm}
\subsection{Posterior Concentration Rates}\label{sec:conc_rate}
Having quantified the convergence rate of the posterior residual $u_\theta(\Xn)$ in Theorem \ref{thm:rate:res}, we are now ready to explore the convergence rate of the entire stationary distribution without necessarily imposing differentiability assumptions.
\vspace{-0.5cm}
\subsubsection{Empirical Bayes Lens}\label{sec:EB}
Recall that  the MHC sampler {\em does not} reach $\pi_n(\theta\C X^{(n)})$ in steady state. Recall  that  the stationary distribution (using the fixed generator) takes the form
\begin{equation}\label{eq:post}
\Pi_n^\star(B\C X^{(n)})=\frac{\int_B  {p_{\theta}^{(n)}}/{p_{\theta_0}^{(n)}}\times\e^{u_\theta}\times \pi(\theta)\d\theta}{\int_\Theta  {p_{\theta}^{(n)}}/{p_{\theta_0}^{(n)}}\times\e^{u_\theta}\times\pi(\theta)\d\theta}.
\end{equation}
In the random design, we simply replace  $u_\theta$ in \eqref{eq:post} with $\wt u_\theta$ defined in \eqref{eq:u2}.
Interestingly, \eqref{eq:post} can be viewed  as an actual posterior under a {\em tilted prior} with a density $\pi^*(\theta)\propto\e^{u_\theta}\pi(\theta)$. This shifted prior depends on the data $\Xn$ (through $u_\theta(\Xn)$) and thereby \eqref{eq:post} can be loosely regarded as an empirical Bayes (EB) posterior. While EB uses plug-in estimators of  prior hyper-parameters, here the data enters the prior in a less straightforward manner. 

We first assess the quality of the posterior approximation \eqref{eq:post}  through its concentration  rate  around the {\em true parameter} value $\theta_0$ using the
traditional Hellinger semi-metric $d_n(\theta,\theta')$.
The rate depends on the interplay between the concentration of the {\em actual} posterior\footnote{Using the usual notion \cite{gvdv07}, we say that the posterior $\Pi_n(\cdot\C\Xn)$ concentrates around $\theta_0$ at the rate $\varepsilon_n$ (satisfying $\varepsilon_n\rightarrow0$ and $n\varepsilon_n^2\rightarrow\infty$ ) if  $P_{\theta_0}\Pi_n[\theta\in\Theta: d_n(\theta,\theta_0)>M\varepsilon_n\C\Xn]\rightarrow0$ as $n\rightarrow\infty$ where $M$ possibly depends on $n$.}  $\Pi_n(\theta\C\Xn)$ and the rate at which the residual $u_\theta(\Xn)$ in \eqref{eq:u} diverges. Recall that the rate of $u_\theta(\cdot)/n$ was established earlier in Theorem \ref{thm:rate:res}. The following Theorem uses assumptions on prior concentration around $\theta_0$
 using the typical Kullback-Leibler neighborhood
$
B_n(\theta_0,\epsilon)=\left\{\theta\in\Theta: K(p_{\theta_0}^{(n)},p_{\theta}^{(n)})\leq n\epsilon^2,
\frac{1}{n}\sum_{i=1}^nV_{2}(p_{\theta_0}(X_i),p_{\theta}(X_i))\leq \epsilon^2\right\}.
$

\begin{thm}\label{thm:one}
Consider the pseudo-posterior distribution $\Pi_n^\star$ defined through \eqref{eq:post}. 
Suppose that the prior $\Pi_n(\cdot)$ satisfies conditions (3.2) and (3.4) in \cite{gvdv07}  for a sequence $\varepsilon_n\rightarrow0$ such that $n\varepsilon_n^2\rightarrow\infty$. 
In addition,  let $\wt C_n$ be such that 
\begin{equation}\label{ass:u}
P_{\theta_0}^{(n)}\left(\sup_{\theta\in\Theta}|u_\theta(X^{(n)})/n|>\wt C_n \varepsilon_n^2\right)=o(1)
\end{equation}
and assume that for sets $\Theta_n\subset\Theta$ the prior satisfies
\begin{equation}\label{ass:prior}
\frac{\Pi_n(\Theta\backslash\Theta_n)}{\Pi_n(B_n(\theta_0,\varepsilon_n))}=o(\e^{-2(1+\wt C_n)n\varepsilon_n^2}).
\end{equation}
Then we have, for any $M_n\rightarrow\infty$ such that $\wt C_n=o(M_n)$,
$$
P_{\theta_0}^{(n)}\left[\Pi^\star_n(\theta: d_n(\theta,\theta_0)>M_n\varepsilon_n\C X^{(n)})\right]=o(1)\quad\text{as $n\rightarrow\infty$}.
$$
\end{thm}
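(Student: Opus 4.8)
The plan is to run the standard testing\hyp{}based posterior\hyp{}contraction argument of \cite{ggv2000,gvdv07}, modified to absorb the exponential tilt $\e^{u_\theta}$. The crucial simplification is that, by \eqref{ass:u}, on the event $\mathcal A_n\vcentcolon=\{\sup_{\theta\in\Theta}|u_\theta|\le n\wt C_n\varepsilon_n^2\}$, which has $P_{\theta_0}^{(n)}$\hyp{}probability $1-o(1)$, the tilt is uniformly sandwiched: $\e^{-n\wt C_n\varepsilon_n^2}\le\e^{u_\theta}\le\e^{n\wt C_n\varepsilon_n^2}$. Hence on $\mathcal A_n$ the tilt may be pulled out of both the numerator and the denominator of $\Pi_n^\star$ as a multiplicative factor no worse than $\e^{2n\wt C_n\varepsilon_n^2}$, reducing the problem to the ordinary (untilted) posterior ratio up to this controllable penalty. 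Since $\Pi_n^\star(\,\cdot\,\C\Xn)\in[0,1]$ and the conclusion is precisely the $P_{\theta_0}^{(n)}$\hyp{}expectation of $\Pi_n^\star(\theta:d_n(\theta,\theta_0)>M_n\varepsilon_n\C\Xn)$, it suffices to bound this expectation, splitting off $\mathcal A_n^c$, which contributes $o(1)$.

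First I would bound the denominator $D_n\vcentcolon=\int_\Theta(p_\theta^{(n)}/p_{\theta_0}^{(n)})\e^{u_\theta}\pi(\theta)\d\theta$ from below. The standard lower\hyp{}bound lemma (Lemma 8.1 in \cite{ggv2000}), applied to the untilted integral, produces an event $\mathcal B_n$ with $P_{\theta_0}^{(n)}(\mathcal B_n^c)\to0$ on which $\int_\Theta(p_\theta^{(n)}/p_{\theta_0}^{(n)})\pi(\theta)\d\theta\ge\Pi_n(B_n(\theta_0,\varepsilon_n))\e^{-2n\varepsilon_n^2}$. Combined with the tilt bound, this gives on $\mathcal A_n\cap\mathcal B_n$ the deterministic lower bound $D_n\ge\Pi_n(B_n(\theta_0,\varepsilon_n))\,\e^{-2n\varepsilon_n^2-n\wt C_n\varepsilon_n^2}$. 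Conditions (3.2) and (3.4) of \cite{gvdv07} together furnish (i) the prior\hyp{}mass lower bound $\Pi_n(B_n(\theta_0,\varepsilon_n))\ge\e^{-c_0 n\varepsilon_n^2}$ used at the end, and (ii) the local entropy bound from which, via the shell\hyp{}covering construction of \cite{ggv2000}, one obtains tests $\phi_n$ with $P_{\theta_0}^{(n)}\phi_n\to0$ and $\sup_{\theta\in\Theta_n:\,d_n(\theta,\theta_0)>M_n\varepsilon_n}P_\theta^{(n)}(1-\phi_n)\le\e^{-K M_n^2 n\varepsilon_n^2}$ for a fixed $K>0$, a construction that remains valid for the growing radius $M_n$.

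Next I would control the numerator. Using $\Pi_n^\star(\cdots)\le\phi_n+(1-\phi_n)\Pi_n^\star(\cdots)$, the $\phi_n$ term vanishes in expectation. For the $(1-\phi_n)$ term I bound $\e^{u_\theta}\le\e^{n\wt C_n\varepsilon_n^2}$ inside the numerator integral on $\mathcal A_n$, divide by the deterministic lower bound on $D_n$ from $\mathcal B_n$, and split the region $\{d_n>M_n\varepsilon_n\}$ into its intersection with $\Theta_n$ and the complement $\Theta\setminus\Theta_n$. Taking $P_{\theta_0}^{(n)}$\hyp{}expectation and applying Fubini (using $P_{\theta_0}^{(n)}[(p_\theta^{(n)}/p_{\theta_0}^{(n)})(1-\phi_n)]=P_\theta^{(n)}(1-\phi_n)$ and $P_{\theta_0}^{(n)}[p_\theta^{(n)}/p_{\theta_0}^{(n)}]=1$) bounds the expected untilted numerator over the two pieces by $\e^{-KM_n^2 n\varepsilon_n^2}+\Pi_n(\Theta\setminus\Theta_n)$. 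Altogether, the expected $(1-\phi_n)$\hyp{}part of $\Pi_n^\star(d_n>M_n\varepsilon_n\C\Xn)$ is at most
\[
\frac{\e^{2n\wt C_n\varepsilon_n^2+2n\varepsilon_n^2}}{\Pi_n(B_n(\theta_0,\varepsilon_n))}\Bigl(\e^{-KM_n^2 n\varepsilon_n^2}+\Pi_n(\Theta\setminus\Theta_n)\Bigr).
\]

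Finally I would check that both terms vanish. The first term is at most $\e^{n\varepsilon_n^2(2\wt C_n+2+c_0-KM_n^2)}$, which tends to $0$ because $M_n\to\infty$ and $\wt C_n=o(M_n)$ force $\wt C_n=o(M_n^2)$, so $KM_n^2$ dominates $2\wt C_n+2+c_0$. The second term is exactly where \eqref{ass:prior} is designed to bite: it equals $\e^{2(1+\wt C_n)n\varepsilon_n^2}\,\Pi_n(\Theta\setminus\Theta_n)/\Pi_n(B_n(\theta_0,\varepsilon_n))=o(1)$, since the prescribed rate $o(\e^{-2(1+\wt C_n)n\varepsilon_n^2})$ precisely cancels the tilt\hyp{}plus\hyp{}denominator penalty $\e^{2(1+\wt C_n)n\varepsilon_n^2}$. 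Adding the $o(1)$ contributions of $\mathcal A_n^c$, $\mathcal B_n^c$, and $P_{\theta_0}^{(n)}\phi_n$ finishes the bound. I expect the main work to be bookkeeping rather than a deep new idea: the substantive content is isolating the tilt into the uniform factor $\e^{2n\wt C_n\varepsilon_n^2}$ and verifying it is both swamped by the testing decay in the first term (needing only $\wt C_n=o(M_n^2)$) and cancelled by the reinforced prior\hyp{}mass condition \eqref{ass:prior} in the second; the test construction and the denominator lemma are imported directly from \cite{ggv2000,gvdv07}.
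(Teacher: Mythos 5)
Your overall architecture (a good event for the tilt via \eqref{ass:u}, the denominator lower bound from Lemma 8.1 of \cite{ggv2000} combined with the tilt, Fubini with tests, and the ratio condition \eqref{ass:prior} for the sieve remainder) parallels the paper's proof, and your treatment of the tilt and of the $\Theta\setminus\Theta_n$ piece is correct. The gap is in the testing term. You bound its expectation by $\e^{2(1+\wt C_n)n\varepsilon_n^2}\,\e^{-KM_n^2n\varepsilon_n^2}\big/\Pi_n\bigl(B_n(\theta_0,\varepsilon_n)\bigr)$ and then invoke a prior-mass lower bound $\Pi_n\bigl(B_n(\theta_0,\varepsilon_n)\bigr)\geq\e^{-c_0n\varepsilon_n^2}$, claiming it is furnished by conditions (3.2) and (3.4) of \cite{gvdv07}. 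It is not: (3.2) is a local-entropy condition, and (3.4) is a \emph{ratio} condition, $\Pi_n(\Theta_{n,j})/\Pi_n\bigl(B_n(\theta_0,\varepsilon_n)\bigr)\leq\e^{nM^2j^2\varepsilon_n^2/4}$ for the Hellinger shells $\Theta_{n,j}=\{\theta\in\Theta_n:\,Mj\varepsilon_n<d_n(\theta,\theta_0)\leq M(j+1)\varepsilon_n\}$. Neither condition, nor their combination, implies any absolute lower bound on the Kullback--Leibler ball mass: the prior can put mass close to one on the Hellinger ball $\{d_n(\theta,\theta_0)\leq\varepsilon_n\}$ while assigning the (typically much smaller) KL ball $B_n(\theta_0,\varepsilon_n)$ arbitrarily small mass, with (3.2) and (3.4) still holding. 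Consequently your final estimate for the first term, $\e^{n\varepsilon_n^2(2\wt C_n+2+c_0-KM_n^2)}$, rests on an assumption the theorem does not grant, and the term $\e^{-KM_n^2n\varepsilon_n^2}/\Pi_n\bigl(B_n(\theta_0,\varepsilon_n)\bigr)$ is simply uncontrolled.

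The repair is precisely where the paper's proof departs from your route: do not lump the alternative $\{d_n(\theta,\theta_0)>M_n\varepsilon_n\}\cap\Theta_n$ into a single test, but peel it into the shells $\Theta_{n,j}$, $j\geq J_n$, take for each shell a test with type-II error at most $\e^{-nM^2j^2\varepsilon_n^2/2}$ (Lemma 9 of \cite{gvdv07}, obtained from (3.2)), and apply (3.4) \emph{shell by shell}. Each shell then contributes at most
\begin{equation*}
\e^{2(1+\wt C_n)n\varepsilon_n^2}\,\e^{-nM^2j^2\varepsilon_n^2/2}\,\frac{\Pi_n(\Theta_{n,j})}{\Pi_n\bigl(B_n(\theta_0,\varepsilon_n)\bigr)}
\;\leq\;\e^{2(1+\wt C_n)n\varepsilon_n^2-nM^2j^2\varepsilon_n^2/4},
\end{equation*}
so the prior mass of $B_n(\theta_0,\varepsilon_n)$ never appears except inside the ratios that (3.4) and \eqref{ass:prior} control. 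Summing over $j\geq J_n$ gives $o(1)$ as soon as $\wt C_n=o(M_n^2)$, which your own observation ($\wt C_n=o(M_n)$ forces $\wt C_n=o(M_n^2)$) already covers. With this substitution, and keeping the rest of your bookkeeping, the argument closes and coincides with the paper's proof.
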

\proof 
The proof is a minor modification of Theorem 4 in \cite{gvdv07} and is postponed until Section \ref{sec:proof_thm:one} in the Appendix.

Theorem \ref{thm:one} shows that the concentration rate of the pseudo-posterior nearly matches the concentration rate of the original posterior $\varepsilon_n$ (this is implied by condition (3.2), (3.4) and a variant of \eqref{ass:prior} according to Theorem 4 of \cite{gvdv07}) up to an inflation factor $\wt C_n$ which depends on the rate of $u_\theta(X^{(n)})/n$.  If  $\wt C_n=\mathcal O(1)$  in  \eqref{ass:u},
the rate of the actual posterior and pseudo-posterior will be the same.

\begin{rem}(Random Generator)
Recall that the  stationary distribution $\wt\pi^\star_n(\theta\C\Xn)$ of the random generator MHC version can be written as \eqref{eq:post} where $u_\theta$ is replaced with $\wt u_\theta$ from \eqref{eq:u2}. Theorem \ref{thm:one} holds also for the random generator where $\wt C_n$ is obtained from \eqref{ass:u} with $\wt u_\theta$ instead of $u_\theta$. Due to the averaging aspect, we might expect this $\wt C_n$ to be smaller in the random generator design.
\end{rem}

{
\begin{rem}\label{rem:marginal}(Marginal Reference Distribution)
Theorem  \ref{thm:one} holds {\em also} for the marginal contrastive learning Metropolis-Hastings approach proposed in \cite{hermans}. Indeed,  defining $u_\theta$   in terms of the discriminator $D_\theta^m(X)=p(X)\pi(\theta)/[p(X)\pi(\theta)+p_\theta(X)\pi(\theta)]$, the same conclusion holds for the marginal approach under the assumption in \eqref{ass:u}. 
\end{rem}
}
Theorem \ref{thm:one} describes the behavior of the pseudo-posterior around the truth $\theta_0$. We learned that the rate is artificially inflated due a bias inflicted by the likelihood approximation, where $\Pi^\star_n(\cdot\C\Xn)$ {\em may not} shrink around $\theta_0$ when $\varepsilon_n$ is faster than the rate $\delta_n$ established in Theorem \ref{thm:rate:res}.  This suggest that the truth may not be the most natural centering point for the posterior to concentrate around.
A perhaps more transparent approach is to consider a different (data-dependent) centering which will allow for a more honest reflection of  the contraction speed  devoid of any implicit bias. We look into model misspecification for guidance about reasonable centering points.
\vspace{-0.3cm}
\subsubsection{Model Misspecification Lens}\label{sec:misspec}
In Section \ref{sec:EB}, we reframed  the stationary distribution \eqref{eq:posterior_density} as an empirical Bayes posterior by absorbing the term $\e^{u_\theta(\Xn)}$ inside the prior. 
This section pursues a different approach, absorbing $\e^{u_\theta(\Xn)}$ inside the likelihood instead. This leads a mis-specified model $\wt P_\theta^{(n)}$  prescribed by the following likelihood function
\begin{equation}\label{eq:tilde_lik}
\wt p^{(n)}_\theta(\Xn)=\frac{p^{(n)}_\theta(\Xn)\e^{u_\theta(\Xn)}}{C_\theta}\quad\text{where}\quad C_\theta=\int_{\mX} p^{(n)}_\theta(\Xn)\e^{u_\theta(\Xn)}\d\Xn.
\end{equation}
Defining  $\wt\pi(\theta)\propto \pi(\theta)C_\theta$, we can rewrite \eqref{eq:posterior_density} as a posterior density under a mis-specified likelihood  and the modified prior $\wt\pi(\theta)$ as
\begin{equation}\label{eq:pstar}
\pi^\star_n(\theta\C X^{(n)})= \frac{\wt p_\theta^{(n)}(X^{(n)})\wt \pi(\theta)}{\int_\Theta \wt p_\theta^{(n)}(\Xn)\wt \pi(\theta)\d\theta}.
\end{equation}
Since the model $\wt p^{(n)}_\theta$ is mis-specified (i.e. $P^{(n)}_{\theta_0}$ is {\em not} of the same form as $\wt\mP^{(n)}=\{\wt P^{(n)}_\theta: \theta\in\Theta\}$ {due to the fact that $\hat D_{n,m}$ departs from the oracle discriminator}), the posterior will concentrate around the point $\theta^*$ defined as
\begin{equation}\label{eq:KL_theta}
\theta^*=\arg\min\limits_{\theta\in\Theta} -P_{\theta_0}^{(n)}\log [\wt p_\theta^{(n)}/ p_{\theta_0}^{(n)}]
\end{equation}
which corresponds to the element $\wt P_{\theta^*}^{(n)}\in\wt\mP^{(n)}$ that is closest to $P^{(n)}_{\theta_0}$ in the KL sense \cite{kleijn1}. Unlike  in the iid data case studied, e.g., in
\cite{kleijn1} and \cite{deblasi}, our likelihood \eqref{eq:tilde_lik} is not an independent product due to the non-separability of the function $u_\theta(\Xn)$. Theorem \ref{thm:misspec} (Section \ref{sec:proof_thm_misspec} in the Appendix) quantifies concentration  in terms of a  KL neighborhoods around $\wt P_{\theta^*}^{(n)}$. {Beyond the speed of posterior concentration, we provide sufficient conditions for the stationary distribution to converge to a Gaussian distribution (see Section \ref{sec:bvm} in the Supplement).}

\vspace{-0.5cm}
\section{MHC in Action}\label{sec:simul}
To whet reader's appetite, we present  MHC performance demonstrations  in  two examples which we found  challenging for pseudo-marginal (PM) approaches and ABC.
 The first one (the CIR model) exemplifies data arising as discretizations of continuous-time process  for which likelihood inference  can be problematic \citep{sorensen}. We show that, compared with MCWM, MHC is not only  far more straightforward to implement but also  more scalable.  The second demonstration  involves a generative model (Lotka-Volterra) for which no explicit hierarchical model exists, precluding from straightforward application of    MH methods \citep{beaumont2003estimation}. We thus compare MHC with ABC, showing that  ABC techniques may fall short without a very informative prior and suitable summary statistics.  More examples are shown in the Appendix where we show bias-variance tradeoffs between fixed/random generators  on a toy normal location-scale model (Section \ref{sec:example_sup}) and  the Ricker model \citep{ricker1954stock} (Section \ref{sec:ricker}). We also present  a Bayesian model selection example (Section \ref{sec:bf} in the Appendix) where ABC faces challenges.

\vspace{-0.5cm}
\subsection{The CIR Model}\label{sec:cir}
The CIR model \citep{cir} is prescribed by the stochastic differential equation 
$$
dX_t=\beta(\alpha-X_t)dt+\sigma\sqrt{X_t}dW_t
$$
where $W_t$ is the Brownian motion, $\alpha>0$ is a mean-reverting level, $\beta>0$ is the speed of the process and $\sigma>0$ is the volatility parameter. This process is an integral component of the Heston model \citep{heston} where it is deployed for modelling  instantaneous variances.  We want to perform Bayesian inference for the parameters $\theta=(\alpha,\beta,\sigma)'$ of this continuous-time Markov process which is observed at discrete time points $t_j=j\Delta$ for $j=1,\dots, T$. We will assume that there are $n$ independent observed realizations $\bm x_i=(x_{i1},\dots, x_{iT})'$ of this discretized series for $1\leq i\leq n$. It has been acknowledged that if the data are recoded at discrete times, parametric inference  using the likelihood can be difficult, partially due to the fact that the likelihood function is often not available \citep{sorensen}.  One possible Bayesian inferential platform for such problems is the MH algorithm where the likelihood function can be replaced with its approximation (e.g. using the analytical closed-form likelihood approximations \cite{ait}). \cite{stramer_bognar} perform a delicate Bayesian analysis of this model using  the MCWM algorithm (defined in \cite{neil} and discussed in \cite{beaumont2003estimation} and \cite{andrieu2009pseudo}) and  the GIMH algorithm \cite{beaumont2003estimation}. Here, we compare MHC with  MCWM, referring to  \cite{stramer_bognar}  for a detailed analysis of the CIR model using GIMH.

One common approach in the literature for Bayesian estimation of diffusion models is to consider estimation on the basis of discrete measurements as a classic missing-data problem (see \cite{roberts_stramer} for irreducible diffusion contexts). The idea is to introduce latent observations between every two consecutive data points. The time-step interval $[0,\Delta]$ is thus partitioned into $M$ sub-intervals, each of length $h=\Delta/M$. The granularity $M$ should be large enough so that the grid is sufficiently fine to yield more accurate likelihood approximations. With the introduction of latent variables, the pseudo-marginal approach naturally comes to mind as a possible inferential approach. The MCWM variant (described in Section 3 of \cite{stramer_bognar}) alternates between simulating $\theta$, conditionally on the missing data blocks, say $U$, and  then updating $U$, given $\theta$.
We will be using the following enumeration for the missing data $U=(u^{ij}_{km})$: we have a replicate index $1\leq i\leq n$, a discrete time index $0\leq j\leq T$, an index of the intermittent auxiliary series $1\leq m\leq M$   and 
an index $1\leq k\leq N$ for the number of replications inside MCWM. Given $\theta$, one can generate the missing data  using the Modified Brownian Bridge (MBB) sampler \citep{durham_gallant}.
Denote with $X=[\bm x_1,\dots, \bm x_n]'$ an $n\times (T+1)$ matrix of observations where $x_{i0}=x_0$ is the initial condition. The CIR model is an interesting  test bed for both MCWM and our MHC approach,   because the transition density is {\em actually known}  (i.e. non-central $\chi^2$   \citep{cir}). We can thereby make comparisons with an exact algorithm which constructs the likelihood  from the exact transition function.
\begin{figure}[!t]
\vspace{-1cm}
\scalebox{0.8}{\includegraphics[height=10cm,width=20cm]{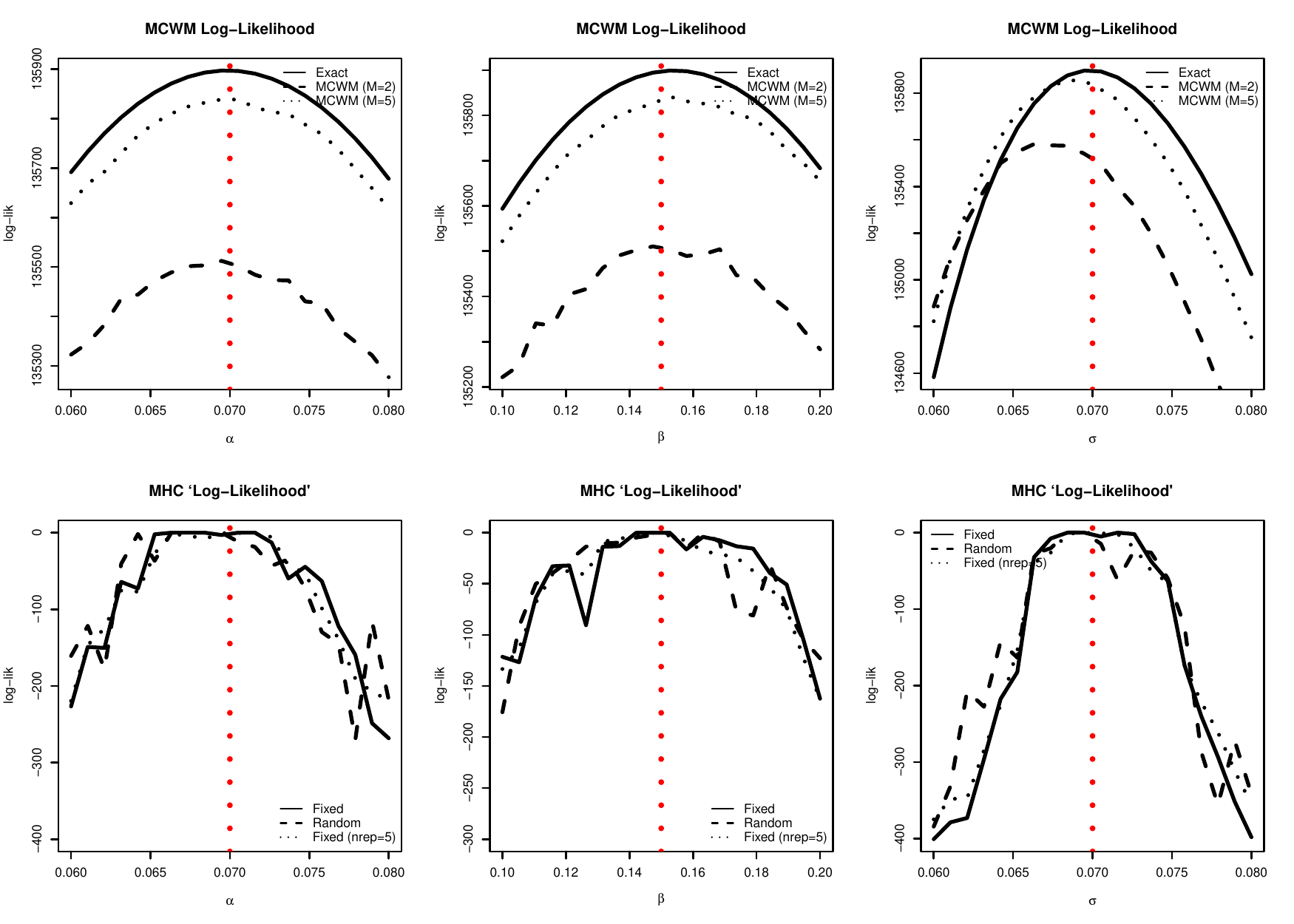}}
\caption{\small Plots of the exact   and estimated log-likelihood function (up to a constant) for MCWM (upper panel using $N=M^2=25$) and MHC (lower panel using fixed and random generators).  Log-Likelihood slice over (Left) $\alpha$ keeping ($\beta_0,\sigma_0$), (Middle) over $\beta$ fixing $(\alpha_0,\sigma_0)$ and (Right)  over $\sigma$ keeping $(\alpha_0,\beta_0)$. }\label{fig:CIR_liks}
\end{figure}
The likelihood can be, however,  stochastically approximated as
\begin{equation}\label{eq:llik_approx_MCWM}
\wh p_\theta(X)=\prod_{i=1}^n \prod_{j=0}^{T-1}\wh \pi(x_{ij+1}\C x_{ij},\theta),\quad\text{where}\quad  \wh\pi(x_{ij+1}\C x_{ij},\theta)=\frac{1}{N}\sum_{k=1}^N R_M(u_k^{ij}),
\end{equation}
where  $u_k^{ij}=(u_{k0}^{ij},\dots,u_{kM}^{ij})' \in R^{M+1}$ is the $k^{th}$ sample of the brownian bridge (described in (3) in \cite{stramer_bognar}) stretching from  $u^{ij}_{k0}=x_{ij}$ and $u^{ij}_{kM}=x_{ij+1}$ and where
$$
R_M(u_k^{ij})=\frac{\prod_{m=0}^{M-1}\phi\left(u_{k m+1}^{ij}; u_{k m}^{ij}+h\beta(\alpha- u_{k m}^{ij})\,,\, \sigma\sqrt{h u_{k m}^{ij}}\right)}{
\prod_{m=0}^{M-2}\phi\left(u_{k m+1}^{ij}; u_{k m}^{ij}+\frac{x_{ij+1}-u^{ij}_{km}}{M-m}\,,\, \sigma\sqrt{h (M-m-1)/(M-m)u_{k m}^{ij}}\right)}
$$
where $\phi(x;\mu,\sigma)$ denotes the normal density with a mean $\mu$ and a standard deviation $\sigma$. Regarding the choice of $M$ and $N$,   asymptotic arguments exist for choosing $N=M^2$  and \cite{stramer_bognar} make thorough comparisons for various choices of $M,N$ and also implement the ('exact' version having the correct stationary distribution) GIMH (see their Section 4) which recycles latent data $U$.  There are some delicate issues regarding dependency between $\sigma$ and $U$ in GIMH and we refer the reader to \cite{stramer_bognar} for further details. 

The true data consist of $n=100$  samples generated using the package \texttt{sde} (using the function \texttt{sed.sim} with `rcCIR' initialized at $x_0=0.1$)  using $\Delta=1$ and $T=500$ and using\footnote{These values are close to parameter estimates found for FedFunds data analyzed in Stramer and Bognar (2011).} $\theta^0=(0.07,0.15,0.07)'$.
In order to implement MHC, we use the LASSO-regularized logistic regression (using an R package \texttt{glmnet} with a value $\lambda$ chosen by $10$-fold cross-validation) using the entire series $\bm x_i$ as predictors. 
While using the entire series  is useful for identifying the location parameter $\alpha$, capturing more subtle aspects of the series such as speed of fluctuation and spread are needed to identify $(\beta,\sigma)$. To this end,
 we add summary statistics (mean, log-variance, auto-correlations at lag 1 and 2 as well as the first 3 principal components of $X$)  yielding the total of $507$ predictors (denoted with $\bm z_i$). We consider both fixed and random generators where, for the fixed variant, we  fix the random seed before generating fake data which essentially corresponds to having a deterministic generative mapping.

\begin{figure}[!t]
\vspace{-1cm}
\includegraphics[height=5.3cm,width=5.3cm]{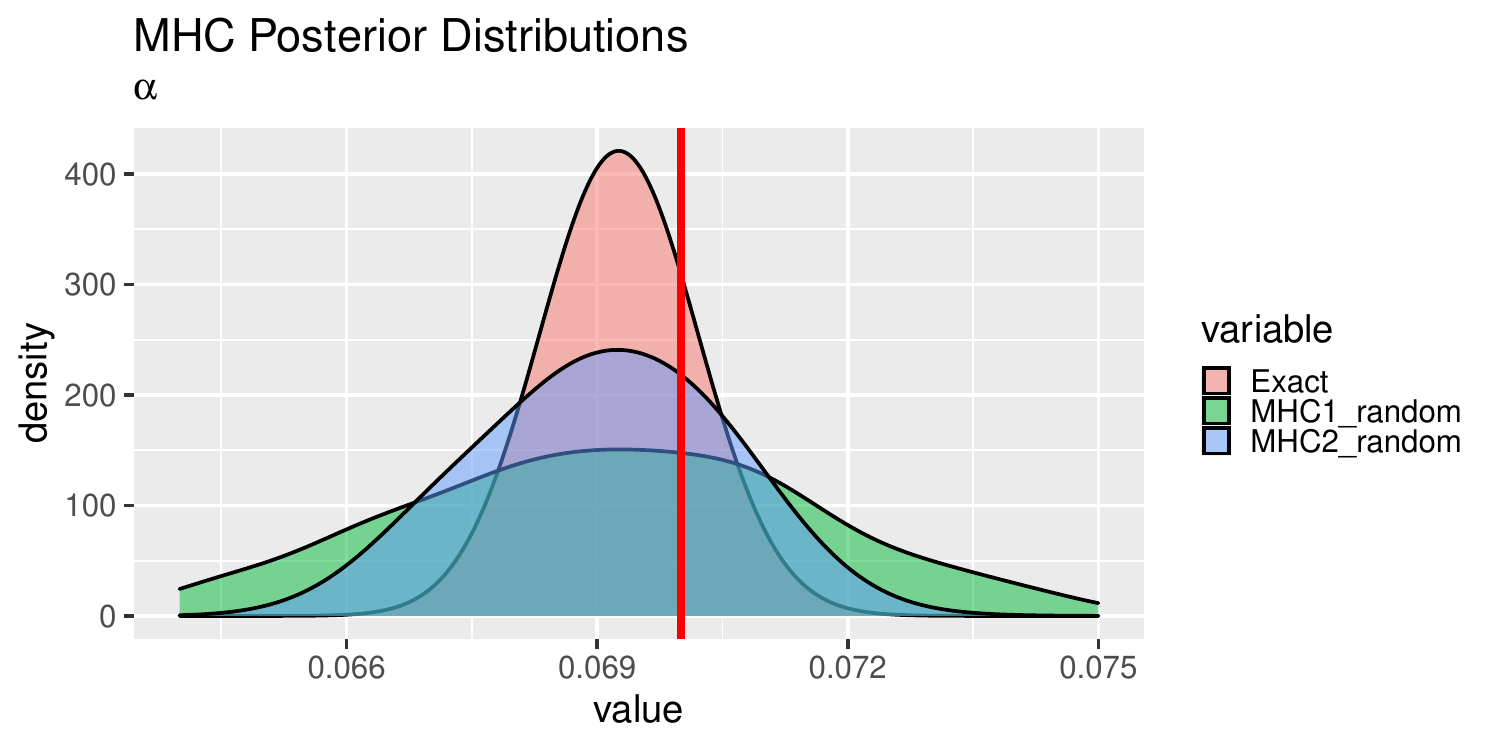} \includegraphics[height=5.3cm,width=5.3cm]{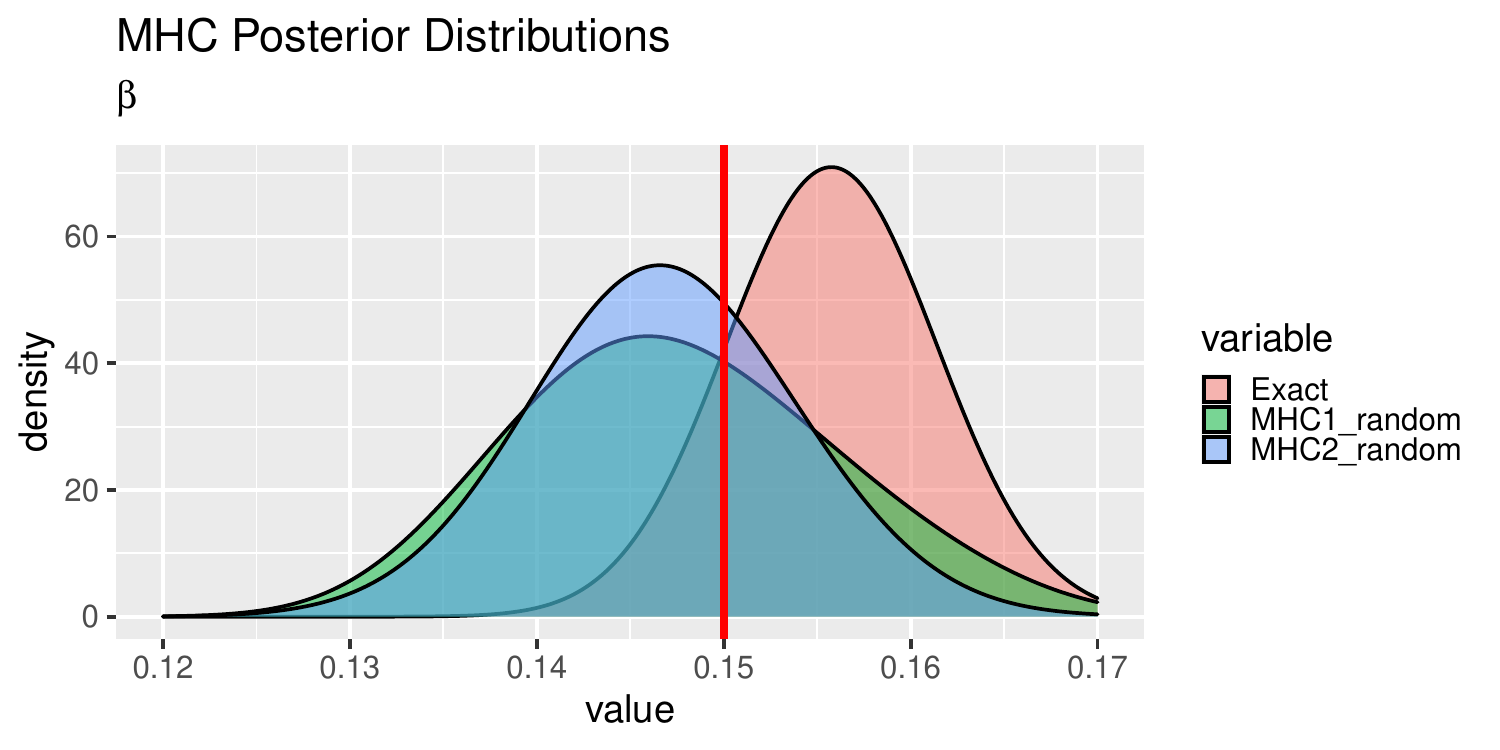} 
\includegraphics[height=5.3cm,width=5.3cm]{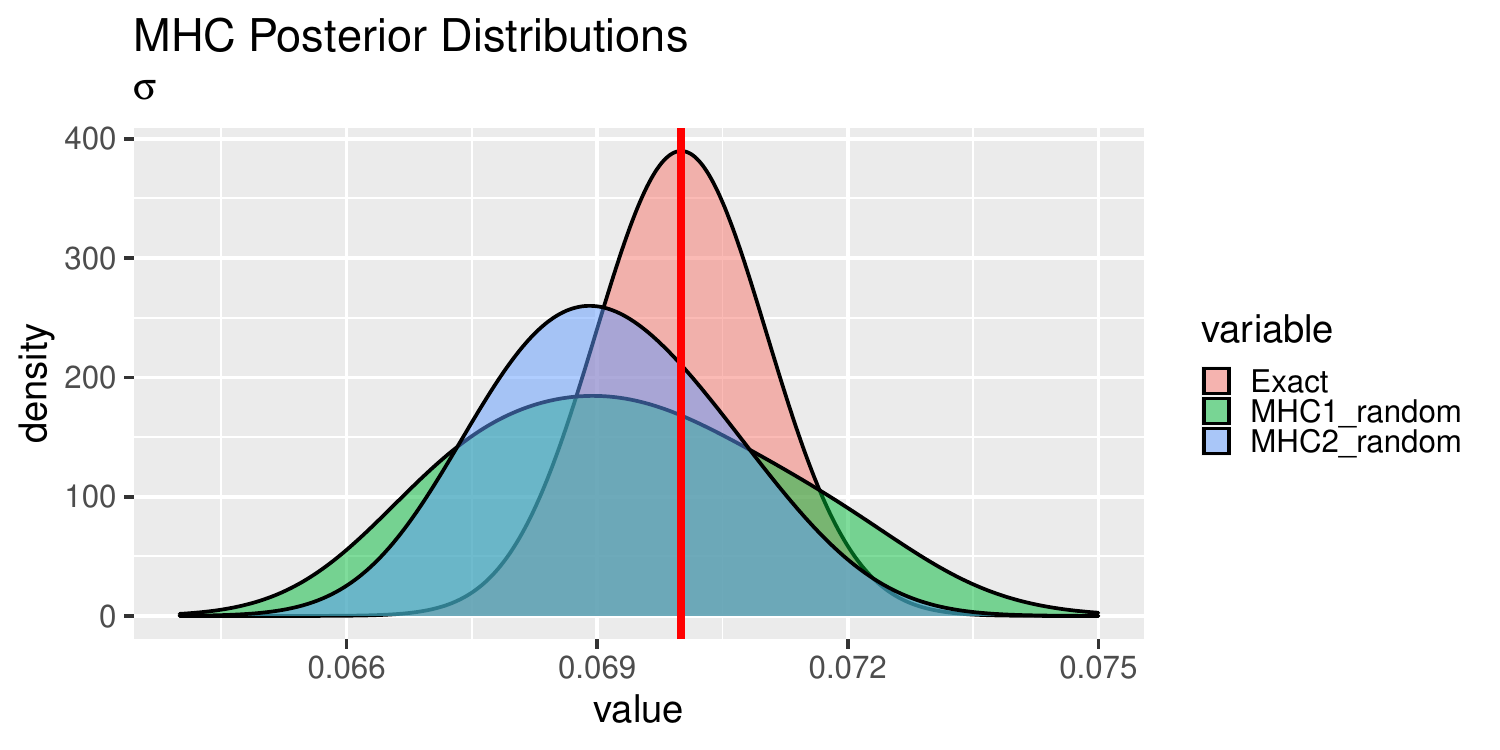} 
\caption{\small Smoothed posterior densities obtained for the CIR model by simulation using exact MH and MHC using $nrep=1$ (green) and $nrep=5$ (blue). Vertical lines are the true values.}\label{fig:MHC_rg}
\end{figure}

We compare the MCWM likelihood approximations obtained in MCWM (using \eqref{eq:llik_approx_MCWM})  with various choices $N=M^2$ with the exact one using the explicit transition distribution (top panel in Figure \ref{fig:CIR_liks}). We can see that, even for a small value of $N=2$, the likelihood approximation seems to have a correct shape and is peaked close to the true values (marked by vertical dotted lines). The plots show likelihood slices along each parameter, one at a time, fixing the others at their true values. The approximation quality improves for $M=5$ and $N=M^2$. 
The lower panel in Figure \ref{fig:CIR_liks} portrays our classification-based log-likelihood (ratio) estimates $\eta=\sum_{i=1}^n\log[(1-\hat D(\bm z_i))/\hat  D(\bm z_i)]$ for the fixed and random generators.
 The curves  are nicely wrapped around the true values (perhaps even more so than for MCWM) with no visible systematic bias (even for the fixed generator). While, in the fixed  case (solid lines), we would expect entirely smooth curves,  recall that our classifier is based on cross-validation which introduces some randomness (thereby the wiggly estimate).  The wigglyness can be  alleviated by averaging over ($nrep$) many fake data replicates (dotted lines). The random generator (dashed lines) yields slightly more variable curves compared to the fixed design, as was expected. 
 These plots  indicate that MHC `pseudo-likelihood' contains relevant inferential information.

\begin{figure}[!t]
\vspace{-1cm}
\includegraphics[height=5.3cm,width=5.3cm]{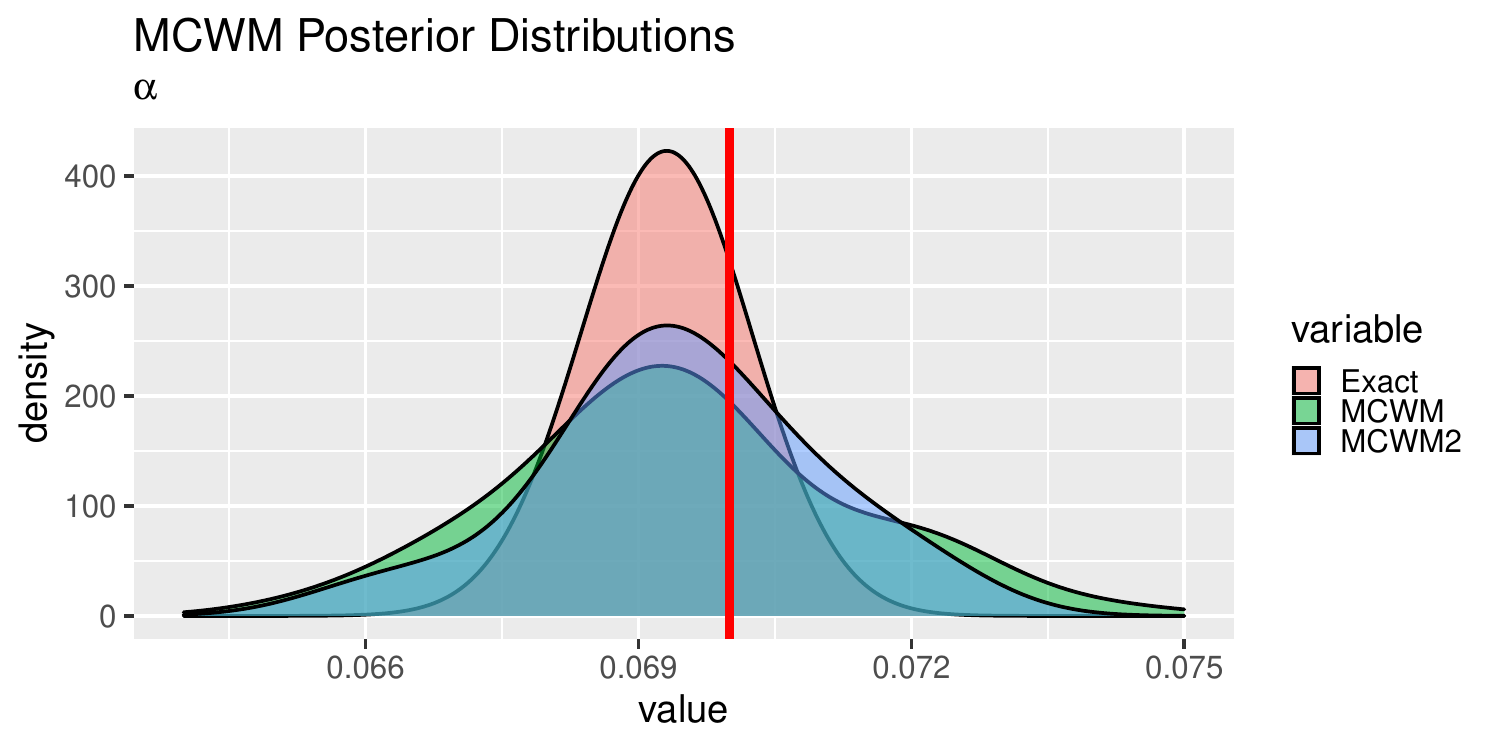} \includegraphics[height=5.3cm,width=5.3cm]{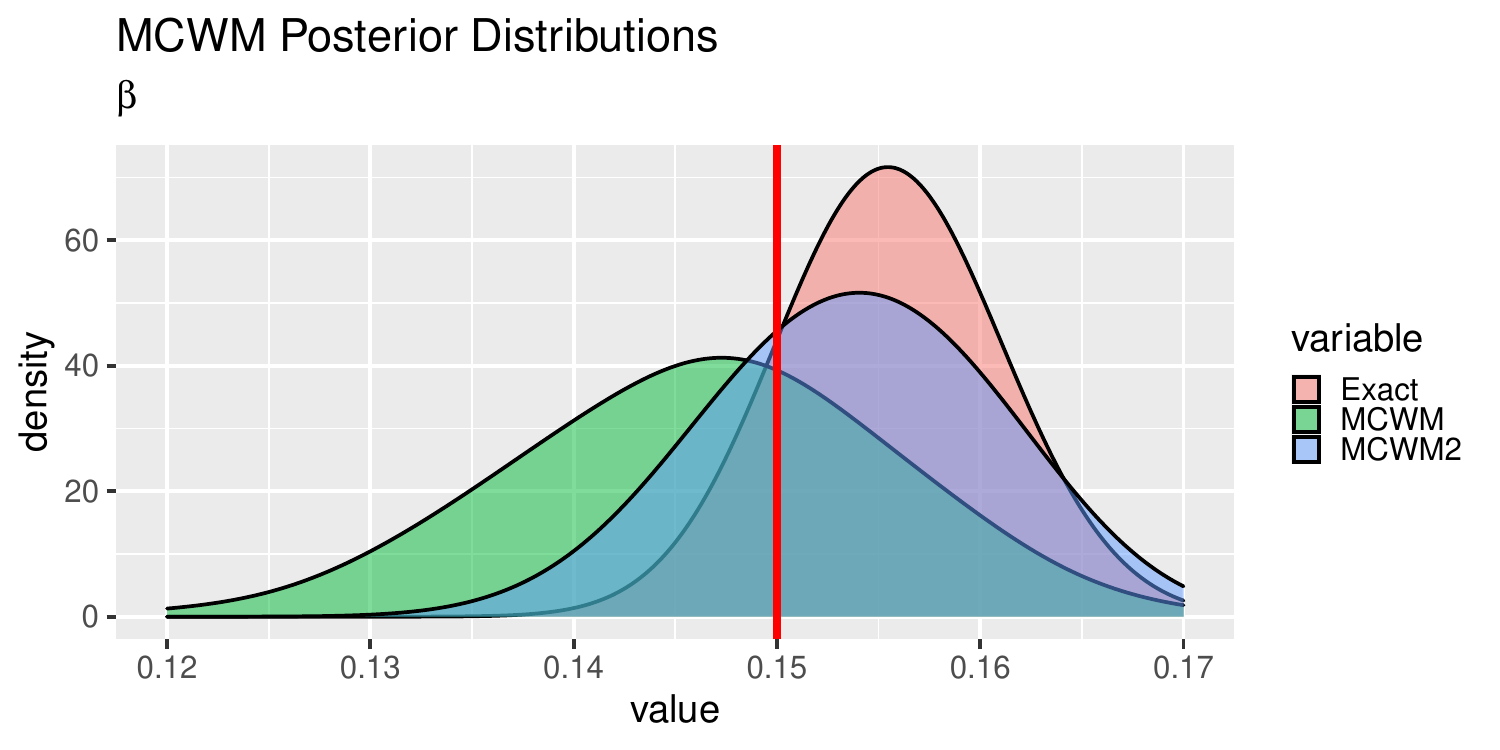} 
\includegraphics[height=5.3cm,width=5.3cm]{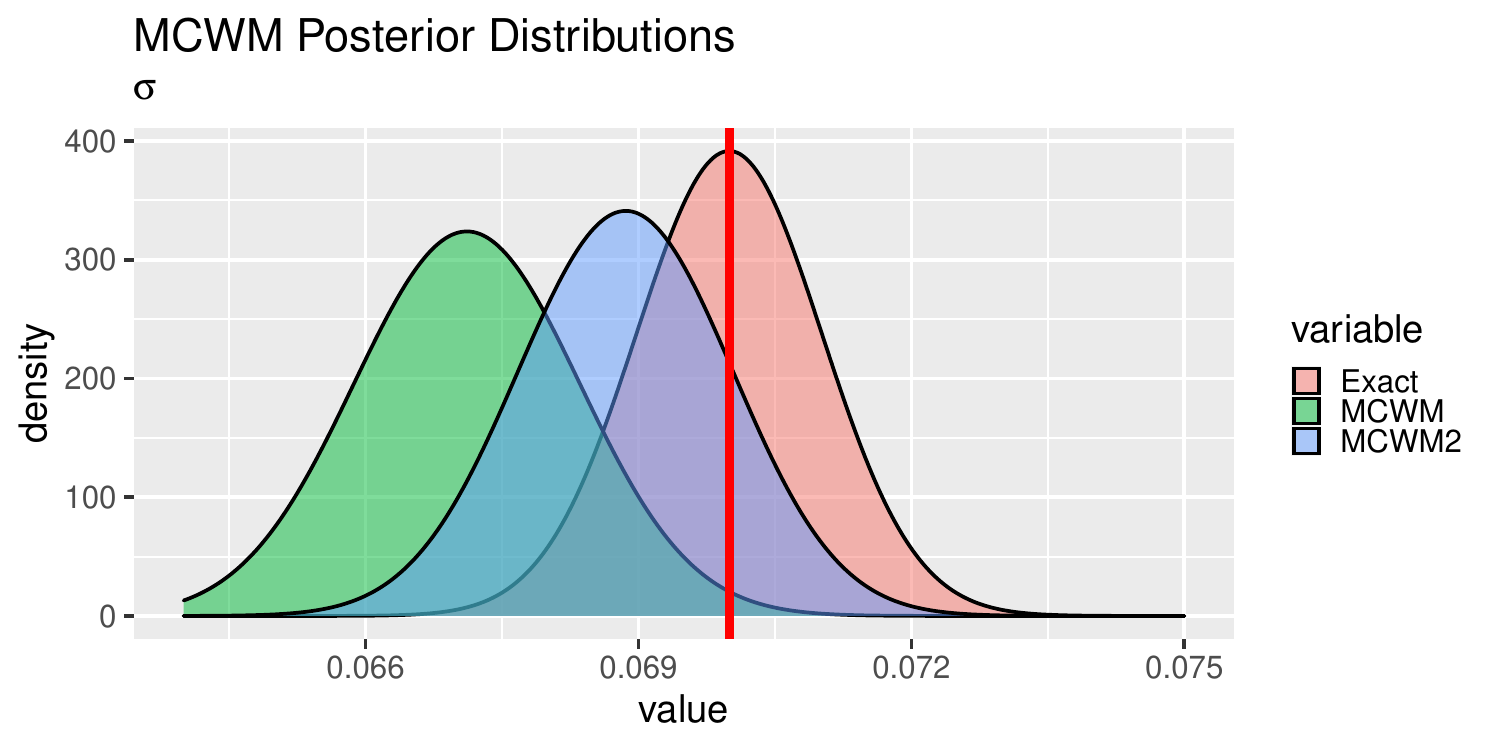} 
\caption{\small  Smoothed posterior densities obtained by simulation using  MCWM (with $N=M^2$) for $M=2$ (MCWM1 green) and $M=5$ (MCWM2 blue). Vertical lines are the true values.}\label{fig:MCWM}
\end{figure}

To implement the exact MH, MCWM and MHC (with $nrep\in\{1,5\}$), we adopt the same prior settings as in \cite{stramer_bognar}, where $\pi(\theta)=\mathbb I_{(0,1)}(\alpha) \mathbb I_{(0,\infty)}(\beta)\sigma^{-1}\mathbb I_{(0,\infty)}(\sigma)$. We also use their random walk proposals.\footnote{With probability $2/3$ propose a joint move $(\alpha^\star,\beta^\star)$ by generating $\alpha^\star\sim U(\alpha-0.01,\alpha+0.01)$ and  $\beta^\star\sim U(\beta-0.01,\beta+0.01)$ and  with probability $1/3$ propose $\sigma^\star\sim U(\sigma-0.01,\sigma+0.01)$. To increase the acceptance rate of the exact MH algorithm, we change the window from $0.01$ to $0.005$.}  All three algorithms are initialized at the same perturbed truth and ran for $10\,000$ iterations with a burnin period $1\,000$. 
Smoothed posterior densities obtained by simulation using the exact MH and  MHC are in Figure \ref{fig:MHC_rg}  (random generator using $nrep\in\{1,5\}$ where fixed generator is portrayed in Figure \ref{fig:MHC_sup} in the Appendix).  {The trace-plots of $10\,000$ iterations are depicted in Figure \ref{fig:MHC_trace_CIR1} and \ref{fig:MHC_trace_CIR2} in the Supplement, where we can see that the random generator variant yields smaller acceptance rates (especially for $\sigma$) which masks the fact  that the random generator sampler generally yields more spread-out posterior approximations.}
Smoothing out the likelihood ratio by averaging over $nrep$ repetitions reduces variance where fixed and random generators seem to yield qualitatively similar results in this example (this is why we have not used the de-biasing variant here). Histograms (together with the demarkation of $95\%$ credible set) are in Figure \ref{fig:MHC_hist_CIR1}  in the Appendix. Compared with the smoothed densities obtained from MCWM (using $N=M^2$ with $M\in\{2,5\}$ in Figure \ref{fig:MCWM}) we can see that MHC yields posterior reconstructions that are wrapped    more closely around the true values. Increasing $M$, MCWM yields posterior reconstructions that are getting closer to the actual posterior (not necessarily centered more narrowly around the truth). Recall, however, that MCWM generates Markov chains whose invariant distribution is not necessarily the exact posterior. The posterior summaries (means and $95\%$ credible intervals) are reported in Table \ref{tab:CIR} (Supplement). Interestingly, both MCWM intervals for $\sigma$ {\em do not} include the true value $0.07$ and the MCWM computation is considerably slower relative to MHC. In particular, MCWM with $N=M^2=25$ (resp. $N=M^2=4$)  took $238.6$ hours  (resp. $15.9$ hours) while MHC with $nrep=5$ (resp. $nrep=1$) took  $13.9$ hours (resp. $4.6$ hours). {See Table 2 in the Supplement for more run-time and effective sample size comparisons.}



\vspace{-0.5cm}

\subsection{Lotka-Volterra Model}\label{sec:lotka}
The Lotka-Volterra (LV) predator-prey model \citep{wilkinson} describes population evolutions   in ecosystems where predators   interact with prey.
The model is deterministically prescribed via a system of first-order non-linear ordinary differential equations with four parameters $\theta=(\theta_1,\dots,\theta_4)'$ controlling (1) the rate $r_1^t=\theta_1X_tY_t$ of a  predator being born, 
(2) the rate  $r_2^t=\theta_2X_t$ of a predator dying, (3) the rate $r_3^t=\theta_3Y_t$ of a prey being born and (4) the rate  $r_4^t=\theta_4X_tY_t$ of a prey dying. 
Given the initial population sizes  $X_0$ (predators) and $Y_0$ (prey) at time $t=0$,  the process can be simulated from exactly using the Gillespie algorithm \citep{gillespie}. In particular, this algorithm samples times  to an event from  an exponential distribution (with a rate $\sum_{j=1}^4 r_j^t$) and then picks one of the 4 reactions  with probabilities proportional to their individual rates $r_j^t$. Despite {being} easy to sample from, the likelihood for this model is unavailable which makes this model a natural candidate for ABC \citep{prangle17} and other likelihood-free methods \citep{papa16,meeds}.
It is not entirely obvious, however, how to implement the pseudo-marginal approach since there is no explicit hierarchical model structure with a conditional likelihood, given latent data, which could be marginalized through simulation to obtain a likelihood estimate.  

\begin{figure}[!t]
\vspace{-1cm}
\centering
\begin{subfigure}[b]{0.3\textwidth}
\scalebox{0.3}{\includegraphics{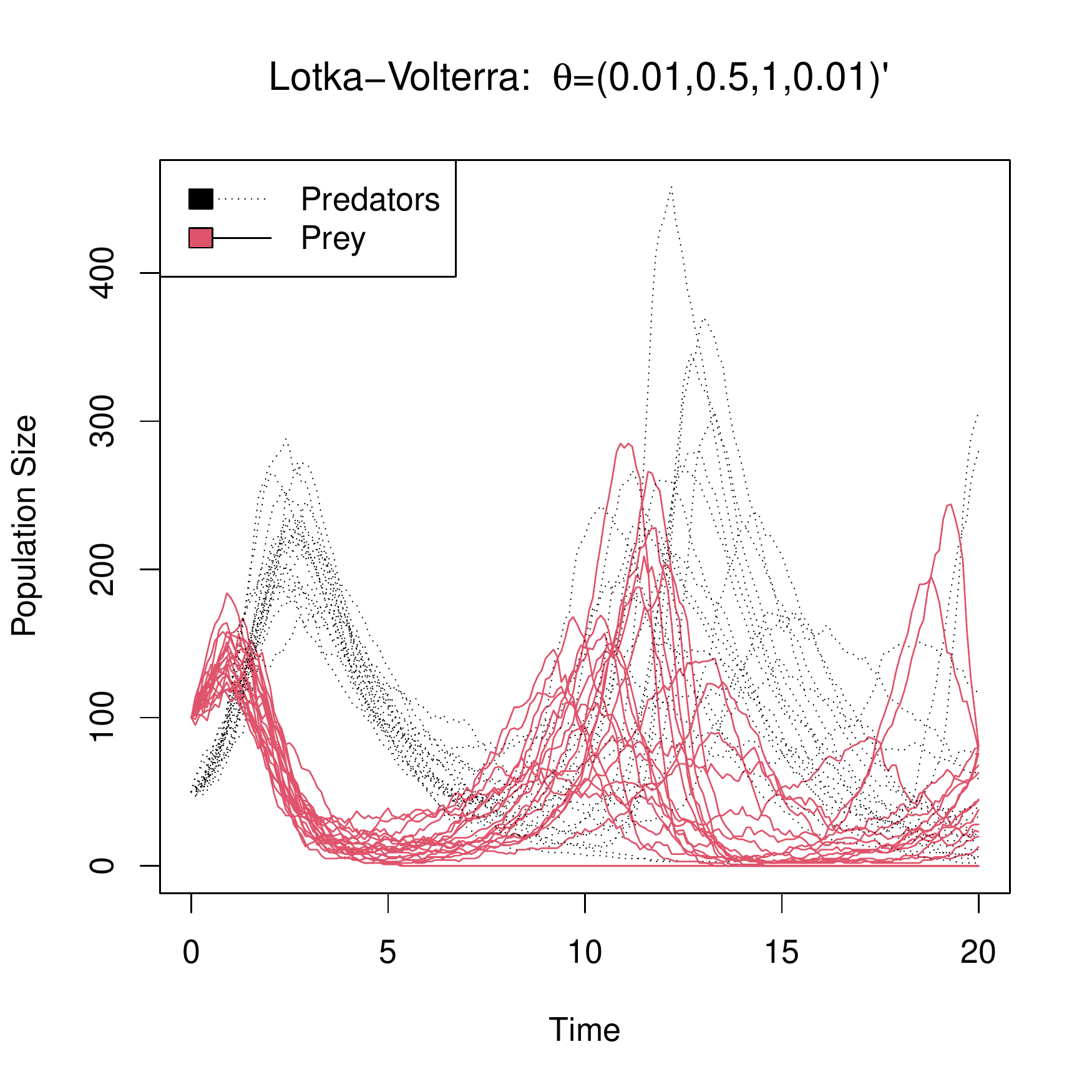}}
\caption{$\theta=(0.01,0.5,1,0.01)'$}\label{fig:predator_a}
\centering
\end{subfigure}
\begin{subfigure}[b]{0.3\textwidth}
\scalebox{0.3}{\includegraphics{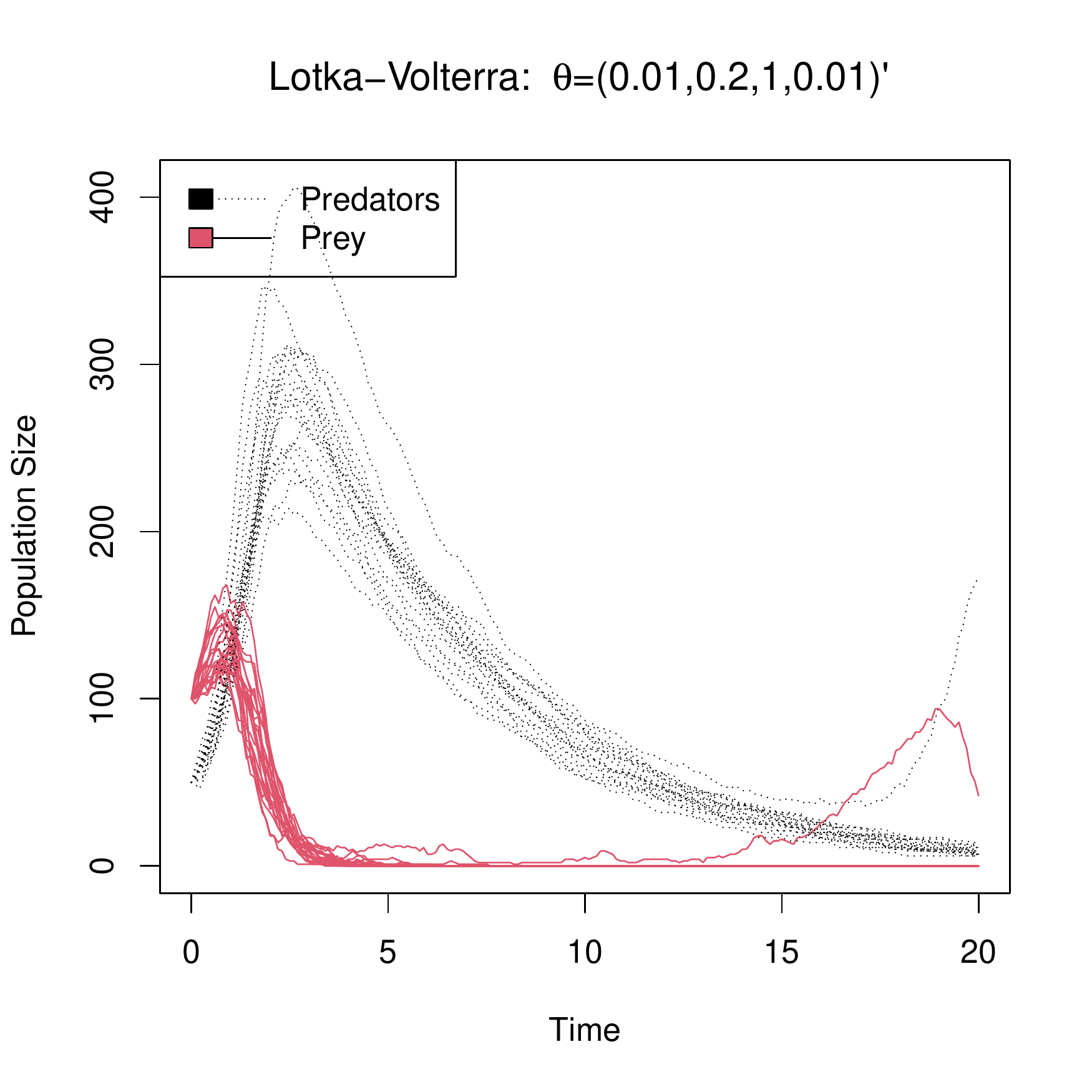}}
\caption{$\theta=(0.01,0.2,1,0.01)'$}\label{fig:predator_b}
\centering
\end{subfigure}
\begin{subfigure}[b]{0.3\textwidth}
\scalebox{0.3}{\includegraphics{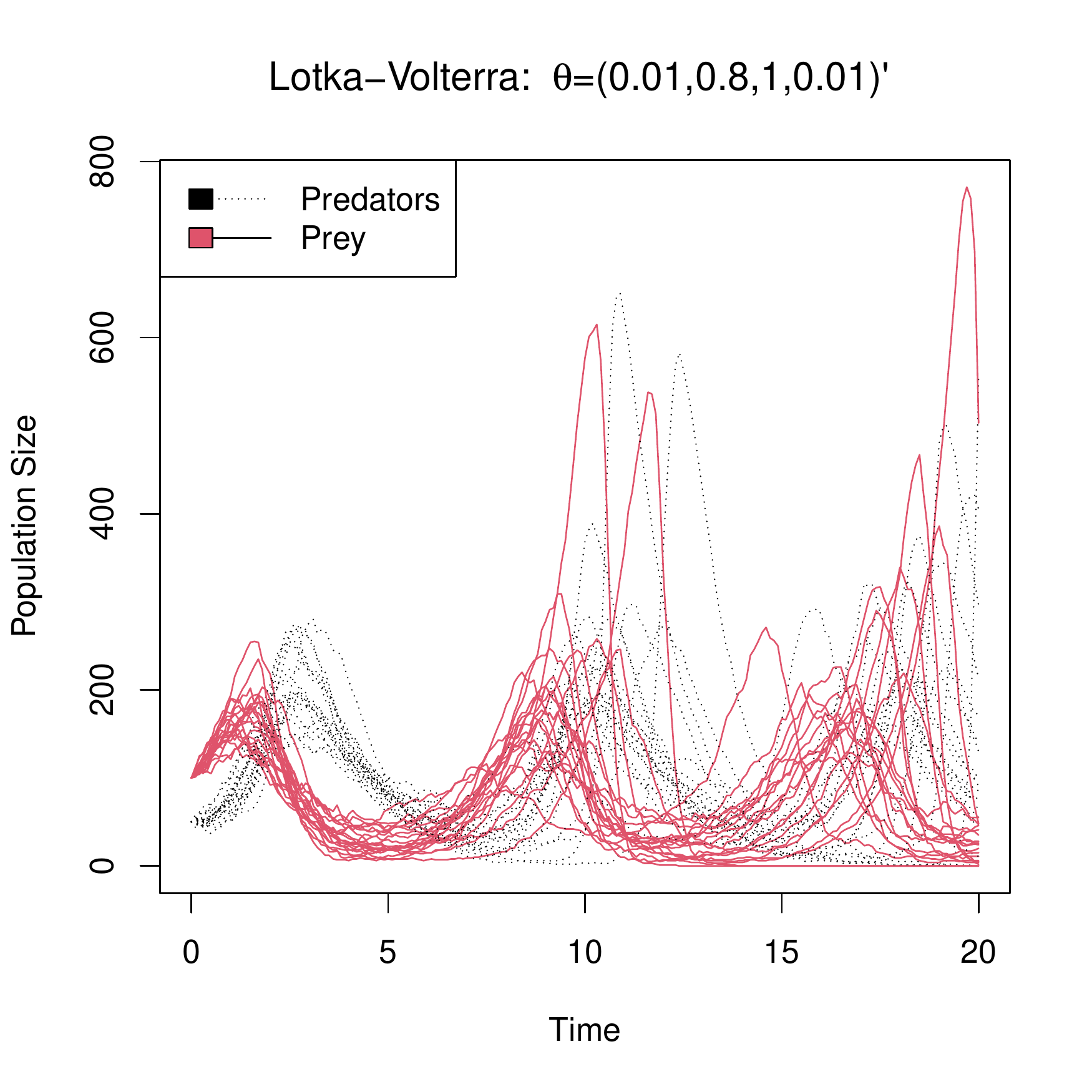}}
\caption{$\theta=(0.01,0.8,1,0.01)'$}\label{fig:predator_c}
\centering
\end{subfigure}
\caption{\small  Lotka-Volterra realizations for three choices of $\theta$}\label{fig:lotka}
\end{figure}

In our experiments, each simulation is started at $X_0=50$ and $Y_0=100$ simulated over $20$ time units and recorded observations every $0.1$ time units, resulting in a series of $T=201$ observations each. 
We plot $n=20$ time series realizations for three particular choices of $\theta$ in Figure \ref{fig:lotka} which differ in the second argument $\theta_2$ with larger values accentuating the cyclical behavior. Slight shifts in parameters  result in (often) dramatically different trajectories. Typical behaviors include (a) predators quickly eating all the prey and then slowly decaying (as in Figure \ref{fig:predator_b}), (b) predators quickly dying out and then the prey population sky-rocketing. For certain carefully tuned values  $\theta$, the two populations exhibit oscillatory behavior. For example, in Figure \ref{fig:predator_a} and \ref{fig:predator_c} we can see how the value $\theta_2$  determines the frequency of the population renewal cycle. We rely on the ability of  the discriminator to tell such different shapes apart. The real data ($n=20$) is generated under the scenario (a) with $\theta^0=(0.01,0.5,1,0.01)'$.

ABC analyses of this model reported in the literature have relied on various summary statistics\footnote{In addition to the summary statistics suggested in \cite{papa16}, we have also considered the classification accuracy ABC summary statistic $CA=\frac{1}{n+m}\left(\sum_{i=1}^n{\hat D}(\bm x_i)+\sum_{j=1}^m(1-\hat D(\wt{\bm x}_j)\right)$ proposed by \cite{gdkc2018}. This ABC version did not provide much better results.}    including the mean, log-variance, autocorrelation (at lag 1 and 2) of each series as well as their cross-correlation \cite{papa16}.  These summary statistics  seem to be able to capture the oscillatory behavior (at different frequencies) and distinguish it from exploding population growth (see Figure \ref{fig:lotka_summaries} in Section \ref{sec:lv_comparisons} of the Supplement). This creates hope that ABC based on these summary statistics has the capacity to provide a reliable posterior reconstruction. In a similar vein, we plotted the estimated log-likelihood  $\eta\equiv \sum_{i=1}^n\log[(1-\hat D(\bm x_i))/\hat D(\bm x_i)]$  where $\bm x_i=(X^i_1,\dots, X^i_T, Y^i_1,\dots, Y^i_T)'$ after training a classifier (using the R package \texttt{glmnet} and \texttt{randomForest}) on $m=n$ fake data observations $\wt{\bm x}_i=
(\wt X^i_1,\dots, \wt X^i_T, \wt Y^i_1,\dots, \wt Y^i_T)'$ for $1\leq i\leq m$. 
See heat-map plots of the estimated likelihood $\eta$ as a function of $(\theta_2,\theta_3)'$  (Figure \ref{fig:llik1}) and as a function of $(\theta_1,\theta_4)'$  (Figure \ref{fig:llik2} for \texttt{glmnet} and Figure \ref{fig:llik3} for \texttt{randomForest}), keeping the remaining parameters at the truth.  Figure \ref{fig:llik2}  reveals a sharp  spike (approximating a point-mass) around the true value at $\theta_1=\theta_4=0.01$  in a otherwise vastly flat landscape.  This peculiar likelihood property  may require a very careful consideration of initializations and proposal densities for MH and the prior domain for ABC. {The random forest classifier, however, did not yield as spiky likelihood estimators (Figure \ref{fig:llik3}), suggesting that it will be less sensitive to MHC initialization. We also inspected estimated log-likelihoods using  (a) the fixed reference approach of \cite{pham} which uses the fake (not observed) data for comparisons  (see Figure \ref{fig:lliks_new} in the Supplement) and (b) the marginal reference approach  \cite{hermans} which trains the log-likelihood ratio estimator ahead of the MCMC simulation. See Figure \ref{fig:lliks_new} in the Supplement for log-likelihood estimators using various  training database sizes $m\in\{5\,000,10\,000,50\,000\}$. We can see that with enough training observations (i.e. $m=50\,000$), the estimator is smooth and peaked around the truth. However, the training time alone (including fake data generation) took roughly $2.7$ hours. }

\begin{figure}[!t]
\vspace{-1cm}
\centering
\begin{subfigure}[b]{0.32\textwidth}
\scalebox{0.3}{\includegraphics{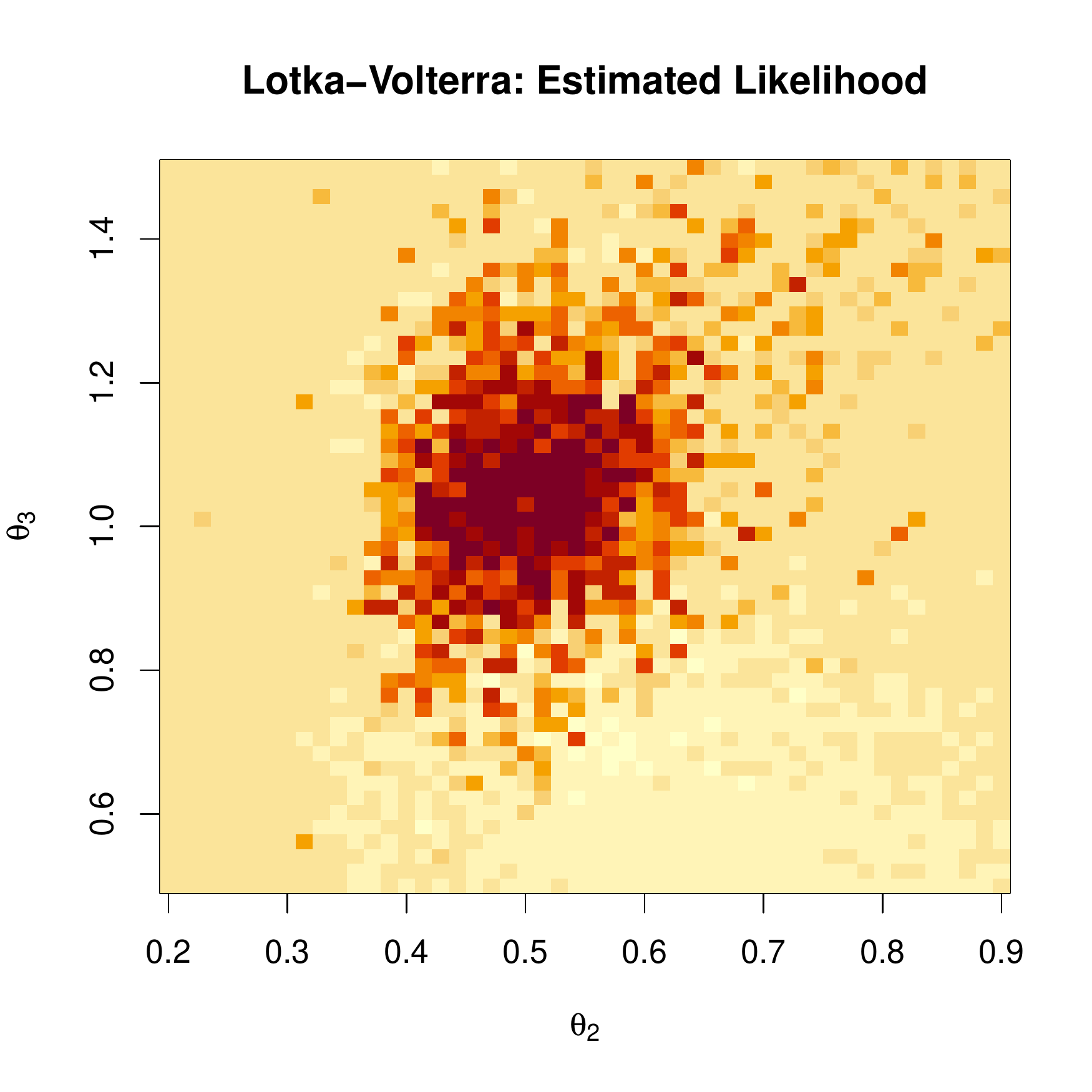}}
\caption{\scriptsize $\eta(\theta_2,\theta_3)$ for $\theta_1=\theta_4=0.01$\\ \centering \texttt{glmnet} }\label{fig:llik1}
\end{subfigure}
\begin{subfigure}[b]{0.32\textwidth}
\centering
\scalebox{0.3}{\includegraphics{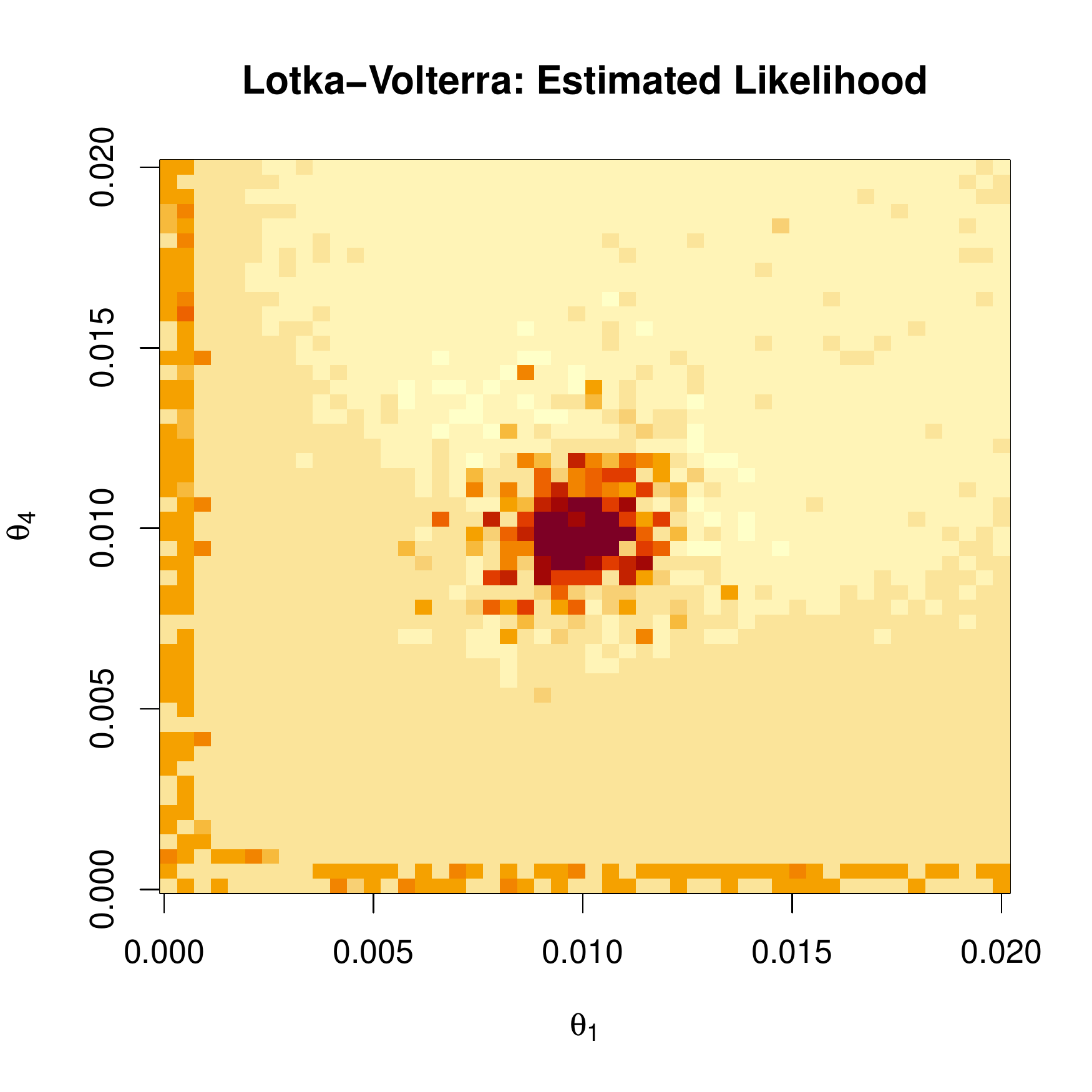}}
\caption{\scriptsize $\eta(\theta_1,\theta_4)$ for $\theta_2=0.5,\theta_3=1$ \\ \centering \texttt{glmnet}}\label{fig:llik2}
\end{subfigure}
\begin{subfigure}[b]{0.32\textwidth}
\scalebox{0.3}{\includegraphics{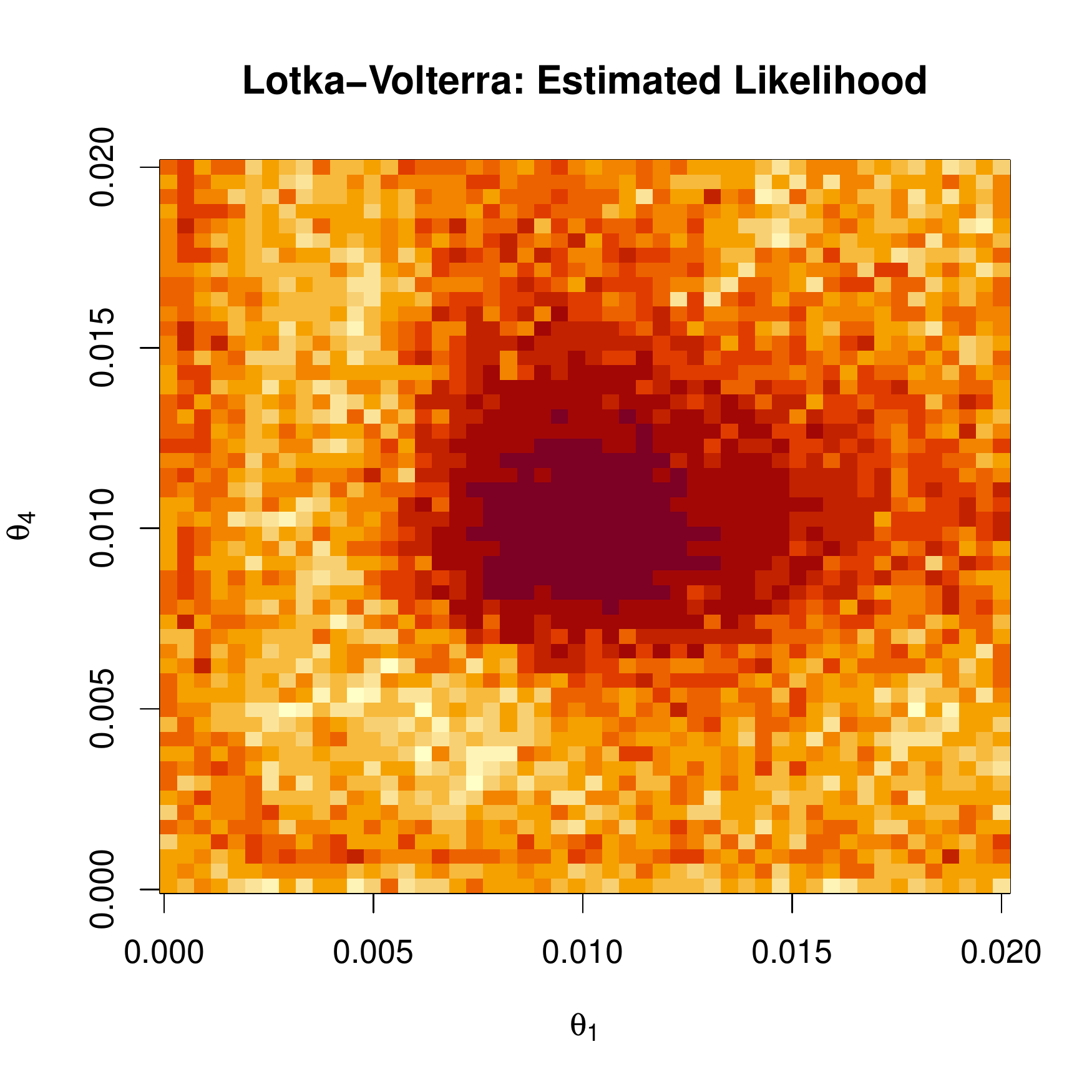}}
\caption{\scriptsize $\eta(\theta_1,\theta_4)$ for $\theta_2=0.5,\theta_3=1$ \\ \centering \texttt{randomForest}}\label{fig:llik3}
\end{subfigure}
\caption{\small  Lotka-Volterra model. Estimated log-likelihood for a grid of parameters.}\label{fig:lliks}
\end{figure}

 \begin{table}[!t]
\scalebox{0.75}{ \begin{tabular}{l| c c c| c c c| c c c| c c c| c c}
 \hline\hline
  &  \multicolumn{3}{c}{$\theta_1^0=0.01$} &\multicolumn{3}{c}{$\theta_2^0=0.5$}& \multicolumn{3}{c}{$\theta_3=1$}& \multicolumn{3}{c}{$\theta_4=0.01$} &  Time (h) \\
  \hline
 Method& $\bar\theta$ & $l$ & $u$   &  $\bar\theta$ & $l$ & $u$   & $\bar\theta$ & $l$ & $u$   & $\bar\theta$ & $l$ & $u$                                               & \\
 ABC1         	                        &0.015  &0.003   &0.038     & 0.554 & 0.037  & 0.985          &1.315  & 0.189 & 1.955            &0.012   &0.004  &0.029     &4.7 \\
 ABC2     		                        &0.016  &0.003   &0.042     & 0.604 & 0.087  & 0.980          &1.259  & 0.205 & 1.971            &0.013   &0.003  &0.024     &47.46  \\
 MHC (rf)                                   &0.01  &0.008   &0.011     &0.490 & 0.421  & 0.575          &1.063  & 0.872 & 1.258            &0.01   &0.009  &0.012          &2.58 \\ 
 MHC (glmnet)                         &0.01  &0.009   &0.015     &0.514 & 0.417  & 0.636          &1.026  & 0.826 & 1.323            &0.01   &0.008  &0.013          &2.45 \\
 ALR MH \scriptsize($m=10\,000$)     &0.006  &0.002   &0.012     &0.477 & 0.285  & 0.849          &1.041  & 0.565 & 1.622            &0.06   &0.001  &0.013     &0.68 \\
 ALR MH \scriptsize($m=50\,000$)     &0.008  &0.005   &0.013     &0.527 & 0.376  & 0.675          &1.199  & 0.864 & 1.752            &0.008   &0.003  &0.013       &3.76 \\
 Classif MH  \scriptsize$(m=20)$             &0.01  &0.008   &0.012     &0.5 & 0.405  & 0.612           &1.027  & 0.798 & 1.307            &0.01   &0.008  &0.013             &6.24 \\
 Classif MH \scriptsize $(m=100)$           &0.01  &0.009   &0.011     &0.501 & 0.45  & 0.558          &1.026  & 0.875 & 1.187            &0.010   &0.009  &0.012          &38.2 \\
\hline\hline
\end{tabular}}
\caption{Posterior summary statistics using ABC1 ($M=10\,000$ and $r=100$), ABC2 ($M=100\,000$ and $r=1\,000$) and MH variants ($M=10\,000$ with burnin $1\,000$). $\bar\theta$ is the posterior mean, $l$ and $u$
denote the lower and upper boundaries of $95\%$ credible intervals. MHC variants are implemented with random forests and \texttt{glmnet} classifiers. \texttt{ALR MH} is the amortized likelihood ratio MH of \cite{hermans} (using random forests). \texttt{Classif MH} is the classifier MCMC of \cite{pham} (using random forests). $m$ is the fake data sample size. }
\label{lotka:stats}
  \end{table}

 In order to facilitate ABC analysis, we have used an informative uniform prior $\theta\sim U(\Xi)$  with a restricted domain $\Xi=[0,0.1]\times[0,1]\times[0,2]\times [0,0.1]$ so that the procedure does not waste time sampling from unrealistic parameter values. These values were chosen based on a visual inspection of simulated evolutions, where we have seen only a limited range of values to yield periodic behavior. 
In a pilot ABC run,   we rank $M=10\,000$ ABC samples based on $\varepsilon$ in an ascending manner and report the histogram of the first $r=100$ samples  (Figure \ref{fig:ABC1} in the Appendix, the upper panel).  We can see that ABC was able to narrow down  the region of interest for $(\theta_1,\theta_4)$, but is still largely uninformative about parameters  $(\theta_2,\theta_3)$ with histograms stretching from the boundaries of the prior domain. Given how narrow the range of  likely parameter values  is (according to Figure \ref{fig:lliks}), the likelihood of encountering such values even  under the restricted uniform prior is  still quite negligible. We thereby tried many more  ABC samples   ($M=100\,000$ which took $47.46$ hours) only to find out that the histograms (top $r=1\,000$ samples) did not improve much (Figure \ref{fig:ABC1} in the Appendix, the lower panel).
 
The hostile likelihood landscape will create problems not only for ABC but also for Metropolis-Hastings. Indeed, initializations  that are too far from the likelihood domain may result in Markov chains wandering aimlessly\footnote{This is a valid concern for the \texttt{glmnet} classifier.} in the vast plateaus for a long time.  Rather than competing with ABC, a perhaps more productive strategy is to combine the strengths of both. We have thereby used the pilot ABC run (the closest $100$ samples out of $M=10\,000$ which took roughly $4$ hours) to obtain ABC approximated posterior means $\wh\theta=(0.015, 0.55, 1.31, 0.012)'$. We use these to initialize\footnote{{MHC with random forests did not seem as sensitive to initialization compared to \texttt{glmnet}.}} all  MH procedures to accelerate convergence (i.e. prevent painfully long burn-in).  To implement MHC, we define a Gaussian random walk proposal for log-parameter values with a proposal standard deviation $0.05$ and deploy the same prior as for the ABC method. We use the random generator variant here, where the fixed one can be implemented (for example) by fixing the random seed prior generating the fake data.  {We compare our approach with the Classification Metropolis-Hastings of \cite{pham}  and the marginal reference approach of \cite{hermans}, both with  the default \texttt{randomForest} implementation, with the same  ABC initialization and $M=10\,000$ MCMC iterations. Details on the comparisons and implementations are in Section \ref{sec:lv_comparisons} in the Supplement.} The histograms after $M=10\,000$ iterations  with the burn-in period $1\,000$ are portrayed in Figure \ref{fig:hists} in the Supplement. The trace plots (Figure \ref{fig:trace1}, \ref{fig:trace2} and \ref{fig:trace3} in the Supplement) show reasonable mixing where the \texttt{glmnet} classifier shows more sensitivity to MHC initialization. The histograms report much sharper concentration around true values (compared to ABC in Figure \ref{fig:lliks}) and were obtained under considerable less time (again compared to ABC with $M=100\,000$).
The posterior summaries (mean $\bar\theta$ and $95\%$ credible intervals $(l,u)$ are compared in Table \ref{lotka:stats}. Compared to ABC, we can see that not only  MHC posterior means  accurately estimate the true parameters, but the $95\%$ credible intervals are much tighter and thereby perhaps more informative for inference. {There are differences (both in timing and performance) depending on the choice of the classifier, with  random forests yielding better and faster results. The  Classification MH method of \cite{pham} yields similar results as MHC (rf) but is slower due to the fact that more fake data need to be generated at each step. The marginal approach (ALR MH) of \cite{hermans} perhaps needed more training samples to learn the likelihood-ratio generator. We have used only $m=50\,000$ database samples so that the overall computing time (training together with MCMC simulation) would be comparable to MHC.}

We believe that MHC provided an inferential framework which was not attainable using neither ABC alone (with our choice of summary statistics), nor the pseudo-marginal method. Potentially more fruitful ABC results could be obtained with {ABC-within-Gibbs style algorithms \cite{clarte}} or by instead deploying Wasserstein distance between the empirical distributions of real and fake data \citep{wasser}, in particular its curve-matching variants tailored for dependent data. {Other promising alternative is the sequential neural likelihood approach \cite{papa18} which uses masked autoregressive flows to learn the conditional probability density of data given parameters and adaptively adjusts  the proposal distribution for sampling new parameter values.

  }

\vspace{-0.5cm}
\section{Discussion}\label{sec:discuss}
This paper develops an approximate Metropolis-Hastings (MH) posterior sampling method for when the likelihood is not tractable. By deploying a Generator and  a Classifier (similarly as in Generative Adversarial Networks \cite{gan}),  likelihood ratio estimators are obtained which are then plugged into the MH sampling routine. One of the main distinguishing features of our work  is that we consider two variants: (1) a fixed generator design yielding biased samples, and (2) a random generator yielding more dispersed samples. 
{Compared to related existing approaches  \citep{pham,hermans}, our approach uses   observed data as the contrasting dataset. This ultimately poses limitations on the classifier when the sample size $n$ is small in which case the approaches \cite{pham} and \cite{hermans} are more appropriate.}
We provide a thorough frequentist analysis of the stationary distribution including convergence rates and asymptotic normality. Under suitable differentiability assumptions, we conclude that correct shape and location can be recovered by deploying a debiasing combination of the fixed and random generator variants. We demonstrate a very satisfactory performance on non-trivial time series examples which render existing techniques (such as PM or ABC) less practical. 


\section*{Acknowledgements}

Tetsuya Kaji gratefully acknowledges the support from the Richard N.\ Rosett Faculty Fellowship and the Liew Family Faculty Fellowship at the University of Chicago Booth School of Business.
Veronika Rockova gratefully acknowledges the support from James S. Kemper Faculty Scholarship and the National Science Foundation (DMS: 1944740).

\bibliographystyle{apalike}

\clearpage

{\begin{center}{\LARGE SUPPLEMENTAL MATERIALS}\end{center}}

\section{Notation}\label{sec:notation}
 The following notation has been  used throughout the manuscript. We employ the operator notation for expectation, e.g., $P_0 f=\int f dP_0$ and $\mathbb{P}_m^\theta f=\frac{1}{m}\sum_{i=1}^m f(X_i^\theta)$. The {\em $\varepsilon$\hyp{}bracketing number} $N_{[]}(\varepsilon,\mathcal{F},d)$ of a set $\mathcal{F}$ with respect to a premetric $d$ is the minimal number of $\varepsilon$\hyp{}brackets in $d$ needed to cover $\mathcal{F}$.%
\footnote{A {\em premetric} on $\mathcal{F}$ is a function $d:\mathcal{F}\times\mathcal{F}\to\mathbb{R}$ such that $d(f,f)=0$ and $d(f,g)=d(g,f)\geq0$.}
The {\em $\delta$\hyp{}bracketing entropy integral} of $\mathcal{F}$ with respect to $d$ is 
$$
J_{[]}(\delta,\mathcal{F},d)\vcentcolon=\int_0^\delta\sqrt{1+\log N_{[]}(\varepsilon,\mathcal{F},d)}d\varepsilon.
$$ We denote the usual Hellinger semi-metric for independent observations as 
$$
d_n^2(\theta,\theta')=\frac{1}{n}\sum_{i=1}^n\int (\sqrt{p_{\theta,i}}-\sqrt{p_{\theta',i}})^2\d\mu_i.$$ 
 Next, $K(p_{\theta_0}^{(n)},p_{\theta}^{(n)})=\sum_{i=1}^nK(p_{\theta_0,i},p_{\theta,i})$ denotes the Kullback-Leibler divergence between product measures and $V_{2}(f,g)=\int f|\log(f/g)|^2\d\mu$.  Define $\langle a,b\rangle=\sum_{i=1}^da_ib_i$ for $a,b\in\R^d$.

\section{Proof of Theorem \ref{thm:rate:res}}\label{sec:proof_thm:rate:res}

The following lemma bounds the Kullback\hyp{}Leibler divergence and variation by possibly non\hyp{}diverging multiples of the Hellinger distance.%
\footnote{\cref{lem:KL} (iv) first appeared in \citet[Lemma 5]{kmp2020}. We reproduce the proof here as it is used to prove other statements.}
This can be used to derive sharper rates of posterior contraction in models with unbounded likelihood ratios \citep[see also][p.\ 199 and Appendix B]{gv2017}.

\begin{lem} \label{lem:KL}
For probability measures $P$ and $P_0$ such that $P_0(p_0/p)<\infty$, let $M\coloneqq\inf_{c\geq1}c P_0(\frac{p_0}{p}\mid\frac{p_0}{p}\geq[1+\frac{1}{2c}]^2)$ where $P_0(\cdot\mid A)=0$ if $P_0(A)=0$.
For $k\geq2$, the following hold.
\begin{enumerate}[(i)]
	\item $-P_0\log\frac{p}{p_0}\leq(3+M)h(p,p_0)^2$.
	\item $P_0|\log\frac{p}{p_0}|^k\leq2^{k-1}\Gamma(k+1)(2+M)h(p,p_0)^2$.
	\item $P_0|\log\frac{p}{p_0}-P_0\log\frac{p}{p_0}|^k\leq2^{2k-1}\Gamma(k+1)(2+M)h(p,p_0)^2$.
	\item $\|\frac{1}{2}\log\frac{p}{p_0}\|_{P_0,B}^2\leq(2+M)h(p,p_0)^2$.
	\item $\|\frac{1}{4}(\log\frac{p}{p_0}-P_0\log\frac{p}{p_0})\|_{P_0,B}^2\leq(2+M)h(p,p_0)^2$.
\end{enumerate}
Here, $\|f\|_{P,B}\coloneqq\sqrt{2P(e^{|f|}-1-|f|)}$ is the Bernstein ``norm''.
\end{lem}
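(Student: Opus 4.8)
The plan is to reduce all five bounds to two ingredients: an elementary reparametrization of the likelihood ratio, and one careful treatment of the region where $p_0/p$ is large. Throughout I would write $g\vcentcolon=\sqrt{p/p_0}$, so that $h(p,p_0)^2=\int(\sqrt p-\sqrt{p_0})^2=P_0(g-1)^2$ and $\tfrac12\log\frac p{p_0}=\log g$. The starting point is the scalar inequality $0\le t-1-\log t\le\tfrac12(t-1)^2$ for $t\ge1$ (checked by differentiating $\tfrac12(t-1)^2-(t-1-\log t)$), applied with $t=g$ on $\{g\ge1\}$ and with $t=1/g$ on $\{g<1\}$. Since $e^{|\log g|}=g\vee g^{-1}$ and $|\log g|=\bigl|\tfrac12\log\frac p{p_0}\bigr|$, these give the pointwise bound
\[
e^{|\log g|}-1-|\log g|\;\le\;\tfrac12(g-1)^2\,\mathbbm{1}_{\{g\ge1\}}+\tfrac12\Bigl(\tfrac1g-1\Bigr)^2\mathbbm{1}_{\{g<1\}},
\]
and, writing $(\tfrac1g-1)^2=\tfrac{p_0}{p}(1-g)^2$, both terms carry a factor $(1-g)^2$ that integrates against $P_0$ into Hellinger mass.

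For (iv) I would integrate this inequality (recalling $\|\cdot\|_{P_0,B}^2=2P_0(\cdots)$, which cancels the $\tfrac12$) and split the $\{g<1\}$ piece at the threshold $\tfrac{2c}{2c+1}$, i.e. according to whether $\tfrac{p_0}p<(1+\tfrac1{2c})^2$ or $\tfrac{p_0}p\ge(1+\tfrac1{2c})^2$. On the bulk event the factor $p_0/p$ is at most $(1+\tfrac1{2c})^2$, so that contribution is bounded by $(1+\tfrac1{2c})^2h^2$; together with the $\{g\ge1\}$ piece ($\le h^2$) this produces the constant $2$. On the tail event I would bound $(1-g)^2\le1$ and control $\int_{\{p_0/p\ge(1+1/2c)^2\}}\tfrac{p_0}p\,dP_0$ by combining the quantity appearing in $M=\inf_{c\ge1}cP_0\bigl(\tfrac{p_0}p\mid\tfrac{p_0}p\ge[1+\tfrac1{2c}]^2\bigr)$ with a Markov bound $P_0\bigl(\tfrac{p_0}p\ge[1+\tfrac1{2c}]^2\bigr)\le(2c+1)^2h^2$ on the mass of the tail region, so that the tail contributes a multiple of $Mh^2$. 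Balancing these two competing estimates over $c\ge1$ is exactly what the infimum in the definition of $M$ accomplishes, yielding $(2+M)h^2$.

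With (iv) in hand the rest is soft. For (ii), from $e^{|f|}-1-|f|=\sum_{j\ge2}|f|^j/j!$ one gets $P_0|f|^k\le\tfrac{k!}2\|f\|_{P_0,B}^2$ for each $k\ge2$; taking $f=\tfrac12\log\frac p{p_0}$ and multiplying by $2^k$ gives $P_0|\log\frac p{p_0}|^k\le2^{k-1}\Gamma(k+1)(2+M)h^2$. For (v) I would use that $\psi(x)\vcentcolon=e^{|x|}-1-|x|$ is even, increasing in $|x|$, and convex, so with $Y=\log\frac p{p_0}$ the triangle and convexity inequalities give $\psi\bigl(\tfrac14|Y|+\tfrac14 P_0|Y|\bigr)\le\tfrac12\psi(\tfrac12|Y|)+\tfrac12\psi(\tfrac12 P_0|Y|)$; integrating and applying Jensen to the constant term yields $\|\tfrac14(Y-P_0Y)\|_{P_0,B}^2\le\|\tfrac12 Y\|_{P_0,B}^2\le(2+M)h^2$. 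Then (iii) follows from (v) exactly as (ii) followed from (iv), now with the factor $4^k$. Finally (i) is handled directly via $-\log g=(1-g)+(g-1-\log g)$: since $-P_0\log\frac p{p_0}=2P_0(-\log g)$, the first term contributes $2P_0(1-g)=h^2$, and the nonnegative remainder $2P_0(g-1-\log g)$ is estimated by the same two-region split as in (iv), giving $2h^2$ plus a multiple of $Mh^2$ and hence the constant $3$.

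The main obstacle is precisely the tail $\{g<1\}$ where the likelihood ratio $p_0/p$ is unbounded and the naive bound by $(g-1)^2$ fails. The entire purpose of $M$, and of threading the split point $c$ through both the exponential threshold $(1+\tfrac1{2c})^2$ and the prefactor $c$, is to trade the size of the tail region (controlled against $h^2$ by Markov) against the conditional heaviness of $p_0/p$ on it. Getting the constants to collapse to exactly $2$ and $3$ is the only delicate bookkeeping, and it is what the infimum defining $M$ is there to deliver.
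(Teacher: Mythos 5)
Your reductions for (ii), (iii), and (v) are correct and essentially the paper's own ((iii) via (v) rather than via (ii) is an immaterial variation), and your decomposition $-\log g=(1-g)+(g-1-\log g)$ with $2P_0(1-g)=h(p,p_0)^2$ is a clean entry into (i). The genuine gap is the tail estimate in (iv) (and it propagates to (i)): your chain of bounds --- the pointwise quadratic bound $e^{|\log g|}-1-|\log g|\le\tfrac12\tfrac{p_0}{p}(1-g)^2$ on $\{g<1\}$, then $(1-g)^2\le 1$ on the tail $A_c=\{p_0/p\ge(1+\tfrac1{2c})^2\}$, then Markov's inequality $P_0(A_c)\le(2c+1)^2h^2$ --- produces a tail coefficient $(2c+1)^2P_0\bigl(\tfrac{p_0}{p}\bigm| A_c\bigr)$, whereas the definition of $M$ only affords the linear-in-$c$ quantity $c\,P_0\bigl(\tfrac{p_0}{p}\bigm| A_c\bigr)$. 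Since $(2c+1)^2\ge 9c$ for $c\ge1$, your tail term is at least $9\,c\,P_0(\tfrac{p_0}{p}\mid A_c)\,h^2\ge 9M h^2$ for every admissible $c$, and no choice of splitting threshold repairs this because the Markov factor is quadratic in $c$ while $M$'s budget is linear. Concretely, in the regime the lemma is designed for --- $P_0(A_c)>0$ for all $c\ge1$, which forces $M\ge\inf_{c\ge1}c(1+\tfrac1{2c})^2=\tfrac94$ --- your bound is at least $9Mh^2>(2+M)h^2$, so the stated constants $(2+M)$ and $(3+M)$ are provably out of reach by this route; your claim that the infimum defining $M$ makes the constants ``collapse to exactly $2$ and $3$'' is where the argument breaks.

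What the paper does differently on the tail is the missing idea. It never passes to the quadratic $\tfrac{p_0}{p}(1-g)^2$: instead it uses the \emph{linear} bound $e^{|x|}-1-|x|\le e^{|x|}-\tfrac32$ for $|x|>\tfrac12$, so the tail integrand is $\sqrt{p_0/p}-\tfrac32\le\sqrt{p_0/p}-1-\tfrac1{2c}$. It then applies the scalar inequality $x-\tfrac1{2c}\le\tfrac{c}{2}x^2$ not pointwise but to the \emph{conditional expectation} $x=P_0\bigl(\sqrt{p_0/p}-1\bigm| A_c\bigr)$, followed by conditional Cauchy--Schwarz, $\bigl[P_0\bigl(\sqrt{p_0/p}-1\bigm| A_c\bigr)\bigr]^2\le P_0\bigl(\tfrac{p_0}{p}\bigm| A_c\bigr)\,P_0\bigl((1-\sqrt{p/p_0})^2\bigm| A_c\bigr)$. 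The conditional Hellinger factor recombines with the untouched probability $P_0(A_c)$ (no Markov step, hence no $(2c+1)^2$) to give exactly $c\,P_0(\tfrac{p_0}{p}\mid A_c)\,h^2\to Mh^2$ after infimizing, while the bulk piece, handled via $e^{|x|}-1-|x|\le(e^x-1)^2$ for $x\ge-\tfrac12$, contributes exactly $2h^2$ with no $(1+\tfrac1{2c})^2$ inflation. The analogous linear tail inequality $\tfrac1x-1-\log\tfrac1x<2(\sqrt{x}-\tfrac32)$ for $x\ge3$, fed into the same conditional argument, is what yields the constant $3+M$ in (i).
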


\begin{proof}
(iv) Using $e^{|x|}-1-|x|\leq(e^{x}-1)^2$ for $x\geq-\frac{1}{2}$ and $e^{|x|}-1-|x|<e^x-\frac{3}{2}$
for $x>\frac{1}{2}$,
\[
	\Bigl\|\log\sqrt{\tfrac{p}{p_0}}\Bigr\|_{P_0,B}^2
	\leq2P_0\Bigl(\sqrt{\tfrac{p}{p_0}}-1\Bigr)^2\mathbbm{1}\bigl\{\tfrac{p}{p_0}\geq\tfrac{1}{e}\bigr\}+2P_0\Bigl(\sqrt{\tfrac{p_0}{p}}-\tfrac{3}{2}\Bigr)\mathbbm{1}\bigl\{\tfrac{p_0}{p}>e\bigr\}.
\]
The first term is bounded by $2h(p,p_0)^2$.
For every $c\geq1$,
\begin{multline*}
	P_0\Bigl(\sqrt{\tfrac{p_0}{p}}-\tfrac{3}{2}\Bigr)\mathbbm{1}\bigl\{\tfrac{p_0}{p}>e\bigr\}
	\leq P_0\Bigl(\sqrt{\tfrac{p_0}{p}}-1-\tfrac{1}{2c}\Bigr)\mathbbm{1}\Bigl\{\sqrt{\tfrac{p_0}{p}}\geq1+\tfrac{1}{2c}\Bigr\}\\
	=P_0\Bigl(\sqrt{\tfrac{p_0}{p}}\geq1+\tfrac{1}{2c}\Bigr)\Bigl[P_0\Bigl(\sqrt{\tfrac{p_0}{p}}-1\Bigm|\sqrt{\tfrac{p_0}{p}}\geq1+\tfrac{1}{2c}\Bigr)-\tfrac{1}{2c}\Bigr].
\end{multline*}
Since $x-\frac{1}{2c}\leq\frac{c}{2}x^2$ for every $x$,
\begin{multline*}
	P_0\Bigl(\sqrt{\tfrac{p_0}{p}}-1\Bigm|\sqrt{\tfrac{p_0}{p}}\geq1+\tfrac{1}{2c}\Bigr)-\tfrac{1}{2c}
	\leq\tfrac{c}{2}\Bigl[P_0\Bigl(\sqrt{\tfrac{p_0}{p}}-1\Bigm|\sqrt{\tfrac{p_0}{p}}\geq1+\tfrac{1}{2c}\Bigr)\Bigr]^2\\
	\leq\tfrac{c}{2}P_0\Bigl(\tfrac{p_0}{p}\Bigm|\sqrt{\tfrac{p_0}{p}}\geq1+\tfrac{1}{2c}\Bigr)P_0\Bigl(\Bigl[1-\sqrt{\tfrac{p}{p_0}}\Bigr]^2\Bigm|\sqrt{\tfrac{p_0}{p}}\geq1+\tfrac{1}{2c}\Bigr)
\end{multline*}
by the Cauchy\hyp{}Schwarz inequality.
Then the result follows.

{
(i) Write
\(
	-P_0\log\tfrac{p}{p_0}=P_0(\tfrac{p}{p_0}-1-\log\tfrac{p}{p_0})+P(p_0=0)
\).
With $x-1-\log x\leq 3(\sqrt{x}-1)^2$ for $x>\tfrac{1}{3}$ and $\frac{1}{x}-1-\log\frac{1}{x}<2(\sqrt{x}-\frac{3}{2})$ for $x\geq3$,
\[
	P_0\bigl(\tfrac{p}{p_0}-1-\log\tfrac{p}{p_0}\bigr)\leq 3 P_0\Bigl(\sqrt{\tfrac{p}{p_0}}-1\Bigr)^2\mathbbm{1}\bigl\{\tfrac{p}{p_0}>\tfrac{1}{3}\bigr\}+2P_0\Bigl(\sqrt{\tfrac{p_0}{p}}-\tfrac{3}{2}\Bigr)\mathbbm{1}\bigl\{\tfrac{p_0}{p}\geq3\bigr\}.
\]
The second term is bounded as above.
The first term and $P(p_0=0)=\int(\sqrt{p}-\sqrt{p_0})^2\mathbbm{1}\{p_0=0\}$ are collectively bounded by $3 h(p,p_0)^2$.
}

(ii) Since $e^x-1-x\geq x^k/\Gamma(k+1)$ for $k\geq2$ and $x\geq0$,%
\footnote{$\Gamma(k-1)\geq\int_x^\infty y^{k-2}e^{-y}dy\geq x^{k-2}e^{-x}$ implies $\frac{d^2}{dx^2}(e^x-1-x)\geq\frac{d^2}{dx^2}x^k/\Gamma(k+1)$.}
\(
	P_0|\log\tfrac{p}{p_0}|^k\leq 2^{k-1}\Gamma(k+1)\|\tfrac{1}{2}\log\tfrac{p}{p_0}\|_{P_0,B}^2
\).
Then, apply (iv).

{
(iii) By the triangle and Jensen's inequalities,
\(
	P_0|\log\tfrac{p}{p_0}-P_0\log\tfrac{p}{p_0}|^k\leq[(P_0|\log\tfrac{p}{p_0}|^k)^{1/k}+|P_0\log\tfrac{p}{p_0}|]^k\leq2^k P_0|\log\tfrac{p}{p_0}|^k
\) for $k\geq1$.
Then, use (ii).
}

(v) By the convexity of $e^{|x|}-1-|x|$ and Jensen's inequality,
\(
	\|\tfrac{1}{4}(\log\tfrac{p}{p_0}-P_0\log\tfrac{p}{p_0})\|_{P_0,B}^2\leq\tfrac{1}{2}\|\tfrac{1}{2}\log\tfrac{p}{p_0}\|_{P_0,B}^2+\tfrac{1}{2}\|P_0\tfrac{1}{2}\log\tfrac{p}{p_0}\|_{P_0,B}^2
	\leq\|\tfrac{1}{2}\log\tfrac{p}{p_0}\|_{P_0,B}^2
\).
With (iv) follows the result.
\end{proof}

\begin{proof}[Proof of \cref{thm:rate:res}]

For $D\in\mathcal{D}_{n,\delta_n}^\theta$, write $\mathbb{P}_n(\log\tfrac{1-D}{1-D_\theta}-\log\tfrac{D}{D_\theta})$ as
\[
	P_0\log\tfrac{1-D}{1-D_\theta}-P_0\log\tfrac{D}{D_\theta}+(\mathbb{P}_n-P_0)\log\tfrac{1-D}{1-D_\theta}-(\mathbb{P}_n-P_0)\log\tfrac{D}{D_\theta}.
\]
Since $\log(x)\leq 2(\sqrt{x}-1)$ for $x>0$, we have
\[
	-2 P_0\Bigl(\sqrt{\tfrac{D_\theta}{D}}-1\Bigr)\leq P_0\log\tfrac{D}{D_\theta}\leq 2 P_0\Bigl(\sqrt{\tfrac{D}{D_\theta}}-1\Bigr).
\]
By the Cauchy\hyp{}Schwarz inequality and \cref{asm:2},
\begin{gather*}
	P_0\Bigl|\sqrt{\tfrac{D}{D_\theta}}-1\Bigr|\leq \sqrt{P_0\Bigl(\sqrt{\tfrac{D}{D_\theta}}-1\Bigr)^2}=h_\theta(D,D_\theta)\leq\delta_n,\\
	P_0\Bigl|\sqrt{\tfrac{D_\theta}{D}}-1\Bigr|\leq\sqrt{P_0\tfrac{D_\theta}{D}}\sqrt{P_0\Bigl(1-\sqrt{\tfrac{D}{D_\theta}}\Bigr)^2}\leq\sqrt{M}\delta_n.
\end{gather*}
Therefore, $|P_0\log\frac{D}{D_\theta}|\leq 2(1\vee\sqrt{M})\delta_n$.
Next, let $W\vcentcolon=\sqrt{\frac{1-D}{1-D_\theta}}-1$ and define a function $R$ by $\log(1+x)=x-\frac{1}{2}x^2+\frac{1}{2}x^2 R(x)$, which implies $R$ is increasing and $R(x)<1$ for $x>-1$, and $R(x)=O(x)$ as $x\to 0$.
With this, write
\[
	P_0\log\tfrac{1-D}{1-D_\theta}=2 P_0 W-P_0 W^2+P_0 W^2 R(W).
\]
By the Cauchy\hyp{}Schwarz inequality,
\begin{gather*}
	P_0|W|\leq\sqrt{P_0\tfrac{p_0}{p_\theta}}\cdot h_\theta(1-D,1-D_\theta)\leq\sqrt{M}\delta_n,\\
	P_0 W^2\leq\sqrt{(P_0+P_\theta)\bigl(\tfrac{p_0}{p_\theta}\bigr)^2(\sqrt{1-D}-\sqrt{1-D_\theta})^2}\cdot h_\theta(1-D,1-D_\theta).
\end{gather*}
Since $D$ and $D_\theta$ are bounded by $0$ and $1$,
\[
	(P_0+P_\theta)\bigl(\tfrac{p_0}{p_\theta}\bigr)^2(\sqrt{1-D}-\sqrt{1-D_\theta})^2\leq P_0\bigl(\tfrac{p_0}{p_\theta}\bigr)^2+P_0\tfrac{p_0}{p_\theta}\leq 2 M.
\]
Therefore, $P_0 W^2\leq\sqrt{2M}\delta_n$.
Next, the residual is bounded as
\begin{align*}
	|P_0 W^2 R(W)|&\leq P_0 W^2|R(W)|\mathbbm{1}\{W\leq-\tfrac{1}{5}\}+P_0 W^2|R(W)|\mathbbm{1}\{W>-\tfrac{1}{5}\}\\
	&\leq P_0(-R(W)\mathbbm{1}\{W\leq-\tfrac{1}{5}\})+P_0 W^2|R(-\tfrac{1}{5})\vee R(W)|,
\end{align*}
where the second inequality uses $W\geq-1$ and $R$ increasing.
Since $R<1$ and $P_0 W^2\leq\sqrt{2 M}\delta_n$, the second term is also bounded by $\sqrt{2 M}\delta_n$.
With $0<-R(x)<-2\log(1+x)$ for $x\leq-\frac{1}{5}$, the first term is bounded by
\begin{align*}
	P_0\bigl(\log\tfrac{1-D_\theta}{1-D}\mathbbm{1}\{W\leq-\tfrac{1}{5}\}\bigr)
	&=P_0\bigl(\tfrac{1-D}{1-D_\theta}\log\tfrac{1-D_\theta}{1-D}\cdot\tfrac{1-D_\theta}{1-D}\mathbbm{1}\{W\leq-\tfrac{1}{5}\}\bigr)\\
	&\leq\sup_{\sqrt{x}-1\leq-1/5}|x\log\tfrac{1}{x}|\cdot P_0\bigl(\tfrac{1-D_\theta}{1-D}\mathbbm{1}\{W\leq-\tfrac{1}{5}\}\bigr).
\end{align*}
The supremum is $1/e$.
The second term is bounded by $P_0(W\leq-\frac{1}{5})P_0(\frac{1-D_\theta}{1-D}\mid\frac{1-D_\theta}{1-D}\geq\frac{25}{16})\leq P_0(W\leq-\frac{1}{5})M$ by \cref{asm:2}. By Markov's inequality, $P_0(W\leq-\frac{1}{5})\leq 25 P_0 W^2\leq 25\sqrt{2 M}\delta_n$.
Thus, $|P_0 W^2 R(W)|\leq(1+25 M/e)\sqrt{2 M}\delta_n$.
Altogether, we have $|P_0\log\frac{1-D}{1-D_\theta}|\leq(\sqrt{2}+2+25 M/e)\sqrt{2 M}\delta_n$.

Next, we bound
\(
	\mathbb{E}^\ast\sup_{D\in\mathcal{D}_{n,\delta_n}^\theta}|\sqrt{n}(\mathbb{P}_n-P_0)\log\tfrac{D}{D_\theta}|
\).
Under \cref{asm:2}, an analogous argument as \cref{lem:KL} (iv) yields
\[
	\bigl\|\tfrac{1}{2}\log\tfrac{D}{D_\theta}\bigr\|_{P_0,B}^2\leq 2(1+M)h_\theta(D,D_\theta)^2\leq 2(1+M)\delta_n^2.
\]
By \citet[Lemma 3.4.3]{vw1996}, we have
\[
	\mathbb{E}^\ast\sup_{D\in\mathcal{D}_{n,\delta_n}^\theta}\bigl|\sqrt{n}(\mathbb{P}_n-P_0)\log\tfrac{D}{D_\theta}\bigr|\lesssim J\Bigl(1+\tfrac{J}{\delta_n^2\sqrt{n}}\Bigr)
\]
for $J\vcentcolon=J_{[]}(\delta_n,\{\log\tfrac{D}{D_\theta}:D\in\mathcal{D}_{n,\delta_n}^\theta\},\|\cdot\|_{P_0,B})$.
Note that a $\delta_n$\hyp{}bracket in $\mathcal{D}_{n,\delta_n}^\theta$ induces a $C\delta_n$\hyp{}bracket in $\{\log\frac{D}{D_\theta}\}$ for some constant $C$ since
\[
	\bigl\|\log\tfrac{u}{D_\theta}-\log\tfrac{\ell}{D_\theta}\bigr\|_{P_0,B}^2\leq 4 P_0\Bigl(\sqrt{\tfrac{u}{\ell}}-1\Bigr)^2=O(d_\theta(u,\ell)^2)
\]
by \cref{asm:2}.
Therefore, $J\leq J_{[]}(\delta_n,\mathcal{D}_{n,\delta_n}^\theta,d_\theta)$ and hence $J(1+\frac{J}{\delta_n^2\sqrt{n}})\lesssim\delta_n^2\sqrt{n}$ by \cref{asm:1}.

Finally, we bound
\(
	\mathbb{E}^\ast\sup_{D\in\mathcal{D}_{n,\delta_n}^\theta}|\sqrt{n}(\mathbb{P}_n-P_0)\log\tfrac{1-D}{1-D_\theta}|
\).
As in \cref{lem:KL} (iv), we obtain
\(
	\rho^2\vcentcolon=\bigl\|\tfrac{1}{2}\log\tfrac{1-D}{1-D_\theta}\bigr\|_{P_0,B}^2\leq 2(1+M)P_0 W^2\leq 2(1+M)\sqrt{2 M}\delta_n
\).
Therefore, by \citet[Lemma 3.4.3]{vw1996}, we have
\(
	\mathbb{E}^\ast\sup_{D\in\mathcal{D}_{n,\delta_n}^\theta}|\sqrt{n}(\mathbb{P}_n-P_0)\log\tfrac{1-D}{1-D_\theta}|\lesssim J\bigl(1+\tfrac{J}{\delta_n^2\sqrt{n}}\bigr)
\)
for $J=J_{[]}(\rho,\{\log\frac{1-D}{1-D_\theta}:D\in\mathcal{D}_{n,\delta_n}^\theta\},\|\cdot\|_{P_0,B})$.
With a $\delta_n$\hyp{}bracket in $\mathcal{D}_{n,\delta_n}^\theta$, \cref{asm:2} implies
\[
	\bigl\|\log\tfrac{1-\ell}{1-D_\theta}-\log\tfrac{1-u}{1-D_\theta}\bigr\|_{P_0,B}^2\leq 4 P_0\Bigl(\sqrt{\tfrac{1-\ell}{1-u}}-1\Bigr)^2=O(\delta_n).
\]
Therefore, the expectation of the supremum is of order $O(\delta_n\sqrt{n})$.
\end{proof}

\section{Proof of Theorem \ref{thm:linear}}\label{sec:proof_thm:linear}

{
Let $h_n$ be a bounded sequence and denote $\theta_n\vcentcolon=\theta_0+\frac{h_n}{\sqrt{n}}$ and $W_n\vcentcolon=\sqrt{\hat{p}_{\theta_n}/\hat{p}_{\theta_0}}-1$.
Define $R$ by $\log(1+x)=x-\frac{1}{2}x^2+\frac{1}{2}x^2 R(x)$ for $R(x)=O(x)$. Then,
\[
	n\mathbb{P}_n\log\tfrac{\hat{p}_{\theta_n}}{\hat{p}_{\theta_0}}=2 n\mathbb{P}_n W_n-n\mathbb{P}_n W_n^2+n\mathbb{P}_n W_n^2 R(W_n).
\]
By \cref{asm:dqmp} (\ref{asm:proj}) and $P_{\theta_0}\dot{\ell}_{\theta_0}=0$,
\[
	2 n\mathbb{P}_n W_n-n\mathbb{P}_n W_n^2=
	2 n P_{\theta_0}W_n+\sqrt{n}\mathbb{P}_n h_n'\dot{\ell}_{\theta_0}-n P_{\theta_0}W_n^2+o_P(1).
\]
By \cref{asm:dqmp} (\ref{asm:dqmp1}), 
\[
	n P_{\theta_0}W_n^2=\tfrac{1}{4}P_{\theta_0}h_n'\dot{\ell}_{\theta_0}\dot{\ell}_{\theta_0}'h_n+o_P(1)
	=\tfrac{1}{4}h_n'I_{\theta_0}h_n+o_P(1).
\]
Also, since $P_{\theta_0}\dot{\ell}_{\theta_0}=0$,
\begin{align*}
	2 n P_{\theta_0}W_n&=2 n \hat{P}_{\theta_0}W_n+2 n(P_{\theta_0}-\hat{P}_{\theta_0})W_n\\
	&=-n\int\bigl({\textstyle\sqrt{\hat{p}_{\theta_n}}-\sqrt{\hat{p}_{\theta_0}}}\bigr)^2+n(c_{\theta_n}-c_{\theta_0})-\sqrt{n}\hat{P}_{\theta_0}h_n'\dot{\ell}_{\theta_0}\\
	&\hphantom{={}}+2 n\int\bigl(\sqrt{p_{\theta_0}}-{\textstyle\sqrt{\hat{p}_{\theta_0}}}\bigr)\bigl(\sqrt{p_{\theta_0}}+{\textstyle\sqrt{\hat{p}_{\theta_0}}}\bigr)\Bigl(W_n-\tfrac{h_n'\dot{\ell}_{\theta_0}}{2\sqrt{n}}\Bigr).
\end{align*}
By \cref{asm:dqmp} (\ref{asm:dqmp1}),
\(
	n\int(\sqrt{\hat{p}_{\theta_n}}-\sqrt{\hat{p}_{\theta_0}})^2
	=\tfrac{1}{4}\hat{P}_{\theta_0}h_n'\dot{\ell}_{\theta_0}\dot{\ell}_{\theta_0}'h_n+o_P(1)
	=\tfrac{1}{4}h_n'I_{\theta_0}h_n+o_P(1)
\).
By the Cauchy\hyp{}Schwarz inequality,
\begin{multline*}
	\biggl|\int\bigl(\sqrt{p_{\theta_0}}-{\textstyle\sqrt{\hat{p}_{\theta_0}}}\bigr)\bigl(\sqrt{p_{\theta_0}}+{\textstyle\sqrt{\hat{p}_{\theta_0}}}\bigr)\Bigl(W_n-\tfrac{h_n'\dot{\ell}_{\theta_0}}{2\sqrt{n}}\Bigr)\biggr|\\
	\leq\biggl[\int\bigl(\sqrt{p_{\theta_0}}-{\textstyle\sqrt{\hat{p}_{\theta_0}}}\bigr)^2\int\bigl(\sqrt{p_{\theta_0}}+{\textstyle\sqrt{\hat{p}_{\theta_0}}}\bigr)^2\Bigl(W_n-\tfrac{h_n'\dot{\ell}_{\theta_0}}{2\sqrt{n}}\Bigr)^2\biggr]^{1/2},
\end{multline*}
which is $O_P(\delta_n n^{-3/4})=o_P(n^{-1})$ under \cref{asm:dqmp} (\ref{asm:dqmp1}) and $\delta_n=o(n^{-1/4})$.

Since $|n\mathbb{P}_n W_n^2 R(W_n)|\leq|n\mathbb{P}_n W_n^2|\max_{1\leq i\leq n}|R(W_n(X_i))|$ and $n\mathbb{P}_n W_n^2$ ``converges'' to $n P_{\theta_0}W_n^2=O_P(1)$ by \cref{asm:dqmp} (\ref{asm:proj}), it remains to show that the maximum is $o_P(1)$.
Write $V_n\vcentcolon=W_n-\frac{h_n'\dot{\ell}_{\theta_0}}{2\sqrt{n}}$.
Then,
\[
	\max_i|W_n(X_i)|\leq\max_i\,\bigl|\tfrac{1}{2\sqrt{n}}h_n'\dot{\ell}_{\theta_0}(X_i)\bigr|+\max_i|V_n(X_i)|.
\]
By Markov's inequality,
\begin{align*}
	P\Bigl(\max_{1\leq i\leq n}\bigl|\tfrac{1}{\sqrt{n}}h_n'\dot{\ell}_{\theta_0}(X_i)\bigr|>\varepsilon\Bigr)
	&\leq n P\bigl(\bigl|\tfrac{1}{\sqrt{n}}h_n'\dot{\ell}_{\theta_0}(X_i)\bigr|>\varepsilon\bigr)\\
	&\leq\varepsilon^{-2}P_{\theta_0}((h_n'\dot{\ell}_{\theta_0})^2\mathbbm{1}\{(h_n'\dot{\ell}_{\theta_0})^2>n\varepsilon^2\}),
\end{align*}
which converges to zero as $n\to\infty$ for every $\varepsilon>0$.
Thus, $\max_i\bigl|\tfrac{1}{\sqrt{n}}h_n'\dot{\ell}_{\theta_0}(X_i)\bigr|$ converges to zero in probability.
Since \cref{asm:dqmp} (\ref{asm:proj}) and (\ref{asm:dqmp1}) imply that $n\mathbb{P}_n V_n^2=n P_{\theta_0}V_n^2+o_P(1)=o_P(1)$, we have $\max_i V_n^2(X_i)=o_P(1)$ and hence $\max_i|V_n(X_i)|=o_P(1)$.
Conclude that $\max_i|W_n(X_i)|$ converges to zero in probability and so does $\max_i|R(W_n(X_i))|$.
}

\section{Proof of Theorem \ref{thm:one}}\label{sec:proof_thm:one}
We will prove   Theorem \ref{thm:one} under   weaker assumptions. 
In particular, we slightly relax Assumption \ref{ass:u}  by considering the aggregate behavior of $u_{\theta}(X^{(n)})$ around $\theta_0$ with respect to the prior $\Pi_n(\cdot)$.
Instead, we   assume 
$$
P_{\theta_0}^{(n)}\left(I_n(\Pi_n,X^{(n)},\varepsilon_n)\leq  \e^{-\wt C_n n\varepsilon_n^2}\right)=o(1)
$$ 
where 
\begin{equation}\label{eq:int_I}
I_n({\Pi}_n,X^{(n)},\epsilon)=\int_{B_n(\theta_0,\epsilon)} \e^{u_\theta(X^{(n)})}\d\Pi_n(\theta)
\end{equation}
and, at the same time,  
$$
P_{\theta_0}^{(n)}\left[\sup_{  \Theta_n^c\cup d_n(\theta,\theta_0)>\epsilon}|u_\theta(X^{(n)})|> \wt C_nn\varepsilon_n^2\right]=o(1)
$$
for any $\epsilon>\varepsilon_n$. Assumption  \eqref{ass:prior} is not needed if one is only interested in the concentration inside $\Theta_n$.
Alternatively, we could also  replace Assumption \eqref{ass:u} with the following condition to lower-bound the denominator in \eqref{eq:post} 
$$
\sup_{\theta\in B_n(\theta_0,\varepsilon_n)}P_{\theta_0}^{(n)}\left[\ln (p_\theta^{(n)}/p_{\theta_0}^{(n)})+u_\theta<-n\varepsilon_n^2\right]=o(n\varepsilon_n^2).
$$
Instead of relying on the existence of exponential tests (through Lemma 9 in \cite{gvdv07}), we could  then directly assume that for any   $\epsilon>\varepsilon_n$ and for all $\theta\in\Theta_n$ such that $d(\theta,\theta_0)>j\epsilon$ for any $j\in\N$ there exists a test $\phi_n(\theta)$ satisfying
$$
P_{\theta_0}^{(n)}\phi_n  \lesssim \e^{-n\epsilon^2/2}\quad\text{and}\quad \int_{\mX}(1-\phi_n) p_\theta^{(n)}\e^{u_\theta}\leq \e^{-j^2n\epsilon^2/2}.
$$

We will use the following Lemma (an analogue of Lemma 10 \cite{gvdv07}).

\begin{lem}\label{lemma:event}
Recall the definition $I_n({\Pi}_n,X^{(n)},\epsilon)$ in \eqref{eq:int_I}  and define $q_{\theta}^{(n)}=p_\theta^{(n)}/p_{\theta_0}^{(n)}\e^{u_\theta}$. Then we have for any $C,\varepsilon>0$
$$
P_{\theta_0}^{(n)}\left(\int_{B(\theta_0,\varepsilon)} q_\theta^{(n)}\d\Pi_n(\theta)\leq   \e^{-(1+C)n\varepsilon^2} \times I_n({\Pi}_n,X^{(n)},\epsilon)  \right)\leq \frac{1}{C^{2} n\varepsilon^2}.
$$
\end{lem}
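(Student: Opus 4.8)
The plan is to reproduce the argument behind Lemma~10 of \cite{gvdv07}, but applied to the \emph{tilted} probability measure that the factor $\e^{u_\theta}$ induces on the Kullback--Leibler ball. Concretely, set $B\vcentcolon=B(\theta_0,\varepsilon)$ and define the (data-dependent) probability measure $\d\widetilde\Pi_n(\theta)\vcentcolon=\e^{u_\theta}\d\Pi_n(\theta)/I_n(\Pi_n,X^{(n)},\varepsilon)$ on $B$. With this normalization the quantity of interest factors as $\int_B q_\theta^{(n)}\d\Pi_n/I_n=\int_B (p_\theta^{(n)}/p_{\theta_0}^{(n)})\,\d\widetilde\Pi_n$, so the claimed event is exactly $\{\int_B (p_\theta^{(n)}/p_{\theta_0}^{(n)})\,\d\widetilde\Pi_n\le \e^{-(1+C)n\varepsilon^2}\}$. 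First I would apply Jensen's inequality to the concave logarithm against $\widetilde\Pi_n$, giving $\log\int_B (p_\theta^{(n)}/p_{\theta_0}^{(n)})\,\d\widetilde\Pi_n\ge S$ with $S\vcentcolon=\int_B\sum_{i=1}^n\log\frac{p_\theta}{p_{\theta_0}}(X_i)\,\d\widetilde\Pi_n$; since $I_n>0$, the event is then contained in $\{S\le-(1+C)n\varepsilon^2\}$ and it suffices to bound $P_{\theta_0}^{(n)}(S\le-(1+C)n\varepsilon^2)$.

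The second step is a one-line Chebyshev bound, for which I need the mean and variance of $S$. Treating the weights as a probability measure supported on $B$, the mean equals $-\int_B K(p_{\theta_0}^{(n)},p_\theta^{(n)})\,\d\widetilde\Pi_n\ge-n\varepsilon^2$ by the defining inequality $K\le n\varepsilon^2$ on $B$. For the variance I would write $S=\sum_{i=1}^n g_i(X_i)$ with $g_i(X_i)=\int_B\log\frac{p_\theta}{p_{\theta_0}}(X_i)\,\d\widetilde\Pi_n$, use independence of the $X_i$ to split the variance across $i$, and then apply Cauchy--Schwarz/Jensen against the probability measure $\widetilde\Pi_n$ together with the second defining inequality $\frac{1}{n}\sum_iV_2(p_{\theta_0,i},p_{\theta,i})\le\varepsilon^2$ on $B$ to obtain $\Var_{\theta_0}(S)\le\int_B\sum_iV_2\,\d\widetilde\Pi_n\le n\varepsilon^2$. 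Chebyshev then gives $P_{\theta_0}^{(n)}(S-\mathbb{E}S\le-Cn\varepsilon^2)\le \Var_{\theta_0}(S)/(C^2n^2\varepsilon^4)\le 1/(C^2n\varepsilon^2)$, which closes the argument once combined with the mean bound.

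The delicate point -- and the place where this departs from \cite{gvdv07} -- is that $\widetilde\Pi_n$ is \emph{not} a fixed prior: its density $\e^{u_\theta}$ depends on the very same sample $X^{(n)}$ through $u_\theta=\sum_i(\dots)(X_i)$, so the weights are correlated with the log-likelihood-ratio fluctuations. This is precisely what threatens the clean decomposition $S=\sum_ig_i(X_i)$ into \emph{independent} summands and the naive Fubini evaluation of $\mathbb{E}S$. My plan to handle this is to carry out the mean/variance computation \emph{conditionally} on the auxiliary (fake) data $\widetilde X^{(m)}$, after which the only remaining coupling is the dependence of $u_\theta$ on $X^{(n)}$; I would then either (i) condition additionally on $u_\theta$ viewed as the data-generated tilt and verify that the pointwise bounds $K\le n\varepsilon^2$ and $\sum_iV_2\le n\varepsilon^2$ survive integration against \emph{any} probability measure supported on $B$, or (ii) invoke a uniform control of the tilt as in \eqref{ass:u}, under which $\e^{u_\theta}$ varies over $B$ only within a factor $\e^{\pm\wt C_nn\varepsilon^2}$, so that $\widetilde\Pi_n$ is comparable to the untilted normalized prior and the excess can be absorbed into the exponent, consistent with the inflation factor $\wt C_n$ appearing in \cref{thm:one}. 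I expect showing that the variance bound genuinely survives the data-dependent reweighting to be the main obstacle.
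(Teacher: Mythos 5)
Your proposal follows essentially the same route as the paper's proof, which is in fact \emph{shorter} than your write-up: the paper defines the tilted prior $\d\Pi^\star_n(\theta)\propto\e^{u_\theta(X^{(n)})}\d\Pi_n(\theta)$, observes (your first paragraph) that the event in question is algebraically identical to $\bigl\{\int_{B}(p_\theta^{(n)}/p_{\theta_0}^{(n)})\,\d\Pi^\star_n\leq\Pi^\star_n(B)\,\e^{-(1+C)n\varepsilon^2}\bigr\}$, and then simply cites Lemma 10 of \cite{gvdv07} --- whose internal proof is precisely the Jensen-plus-Chebyshev computation you reproduce in your second paragraph. So your first two paragraphs already constitute the paper's entire argument.

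The ``delicate point'' in your third paragraph is a genuine observation, but you should know the paper does not address it at all: Lemma 10 of \cite{gvdv07} is stated for a \emph{fixed} probability measure on the Kullback--Leibler ball, and the paper applies it to the data-dependent measure $\Pi^\star_n$ without comment, even though $u_\theta$ is built from the same observations $X^{(n)}$ that enter the log-likelihood-ratio sum, so the Fubini evaluation of the mean and the independence-based variance splitting are not literally licensed. Of your two proposed repairs, (i) does not resolve the coupling --- conditioning on $\wt X^{(m)}$ still leaves $u_\theta$ a function of $X^{(n)}$ --- whereas (ii) is the workable one: on the event in \eqref{ass:u} the tilt satisfies $\sup_\theta|u_\theta|\leq\wt C_n n\varepsilon_n^2$, so the tilted and untilted normalized priors on $B$ are comparable up to a factor $\e^{2\wt C_n n\varepsilon_n^2}$, and applying the fixed-prior lemma to $\Pi_n$ restricted to $B$ yields the conclusion with $C$ replaced by $C-2\wt C_n$. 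That weaker, assumption-laden version is not the lemma as stated (which claims the bound for every $C>0$ and arbitrary $u_\theta$), but it is exactly what is consumed in the proof of \cref{thm:one}, where the inflation factor $\wt C_n$ enters the exponents in any case. In short: your proposal matches the paper's proof, and is more candid than the paper about its one soft spot.
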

\begin{proof}
Define a changed prior measure $\Pi^\star_n(\cdot)$   through  $\d\Pi^\star_n(\theta)=\frac{\e^{u_\theta(X^{(n)})}}{\int \e^{u_\theta(X^{(n)})}\d\theta}\d\Pi_n(\theta)$.
 Lemma 10 of  \cite{gvdv07} then yields
\begin{align*}
&P_{\theta_0}^{(n)}\left(\int_{B(\theta_0,\varepsilon)} q_\theta^{(n)}\d\Pi_n(\theta)\leq \e^{-(1+C)n\varepsilon^2} I_n({\Pi}_n,X^{(n)},\epsilon)\right)\\
&=P_{\theta_0}^{(n)}\left(\int_{B(\theta_0,\varepsilon)} p_\theta^{(n)}/p_{\theta_0}^{(n)}\d\Pi^\star_n(\theta)\leq \Pi^\star_n(B(\theta_0,\varepsilon)) \e^{-(1+C)n\varepsilon^2}\right)\leq \frac{1}{C^{2} n\epsilon^2}.\quad\quad \qedhere
\end{align*}
\end{proof}

\medskip

Recall the definition $I_n(\Pi_n,X^{(n)},\varepsilon_n)=\int_{B(\theta_0,\varepsilon_n)}\e^{u_\theta(X^{(n)})}\d\Pi_n(\theta)$ and define an event
$$
\mA_n=\left\{X^{(n)}: \int_{B(\theta_0,\varepsilon_n)} q_\theta^{(n)}\d\Pi_n(\theta)> \e^{-2n\varepsilon_n^2}I_n(\Pi_n,X^{(n)},\varepsilon_n) \right\}
$$
where $q_{\theta}^{(n)}=p_\theta^{(n)}/p_{\theta_0}^{(n)}\e^{u_\theta}$. From our  assumptions, there exists a  sequence  $\wt C_n>0$ such that the complement of the set
$$
\mathcal B_n=\left\{X^{(n)}: I_n(\Pi_n,X^{(n)},\varepsilon_n) > \e^{-\wt C_nn\varepsilon_n^2}\,\,\text{and}\,\, 
\sup_{\Theta_n^c\cup d_n(\theta,\theta_0)>\varepsilon_n}|u_\theta(X^{(n)})|\leq \wt C_nn\varepsilon_n^2\right\}
$$
has a vanishing probability. Lemma \ref{lemma:event} then yields $P_{\theta_0}^{(n)}[\mA_n^c\cup \mathcal B_n^c]=o(1)\,\,\text{as $n\rightarrow\infty$}.$
The following calculations are  thus conditional on the set $\mA_n\cap\mathcal B_n$. On this set, we can lower-bound the denominator of \eqref{eq:post} as follows 
$$
\int_{\Theta}q_{\theta}^{(n)}\d\Pi_n(\theta)>\int_{B(\theta_0,\varepsilon_n)}q_{\theta}^{(n)}\d\Pi_n(\theta)>\e^{-2n\varepsilon_n^2} I_n(\Pi_n,X^{(n)},\varepsilon_n)\geq  \e^{-(2+\wt C_n)n\varepsilon_n^2}.
$$
We first show that
$P_{\theta_0}^{(n)}[\Pi_n^\star(\Theta\backslash\Theta_n\C X^{(n)})]=o(1)$ as $n\rightarrow\infty$.
On the set $\mA_n\cap\mathcal B_n$ we have from \eqref{ass:prior} and from the Fubini's theorem
\begin{align*}
 P_{\theta_0}^{(n)}\left[\Pi^\star_n(\Theta\backslash\Theta_n\C X^{(n)})\right]&=
 P_{\theta_0}^{(n)} \left[\frac{\int_{\Theta\backslash\Theta_n} 
q_{\theta}^{(n)}\d\Pi_n(\theta)}{\int_{\Theta}q_{\theta}^{(n)}\d\Pi_n(\theta)}\right]
\leq  \e^{2n\varepsilon_n^2}
 \frac{\Pi_n^\star(\Theta\backslash\Theta_n)}{\Pi_n^\star(B_n(\theta_0,\varepsilon_n))}\\
 &= \e^{2(1+\wt C_n)n\varepsilon_n^2} \frac{\Pi_n(\Theta\backslash\Theta_n)}{\Pi_n(B_n(\theta_0,\varepsilon_n))}=o(1).
\end{align*}

For some $J>0$ (to be determined later) we define the complement of the ball around the truth as a union of shells
$$
U_n=\{\theta\in\Theta_n: d_n(\theta,\theta_0)>  MJ\varepsilon_n\}=\bigcup_{j\geq J}\Theta_{n,j}
$$
where each shell equals
$$
\Theta_{n,j}=\{\theta\in\Theta_n:Mj\varepsilon_n<d_n(\theta,\theta_0)\leq M(j+1)\varepsilon_n\}.
$$
We now invoke the local entropy Assumption (3.2) in \cite{gvdv07} which guarantees  (according to Lemma 9 in \cite{gvdv07})  that 
 there exist tests $\phi_n$ (for each $n$) such that 
 \begin{equation}\label{eq:tests}
P_{\theta_0}^{(n)}\phi_n\lesssim \e^{n\varepsilon_n^2-nM^2\varepsilon_n/2}\quad\text{and}\quad P_{\theta}^{(n)}(1-\phi_n)\leq \e^{-nM^2\varepsilon_n^2j^2/2}
\end{equation}
for all $\theta\in\Theta_n$ such that $d_n(\theta,\theta_0)>M\varepsilon_nj$ and for every $j\in\N\backslash\{0\}$ and $M>0$. 
One can then write
\begin{align*}
P_{\theta_0}^{(n)}\Pi\left(\theta\in\Theta:d(\theta,\theta_0)>MJ\varepsilon_n\C X^{(n)}\right)&\leq  P_{\theta_0}^{(n)}\Pi(\Theta_n^c\C X^{(n)})+P_{\theta_0}^{(n)}\phi_n+P_{\theta_0}^{(n)}(\mA_n^c)+P_{\theta_0}^{(n)}(\mathcal B_n^c)\\
&+\sum_{j\geq J}P_{\theta_0}^{(n)}[
\Pi(\Theta_{n,j}\C X^{(n)})(1-\phi_n)\mathbb I(\mA_n\cap \mathcal B_n)]
\end{align*}
For the last term above, we  recall that
$\Pi(\Theta_{n,j}\C X^{(n)})=\frac{\int_{\Theta_{n,j}} q_{\theta}^{(n)}\d\Pi_n(\theta)}{\int_{\Theta}q_{\theta}^{(n)}\d\Pi_n(\theta)}.$ We bound the denominator as before.
Regarding the numerator,  on the event $\mathcal B_n$ we have from \eqref{eq:tests} and from the Fubini's theorem
\begin{align}\label{eq:second_type}
P_{\theta_0}^{(n)}\int_{\Theta_{n,j}}q_{\theta}^{(n)}\d\Pi_n(\theta)(1-\phi_n)&\leq  \e^{-nM^2\varepsilon_n^2j^2/2+\wt C_nn\varepsilon_n^2}\Pi_n(\Theta_{n,j})
\end{align}
Putting the pieces together, we obtain
\begin{align*}
P_{\theta_0}^{(n)}[\Pi(\Theta_{n,j}\C X^{(n)})(1-\phi_n)\mathrm I(\mA_n\cap \mathcal B_n)]&\leq 
\e^{-nM^2\varepsilon_n^2j^2/2+2(1+\wt C_n)n\varepsilon_n^2}\frac{\Pi_n(\Theta_{n,j})}{\Pi_n[B_n(\theta_0,\varepsilon_n)]}.
\end{align*}
Assumption (3.4) of \cite{gvdv07} writes as 
\begin{equation}\label{ass:prior2}
\frac{\Pi_n(\Theta_{n,j})}{\Pi_n[B_n(\theta_0,\varepsilon_n)]}\leq \e^{nM^2\varepsilon_n^2j^2/4}
\end{equation}
which yields
$$
P_{\theta_0}^{(n)}\Pi\left(\theta\in\Theta:d(\theta,\theta_0)>MJ\varepsilon_n\C X^{(n)}\right)\leq o(1)+\sum_{j\geq J}\e^{-n\varepsilon_n^2(M^2j^2/4-2-2\wt C_n)}.
$$
The right hand side converges to zero as long as $J=J_n\rightarrow\infty$ fast enough so that $\wt C_n=o(J_n)$ and $n\varepsilon_n^2$ is bounded away from zero. \qedhere

\section{Posterior Concentration Rate: Misspecification Lens}\label{sec:proof_thm_misspec}

The following Theorem \ref{thm:misspec} quantifies concentration  in terms of a  KL neighborhoods around $\wt P_{\theta^*}^{(n)}$ defined  as 
$B(\epsilon,\wt P_{\theta^*}^{(n)},P_{\theta_0}^{(n)})=\left\{\wt P_\theta^{(n)}\in\wt \mP^{(n)}: K(\theta^*,\theta_0) \leq n\epsilon^2,V(\theta^*,\theta_0)\leq n\epsilon^2\right\},
$
where $K(\theta^*,\theta_0)\equiv P_{\theta_0}^{(n)}\log \frac{\wt p_{\theta^*}^{(n)}}{\wt p_\theta^{(n)}}
$ and $V(\theta^*,\theta_0)=P_{\theta_0}^{(n)}\left|\log \frac{\wt p_{\theta^*}^{(n)}}{\wt p_\theta^{(n)}}- K(\theta^*,\theta_0)\right|^2$.

\begin{thm}\label{thm:misspec}
 Denote with $Q_\theta^{(n)}$ a measure defined through $\d Q_\theta^{(n)}=\frac{p_{\theta_0}^{(n)}}{\wt p_{\theta^*}^{(n)}}\d P_\theta^{(n)}$ and let  $d(\cdot,\cdot)$ be a semi-metric on $\mathcal P^{(n)}$. Suppose that there exists a sequence $\varepsilon_n>0$ satisfying  $\varepsilon_n\rightarrow0$ and $n\varepsilon_n^2\rightarrow\infty$  such that for every $\epsilon>\varepsilon_n$ there exists a test $\phi_n$ (depending on $\epsilon$) such that for every $J\in\mathbb N_0$
\begin{align}
P_{\theta_0}^{(n)}\phi_n\lesssim \e^{-n\epsilon^2/4}\quad\text{and}\quad \sup\limits_{\wt P_\theta^{(n)}: d(\wt P_\theta^{(n)},\wt P_{\theta^*}^{(n)})>J\epsilon} Q_\theta^{(n)}(1-\phi_n)\leq \e^{-nJ^2\epsilon^2/4}\label{test2}.
\end{align}
Let  $B(\epsilon,\wt P_{\theta^*}^{(n)},P_{\theta_0}^{(n)})$ be as before and let $\wt\Pi_n(\theta)$ be a prior distribution with a density $\wt\pi(\theta)\propto C_\theta\pi(\theta)$. Assume that there exists a constant $L>0$ such that,  for all $n$ and $j\in\mathbb N$, 
\begin{align}\label{eq:prior_mass_misspec}
\frac{\wt\Pi_n\left( \theta\in\Theta: j\varepsilon_n<d(\wt P_{\theta}^{(n)},\wt P_{\theta^*}^{(n)})\leq (j+1)\varepsilon_n \right)}{\wt\Pi_n\left( B(\epsilon,\wt P_{\theta^*}^{(n)},P_{\theta_0}^{(n)}) \right)}&\leq \e^{n\varepsilon_n^2j^2/8}.
\end{align}
Then for every sufficiently large constant $M$, as $n\rightarrow\infty$,
\begin{equation}\label{eq:statement_misspec}
P_{\theta_0}^{(n)}\Pi_n^\star\left(\wt P_\theta^{(n)}: d(\wt P_\theta^{(n)}, \wt P_{\theta^*}^{(n)})\geq M\varepsilon_n\C X^{(n)}\right)\rightarrow 0.
\end{equation}
\end{thm}
\proof

We define the event 
$$
\mA=\left\{X^{(n)}\in\mathcal X: \int\frac{\wt p_\theta^{(n)}}{\wt p_{\theta^*}^{(n)}}\d\wt\Pi_n(\theta)> \e^{-(1+C)n\epsilon^2}\wt\Pi_n[B(\epsilon,\wt P_{\theta^*}^{(n)},P_{\theta_0}^{(n)})] \right\}.
$$ 
The following lemma shows that $P_{\theta_0}^{(n)}[\mA^c]=o(1)$ as $n\rightarrow\infty$.
\begin{lem}
For $k\geq 2$, every $\epsilon>0$ and a prior measure $\wt\Pi_n(\theta)$ on $\Theta$, we have for every $C>0$
$$
P_{\theta_0}^{(n)} \left(\int\frac{\wt p_\theta^{(n)}}{\wt p_{\theta^*}^{(n)}}\d\wt\Pi_n(\theta)\leq \e^{-(1+C)n\epsilon^2}\wt\Pi_n[B(\epsilon,\wt P_{\theta^*}^{(n)},P_{\theta_0}^{(n)})]\right)\leq 
\frac{1}{C^2n\epsilon^2}.
$$
\end{lem}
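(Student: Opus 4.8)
The plan is to establish a misspecified \emph{evidence lower bound}, adapting Lemma 10 of \cite{gvdv07} (see also \cite{kleijn1}) with the Kullback--Leibler projection $\wt p_{\theta^*}^{(n)}$ playing the role of the reference density in place of $p_{\theta_0}^{(n)}$. Since the integrand is nonnegative, I would first discard everything outside $B\vcentcolon=B(\epsilon,\wt P_{\theta^*}^{(n)},P_{\theta_0}^{(n)})$ and normalize $\wt\Pi_n$ to a probability measure on $B$, giving
\[
	\int\frac{\wt p_\theta^{(n)}}{\wt p_{\theta^*}^{(n)}}\d\wt\Pi_n(\theta)\geq\wt\Pi_n(B)\int_B\frac{\wt p_\theta^{(n)}}{\wt p_{\theta^*}^{(n)}}\frac{\d\wt\Pi_n(\theta)}{\wt\Pi_n(B)}\geq\wt\Pi_n(B)\,\e^{Z_n},
\]
where the second inequality is Jensen's inequality applied to the convex exponential, with $Z_n\vcentcolon=\int_B\log\frac{\wt p_\theta^{(n)}}{\wt p_{\theta^*}^{(n)}}\frac{\d\wt\Pi_n(\theta)}{\wt\Pi_n(B)}$. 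It therefore suffices to show that $Z_n\geq-(1+C)n\epsilon^2$ on an event of probability at least $1-\frac{1}{C^2n\epsilon^2}$.

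Next I would control the first two moments of $Z_n$ under $P_{\theta_0}^{(n)}$. By Fubini's theorem and the definition of $B$, the mean satisfies $P_{\theta_0}^{(n)}Z_n=-\int_B K(\theta^*,\theta_0)\frac{\d\wt\Pi_n(\theta)}{\wt\Pi_n(B)}\geq-n\epsilon^2$, since $K(\theta^*,\theta_0)\leq n\epsilon^2$ throughout $B$. For the variance, write the centered log-ratio $g_\theta\vcentcolon=\log\frac{\wt p_\theta^{(n)}}{\wt p_{\theta^*}^{(n)}}-P_{\theta_0}^{(n)}\log\frac{\wt p_\theta^{(n)}}{\wt p_{\theta^*}^{(n)}}$, so that $Z_n-P_{\theta_0}^{(n)}Z_n=\int_B g_\theta\frac{\d\wt\Pi_n(\theta)}{\wt\Pi_n(B)}$ and Jensen's inequality gives $(Z_n-P_{\theta_0}^{(n)}Z_n)^2\leq\int_B g_\theta^2\frac{\d\wt\Pi_n(\theta)}{\wt\Pi_n(B)}$. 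Taking expectations and using $P_{\theta_0}^{(n)}g_\theta^2=V(\theta^*,\theta_0)\leq n\epsilon^2$ on $B$ yields $\Var_{\theta_0}^{(n)}(Z_n)\leq n\epsilon^2$. Chebyshev's inequality with threshold $Cn\epsilon^2$ then gives
\[
	P_{\theta_0}^{(n)}\bigl(Z_n\leq-(1+C)n\epsilon^2\bigr)\leq P_{\theta_0}^{(n)}\bigl(Z_n\leq P_{\theta_0}^{(n)}Z_n-Cn\epsilon^2\bigr)\leq\frac{\Var_{\theta_0}^{(n)}(Z_n)}{(Cn\epsilon^2)^2}\leq\frac{1}{C^2n\epsilon^2},
\]
where the first inclusion uses $P_{\theta_0}^{(n)}Z_n\geq-n\epsilon^2$. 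Combining with the first display shows that on the complementary event the integral exceeds $\e^{-(1+C)n\epsilon^2}\wt\Pi_n(B)$, which is the assertion.

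The adaptation to the misspecified, non-i.i.d.\ setting is mostly bookkeeping, and the one genuine departure from the classical argument that I expect to be the main point to get right is the variance bound. Because the tilt $u_\theta(\Xn)$ is non-separable, $\log\frac{\wt p_\theta^{(n)}}{\wt p_{\theta^*}^{(n)}}$ is \emph{not} a sum of independent increments, so I cannot compute $\Var_{\theta_0}^{(n)}(Z_n)$ coordinatewise as in the product-model case. This is precisely why $B$ is defined through the aggregate second central moment $V(\theta^*,\theta_0)$ rather than through per-observation quantities: the single Jensen step above then delivers $\Var_{\theta_0}^{(n)}(Z_n)\leq n\epsilon^2$ with no appeal to independence. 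I would also note that the normalizing constants $C_\theta$ and $C_{\theta^*}$ cancel identically in the ratio $\wt p_\theta^{(n)}/\wt p_{\theta^*}^{(n)}$, so they never enter the computation, and the general $k\geq2$ version follows by the same route with Markov's inequality applied to $|Z_n-P_{\theta_0}^{(n)}Z_n|^k$.
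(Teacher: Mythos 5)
Your proof is correct and is, in substance, the paper's own proof: the paper disposes of this lemma with a one-line citation to Lemma 10 of \cite{gvdv07}, and your Jensen--Fubini--Chebyshev argument (restrict to $B$, lower-bound the mean of the prior-averaged log-ratio by $-n\epsilon^2$, bound its variance by $n\epsilon^2$ via the second-moment condition defining $B$, then apply Chebyshev) is exactly the standard proof of that lemma, transplanted so that the KL projection $\wt p_{\theta^*}^{(n)}$ plays the role of the reference density. One inessential slip in your closing remarks: the normalizing constants do \emph{not} cancel in $\wt p_\theta^{(n)}/\wt p_{\theta^*}^{(n)}$ (since $C_\theta$ varies with $\theta$), but this is immaterial because your argument never decomposes that ratio---the neighborhood $B$ and both moment bounds are stated directly in terms of the $\wt p$'s.
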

\proof This follows directly from  Lemma 10 in \cite{gvdv07}.

\medskip
We now define $U_n(\epsilon)=\Pi_n(\theta\in\Theta: d(\wt P_\theta^{(n)},\wt P_{\theta^*}^{(n)})>\epsilon\C X^{(n)})$. For every $n\geq 1$ and $J\in\mathbb N\backslash\{0\}$, we can decompose
\begin{align*}
P_{\theta_0}^{(n)} U_n(JM\varepsilon_n)=&P_{\theta_0}^{(n)}[U_n(JM\varepsilon_n)\phi_n]+P_{\theta_0}^{(n)}[U_n(JM\varepsilon_n)(1-\phi_n)\mathbb I(\mA^c)]\\
&+
P_{\theta_0}^{(n)}[U_n(JM\varepsilon_n)(1-\phi_n)\mathbb I(\mA)].
\end{align*}
The first term is bounded (from the assumption \eqref{test2}) as
$$
P_{\theta_0}^{(n)}[U_n(JM\varepsilon_n)\phi_n]\leq P_{\theta_0}^{(n)}\phi_n\lesssim\e^{-n\varepsilon^2_nJ^2M^2}.
$$
The second term can be bounded by $P_{\theta_0}^{(n)}[\mathbb I(\mA^c)]\leq \frac{1}{C^2J^2M^2n\varepsilon_n^2}$ which converges to zero as $n\varepsilon_n^2\rightarrow\infty$.
The last term satisfies
\begin{align*}
&P_{\theta_0}^{(n)}[U_n(JM\varepsilon_n)(1-\phi_n)\mathbb I(\mA)]=P_{\theta_0}^{(n)}\left[(1-\phi_n)\mathbb I(\mA)
\frac{\int_{\theta: d(\wt P_\theta^{(n)},\wt P_{\theta^*}^{(n)})>JM\varepsilon_n} \frac{\wt p_\theta^{(n)}}{\wt p_{\theta^*}^{(n)}}\wt\Pi_n(\theta)\d\theta }{\int_\Theta \frac{\wt p_\theta^{(n)}}{\wt p_{\theta^*}^{(n)}}\wt\Pi_n(\theta)\d\theta }\right]\\
&\qquad\leq  \frac{\e^{(1+C)n\epsilon^2}}{\wt\Pi_n[B(\epsilon,\wt P_{\theta^*}^{(n)},P_{\theta_0}^{(n)})]}
{\int_{\theta: d(\wt P_\theta^{(n)},\wt P_{\theta^*}^{(n)})>JM\varepsilon_n}\left[\int_{\mX}(1-\phi_n)p_{\theta_0}^{(n)}\frac{\wt p_\theta^{(n)}}{\wt p_{\theta^*}^{(n)}}\right]\wt\Pi_n(\theta)\d\theta}\\
&\qquad\leq  \frac{\e^{(1+C)n\epsilon^2}}{\wt\Pi_n[B(\epsilon,\wt P_{\theta^*}^{(n)},P_{\theta_0}^{(n)})]}\sum_{j\geq J}\int_{U_{n,j}} Q_\theta^{(n)}(1-\phi_n) \d\wt\Pi_n(\theta),
\end{align*}
where $U_{n,j}=\{\theta: jM\varepsilon_n<d(\wt P_\theta^{(n)},\wt P_{\theta^*}^{(n)})\leq (j+1)M\varepsilon_n)\}$. The tests (from the assumption \eqref{test2}) satisfy 
$Q_\theta^{(n)}(1-\phi_n)\leq \e^{-nj^2M^2\varepsilon_n^2/4}$ uniformly on $U_{n,j}$. Then we find (using the assumption \eqref{eq:prior_mass_misspec})
$$
P_{\theta_0}^{(n)}[U_n(JM\varepsilon_n)(1-\phi_n)\mathbb I(\mA)]\leq \e^{(1+C)n\varepsilon_n^2}\sum_{j\geq J} 
\e^{-nj^2M^2\varepsilon_n^2/4+ nj^2M^2\varepsilon_n^2/8}. 
$$
The sum converges to zero when $n\varepsilon_n^2$ is bounded away from zero and $J\rightarrow\infty$. \qedhere

\begin{rem}
For iid data, \cite{kleijn1} introduce a condition involving entropy numbers under misspecification which implies the existence of exponential tests for a testing problem that involves non-probability measures. Since we have a non-iid situation, we assumed the existence of tests directly.
\end{rem}
 \begin{rem}(Friendlier Metrics)\label{rem:parametric_misspec}
In parametric models indexed by $\theta$ in a metric space $(\Theta,d)$, it is more natural to characterize the posterior concentration in terms of $d(\cdot,\cdot)$ rather than the Kullback-Leibler divergence\footnote{Hellinger neighborhoods are less appropriate for misspecified models}. 
Section 5 of \cite{kleijn1} clarifies how Theorem \ref{thm:misspec} can be reformulated in terms of some metric $d(\cdot,\cdot)$ on $\Theta$.

 \end{rem}

\section{Normal Location-Scale Example}\label{sec:example_sup}

Let $X_i\sim P_0=N(0,1)$ and $P_\theta=N(\mu,\sigma^2)$ where $\theta=(\mu,\sigma^2)$ are the unknown parameters and $\theta_0=(0,1)$ are the true values.
This model satisfies \cref{asm:dqm} with the score
\(
	\dot{\ell}_{\theta_0}(x)=\begin{bsmallmatrix}x\\(x^2-1)/2\end{bsmallmatrix}
\)
and the Fisher information matrix
\(
	I_{\theta_0}=\begin{bsmallmatrix}1&0\\0&1/2\end{bsmallmatrix}
\).
The oracle discriminator of $P_0$ from $P_\theta$ is
\(
	D_\theta(x)=\bigl[1+\exp\bigl(-\frac{1}{2}\log\sigma^2+\frac{x^2}{2}-\frac{(x-\mu)^2}{2\sigma^2}\bigr)\bigr]^{-1}
\).
Let us use the logistic regression using regressors $(1,x,x^2)$ to estimate $D_\theta$, i.e., 
$$
D_\theta(x)=[1+\exp(-\beta_0-\beta_1 x-\beta_2 x^2)]^{-1}.
$$
Thus, the true parameter for the logistic regression is $\beta=(\beta_0,\beta_1,\beta_2)=\bigl(\frac{1}{2}\log\sigma^2+\frac{\mu^2}{2\sigma^2},-\frac{\mu}{\sigma^2},\frac{1}{2\sigma^2}-\frac{1}{2}\bigr)$.
Let $\hat{\beta}=(\hat{\beta}_0,\hat{\beta}_1,\hat{\beta}_2)$ be the estimator of $\beta$.
Then,
\[
	\hat{p}_\theta(x)=\frac{\exp\bigl(-\frac{x^2}{2}-\hat{\beta}_0-\hat{\beta}_1 x-\hat{\beta}_2 x^2\bigr)}{\sqrt{2\pi}}\qquad\text{and}\quad
	c_\theta=\frac{\exp\bigl(-\hat{\beta}_0+\frac{1}{2}\frac{\hat{\beta}_1^2}{1+2\hat{\beta}_2}\bigr)}{\sqrt{1+2\hat{\beta}_2}}.
\]
Being a MLE, $\hat{\beta}$ is regular and efficient, so $\sqrt{n}(\hat{\beta}-\beta)=\Delta+o_P(1)$ for a normal vector $\Delta$.
Moreover, if we generate $X_i^\theta$ through $X_i^\theta=\mu+\sigma\wt{X}_i$, $\wt{X}_i\sim N(0,1)$, there is one\hyp{}to\hyp{}one correspondence between $X_i^{\theta_1}$ and $X_i^{\theta_2}$ for every $\theta_1$ and $\theta_2$, so the dependence of $\Delta$ on $\theta$ disappears as $n\to\infty$ for otherwise a more efficient estimator exists to contradict efficiency.
Therefore, the formula for $\hat{p}_\theta$ implies that \cref{asm:dqmp} (\ref{asm:dqmp1}) is satisfied with the oracle score function $\dot{\ell}_{\theta_0}$; since $\hat{p}_\theta$ is twice differentiable, it holds with a faster rate of $O_P(\|h\|^4)$.
{
Meanwhile, if inflated with $\sqrt{n}$, the dependence of $\Delta$ on $\theta$ may not be ignorable.
Simulation suggests that this dependence is linear and of order $O(n^{-1/2})$, so write $\sqrt{n}(\hat{\beta}-\beta)=\Delta+n^{-1/2}\dot{\Delta}(\theta-\theta_0)+o_P(n^{-1/2})$ for some $\dot{\Delta}$ independent of $\theta$.
Considering $c_\theta$ as a function of $\hat{\beta}$ and $\beta$ as a function of $\theta$, Taylor's theorem implies
\begin{align*}
	n(c_\theta-c_{\theta_0})&=\sqrt{n}\tfrac{\partial c_\theta}{\partial\beta'}\sqrt{n}(\hat{\beta}_\theta-\hat{\beta}_{\theta_0})
	+\tfrac{1}{2}\sqrt{n}(\hat{\beta}_\theta-\beta_{\theta_0})'\tfrac{\partial^2 c_\theta}{\partial\beta\partial\beta'}\sqrt{n}(\hat{\beta}_\theta-\beta_{\theta_0})\\
	&\hspace{10pt}-\tfrac{1}{2}\sqrt{n}(\hat{\beta}_{\theta_0}-\beta_{\theta_0})'\tfrac{\partial^2 c_\theta}{\partial\beta\partial\beta'}\sqrt{n}(\hat{\beta}_{\theta_0}-\beta_{\theta_0})+o_P(1),\\
	\sqrt{n}(\hat{\beta}_\theta-\hat{\beta}_{\theta_0})&=\tfrac{\partial\beta}{\partial\theta'}\sqrt{n}(\theta-\theta_0)
	+\tfrac{1}{2}(\mu-\mu_0)\tfrac{\partial^2\beta}{\partial\mu\partial\theta'}\sqrt{n}(\theta-\theta_0)\\
	&\hspace{10pt}+\tfrac{1}{2}(\sigma^2-\sigma_0^2)\tfrac{\partial^2\beta}{\partial\sigma^2\partial\theta'}\sqrt{n}(\theta-\theta_0)+\tfrac{\dot{\Delta}}{\sqrt{n}}(\theta-\theta_0)+o_P(n^{-1/2}).
\end{align*}
At $\theta=\theta_0$,
\[
	\tfrac{\partial c_\theta}{\partial\beta}=\begin{bsmallmatrix}-1\\0\\-1\end{bsmallmatrix},
	\tfrac{\partial^2 c_\theta}{\partial\beta\partial\beta'}=\begin{bsmallmatrix}1&0&1\\0&1&0\\1&0&3\end{bsmallmatrix},
	\tfrac{\partial\beta}{\partial\theta'}=\begin{bsmallmatrix}0&\frac{1}{2}\\-1&0\\0&-\frac{1}{2}\end{bsmallmatrix},
	\tfrac{\partial^2\beta}{\partial\mu\partial\theta'}=\begin{bsmallmatrix}1&0\\0&1\\0&0\end{bsmallmatrix},
	\tfrac{\partial^2\beta}{\partial\sigma^2\partial\theta'}=\begin{bsmallmatrix}0&-\frac{1}{2}\\1&0\\0&1\end{bsmallmatrix}.
\]
Substituting these, we can derive that
\[
	n(c_\theta-c_{\theta_0})=\sqrt{n}(\theta-\theta_0)'\biggl(\begin{bsmallmatrix}0&-1&0\\0&0&-1\end{bsmallmatrix}\Delta
	+\dot{\Delta}'\begin{bsmallmatrix}-1\\0\\-1\end{bsmallmatrix}\biggr)+o_P(1),
\]
yielding \cref{asm:dqmp} (\ref{asm:const}).
Finally, \Cref{fig:exa:normal:asm} illustrates \cref{asm:dqmp} (\ref{asm:proj}) and (\ref{asm:const}).
The black lines plot $n(c_\theta-c_{\theta_0})$ as we change $\theta$; they are linear and its quadratic curvatures are ignorable.
The blue lines represent $n(\mathbb{P}_n-P_{\theta_0})\bigl(\sqrt{\hat{p}_\theta/\hat{p}_{\theta_0}}-1-(\theta-\theta_0)'\dot{\ell}_{\theta_0}/2\bigr)$ and the red lines $n(\mathbb{P}_n-P_{\theta_0})\bigl(\sqrt{\hat{p}_\theta/\hat{p}_{\theta_0}}-1\bigr)^2$; compared to the values of $n(c_\theta-c_{\theta_0})$, both are uniformly ignorable.
}
\begin{figure}[!t]
\centering
\begin{subfigure}[b]{0.45\textwidth}
\centering
\scalebox{0.25}{\includegraphics{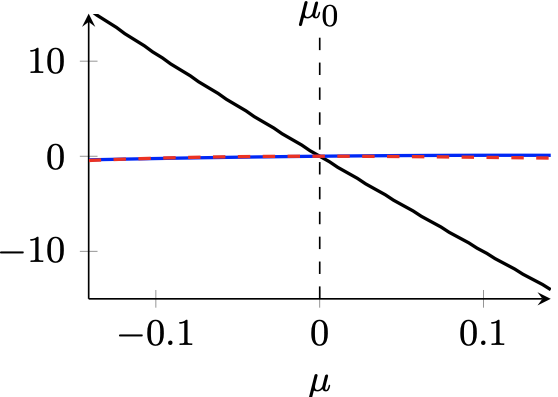}}
\caption{The black line $n(c_\theta-c_{\theta_0})$; the blue line $n(\mathbb{P}_n-P_{\theta_0})(\sqrt{\hat{p}_\theta/\hat{p}_{\theta_0}}-1-(\theta-\theta_0)'\dot{\ell}_{\theta_0}/2)$; the red line $n(\mathbb{P}_n-P_{\theta_0})(\sqrt{\hat{p}_\theta/\hat{p}_{\theta_0}}-1)^2$. $\sigma^2$ is fixed at $\sigma_0^2$.}
\label{fig:exa:normal:asm:mu}
\end{subfigure}
\qquad
\begin{subfigure}[b]{0.45\textwidth}
\centering
\scalebox{0.25}{\includegraphics{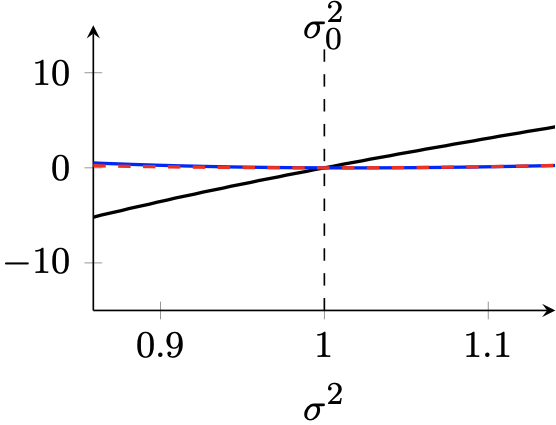}}
\caption{The black line $n(c_\theta-c_{\theta_0})$; the blue line $n(\mathbb{P}_n-P_{\theta_0})(\sqrt{\hat{p}_\theta/\hat{p}_{\theta_0}}-1-(\theta-\theta_0)'\dot{\ell}_{\theta_0}/2)$; the red line $n(\mathbb{P}_n-P_{\theta_0})(\sqrt{\hat{p}_\theta/\hat{p}_{\theta_0}}-1)^2$. $\mu$ is fixed at $\mu_0$.}
\label{fig:exa:normal:asm:sig2}
\end{subfigure}
\caption{Illustration of \cref{asm:dqmp} (\ref{asm:proj}\textendash\ref{asm:const}) in the normal location-scale example with $n=m=5000$.}
\label{fig:exa:normal:asm}
\end{figure}




%
%
Since this model with the logistic classifier satisfies \cref{asm:dqm,asm:dqmp}, it is susceptible to \cref{thm:linear}. This is supported by a diagnostics plot in \Cref{fig:exa:normal:quad} which portrays true and estimated likelihood ratios.
In \Cref{fig:exa:normal:quad:mu}, $\mu$ is varied with $\sigma^2$ fixed at $\sigma_0^2$ while, in \Cref{fig:exa:normal:quad:sig2}, $\sigma^2$ is varied with $\mu$ held at $\mu_0$.
The difference between the estimated log likelihood (blue) and the quadratic approximation (dashed red) is negligible, demonstrating that the validity of \cref{thm:linear} is justifiable.
Compared to the oracle log likelihood (black), the estimated log likelihood is shifted by the random term $\sqrt{n}(\dot{c}_{n,\theta_0}-\hat{P}_{\theta_0}\dot{\ell}_{\theta_0})$.
The curvature, however, is the same as oracle since the red line curves by the Fisher information $I_{\theta_0}$.
Thus,  we expect Algorithm 1 to produce a biased sample and Algorithm 2 a dispersed sample.
%
Note that we can compute
\(
	\sqrt{n}\hat{P}_{\theta_0}\dot{\ell}_{\theta_0}=c_{\theta_0}\sqrt{n}\bigl[-\frac{\hat{\beta}_1}{1+2\hat{\beta}_2},-\frac{1}{2}+\frac{1}{2(1+2\hat{\beta}_2)}+\frac{\hat{\beta}_1^2}{2(1+2\hat{\beta}_2)^2}\bigr]'
\),
which is asymptotically linear in $\Delta$ by the delta method. 
It is then reasonable to expect that this term has mean zero when averaged over $\wt{X}$ since $\hat{\beta}$ is asymptotically unbiased.
If $\dot{c}_{n,\theta_0}$ also has mean zero, then Algorithm 2 is unbiased and Algorithm 3 recovers the exact normal posterior.

\begin{figure}[t!]
\centering
\begin{subfigure}[b]{0.45\textwidth}
\centering
\scalebox{0.25}{\includegraphics{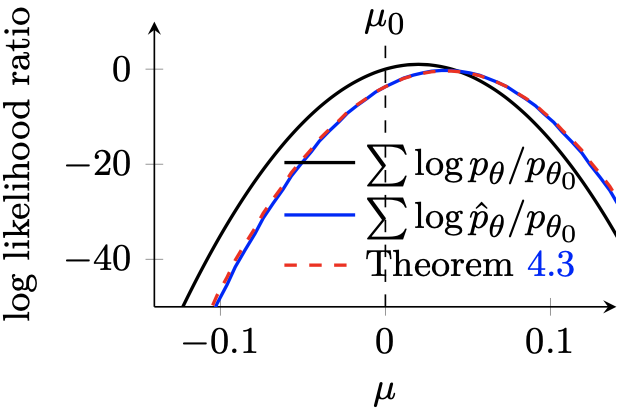}}
\caption{True log likelihood, estimated log likelihood, and quadratic approximation by \cref{thm:linear}. $\sigma^2=\sigma_0^2$.}
\label{fig:exa:normal:quad:mu}
\end{subfigure}
\qquad
\begin{subfigure}[b]{0.45\textwidth}
\centering
\scalebox{0.25}{\includegraphics{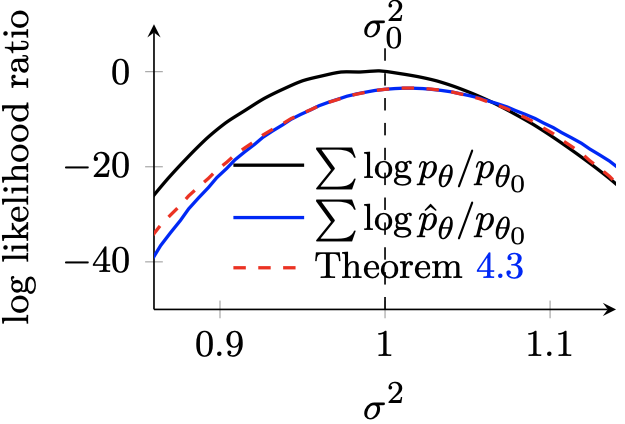}}
\caption{True log likelihood, estimated log likelihood, and quadratic approximation by \cref{thm:linear}. $\mu=\mu_0$.}
\label{fig:exa:normal:quad:sig2}
\end{subfigure}
\caption{Illustration of \cref{thm:linear} in the normal mean-scale example with  $n=m=5000$.}
\label{fig:exa:normal:quad}
\end{figure}

\begin{figure}[!t]
\centering

\begin{subfigure}{0.45\textwidth}
\centering
\scalebox{0.3}{\includegraphics{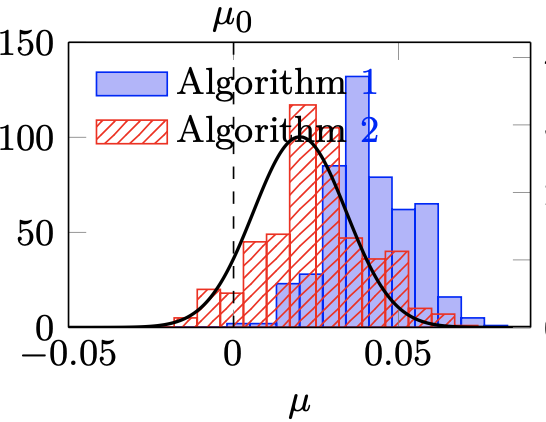}}
\caption{ Algorithm 1 and 2.}
\label{fig:normal:mu:hist2}
\end{subfigure}
\begin{subfigure}{0.45\textwidth}
\scalebox{0.3}{\includegraphics{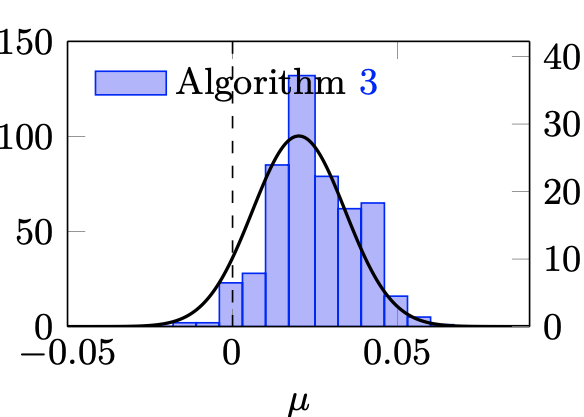}}
\caption{ Algorithm 3.}
\label{fig:normal:mu:hist3}
\end{subfigure}
\begin{subfigure}{0.45\textwidth}
\centering
\scalebox{0.3}{\includegraphics{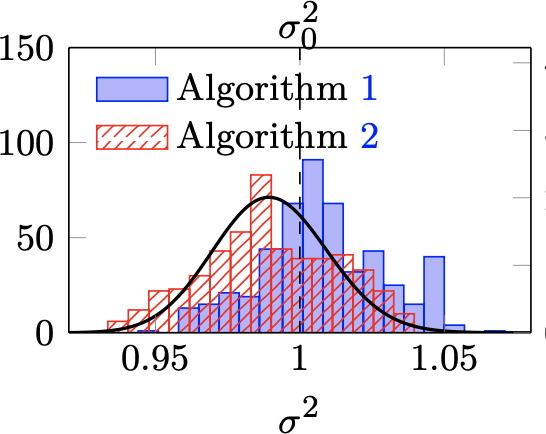}}
\caption{ Algorithm 1 and 2.}
\label{fig:normal:sig2:hist2}
\end{subfigure}
\begin{subfigure}{0.45\textwidth}
\scalebox{0.3}{\includegraphics{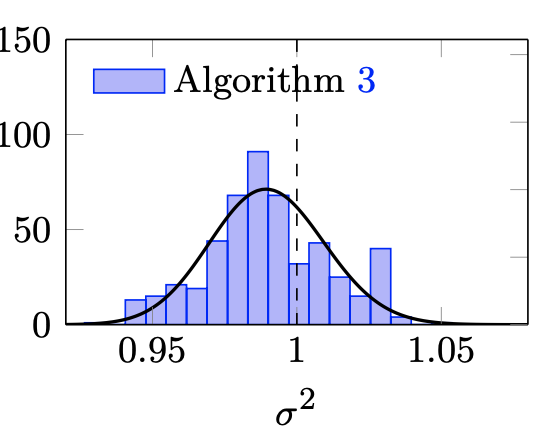}}
\caption{ Algorithm 3.}
\label{fig:normal:sig2:hist3}
\end{subfigure}
\caption{\small Histograms of the MHC samples of $\mu$ and $\sigma^2$ in the normal location\hyp{}scale model. Algorithm 1 (resp. 2) yield more biased (resp. dispersed) samples compared to the true posterior (black curve).  Algorithm 3 (on the right) tracks the black curve more closely.}
\label{fig:normal:hist}
\end{figure}

To see that this is indeed the case, we impose a conjugate normal\hyp{}inverse\hyp{}gamma prior, $\theta\sim N\Gamma^{-1}(\mu_0,\nu,\alpha,\beta)$, that is, the marginal prior of $\sigma^2$ is the inverse\hyp{}gamma $\Gamma^{-1}(\alpha,\beta)$ and the conditional prior of $\mu$ given $\sigma^2$ is $N(\mu_0,\frac{\sigma^2}{\nu})$.
The posterior is then analytically calculated as (for $\bar{X}_n =\frac{1}{n}\sum_i X_i$)
$$
\theta\mid X\sim N\Gamma^{-1}\bigl(\frac{\nu\mu_0+n\bar{X}_n}{\nu+n},\nu+n,\alpha+\frac{n}{2},\beta+\frac{1}{2}\sum_i(X_i-\bar{X}_n)^2+\frac{n\nu}{\nu+n}\frac{(\bar{X}_n-\mu_0)^2}{2}\bigr).
$$
Figure \ref{fig:normal:hist} shows the histograms of Algorithm 1, 2 and  3 after $K=500$ MCMC steps.
Since the estimated log likelihood has a rightward bias (as seen from \Cref{fig:exa:normal:quad}), Algorithm 1 produces a sample that is shifted to the right (Figures \ref{fig:normal:mu:hist2} and \ref{fig:normal:sig2:hist2}).
Algorithm 2, on the other hand, gives a sample that is more dispersed than the posterior but is correctly placed, indicating that the random bias has mean zero.
Consequently, Algorithm 3 generates a sample that is placed and shaped correctly (\Cref{fig:normal:mu:hist3,fig:normal:sig2:hist3}).

\section{Mixing Properties of MHC}\label{sec:speed}
A critical issue for MCMC algorithms is the determination of the number of iterations needed for the result  to be approximately a sample from the distribution of interest. 
This section sheds light on the mixing rate of Algorithm 1.
Under standard assumptions on   $q(\cdot\C \cdot)$ (such as positivity almost surely, see Corollary 4.1 in \cite{tha2017}), the distribution of the MHC Markov chain after $t$ steps  will converge to $\pi^\star_n(\theta\C\Xn)$ from any initialization in $\Theta$ in total variation as $t\rightarrow\infty$. \cite{mengersen} derive necessary and sufficient conditions for the Metropolis algorithms (with independent or symmetric candidate distributions) to converge at a geometric rate to a prescribed continuous distribution. 
\cite{bc2009} studied the speed of convergence of MH  when {both} $n\rightarrow\infty$ {\em and} $d\rightarrow\infty$ where $\theta\in\Theta\subset\R^d$.

We can reformulate their sufficient conditions for showing polynomial mixing times of MHC. Recall that the stationary distribution $\pi^\star_n(\theta\C\Xn)$ of the MHC sampler in \eqref{eq:posterior_density}
normalized to a compact set $K\subset\Theta$, writes as
$
\Pi^\star_K(B)=\int_B \pi^\star_n(\theta\C\Xn)/\int_K\pi^\star_n(\theta\C\Xn).
$
We are interested in bounding the number of steps needed to draw a random variable from $\Pi^*_K$ with a given precision. We denote with $\Pi_K^{*t}$ the distribution obtained after $t$ steps of the MHC algorithm starting from $\Pi_K^{*0}$. It is known  (see e.g. \cite{lovasz}) that the total variation distance between $Q$ and $Q_t$ can be bounded by
$
\|\Pi^*_K-\Pi^{*t}_K\|_{TV}\leq \sqrt{M}(1-\phi^2/2)^t,
$
where $M$ is a constant which depends on the initial distribution $\Pi_K^{*0}$ and $\phi$ is the {\em conductance} of the Markov chain defined, e.g., in (3.13) in \cite{bc2009}.
To obtain bounds on the conductance, the Markov chain needs to transition somewhat smoothly (see assumption D1 and D2 in \cite{bc2009}). These assumptions pertain to the continuity of the transitioning measure  and are satisfied by the Gaussian random walk with a suitable choice of the proposal variance (see Section 3.2.4 in \cite{bc2009})
The following Lemma summarizes Theorem 2 of \cite{bc2009} in the  context of Algorithm 1 under asymptotic normality assumptions examined in more detail in Section \ref{sec:bvm}.
\begin{lem}(Mixing Rate)\label{lemma:mix}
Under conditions in equations \eqref{eq:loglike}-\eqref{eq:conditions_eps} and a Gaussian random walk $q(\cdot\C\cdot)$  satisfying Lemma 4 of \cite{bc2009},
the global conductance $\phi$ of the Markov chain obtained from Algorithm 1 satisfies
$
1/\phi=\mathcal O(d)$ in  $P_{\theta_0}^{(n)}$-probability. In addition,  the minimal number of MCMC iterations needed to achieve $\|\Pi^*_K-\Pi^{*t}_K\|_{TV}<\epsilon$ is  $\mathcal O(d^2\log (M/\epsilon))$ for some suitable constant $M$ depending on the initial distribution $\Pi^{*0}_K$.
\end{lem}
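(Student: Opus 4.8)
The plan is to obtain this as a corollary of Theorem 2 of \cite{bc2009}, whose hypotheses I would verify for the particular target $\pi^\star_n(\theta\C\Xn)$ in \eqref{eq:posterior_density}. The enabling observation is that the stationary density of Algorithm 1 is an exponentially tilted posterior whose logarithm, by \cref{thm:linear} and \cref{rem:linear1}, is asymptotically quadratic in $h=\sqrt n(\theta-\theta_0)$ with curvature governed by the Fisher information $I_{\theta_0}$ and a center that is merely \emph{shifted} (not reshaped) by the linear term $h'\sqrt n(\dot c_{n,\theta_0}-\hat P_{\theta_0}\dot\ell_{\theta_0})$. Consequently $\pi^\star_n$ belongs to the class of approximately Gaussian targets for which \cite{bc2009} bound the conductance, and the whole argument reduces to checking their regularity conditions and then combining the resulting conductance bound with a standard mixing-from-conductance inequality.

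First I would verify the log-density conditions abbreviated by \eqref{eq:loglike}--\eqref{eq:conditions_eps}, i.e. the locally-quadratic-approximation (LQA) requirements of \cite{bc2009}: that on the compact $K$ the normalized log target admits a quadratic expansion with Hessian eigenvalues bounded away from $0$ and $\infty$ and a uniformly controlled remainder. Because the tilt $u_\theta$ contributes only the asymptotically linear term of \cref{rem:linear1}, it leaves the Hessian equal to the oracle curvature $I_{\theta_0}$ and perturbs only the gradient; hence the eigenvalue and remainder bounds that hold for the actual posterior (via \cref{asm:dqm} and the asymptotic normality assumed in \cref{sec:bvm}) transfer to $\pi^\star_n$. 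Next I would check the transition-smoothness conditions D1--D2 of \cite{bc2009}, namely that the one-step random-walk law varies continuously in the current state; these are met by the Gaussian proposal with an appropriately scaled variance, as recorded in Lemma 4 and Section 3.2.4 of \cite{bc2009}, which is exactly the hypothesis imposed here on $q(\cdot\C\cdot)$.

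With both sets of conditions in hand, Theorem 2 of \cite{bc2009} yields $1/\phi=\mathcal O(d)$ in $P_{\theta_0}^{(n)}$-probability, where $\phi$ is the global conductance of the chain restricted to $K$. For the iteration count I would insert this into the conductance-based bound $\|\Pi^*_K-\Pi^{*t}_K\|_{TV}\leq\sqrt M(1-\phi^2/2)^t$ quoted from \cite{lovasz}. Requiring the right-hand side to be below $\epsilon$ and using $-\log(1-\phi^2/2)\asymp\phi^2/2$ gives
\[
 t\ \geq\ \frac{\log(\sqrt M/\epsilon)}{-\log(1-\phi^2/2)}\ =\ \mathcal O\!\left(\phi^{-2}\log(M/\epsilon)\right)\ =\ \mathcal O\!\left(d^2\log(M/\epsilon)\right),
\]
which is the claimed polynomial mixing time.

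The main obstacle I anticipate is \emph{uniformity}: \cref{rem:linear1} delivers the linear expansion of $u_\theta$ only in the $1/\sqrt n$ neighborhood of $\theta_0$, whereas the LQA conditions of \cite{bc2009} must hold over the entire restricted support $K$. Away from $\theta_0$, \cref{thm:rate:res} controls $u_\theta$ only at the coarser order $O_P(n\delta_n)$ and supplies no quadratic structure, so the delicate step is to promote the local expansion to a uniform-over-$K$ statement and to bound the tail contribution of the non-separable tilt $\e^{u_\theta}$ so that it neither distorts the effective curvature nor inflates the moment constants that enter $\phi$. Establishing this uniform control, which leans on the asymptotic-normality apparatus of \cref{sec:bvm} together with the tail and support-compatibility bounds of \cref{asm:2}, is where the real work lies; once it is in place the conductance bound and the mixing estimate follow mechanically from \cite{bc2009} and \cite{lovasz}.
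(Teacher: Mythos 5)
Your proposal is correct and takes essentially the same route as the paper: the lemma is a direct application of Theorem 2 of \cite{bc2009} (whose LQA hypotheses are exactly \eqref{eq:loglike}--\eqref{eq:conditions_eps} and whose transition-smoothness conditions D1--D2 are met by the Gaussian random walk via their Lemma 4 and Section 3.2.4), combined with the Lov\'asz--Simonovits bound $\|\Pi^*_K-\Pi^{*t}_K\|_{TV}\leq \sqrt{M}(1-\phi^2/2)^t$ from \cite{lovasz} to turn $1/\phi=\mathcal O(d)$ into the $\mathcal O(d^2\log(M/\epsilon))$ iteration count, exactly as in your displayed calculation. One clarification: conditions \eqref{eq:loglike}--\eqref{eq:conditions_eps} are \emph{hypotheses} of the lemma rather than claims to be established, so the uniformity issue you flag as ``where the real work lies'' (promoting the local linear expansion of $u_\theta$ from Corollary \ref{rem:linear1} to a quadratic approximation over all of $K$) is not part of the lemma's proof burden---the paper assumes these conditions outright and only discusses informally, via Theorem \ref{thm:linear} and Section \ref{sec:bvm}, when they can be expected to hold.
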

MHC thus attains  bounds on the mixing rate that are  {\em polynomial} in $d$ (i.e. rapid mixing) under suitable Bernstein-von Mises  conditions formalized later in Section \ref{sec:bvm}.
This section investigates how fast the Markov chain converges to its target $\pi^\star_n(\theta\C\Xn)$ as the number of iterations $t$ grows. 
In Section \ref{sec:EB} (resp. Section \ref{sec:misspec}),  we investigate a fundamentally different question. We assess the speed at which the target $\pi^\star_n(\theta\C\Xn)$ shrinks around the truth $\theta_0$ (resp. a Kullback-Leibler projection) as $n$ grows.

{
 The multiplication constant $M$ in  Lemma \ref{lemma:mix} depends on the initial distribution. Namely,  the initial distribution needs to be ``$M$-warm" according to assumption (3.5) in \cite{bc2009}. Loosely speaking, $M$ quantifies the amount of overlap between the initial and stationary distributions. Our convergence rate result thereby implicitly incorporates the properties of the initialization algorithm by regarding the  constant $M$ as dependent on the initialization routine.  In our Lotka-Volterra example, we found that the mixing  mixing performance of MHC depends on the classifier. With  random forests, the initialization was not as important since the shape of the likelihood approximation did not have a sharp peak (compare  Figure 5 and 6 in the main manuscript). On the other hand,   \texttt{glmnet} yields likelihood approximations with only a very narrow area of likelihood support and the initialization needed to be close in order to avoid a very long burn-in.  We have considered an ABC pilot run for initialization. Alternatively, one could try less accurate/costly classifiers in a pilot run to obtain a good initialization. 
}

\section{Bernstein-von Mises Theorem}\label{sec:bvm}
The Bernstein-von Mises (BvM)  theorem asserts that the posterior distribution of a parameter  in a suitably regular finite-dimensional model is approximately  normally distributed as the number of observations grows to infinity.  
More precisely, {if $p_\theta$ is appropriately smooth and identifiable in $\theta$} and the prior  $\Pi_n(\cdot)$ puts positive mass around the true parameter $\theta_0$, then the posterior distribution of $\sqrt{n}(\theta-\wh\theta_n)$ tends to $N(0,I_{\theta_0}^{-1})$ for most observations $\Xn$,
where $\wh \theta_n$ is an efficient estimator and $I_\theta$ is the Fisher information matrix of the model at $\theta$. In this section, we want to understand   the effect of the tilting factor $\e^{u_\theta(X^{(n)})}$ on the limiting shape of the pseudo-posterior  in \eqref{eq:posterior_density} that is proportional to $\pi_n(\theta\C\Xn)\e^{u_\theta(\Xn)}$. Exponential tilting is particularly intuitive  for linear $u_\theta(\Xn)$ and for Gaussian posteriors   where it implies a location shift.
Example \ref{ex:linear} below reveals how  the behavior of $u^*(\Xn)$ affects the centering of the posterior limit  (under linearity and Gaussianity)
\begin{exa}(Linear $u_\theta$)\label{ex:linear}
Suppose that the posterior $\pi_n(\theta\C\Xn)$ is Gaussian with some mean $\mu$ and covariance $\Sigma$. This  holds approximately in regular models according to the BvM theorem (Theorem 10.1 in \cite{v1998}). 
Assume that there exists an invertible mapping $\tau:\Theta\rightarrow\Theta$ such that $\theta=\tau(\bar\theta)$ where the density for $\bar\theta$ satisfies
$
\pi_n(\theta\C\Xn)\e^{u_\theta(X^{(n)})}\d\theta\propto \pi^*_n(\bar\theta\C\Xn)\d\bar\theta.
$
Assuming the following linear form   (justified in Remark \ref{rem:linear1})
\begin{equation}\label{eq:linear}
u_\theta(X^{(n)})=a^*(X^{(n)})+ \theta' u^*(X^{(n)})
\end{equation} we obtain
$\bar\theta\sim \mathcal N(\mu+\Sigma \, u^*(X^{(n)}),\Sigma)$. In this case, the mapping $\tau$ satisfies
$\theta=\tau(\bar\theta)=\bar\theta-\Sigma u^*(X^{(n)})$, implying a location shift. We had concluded a similar property below Theorem  \ref{thm:linear} at the end of Section \ref{sec:residual}.
\end{exa}

We now turn to more precise statements by recollecting the BvM phenomenon under misspecification in LAN models \cite{kleijn2}. The centering and the asymptotic covariance matrix will be ultimately affected by $\theta^*$ in \eqref{eq:KL_theta}.  
 
\begin{lem}(Bernstein von-Mises)\label{thm:bvm}
Assume that the posterior \eqref{eq:pstar} concentrates around $\theta^*$ at the rate $\varepsilon_n^*$
and that   for every compact $K\subset \R^d$
\begin{equation}\label{eq:LAN_misspec}
\sup_{h\in K}\left| \log\frac{\wt p_{\theta^*+\varepsilon_n^*h}^{(n)}(\Xn)}{\wt p_{\theta^*}^{(n)}(\Xn)}-h'\wt V_{\theta^*}\wt\Delta_{n,\theta^*}-\frac{1}{2}h'\wt V_{\theta^*}h\right|\rightarrow 0\quad\text{in $P_{\theta_0}^{(n)}$-probability}
\end{equation}
for some random vector $\wt\Delta_{n,\theta^*}$ and a non-singular matrix $\wt V_{\theta^*}$. Then  the pseudo-posterior converges to a sequence of normal distributions in total variation at the rate $\varepsilon_n^*$, i.e.
$$
\sup_B \left| \Pi_n^*\left(\varepsilon_n^{*-1}(\theta-\theta^*) \in B\C\Xn\right)-N_{\wt\Delta_{n,\theta^*},\wt V_{\theta^*}}(B)\right|\rightarrow 0 \quad\text{in 
$P_{\theta_0}^{(n)}$-probability}.
$$
\end{lem}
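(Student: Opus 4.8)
The plan is to run the classical Le Cam--van der Vaart localization argument, adapted to the misspecified, non-i.i.d.\ likelihood $\wt p_\theta^{(n)}$, essentially transcribing the proof of Theorem~2.1 in \cite{kleijn2}. First I would reparametrize to the local coordinate $h = \varepsilon_n^{*-1}(\theta - \theta^*)$ and write the induced posterior density of $h$ as $f_n(h) = r_n(h)/\int_{\R^d} r_n(g)\,\d g$, where the unnormalized integrand is the product of the local likelihood ratio and the local prior ratio,
\begin{equation*}
r_n(h) = \frac{\wt p_{\theta^*+\varepsilon_n^* h}^{(n)}(\Xn)}{\wt p_{\theta^*}^{(n)}(\Xn)}\,\frac{\wt\pi(\theta^*+\varepsilon_n^* h)}{\wt\pi(\theta^*)}.
\end{equation*}
Completing the square in the LAN form \eqref{eq:LAN_misspec} identifies the target: the quantity $h'\wt V_{\theta^*}\wt\Delta_{n,\theta^*} - \tfrac12 h'\wt V_{\theta^*}h$ equals $-\tfrac12(h-\wt\Delta_{n,\theta^*})'\wt V_{\theta^*}(h-\wt\Delta_{n,\theta^*})$ up to an $h$-free term, so the limiting shape is the $N_{\wt\Delta_{n,\theta^*},\wt V_{\theta^*}}$ density $\phi_0(h)$. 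The goal then reduces to showing $\int_{\R^d}|f_n(h) - \phi_0(h)|\,\d h \to 0$ in $P_{\theta_0}^{(n)}$-probability, which is exactly twice the total variation distance asserted.

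The argument would then split into a compact part and a tail part. On any fixed compact $K$, the LAN hypothesis \eqref{eq:LAN_misspec} gives $\log[\wt p^{(n)}_{\theta^*+\varepsilon_n^* h}/\wt p^{(n)}_{\theta^*}] \to h'\wt V_{\theta^*}\wt\Delta_{n,\theta^*} - \tfrac12 h'\wt V_{\theta^*}h$ uniformly in $h \in K$, and the continuity and positivity of the tilted prior density $\wt\pi \propto C_\theta\pi$ at $\theta^*$ (a mild regularity requirement implicit in the statement) force the prior ratio to $1$; hence $r_n(h) \to \phi_0(h)$ uniformly on $K$, so $\int_K r_n \to \int_K \phi_0$ and the normalizer is bounded away from $0$. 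For the tail I would invoke the concentration hypothesis: the posterior mass of $\{d(\wt P_\theta^{(n)},\wt P_{\theta^*}^{(n)}) > M\varepsilon_n^*\}$ tends to $0$, and since nonsingularity of $\wt V_{\theta^*}$ makes $d$ locally equivalent to the Euclidean norm of $h$ (so that this region translates into $\{\|h\| > cM\}$), the posterior mass $\int_{\|h\|>cM} f_n$ is small with high probability for large $M$, while the corresponding Gaussian tail $\int_{\|h\|>cM}\phi_0$ is small deterministically. Combining uniform convergence on compacts with negligibility of both tails then yields the $L_1$ convergence by a Scheff\'e-type dominated-convergence argument.

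The main obstacle, as usual in Bernstein--von Mises proofs, is controlling the contribution of the tails to the \emph{normalizing constant} $\int_{\R^d} r_n$ rather than merely to the posterior mass. Concentration bounds the ratio $\int_{\|h\|>cM} r_n / \int_{\R^d} r_n$ directly, but to conclude $L_1$ convergence one must rule out the normalizer being inflated by mass escaping to infinity along the rescaling; this is precisely where the quantitative link between the semimetric $d$ figuring in the concentration statement and the quadratic LAN geometry is needed. I would bridge this gap exactly as in \cite{kleijn2}: use the concentration at rate $\varepsilon_n^*$ to restrict attention, with probability tending to one, to a slowly growing ball $\{\|h\|\le M_n\}$, patch the fixed-compact LAN convergence across this growing ball via a standard sequence-of-compacts (diagonal) argument, and absorb the $h$-free remainder of the completed square into the normalization. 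The non-i.i.d.\ and misspecified nature of $\wt p_\theta^{(n)}$ enters only through the two hypotheses, which already package all the probabilistic content, so the remaining steps are deterministic manipulations of densities.
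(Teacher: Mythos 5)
Your proposal is correct and coincides with the paper's route: the paper's entire proof is the one-line citation ``Follows from Theorem 2.1 of \cite{kleijn2}'', and what you have written is precisely a reconstruction of that theorem's localization argument (rescaling to $h$, completing the square in the LAN expansion, compact-plus-tail decomposition with the normalizer controlled via concentration on slowly growing balls). Since you correctly flag and resolve the one genuine difficulty --- preventing the normalizing constant from being inflated by tail mass --- exactly as Kleijn and van der Vaart do, there is no gap to report.
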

\proof Follows from Theorem 2.1 of \cite{kleijn2}.\smallskip

It remains to examine the assumption \eqref{eq:LAN_misspec}.  For \texttt{iid} data, \cite{kleijn2} derived sufficient conditions (Lemma 2.1) for \eqref{eq:LAN_misspec} to hold. Due to the non-separability of the term $u_\theta(\Xn)$, the mis-specified model cannot be regarded as arriving from an \texttt{iid} experiment. 
In Lemma \ref{lemma:lan} below we nevertheless provide  intuition for when \eqref{eq:LAN_misspec} is expected to hold  if $u_\theta(\Xn)$ is linear.  Recall that in Remark \ref{rem:linear1} we have concluded that under differentiability, the posterior residual $u_\theta(\Xn)$ does converge to a linear function in $\theta$.
Below, we provide sufficient conditions for the  LAN assumption \eqref{eq:LAN_misspec},  relaxing slightly Lemma 2.1 in 
\cite{kleijn2}. The assumptions in Lemma \ref{lemma:lan} are closely related to the ones in Theorem \ref{thm:linear}. The main difference is that Lemma \ref{lemma:lan} is concerned with the behavior of the (misspecified) likelihood around $\theta^*$ as opposed to $\theta^0$.

\begin{lem}\label{lemma:lan}
Assume that $P_{\theta_0}^{(n)}=P_{\theta_0}^n$ with a density $\prod_{i=1}^np_{\theta_0}(x_i)$ where the function
$\theta\rightarrow\log p_\theta(x)$ is differentiable at $\theta^*$ with a derivative $\dot \ell_\theta$. Assume  there exists an open neighborhood $U$ of $\theta^*$ such that
$
\left| \log\frac{p_{\theta_1}(x)}{p_{\theta_2}(x)}\right|\leq m_{\theta^*}\|\theta_1-\theta_2\|\,\,P_{\theta_0}-a.s.\,\forall \theta_1,\theta_2\in U
$
where $m_\theta$ is a square integrable function. Assume that the log-likelihood has a 2nd order Taylor expansion around $\theta^*$ (i.e. \eqref{eq:quadratic} holds).
 Assume that $u_\theta$ is asymptotically linear around $\theta^*$(i.e. \eqref{eq:as_linearity} holds), then \eqref{eq:LAN_misspec} holds with $\varepsilon_n^*=1/\sqrt{n}$ and
\begin{equation}\label{eq:verification}
\wt V_{\theta}=V_{\theta}\quad\text{and}\quad \wt\Delta_{n,\theta}=V_{\theta}^{-1}\left[\frac{\dot C_{\theta}}{\sqrt{n}}
+ \sqrt{n}\P_n\dot\ell_{\theta}+\frac{u^*(\Xn)}{\sqrt{n}} \right]
\end{equation}
\end{lem}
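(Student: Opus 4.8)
The plan is to exploit the multiplicative structure $\wt p_\theta^{(n)}=p_\theta^{(n)}\e^{u_\theta}/C_\theta$ of the misspecified likelihood in \eqref{eq:tilde_lik} and split the log-likelihood ratio into three additive pieces, treating each on the $1/\sqrt n$ scale separately. This detour is forced by the non-separability of $u_\theta(\Xn)$, which (as noted below \cref{thm:misspec}) prevents a direct appeal to the iid machinery of \cite{kleijn2}. Writing $\theta=\theta^*+h/\sqrt n$, I would start from the exact identity
\[
\log\frac{\wt p_{\theta^*+h/\sqrt n}^{(n)}}{\wt p_{\theta^*}^{(n)}}
=\underbrace{\log\frac{p_{\theta^*+h/\sqrt n}^{(n)}}{p_{\theta^*}^{(n)}}}_{(\mathrm I)}
+\underbrace{\bigl(u_{\theta^*+h/\sqrt n}-u_{\theta^*}\bigr)}_{(\mathrm{II})}
-\underbrace{\bigl(\log C_{\theta^*+h/\sqrt n}-\log C_{\theta^*}\bigr)}_{(\mathrm{III})},
\]
and establish a quadratic expansion for each term uniformly over $h$ in a compact $K\subset\R^d$.

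For $(\mathrm I)$ I would run the classical LAN argument for correctly specified iid experiments, as in the proof of Theorem 7.2 in \cite{v1998} or Lemma 2.1 in \cite{kleijn2}. The pointwise differentiability of $\theta\mapsto\log p_\theta$ at $\theta^*$ together with the square-integrable Lipschitz envelope $m_{\theta^*}$ delivers the stochastic equicontinuity needed to control the empirical-process fluctuation uniformly over $K$, reducing the first-order part to $h'\sqrt n\,\mathbb{P}_n\dot\ell_{\theta^*}$; the assumed second-order Taylor expansion \eqref{eq:quadratic} of the expected log-likelihood then produces the quadratic drift, so that $(\mathrm I)=h'\sqrt n\,\mathbb{P}_n\dot\ell_{\theta^*}-\tfrac12 h'V_{\theta^*}h+o_P(1)$ with $V_{\theta^*}$ the (negative) Hessian of $P_{\theta_0}\log p_\theta$. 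For $(\mathrm{II})$ I would invoke the asymptotic linearity of $u_\theta$ around $\theta^*$ postulated in \eqref{eq:as_linearity}, which is the analogue at the projection point $\theta^*$ of the linearization established around $\theta_0$ in \cref{rem:linear1}; this gives $(\mathrm{II})=h'u^*(\Xn)/\sqrt n+o_P(1)$ uniformly on $K$. Finally, a Taylor expansion of the deterministic term $\log C_\theta$ contributes $-(\mathrm{III})=h'\dot C_{\theta^*}/\sqrt n+o(1)$, where $\dot C_{\theta^*}$ denotes the suitably signed gradient arising from $-\log C_\theta$ and where the $O(1/n)$ curvature of $\log C_\theta$ is checked to be negligible at this scale.

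Collecting the three expansions, the linear-in-$h$ contributions add to $h'\bigl[\sqrt n\,\mathbb{P}_n\dot\ell_{\theta^*}+u^*(\Xn)/\sqrt n+\dot C_{\theta^*}/\sqrt n\bigr]$ and the quadratic contribution is $-\tfrac12 h'V_{\theta^*}h$. Setting $\wt V_{\theta^*}=V_{\theta^*}$ and solving $h'\wt V_{\theta^*}\wt\Delta_{n,\theta^*}$ against the linear term yields exactly $\wt\Delta_{n,\theta^*}=V_{\theta^*}^{-1}\bigl[\dot C_{\theta^*}/\sqrt n+\sqrt n\,\mathbb{P}_n\dot\ell_{\theta^*}+u^*(\Xn)/\sqrt n\bigr]$, which is \eqref{eq:verification} with $\varepsilon_n^*=1/\sqrt n$; the lemma then follows since \eqref{eq:LAN_misspec} is precisely the collected expansion.

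The main obstacle is the uniform-over-$K$ control of the remainders together with the apparent divergence of the individual first-order terms. Because $\theta^*$ is the Kullback–Leibler projection defined in \eqref{eq:KL_theta}, the first-order condition of that minimization forces the diverging $O(\sqrt n)$ drifts contributed separately by $\log p_\theta$, $u_\theta$ and $\log C_\theta$ to cancel, so that $\wt\Delta_{n,\theta^*}$ is a genuine $O_P(1)$ (mean-zero fluctuation) vector rather than an $O_P(\sqrt n)$ one; verifying this cancellation, and that the cross-interactions between the three pieces are $o_P(1)$ uniformly on compacta, is where the Lipschitz hypothesis and the linearity assumption \eqref{eq:as_linearity} do the real work. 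Once these are in place the result is immediate from \eqref{eq:LAN_misspec} via \cref{thm:bvm}.
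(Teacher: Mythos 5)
Your proposal follows essentially the same route as the paper's proof: the same decomposition of $\log\bigl(\wt p^{(n)}_{\theta^*+h/\sqrt{n}}/\wt p^{(n)}_{\theta^*}\bigr)$ into the exact log-likelihood ratio, the increment of $u_\theta$, and the normalizing-constant term, with the first piece handled by the classical iid LAN argument (Lemma 19.31 of \cite{v1998} together with \eqref{eq:quadratic}), the second by \eqref{eq:as_linearity}, the third by a first-order expansion of $\log C_\theta$, and the result read off by matching against \eqref{eq:LAN_misspec} to obtain \eqref{eq:verification}. The only differences are cosmetic: you carry the opposite sign convention for $V_{\theta^*}$ (the negative Hessian rather than the Hessian appearing in \eqref{eq:quadratic}), you fix the sign of the $\log C_\theta$ contribution that the paper's own display absorbs into the definition of $\dot C_{\theta^*}$, and your closing remarks on the $O_P(1)$ cancellation of the diverging drifts via the first-order condition at $\theta^*$ are additional discussion that the paper's proof does not attempt.
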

\proof 

We can write
\begin{equation}\label{eq:expansion}
\log\frac{\wt p_{\theta^*+\varepsilon_nh}^{(n)}}{\wt p_{\theta^*}^{(n)}}=
\log \frac{C_{\theta^*+\varepsilon_nh}}{C_{\theta^*}}+\log\frac{ p_{\theta^*+\varepsilon_nh}^{(n)}}{p_{\theta^*}^{(n)}}
+u_{\theta^*+\varepsilon_nh}-u_{\theta^*}.
\end{equation}
{This yields, from Lemma 19.31 in  \cite{v1998}, that
$$
\bG_n\left(\sqrt{n}\log \frac{p_{\theta^*+h/\sqrt{n}}}{p_\theta^*}-h'\dot\ell_{\theta^*}\right)\rightarrow 0\quad \text{in $P_{0}$},
$$
where $\bG_n=\sqrt{n}(\P_n-P_{\theta_0})$ is the empirical process.
Assuming that 
\begin{equation}\label{eq:quadratic}
P_{\theta_0}\log \left(\frac{p_\theta}{p_{\theta^*}}\right)=P_{\theta_0}\dot\ell_{\theta^*}'(\theta-\theta^*)+
\frac{1}{2}(\theta-\theta^*)'V_{\theta^*}(\theta-\theta^*)+o(\|\theta-\theta^*\|^2)\quad\text{as $\theta\rightarrow\theta^*$}
\end{equation}
one obtains
\begin{align*}
\log \frac{p^{(n)}_{\theta^*+h/\sqrt{n}}}{p^{(n)}_{\theta^*}}=n\P_n\log \frac{p_{\theta^*+h/\sqrt{n}}}{p_{\theta^*}}
&=o_{P}(1)+\bG_nh'\dot\ell_{\theta^*}+nP_{\theta_0}\log \frac{p_{\theta^*+h/\sqrt{n}}}{p_{\theta^*}}\\
&=o_{P}(1)+\bG_nh'\dot\ell_{\theta^*}+\frac{h_n'V_{\theta^*}h}{2}+\sqrt{n}P_{\theta_0}h'\dot\ell_{\theta}
\end{align*}
If we assume asymptotic linearity of $u_\theta$ around $\theta^*$, i.e. 
\begin{equation}\label{eq:as_linearity}
u_{\theta^*+h/\sqrt{n}}(\Xn)-u_{\theta^*}(\Xn)=\frac{1}{\sqrt{n}}h'u^\star(\Xn)+o_{P}(1)
\end{equation}
for some $u^\star(\Xn)$
and
$$
\log  \frac{C_{\theta^*+h_n/\sqrt{n}}}{C_{\theta^*}}=\frac{\dot C_{\theta^*}'h_n}{\sqrt{n}}+o(1)
$$
then \eqref{eq:LAN_misspec} holds with \eqref{eq:verification}. \qed

Related BvM conditions have been characterized in \cite{bc2009}. We restate these conditions utilizing the localized re-parametrization $h=\sqrt{n}(\theta-\theta_0)-s$, where $s=\sqrt{n}(\hat\theta-\theta_0)$  is a {\em zero-mean} vector where $\hat\theta$ is some suitable estimator. We first define a localized criterion function 
$
\ell(h)\equiv \frac{\wt p_{\hat \theta+h/\sqrt{n}}(\Xn)\wt\pi(\hat\theta +h/\sqrt{n})}{
\wt p_{\hat\theta}(\Xn)\wt\pi(\hat\theta)},
$ 
which corresponds to the normalized pseudo-posterior $\pi^*(\theta\C\Xn)/\pi^*(\hat \theta\C\Xn)$. \cite{bc2009} impose a centered  variant of \eqref{eq:LAN_misspec} requiring that  $\ell(h)$
approaches a quadratic form on a closed ball $K$ (such that\footnote{$\int_K \ell(h)\d h/\int_\Lambda \ell(h)\d h\geq 1-o_{P_{\theta_0}}(1)$ and $\int_K\phi(h)\d h$ for $\phi(\cdot)$ standard Gaussian density} $\Lambda\equiv \sqrt{n}(\Theta-\theta_0)-s=K\cup K^c$) in the sense that
\begin{equation}\label{eq:loglike}
|\log \ell(h)- (-h'Jh)/2|\leq \epsilon_1+\epsilon_2\times  h'Jh /2\quad\forall h \in K,
\end{equation}
for some matrix $J>0$ with eigenvalues bounded away from zero. If 
\begin{equation}\label{eq:conditions_eps}
\epsilon_1=o(1)\quad\text{and}\quad \epsilon_2\times \lambda_{max}^2(J)(\sup_{h\in K}\|h\|)^2=o(1) \quad\text{in $P_{\theta_0}^{(n)}$-probability.}
\end{equation}
Theorem 1 of \cite{bc2009} shows that $\ell(h)/\int_\Lambda \ell(h)\d h$ approaches the standard normal density in $P_{\theta_0}^{(n)}$-probability as $n,d\rightarrow\infty$. The condition \eqref{eq:loglike} (a) allows for mild deviations from smoothness and log-concavity, (b) involves also the prior (unlike \eqref{eq:LAN_misspec}) but, (c) requires the existence of a $\sqrt{n}-$consistent estimator $\hat\theta$. Lemma \ref{thm:bvm} is more general, where the rate $\varepsilon^*_n$ does not need to be $1/\sqrt{n}$ and where the posterior is allowed to have a non-vanishing bias.  The requirement \eqref{eq:conditions_eps} imposes certain restrictions on $u_\theta(\Xn)$. For example, in the linear case \eqref{eq:linear} one would need $u^\star(\Xn)=o(\sqrt{n})$ in $P_{\theta_0}^{(n)}$-probability from  \eqref{eq:conditions_eps}.

{
\section{Alternatives to MHC}\label{sec:variants}
A recent paper \cite{hermans} suggests a related Metropolis-Hastings strategy which relies on a simulation-based likelihood ratio estimator trained separately from the Markov chain simulation. This estimator is based on contrastive learning between two fake data-parameter pairs, with parameters sampled from the prior and with fake data generated  either from the marginal or the conditional likelihood evaluated at sampled prior parameters \cite{gutman2}. See also \cite{density_ratio} (Chapter 12) for conditional density estimation using machine learning.
Using the marginal distribution $p(\cdot)$ as a reference and denoting with $D_\theta^m(X)=\frac{p(X)\pi(\theta)}{p(X)\pi(\theta)+p_\theta(X)\pi(\theta)}$ 
 we can re-write \eqref{eq:likelihood}  as $p_\theta^{(n)}=p^{(n)}(X^{(n)})\exp\left(\sum_{i=1}^n\log\frac{1-D_\theta^m(X_i)}{D_\theta^m(X_i)}\right)$, where $p^{(n)}(X^{(n)})$ is the marginal likelihood. 
 Similarly as in (2.5), a likelihood estimator can be then obtained by replacing $D_\theta^m$ with $\hat D_\theta^m$, which is now trained solely on simulated data.  The expression (2.5) then still holds with $u_\theta$ now defined  using $D_\theta^m$ and 
 $\hat D_\theta^m$. We implement this approach  in Section \ref{sec:lotka} (main document) and discuss its theoretical properties in Remark \ref{rem:marginal}.  This approach will be advantageous when  the cost of learning the likelihood ratio 
 simulator prior to MCMC simulation outweighs the costs of performing classification at each MH step.  Another related strategy was proposed in \cite{pham}, where no reference is used and the likelihood ratio inside Metropolis-Hastings is estimated by contrastive learning between two fake data generated from conditional likelihoods evaluated at new versus current parameter values. This approach also requires classification at each step but is not limited by the sample size $n$ when choosing the fake data sample size $m$ for classification.  We also implement this approach later in Section \ref{sec:lotka}  and make comparisons with our approach in Section 12 in the Supplement. The choice of the contrasting density in the context of parameter estimation in unnormalized models is discussed in   \cite{gh_12}. 
}

\section{Ricker Model}\label{sec:ricker}
The Ricker model is a classic discrete  model that describes partially observed population dynamics of fish and animals in ecology.
The latent population $N_{i,t}$ follows
\[
	\log N_{i,t+1}=\log r+\log N_{i,t}-N_{i,t}+\sigma\varepsilon_{i,t}, \qquad \varepsilon_{i,t}\sim N(0,1),
\]
where $r$ denotes the intrinsic growth rate and $\sigma$ is the dispersion of innovations.
The index $t$ represents time and runs through 1 to $T=20$. The index $i$ represents independent observations and runs through 1 to $n=300$.
The initial population $N_{i,0}$ may be set as $1$ or set randomly after some burn\hyp{}in period.
We observe $X_{i,t}$ such that
\[
	X_{i,t}\mid N_{i,t}\sim\text{Poisson}(\varphi N_{i,t}),
\]
where $\varphi$ is a scale parameter.
The objective is to make inference on $\theta\vcentcolon=(\log r,\sigma^2,\varphi)$.
Each time sequence $X_i\vcentcolon=(X_{i,1},\dots,X_{i,T})$ constitutes an observation, where $i$ runs through $n$.
In our notation, we can define the underlying data-generating process as $\wt{X}_{i,t}\vcentcolon=(U_{i,t},\varepsilon_{i,t})$ for $U_{i,t}\sim U[0,1]$ and set the function $T_\theta$ to map $\varepsilon_i$ to $N_i$ and then $(U_i,N_i)$ to $X_i$ through the Poisson inverse transform sampling of $U_{i,t}$ into $X_{i,t}$.
We set the true parameter as $(\log r_0,\sigma_0^2,\varphi_0)=(3.8,1,10)$ and employ an improper, flat prior.
Note that our method can accommodate an improper prior, unlike ABC.

\begin{figure}
\scalebox{0.25}{\includegraphics{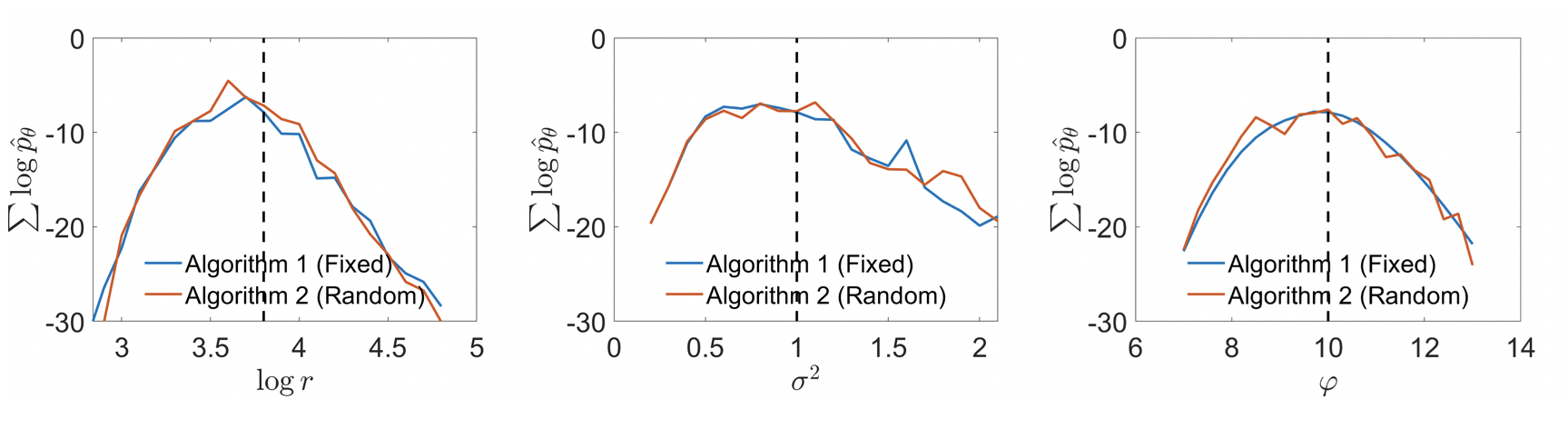}}
\caption{\small Estimated log likelihood ratio for the Ricker model: (Left) function of  $\log r$ fixing $\sigma=\sigma_0$ and $\varphi=\varphi_0$, (Middle) 
function of  $\sigma^2$ fixing $r=r_0$ and $\varphi=\varphi_0$, (Right) function of  $\varphi$ fixing $\sigma=\sigma_0$ and $r=r_0$.}
\label{fig:ricker:quad}
\end{figure}

There is no obvious sufficient statistic for this model, and the likelihood is intractable due to the nontrivial time dependence of $N_{i,t}$.
We use an average of neural network discriminators to adapt to the unknown likelihood ratio.
First, we estimate $D_\theta$ by a neural network with one hidden layer with 50 nodes, each of which is equipped with the hyperbolic tangent sigmoid activation function.
Then, we compute the log likelihood of the data $\sum_i\log\frac{1-\hat{D}_\theta}{\hat{D}_\theta}$.
We repeat this for 20 times with independently drawn $\wt{X}$ and take the average of the log likelihood.
This specification produces approximately quadratic likelihood-ratio curves (\Cref{fig:ricker:quad}). Unlike  the location-scale normal model, the fixed design does not produce entirely smooth curves due to the averaging aspect over many discriminators. The quadratic shape is nevertheless recovered here, implying that the differentiability assumptions from Section \ref{sec:residual} are not entirely objectionable.

\begin{figure}[!t]
\scalebox{0.3}{\includegraphics{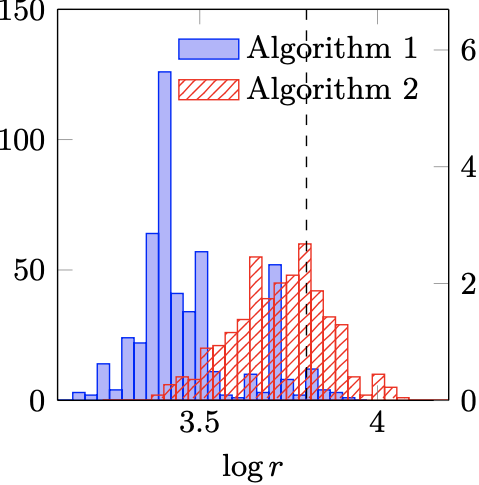}}
\scalebox{0.3}{\includegraphics{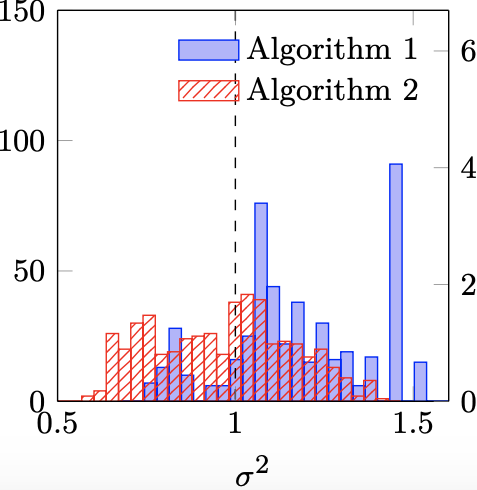}}
\scalebox{0.3}{\includegraphics{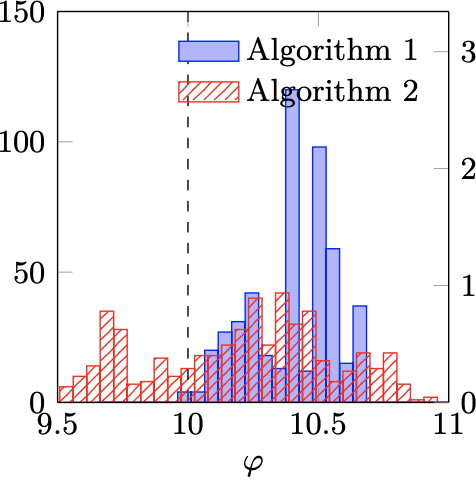}}
\caption{MHC samples for the Ricker model.}
\label{fig:ricker:hist}
\end{figure}

\Cref{fig:ricker:hist} shows the marginal histograms of the MHC samples ($500$ MCMC iterations). The proposal distribution is independent across parameters; $\log r$ uses the normal distribution, $\sigma^2$ the inverse\hyp{}gamma distribution, and $\varphi$ the gamma distribution; each of them has the mean equal to the previous draw and variance $1/n$. The vertical dashed lines indicate the true parameter $\theta_0$.
Note that the posterior is asymptotically centered at the MLE, not $\theta_0$.
However, the blue histograms on the left (Algorithm 1) seem too far away from $\theta_0$ relative to the widths of the histograms.
On the other hand, the red histograms (Algorithm 1) are more dispersed but located closer to $\theta_0$. These observations confirm our theoretical findings.
Histograms of Algorithm 3 (Figure \ref{fig:ricker:hist2}) look reasonable as a posterior sample, center around the true values.

\begin{figure}[!t]
\scalebox{0.3}{\includegraphics{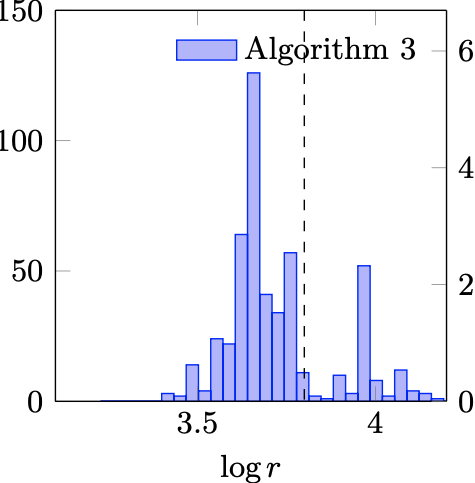}}
\scalebox{0.3}{\includegraphics{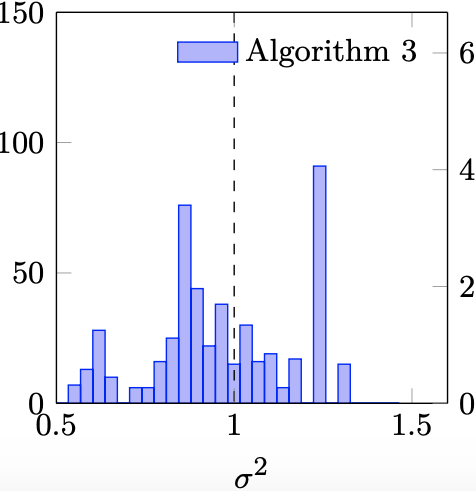}}
\scalebox{0.3}{\includegraphics{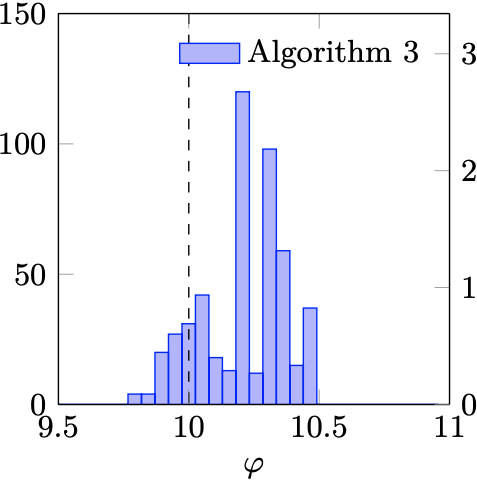}}
\caption{\small  MHC samples for the Ricker model (Algorithm 3)}
\label{fig:ricker:hist2}
\end{figure}

Figure \ref{fig:ricker:hist2}  and  \ref{fig:ricker:hist3} compare our method with the MCWM pseudo\hyp{}marginal Metropolis\hyp{}Hastings algorithm \cite{andrieu2009pseudo}.
We have implemented the default pseudo\hyp{}marginal method which deploys an average of conditional likelihoods for $X_i$, given $N_i$,
\[
	\hat{p}(X_i)=\frac{1}{K}\sum_{k=1}^{K}\prod_{t=1}^T p(X_{i,t}\mid N_{i,t,k})=\frac{1}{K}\sum_{k=1}^{K}\prod_{t=1}^{T}\frac{(\varphi N_{i,t,k})^{X_{i,t}}e^{-\varphi N_{i,t,k}}}{X_{i,t}!}
\]
as the likelihood approximation, where $K$ is some positive integer and  where $N_{i,t,k}$ are independently drawn across $k=1,\dots,K$.  
In our comparisons, we let $K=20 n$.
Figure \ref{fig:ricker:hist2} shows that the two methods produce posterior draws that are located at similar places, and the widths of the histograms are also comparable.
We would like to point out, again, that our method does not require that a tractable conditional likelihood is available nor that a user\hyp{}specified summary statistic is supplied.

\begin{figure}[!t]
\scalebox{0.3}{\includegraphics{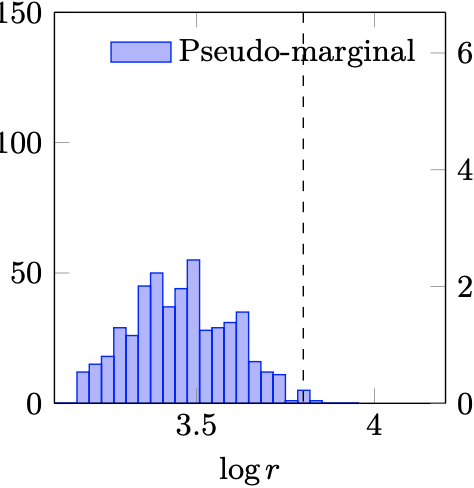}}
\scalebox{0.3}{\includegraphics{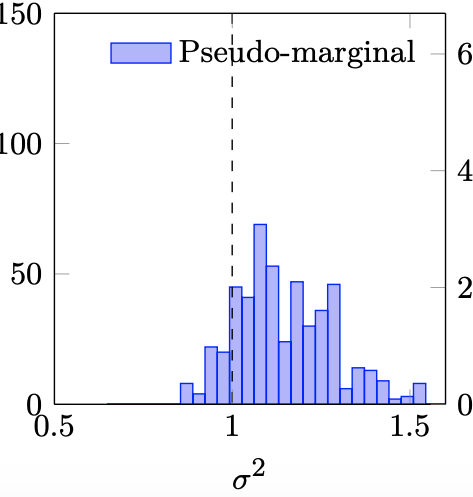}}
\scalebox{0.3}{\includegraphics{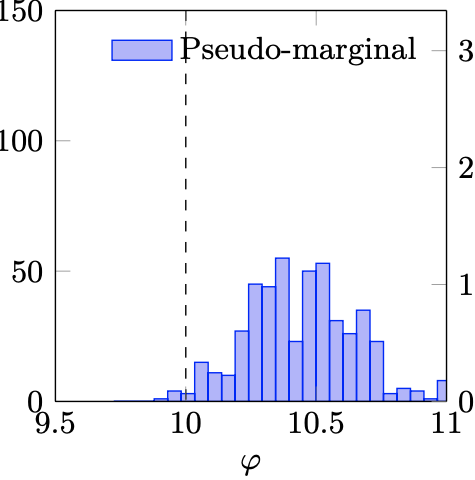}}
\caption{\small  Posterior samples for the Ricker model using the pseudo-marginal   MCWM method}
\label{fig:ricker:hist3}
\end{figure}

\section{Bayesian Model Selection}\label{sec:bf}
The performance of summary statistic\hyp{}based methods is ultimately sensitive to the quality of summary statistics whose selection can be a  delicate matter.
One such instance is  model selection, where it is known  that when ABC may fail even when the summary statistic is sufficient for {\em each} of the models considered \citep{robert2011lack}.
Our method {\em does not} require a summary statistic but a sieve of discriminators that can adapt to the oracle discriminator in the limit. This creates hope that our method can tackle model selection problems.
To illustrate this point we consider a toy model choice problem considered in \cite{robert2011lack}.
The actual data follows $X_i\sim N(0,1)$ for $i=1,\dots, n=500$. We have two candidate models $P_{1,\mu}=N(\mu,1)$ and $P_{2,\mu}=N(\mu,1+3/\sqrt{n})$ to choose from.
We let the parameters be $\theta\vcentcolon=(m,\mu)$, where $m\in\{1,2\}$ is the model indicator and $\mu$ is unknown mean with a prior $N(0,1)$. The model is assigned a uniform prior, i.e. $P(m=1)=P(m=2)=0.5$.  Following the traditional Bayesian model selection formalism, we collect evidence for model $m=1$  with a Bayes factor 
\[
	B_{12}\vcentcolon=\frac{\pi_n(m=1\mid X)}{\pi_n(m=2\mid X)}.
\]
The Bayes factor is the ratio of the marginal likelihoods (or posterior probabilities) of $m=1$ over $m=2$.   The actual Bayes factor value is $B_{12}=9$, indicating strong evidence in favor of $m=1$.
The Bayes factor will be estimated by the ratio of the frequencies of the posterior samples given by ABC or our method.
Since our parameter of interest $m$ is discrete, there is no de-biasing for this example.
\cite{robert2011lack} in their Lemma 2 show that when the summary statistic is $\sum_i X_i$, the Bayes factor estimated by ABC asymptotes to $1$.
This is equivalent to choosing the model with a coin toss.
For our method, we use the logistic regression on regressors $(1,X_i,X_i^2)$, which can mimic  the oracle discriminator.
\begin{figure}
{\includegraphics[width=5cm,height=4.5cm]{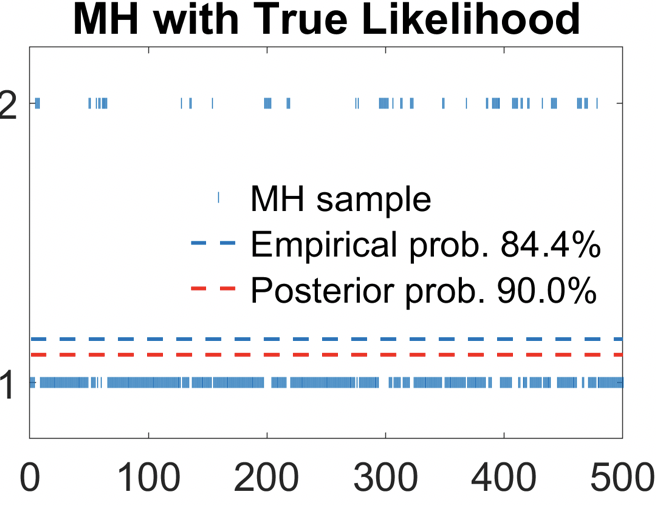}}
{\includegraphics[width=5cm,height=4.5cm]{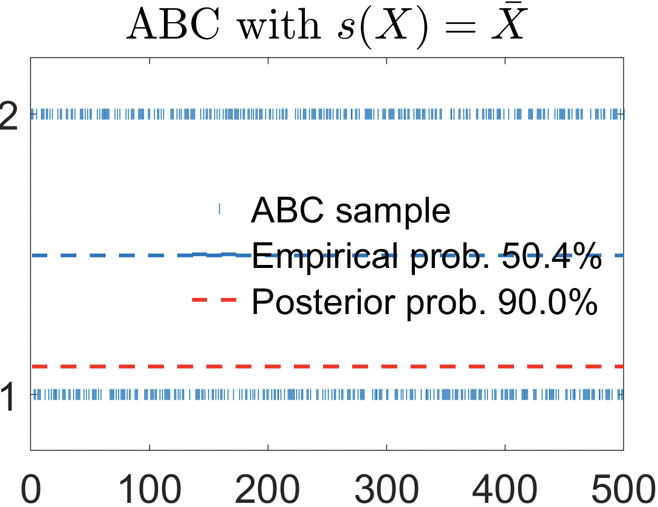}}
{\includegraphics[width=5cm,height=4.5cm]{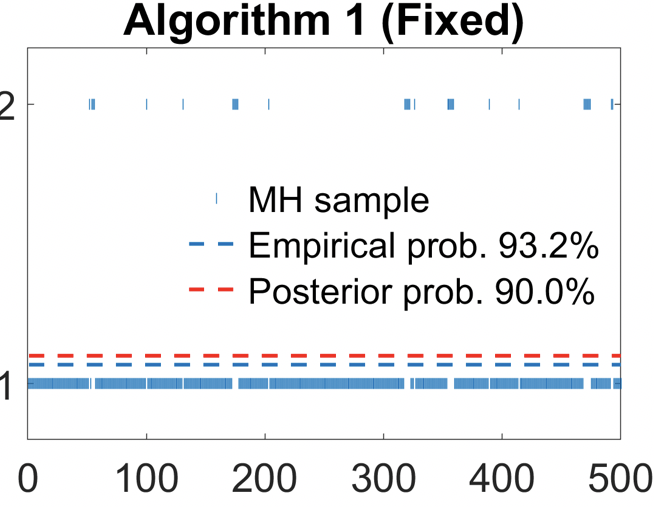}}
\caption{\small Trace plots of sampled models using: (Left) MH with the true likelihood ratio, (Middle) ABC with $s(\Xn)=\bar X_n$ and (Right) fixed generator MHC.}\label{fig:ricker:trace}
\end{figure}
The trace plots of sampled models for exact MH, MHC and ABC are provided in Figure \ref{fig:ricker:trace}.  Table \ref{tbl:selection} summarizes the  posterior model frequencies.
The true posterior probabilities are $\pi_n(m=1\mid X)\approx 0.9$ and $\pi_n(m=2\mid X)\approx 0.1$, so the Bayes factor is 9.
The ``Oracle MH'' is the Metropolis\hyp{}Hastings with the true likelihood, in which 84.4\% of the posterior draws choose model 1.
Algorithms 1 and 2 choose model 1 respectively 93.2\% and 70\% of the times.
ABC based on the sum, on the other hand, chooses the model randomly. Finally, \Cref{fig:selection:quad} in Appendix  gives the estimated log-likelihood ratio for each model.
In terms of $\mu$, we again see that Algorithm 1 is slightly biased with the correct shape and Algorithm 2 is less biased but more dispersed on average.

\begin{table}[t!]
\centering
\small
\begin{tabular}{lccccc}
\hline\hline
& Posterior & Oracle MH & Algorithm 1 & Algorithm 2 & ABC \\
\hline
Model 1 & 90\% & 422 & 466 & 350 & 252 \\
Model 2 & 10\% & \hphantom{0}78 & \hphantom{0}34 & 150 & 248 \\
\hline
Bayes factor & 9.00 & 5.41 & 13.71 & 2.33 & 1.02 \\
\hline
\end{tabular}
\caption{``Posterior'' column gives the posterior probability of each model, $\pi_n(m=j\mid X)$. Other columns give the frequencies of the corresponding sample of size 500. ``Oracle MH'' refers to the Metropolis\hyp{}Hastings algorithm with the true likelihood. ``ABC'' is based on the summary statistics $s(X)=\bar{X}_n$.}
\label{tbl:selection}
\end{table}

Figure \ref{fig:selection:quad} shows true likelihood ratio and  and classification-based estimates for fixed and random designs for the Bayesian model selection example from Section \ref{sec:bf}.
Under the fixed design, the curve is smooth and slightly biased with a similar shape to the true log-likelihood. For the random design, there is no smoothness (due to the fake data refreshing aspect).

\begin{figure}
\centering
\scalebox{0.3}{\includegraphics{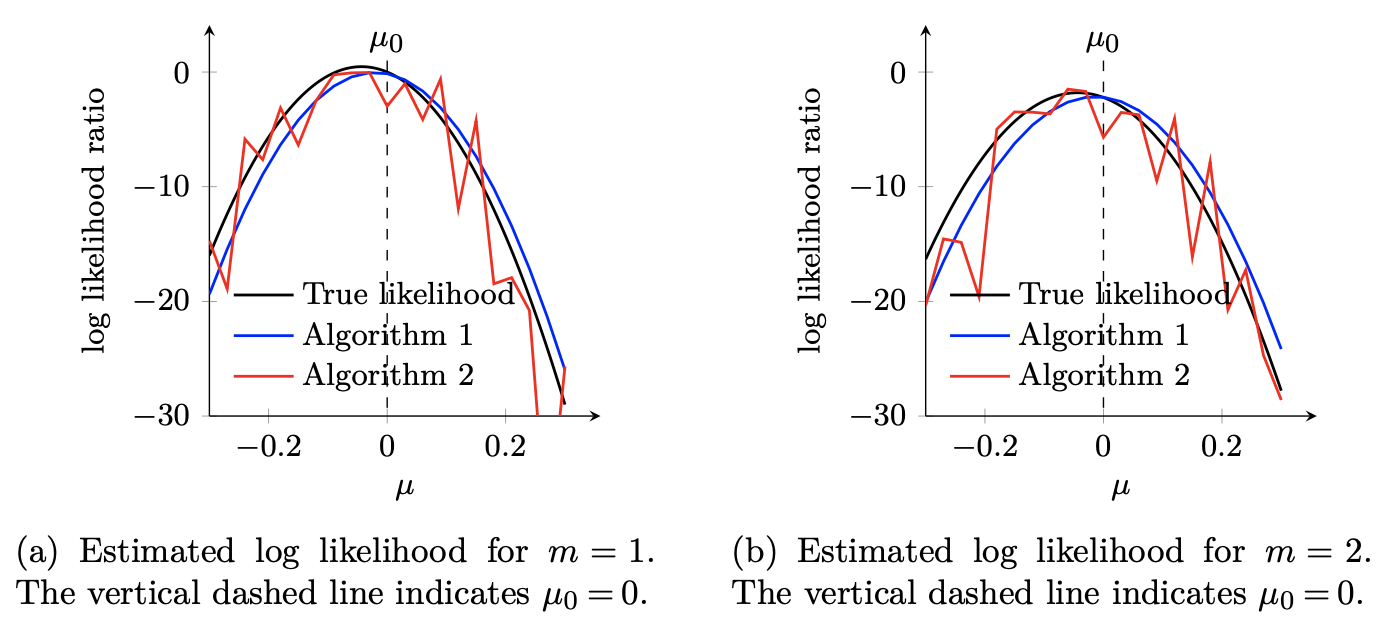}}
\caption{Estimated log likelihood for models 1 and 2. The figures indicate that it is smooth in $\mu$ and have the same curvature as the true log likelihood.}
\label{fig:selection:quad}
\end{figure}

\vspace{-0.5cm}
\section{The CIR Model: Further Details}
This section presents additional plots for the CIR analysis from Section \ref{sec:cir}.
Figure \ref{fig:MHC_sup} shows smoothed posterior samples for MHC (fixed generator) and $nrep\in\{1,5\}$. These plots look qualitatively similar to the random generator results presented in Figure 2 in the main manuscript.
Next, Figure \ref{fig:MHC_trace_CIR1} and \ref{fig:MHC_trace_CIR2} show trace-plots of the MHC samples. We can see that (1) using larger $nrep$ reduces variance, (2) random generators have smaller acceptance rates for the same proposal distribution. {Trace-plots for the MCWM method (Figure \ref{fig:MCWM_trace}) show bias in estimation of $\sigma$.  }
Table \ref{tab:CIR} shows posterior summaries for the various algorithms we tried,  {including acceptance rates and effective sample size (computed using the \texttt{coda} R package).
Since MHC (random generator) resembles GIMH \citep{beaumont2003estimation} in that it recycles the fake data, one would expect the effective sample of  MHC to be smaller than for MCWM. However, making the MCWM likelihood estimator more accurate (increasing $M$ and $N$) made the effective sample size (ESS) smaller even though the acceptance rate was still around $10\%$. Interestingly, the random generator MHC also showed a decreased ESS (as well
as the acceptance rate) once we used ``better" log-likelihood estimator (i.e. averaging over $nrep = 5$ estimators using different fake data). }
Lastly, histograms of the posterior samples together with demarkations of the $95\%$ credible intervals are in Figure \ref{fig:MHC_hist_CIR1} and \ref{fig:MHC_hist_CIR2}.

{
\begin{table}[!h]
\scalebox{0.75}{\begin{tabular}{c | ccc | ccc | ccc | c  c c}
\hline\hline
Method              & \multicolumn{3}{c}{$\alpha$} &  \multicolumn{3}{c}{$\beta$} &  \multicolumn{3}{c}{$\sigma$} & AR & Time & ESS\\
\cline{2-10}
	                  & $\bar\alpha$ & l &u & $\bar\beta$ & l &u & $\bar\sigma$ & l &u &  &  &\\
\hline
MH Exact		 		&0.0693  &0.683 &0.703    &0.1558 &0.1507 &0.1608 &0.07 &0.696 &0.704 & 9.1  & 3.3           &255\\
Alg1 ($nrep=1$)        	&0.0691  &0.0644 &0.0735 &0.1505 &0.1374 &0.1636 &0.0703 &0.0669 &0.0734 &  16.8& 4.6 &191\\
Alg2 ($nrep=1$)	        &0.0691  &0.0644 &0.0741 &0.1476 &0.1353 &0.1632 &0.693 &0.0667 &0.0725 &10.7  & 4.9   &155\\
Alg1  ($nrep=5$)	        &0.0698  &0.0667 &0.0725 &0.1468 &0.1377 &0.1574 &0.0699 &0.676 &0.725 & 7.8 &13.9      &104\\
Alg2  ($nrep=5$)	        &0.0691  &0.0665 &0.0715 &0.1468 &0.1366 &0.1571 &0.0691 &0.0674 &0.0714 & 5.6 &13.9  &112\\
MCWM ($M=2$)		&0.0693  &0.0658 &0.0733 &0.1469 &0.1287 &0.1632 &0.067 &0.0657 &0.0684 & 13.1 &15.9  &316\\
MCWM ($M=5$)		&0.0694  &0.0662 &0.723 &0.1538 &0.1423 &0.1634 &0.0689 &0.0676 &0.0698& 10.1&238.6  &63\\ 
\hline\hline
\end{tabular}}
\caption{Posterior means and $95\%$ credible interval boundaries (lower (l) and upper (u)). $AR$ is the acceptance rate and  \texttt{Time} is computing time (in hours) for $10\,000$ iterations. \texttt{ESS} is the average effective sample size for the three chains computed using the R package \texttt{coda}.}\label{tab:CIR}
\end{table}}

\begin{figure}[!h]
\includegraphics[height=5.3cm,width=5.3cm]{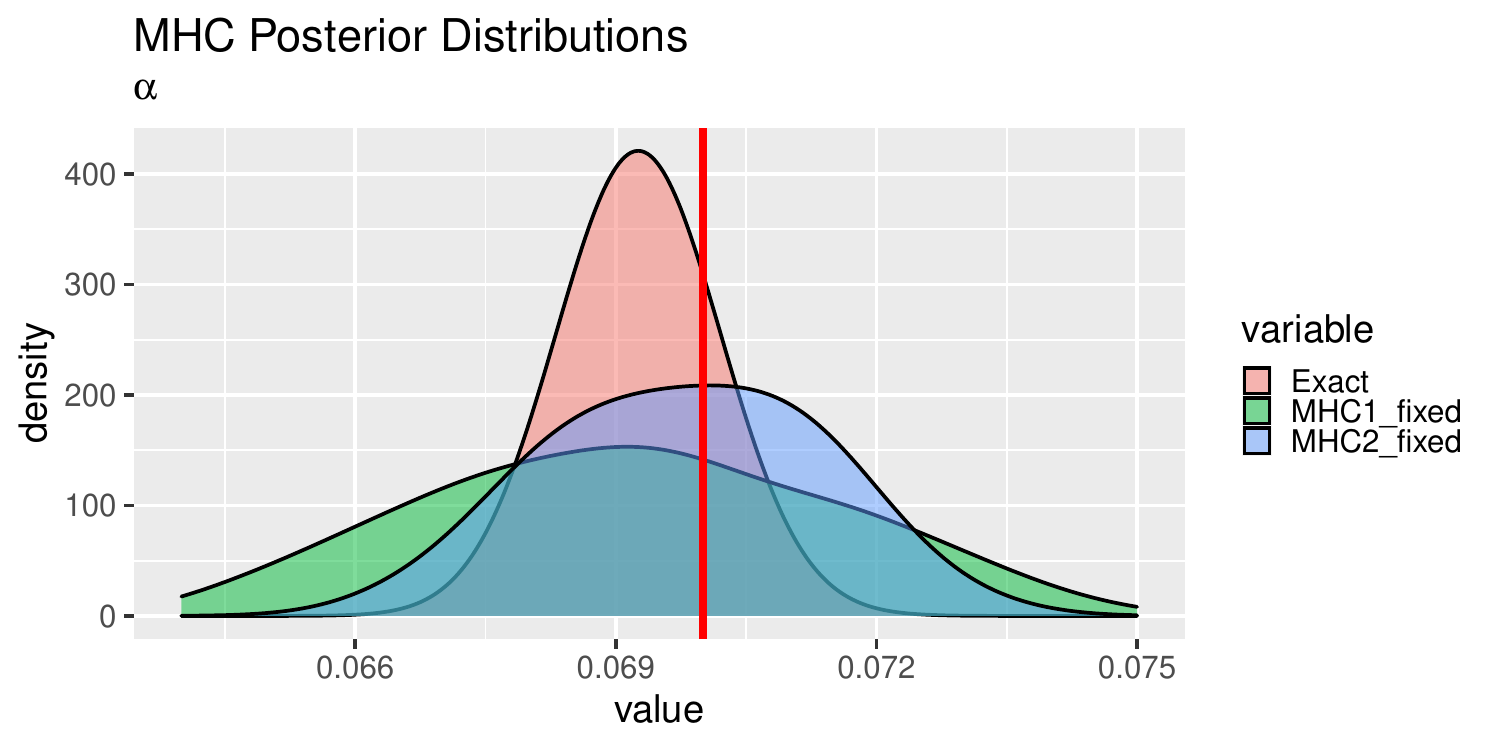} \includegraphics[height=5.3cm,width=5.3cm]{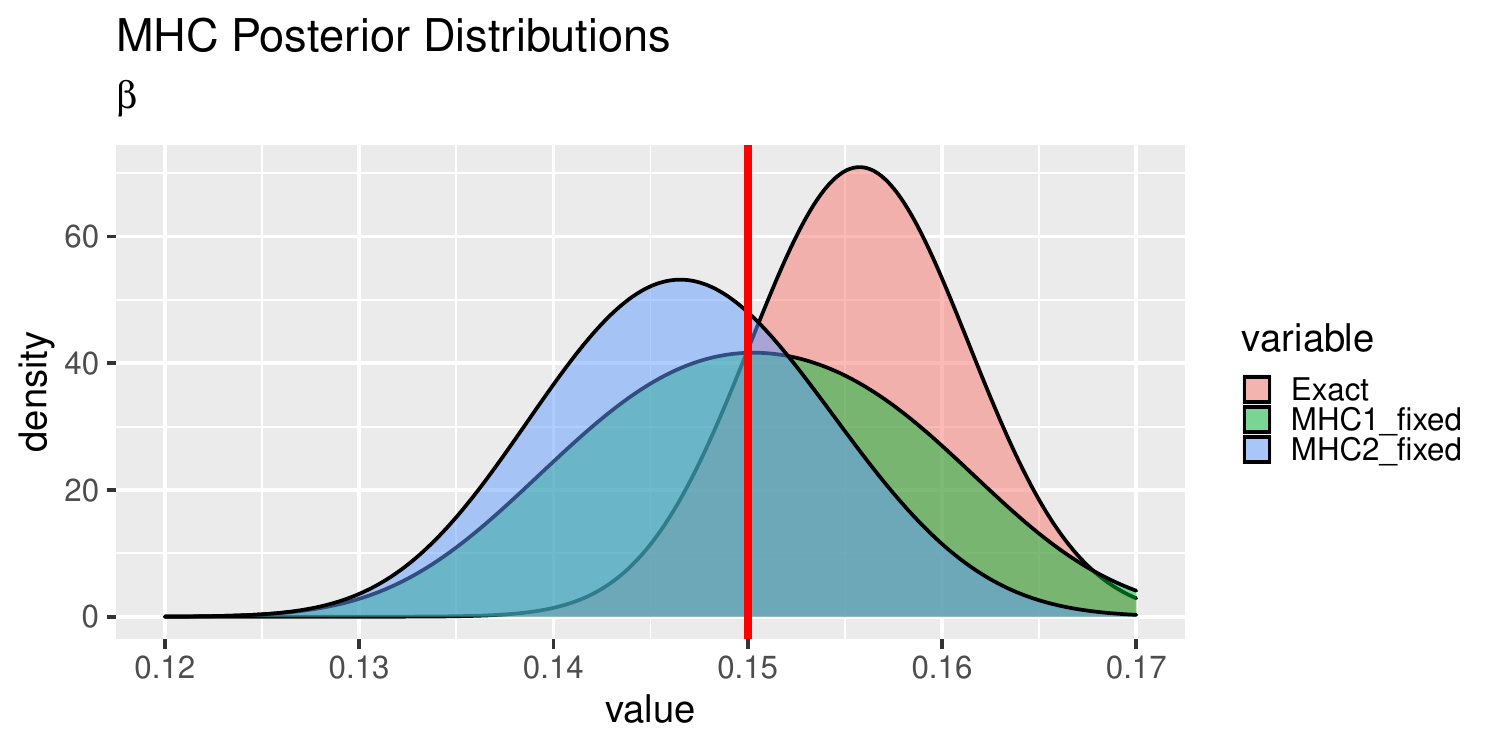} 
\includegraphics[height=5.3cm,width=5.3cm]{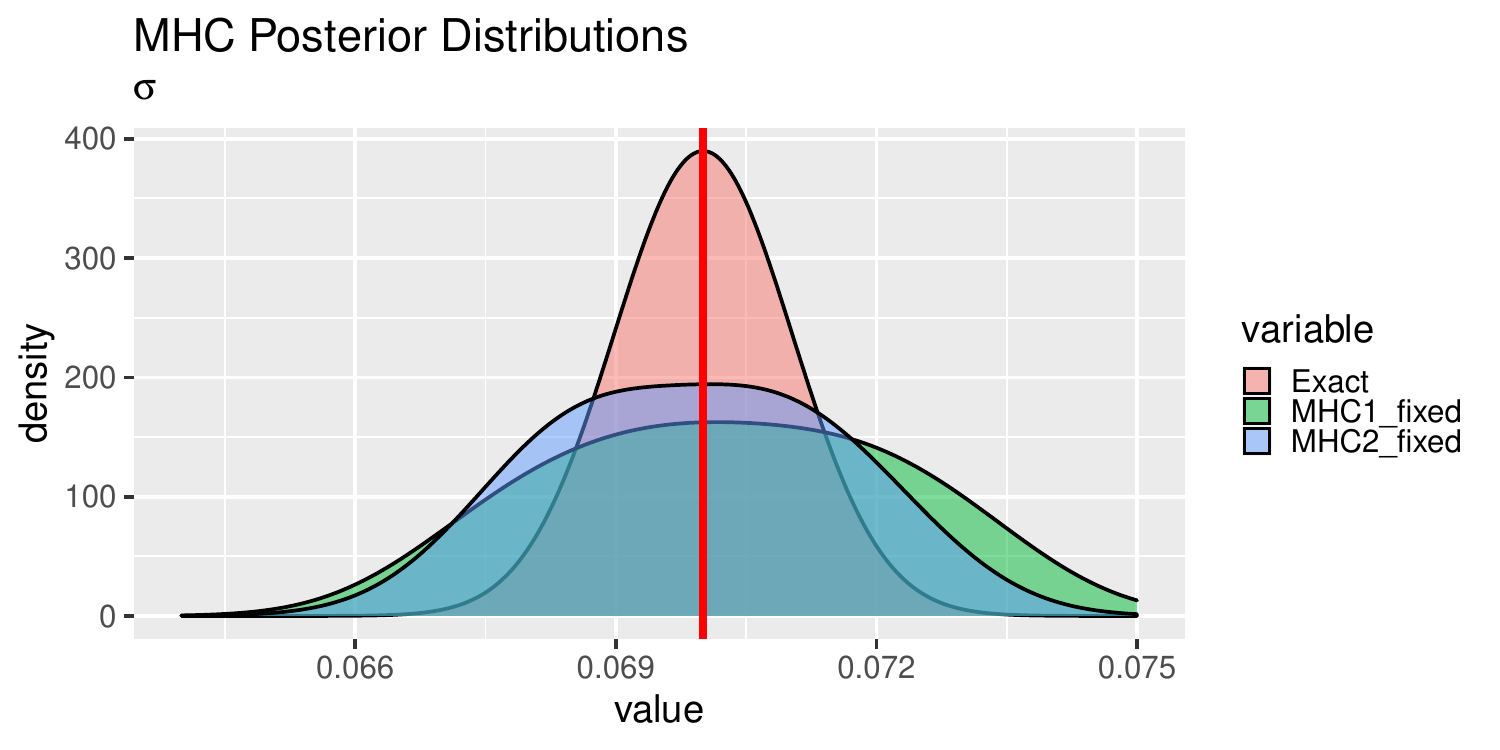} 
\caption{\small Smoothed posterior densities obtained by simulation using the exact MH and  MHC fixed generator using $nrep\in\{1,5\}$}\label{fig:MHC_sup}
\end{figure}

\begin{figure}
\scalebox{0.8}{\includegraphics{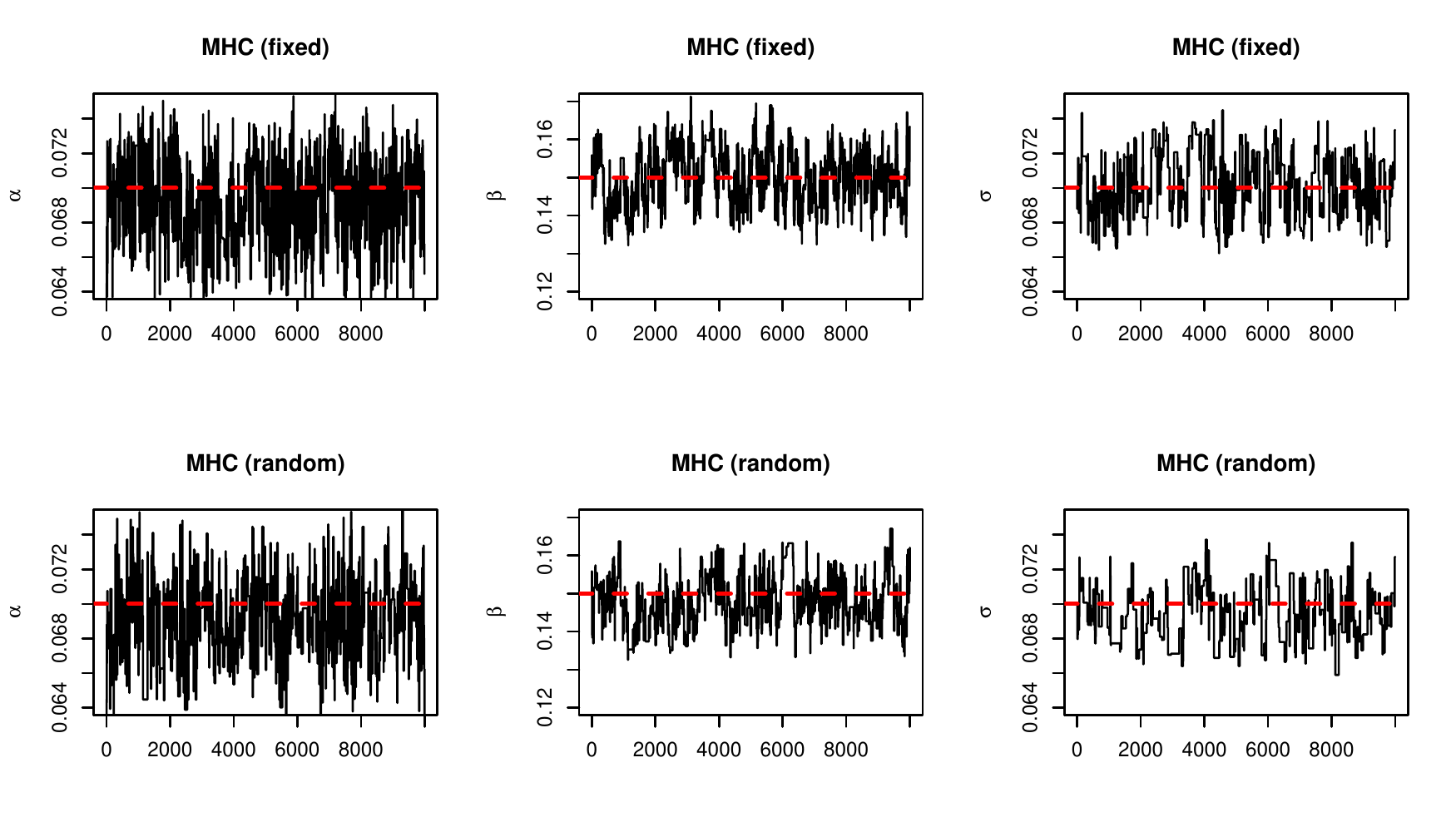}}
\caption{\small Trace-plots of $10\,000$ MHC iterations with $nrep=1$}\label{fig:MHC_trace_CIR1}
\end{figure}

\begin{figure}
\scalebox{0.8}{\includegraphics{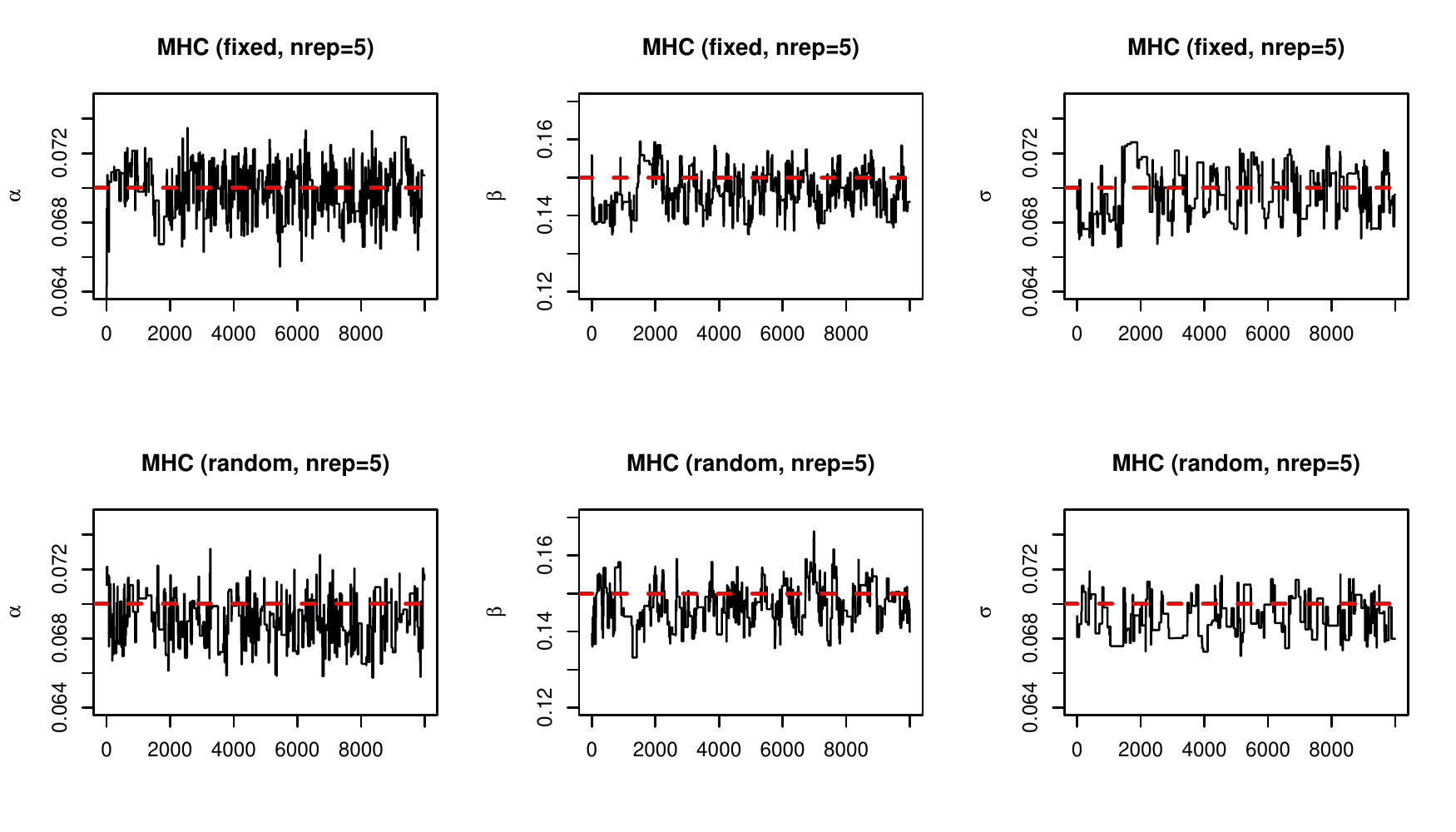}}
\caption{\small Trace-plots of $10\,000$ MHC iterations with $nrep=5$}\label{fig:MHC_trace_CIR2}
\end{figure}

\begin{figure}
\scalebox{0.8}{\includegraphics{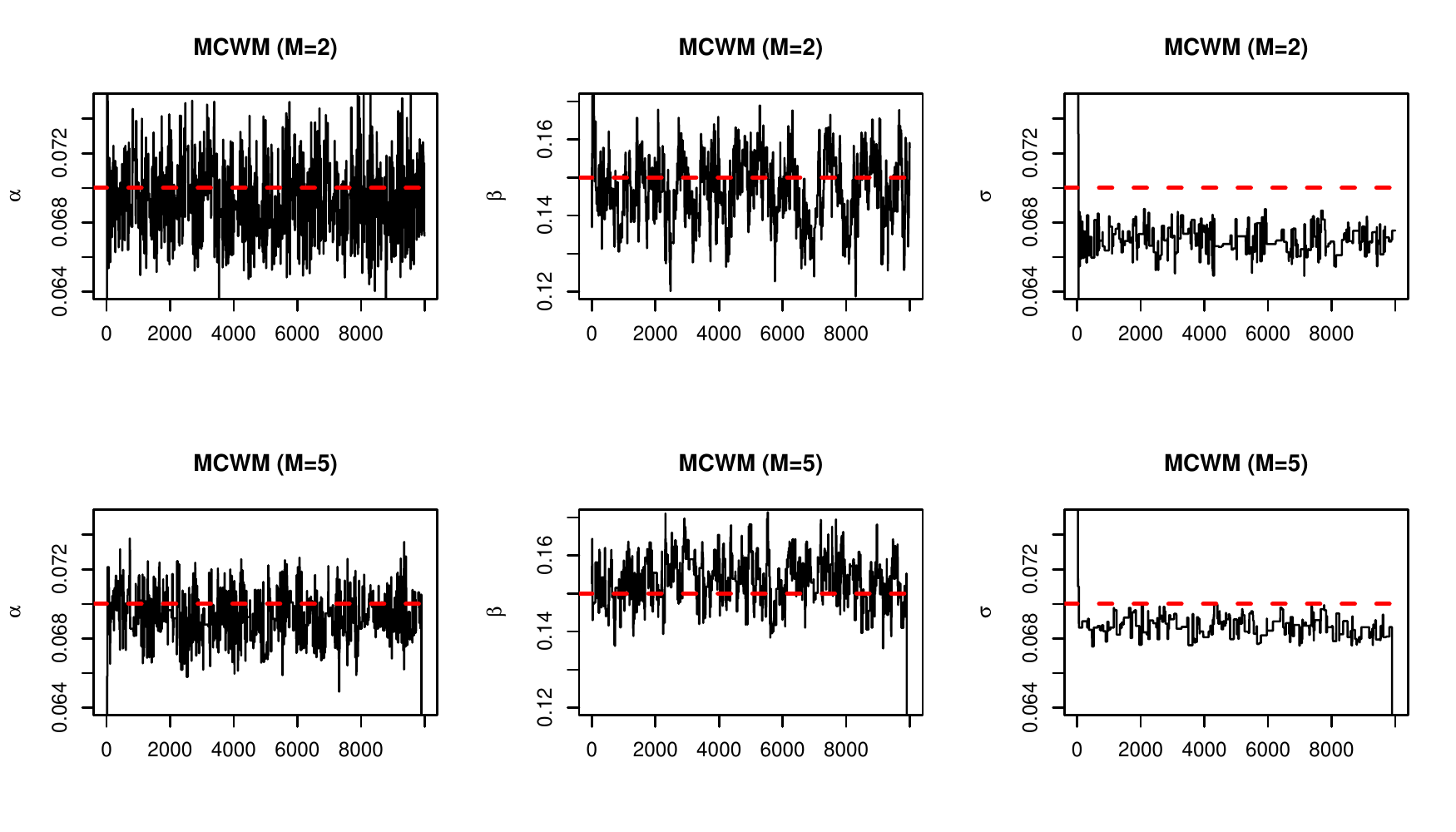}}
\caption{\small Trace-plots of $10\,000$ MCWM iterations with $M\in\{2,5\}$}\label{fig:MCWM_trace}
\end{figure}

\begin{figure}
\scalebox{0.8}{\includegraphics{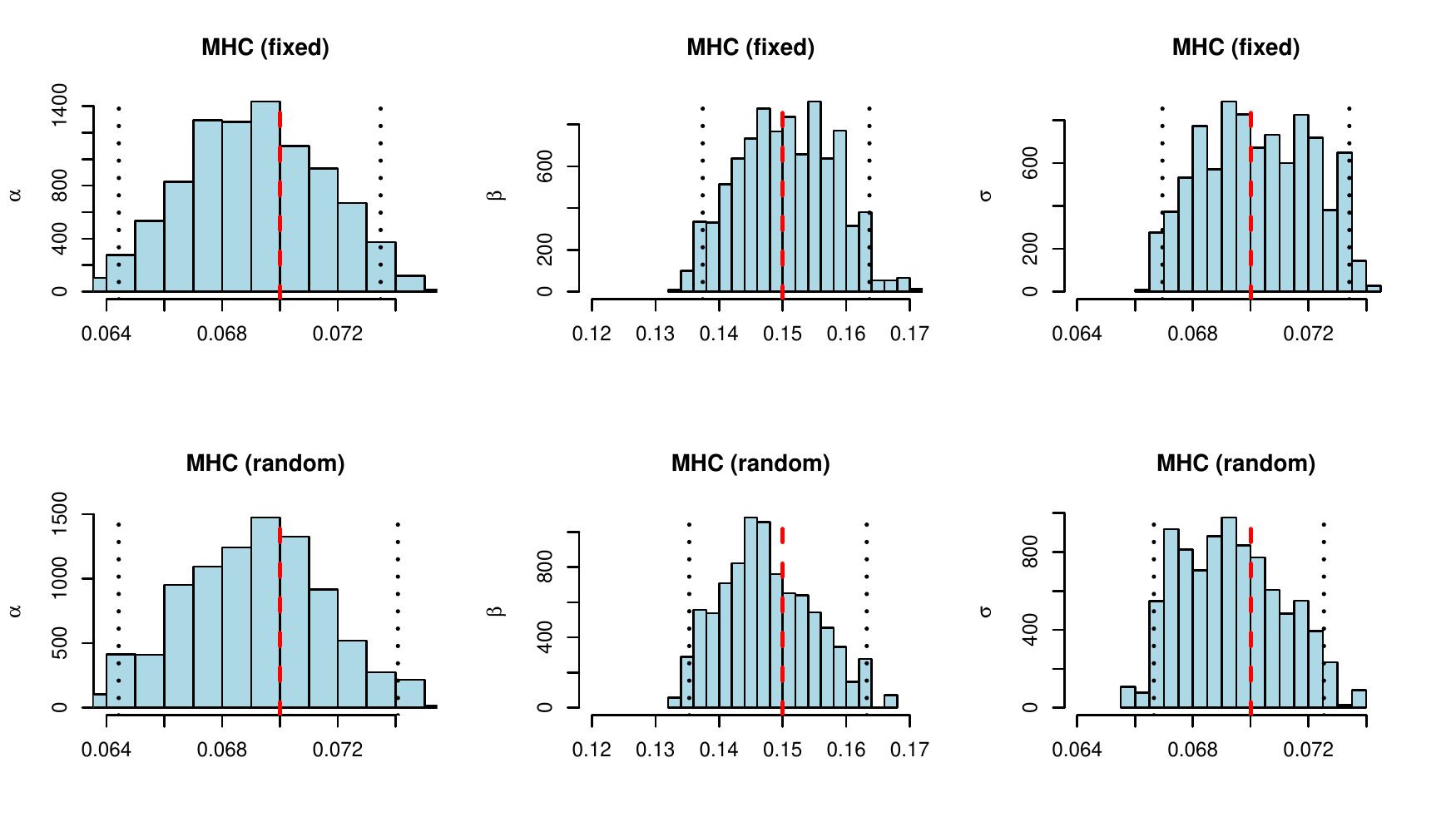}}
\caption{\small Histogram of $9\,000$ MHC iterations (after $1\,000$ burnin) with $nrep=1$}
\label{fig:MHC_hist_CIR1}
\end{figure}

\begin{figure}
\scalebox{0.8}{\includegraphics{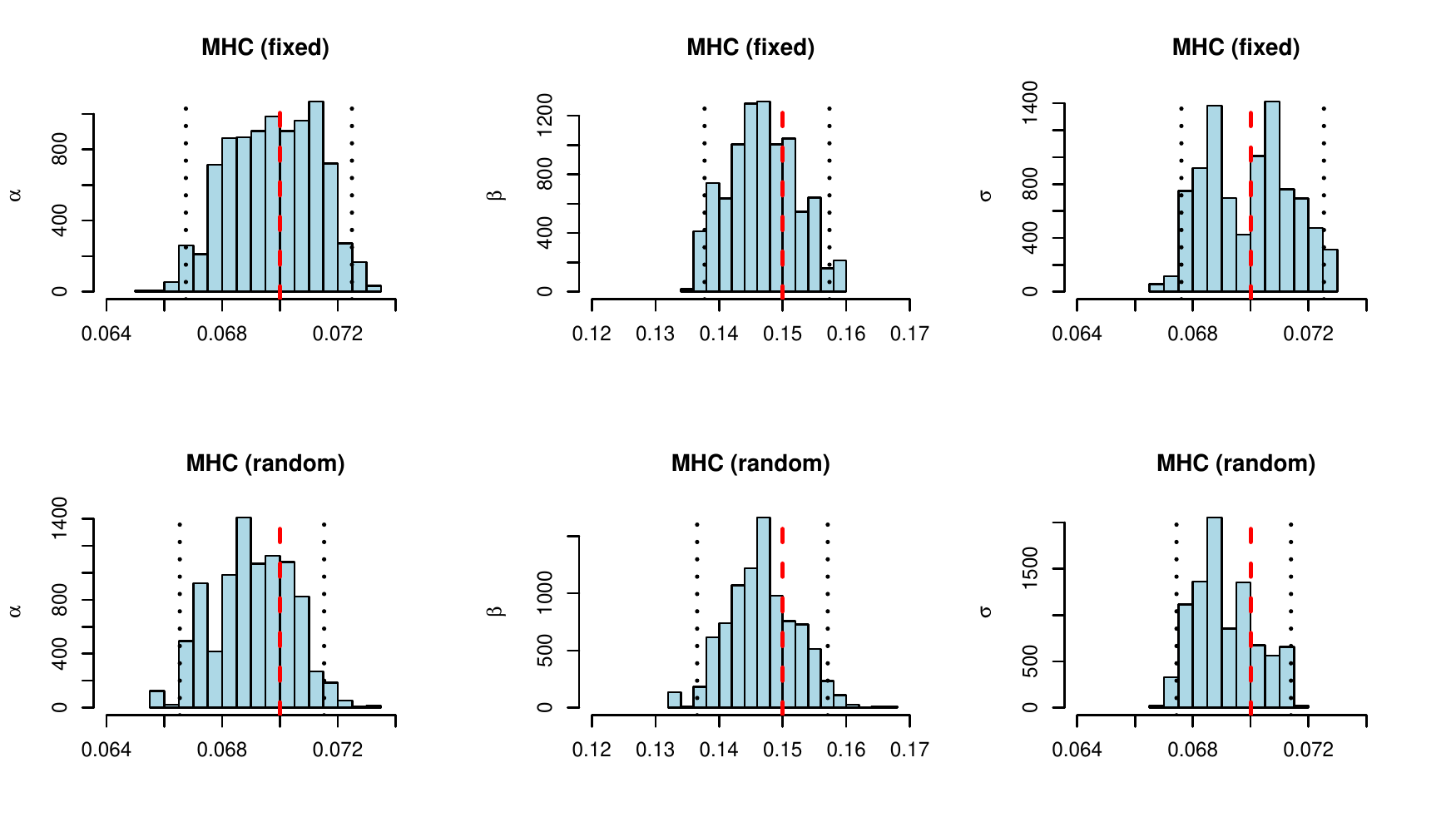}}
\caption{\small Histogram of $9\,000$ MHC iterations (after $1\,000$ burnin) with $nrep=5$}\label{fig:MHC_hist_CIR2}
\end{figure}

\clearpage

\section{The Lotka-Volterra Model: Further Details}
{\subsection{Timing Comparisons}\label{sec:lv_timing}
The complexity of MHC depends on the complexity of the classifier as well as on how costly it is to simulate fake data.
This will be problem-specific. For example, for the Lotka-Volterra model, we have used the Gillespie algorithm \cite{gillespie} which can be quite costly.  This will have some implication for the algorithm of   \cite{pham} which generates two  (not just one)  fake data sets at each step. This will be slower than our approach (which generates just one fake dataset and uses observed data for contrasting)  even when $m=n$.
In order to get a more concrete idea about the dependence on $n,m$ and $p$ (which depends on the length of the time series), we have measured the cost of a single iteration of MHC for various $m, n$ and $p$ for the default implementation of \texttt{cv.glmnet} (10 fold cross-validation) and \texttt{randomForests} (500 trees). The computing times are in Figure \ref{fig:times}. The default implementation of \texttt{glmnet} appears to scale less favorably with $n$ compared to random forests and the complexity, of course, increases with $m$. The method of \cite{pham} requires simulating two (as opposed to one) fake dataset at each step and is, thereby, slower.  This seemingly minor timing gap can aggregate in long Monte Carlo simulations. For example,  $10\,000$ iterations of MHC with default random forests took $2.5$ hours for $n=m=20$, where \cite{pham} takes more than $6$ hours with the same classifier and $n=m=20$.
  This gap is particularly prominent when $p$ (i.e. the length of the time series) is large.   Random forests scale less favorably with $p$, compared to \texttt{glmnet} logistic regression.

\begin{figure}[!t]
\scalebox{0.45}{\includegraphics{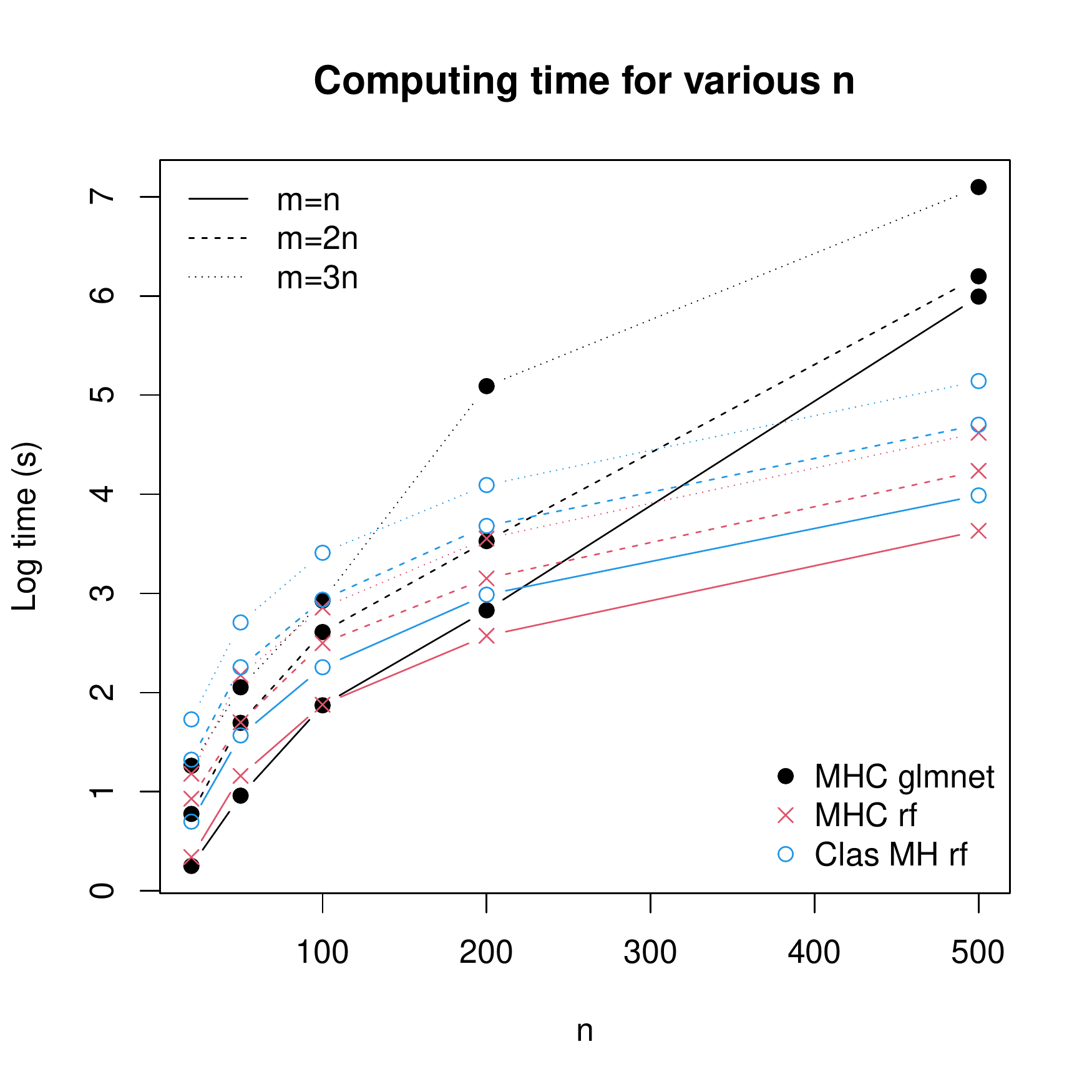}}\scalebox{0.45}{\includegraphics{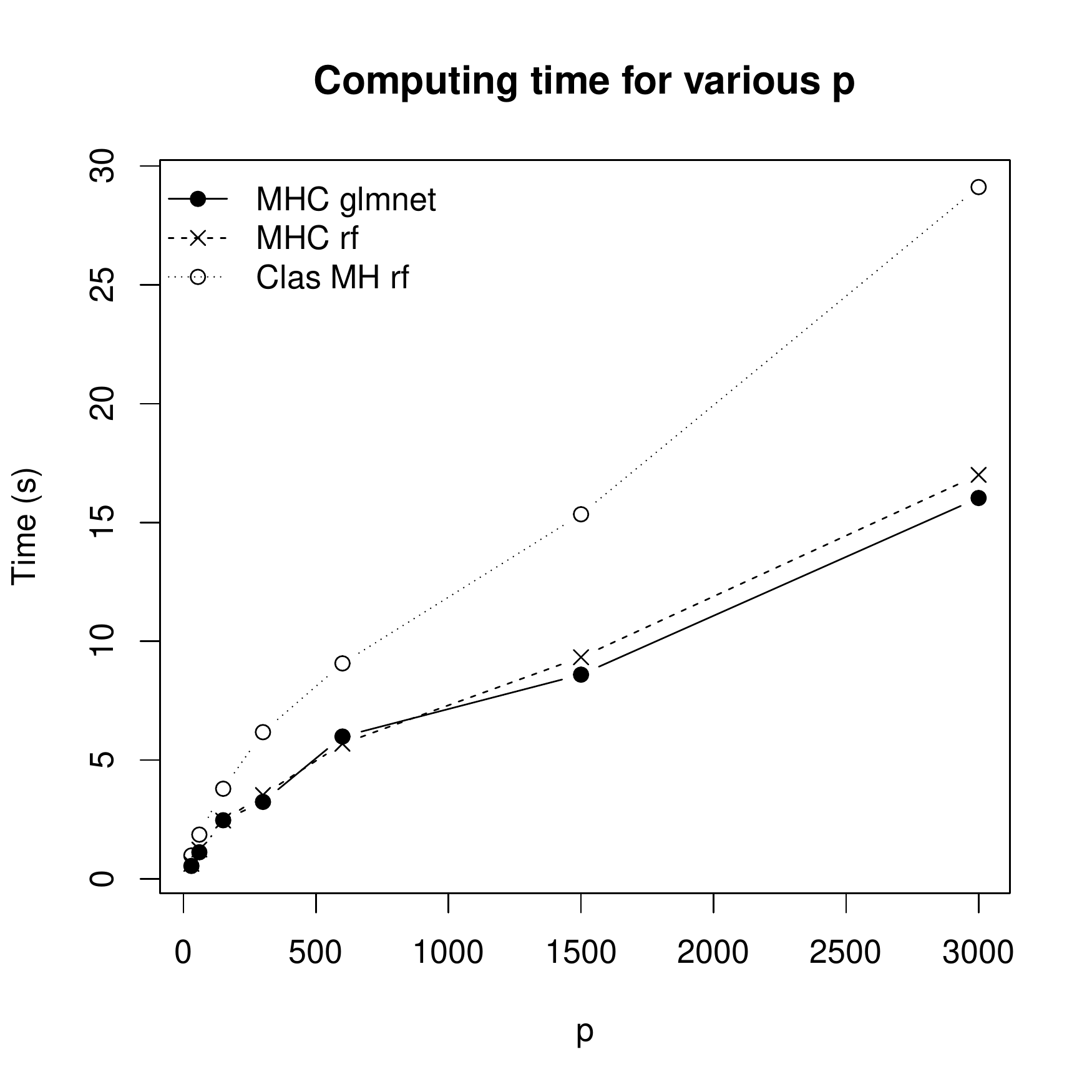}}
\caption{Computing times for one iteration of MHC and Clas MH (Classification MH of \cite{pham}) in the Lotka-Volterra example. (Left) Fixed $p=603$ and various $n$ and $m$ (fake data sample size). (Right) Fixed $n=20$ and various $p$ (depending on the length of the time series).}\label{fig:times}
\end{figure}
  
Our LASSO implementation uses \texttt{glmnet} \citep{glmnet} where the complexity depends on the number of penalty parameters and the number of iterations of the inner coordinate ascent algorithm. As shown in Section 3 of \citep{glmnet}, the \texttt{glmnet} algorithm for logistic regression has three nested loops. For each penalty parameter, one performs a penalized variant of iterated reweighted least squares. Because the weights are changing throughout the iterations, one cannot use faster covariance updates (Section 2.2 in \cite{glmnet}) and each inner cycle thereby costs $O(np)$. The complexity (without cross-validation) thus depends on the number of re-weighting steps, the number of inner  iteration cycles and the length of the regularization path.  

\subsection{The effect of $m$ and $nrep$}\label{sec:effect_m_nrep}

We found that  computing the classification estimator separately for $nrep$ many fake data (using observed data as a reference) and averaging them  out stabilizes estimation. Since our MHC approach uses real observed data for contrasting, it will have the limitation that the choice of $m$ cannot be much larger than $n$ in order for the classification to yield good results. Indeed, we found that for small $n$, increasing $m$ does help as long as it is not overly large to make the classification problem too imbalanced. This can be seen from Figure \ref{fig:varying} below where using $n=20$ and $m=1\,000$ yielded unstable classification (using cross-validation and the \texttt{glmnet} classifier). Averaging over $nrep$ log-likelihood estimators is a heuristic for stabilizing estimation when $n$ is small and, thereby, $m$ cannot be chosen overly large. In addition, while increasing $m$ may result in estimators which concentrate more sharply around the truth, averaging out   $nrep$  estimators will result in a smoother final estimator.

\begin{figure}[!t]
\scalebox{0.45}{\includegraphics{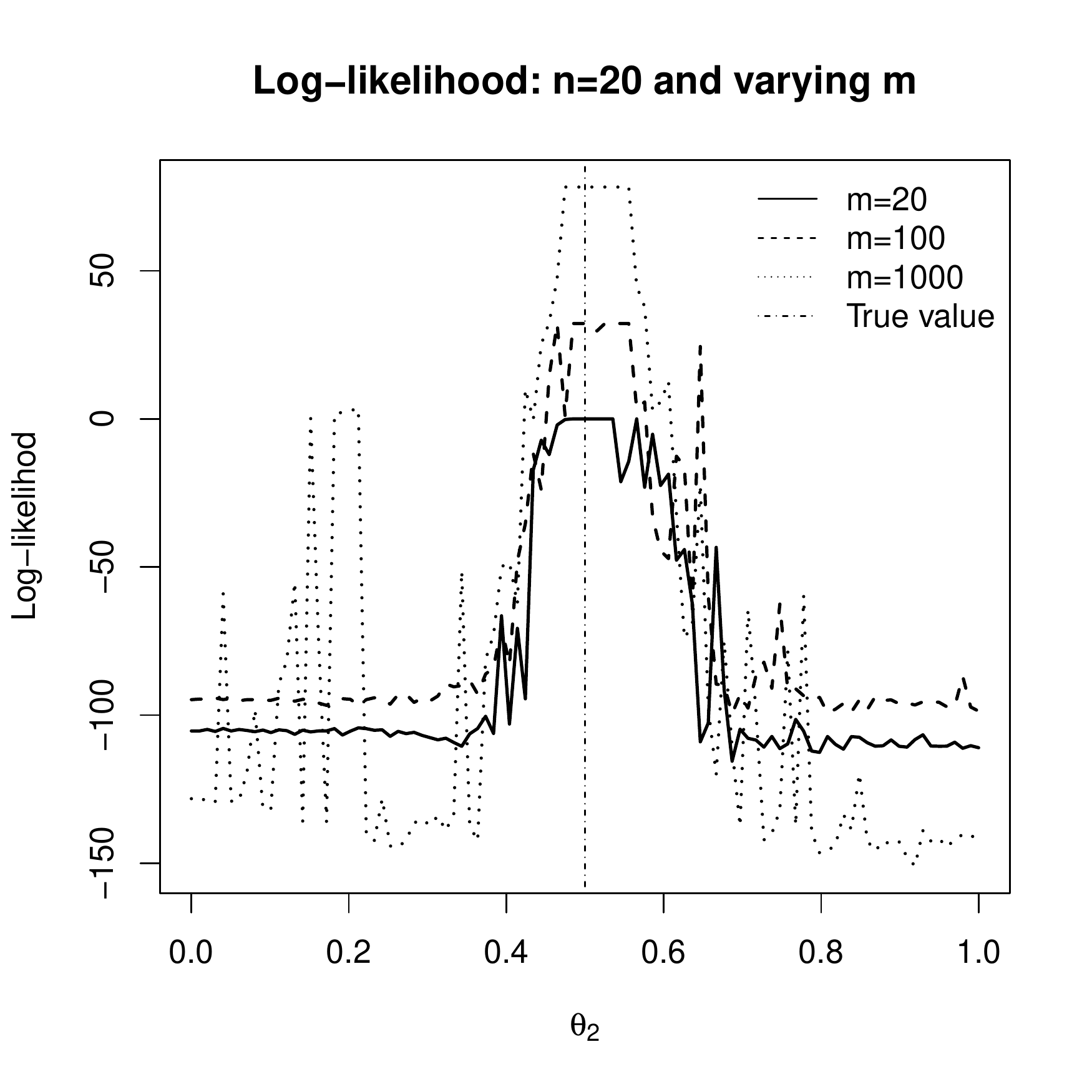}}\scalebox{0.45}{\includegraphics{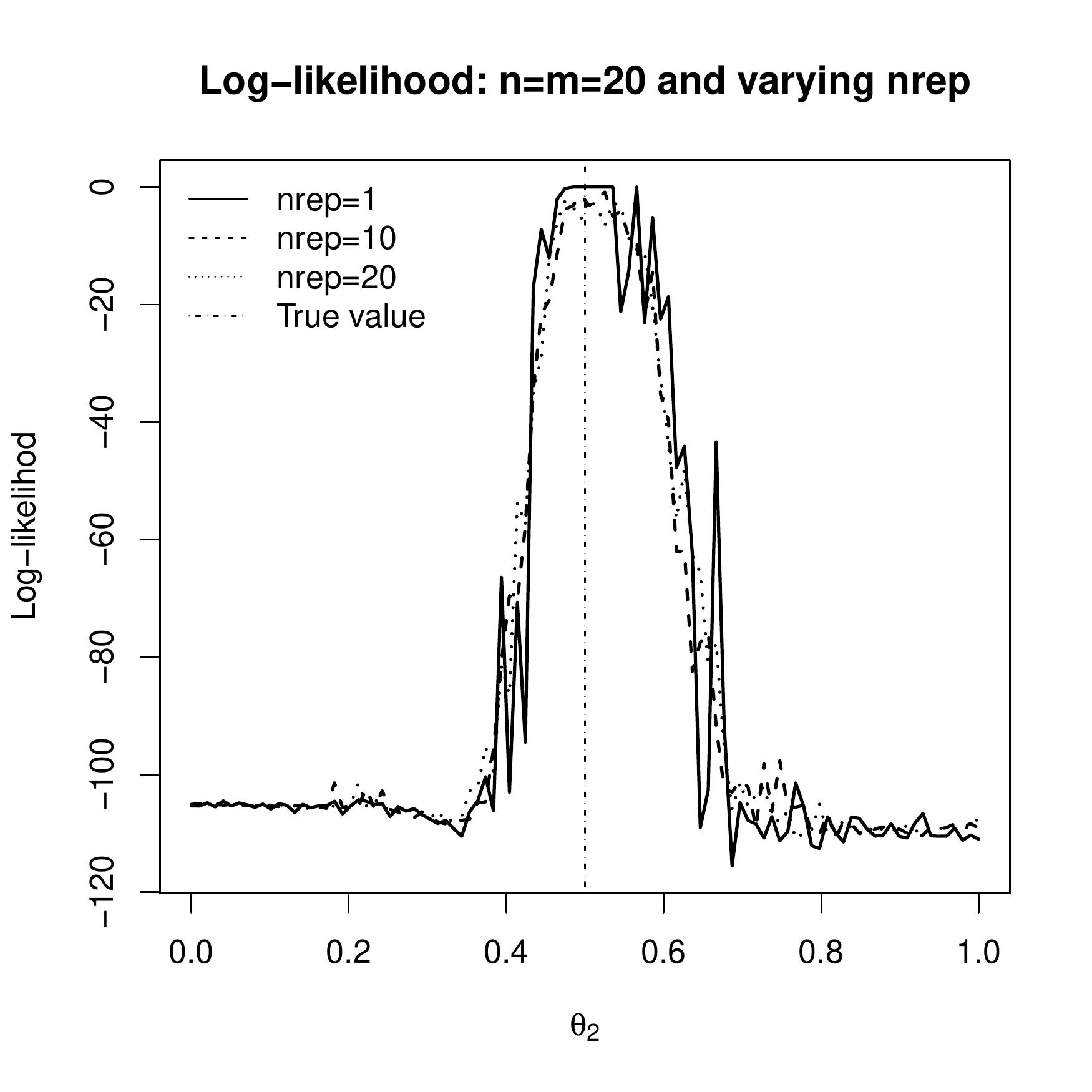}}
\caption{Log-likelihood estimators for varying $m$ and $nrep$ and fixed $n=20$.}\label{fig:varying}
\end{figure}

\begin{figure}[!t]
\scalebox{0.45}{\includegraphics{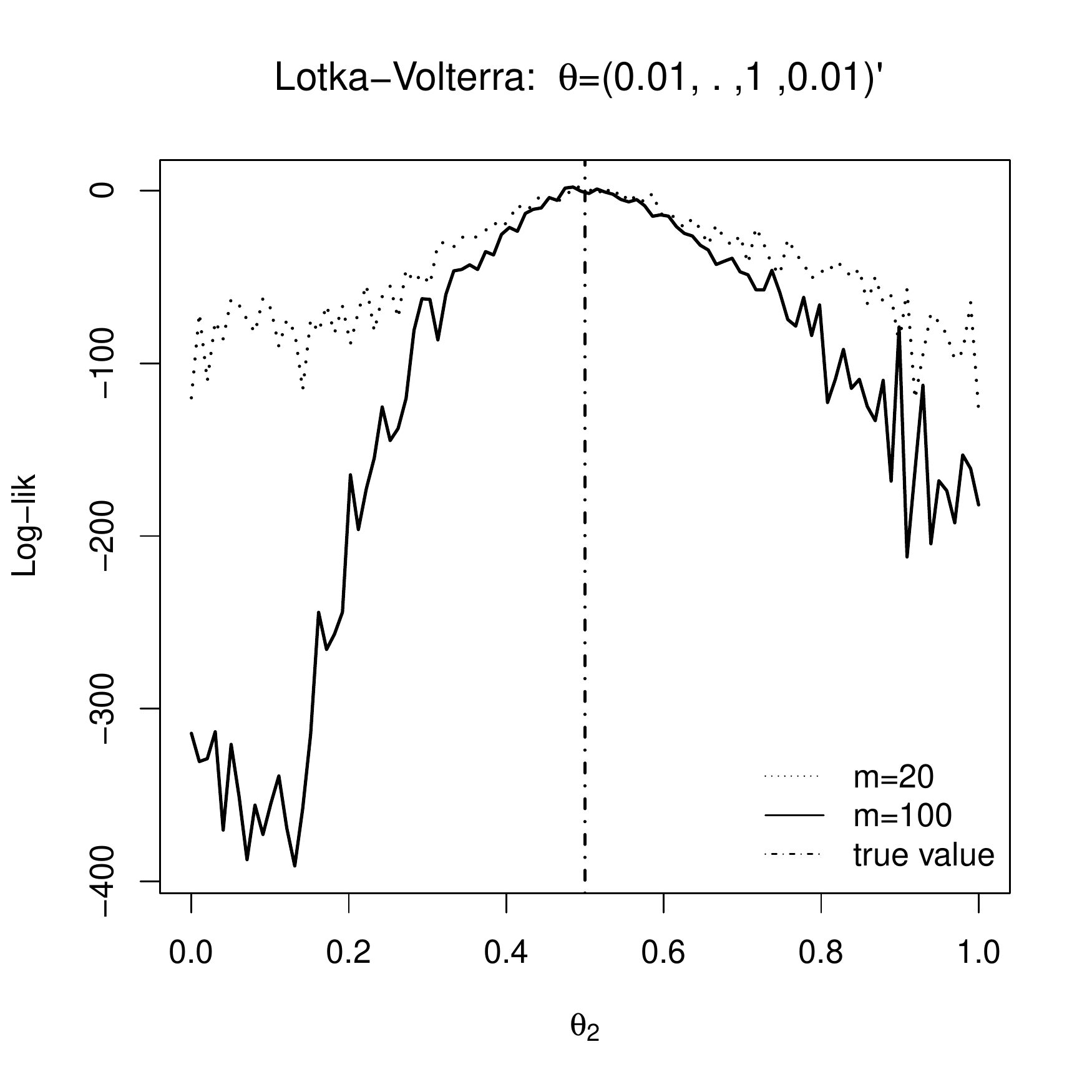}}
\scalebox{0.45}{\includegraphics{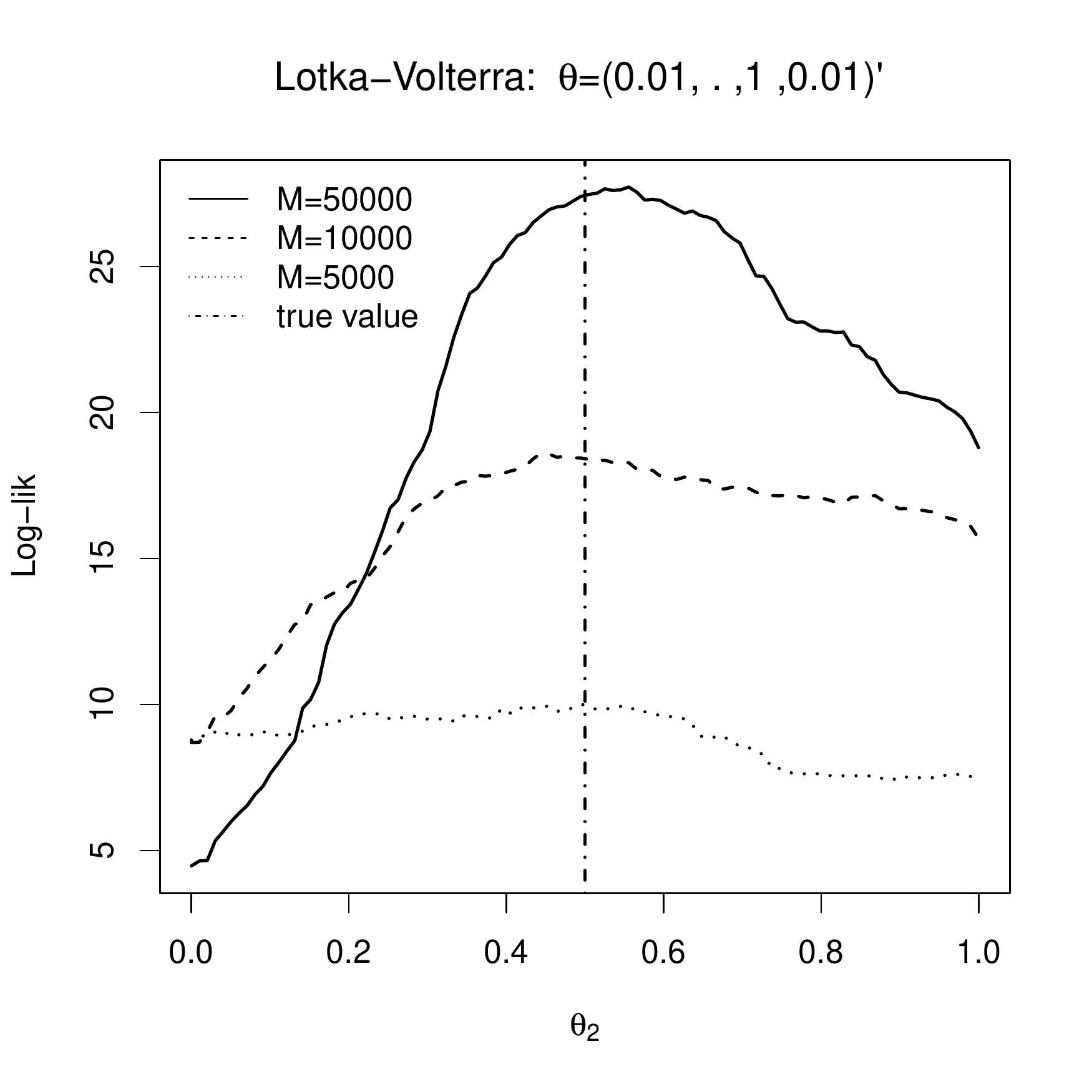}}
\caption{({\bf Conditional versus Marginal Reference}) Plot of estimated log-likelihood as a function of $\theta_2$, keeping all the other parameters at the truth. (Left) The conditional approach of \cite{pham} using various $m$ and using the default random forest classifier (R package \texttt{randomForest}), (Right) the marginal approach of \cite{hermans} using various $m$ and the random forest classifier.} \label{fig:lliks_new}
\end{figure}
\subsection{Comparisons}\label{sec:lv_comparisons}
Referees suggested comparisons with other classification MCMC approaches which use the conditional likelihood with fake data as a reference \cite{pham} or the marginal likelihood as a reference \cite{hermans}. 
We explore the extent to which using the conditional fixed reference (i.e. the observed data) in our MHC approach  is beneficial. 
\cite{hermans}  point out that using a fixed reference point might be problematic if there is not enough overlap between the conditional densities.
MHC uses the truth (i.e. the real data) as the  fixed reference, tacitly assuming that  if the Markov chain is initialized in the vicinity of the truth, the lack of overlap between the two likelihood densities would not be  a practical concern.
We anticipated that using other fixed reference point (i.e. not contrasting agains observed data) might increase variance in the random generator design since the fake reference data would introduce extra randomness. This is indeed the case when looking at the width of the $95\%$ credible interval in Table 3 (comparing MHC with random forests and  Classif MH of \cite{pham} with $n=m=20$). The only difference between these two methods is that  \cite{pham} generates another set of fake data as a reference.

In particular, the method of \cite{pham} directly computes the likelihood ratio of the new versus old proposed value by contrasting two fake datasets without any fixed reference. We have implemented their approach which uses a random forest discriminator (the default \texttt{randomForest} setting in R). The plot of  the estimated log-likelihod (a variant of Figure \ref{fig:varying} on the right) is depicted  in the  left panel of Figure \ref{fig:lliks_new}. We tried fake datasets of size $m=n=20$ and $m=100$.  Learning, of course, improves with increased $m$ but at much increased computational cost (see Section \ref{sec:lv_timing}).

\cite{hermans}  suggest a marginal model  trained ahead of the Monte Carlo simulation which compares dependent and independent data-parameter pairs. A related marginal technique is in \cite{gutman2}. We applied the technique of \cite{hermans} using, again, the default  random forest classifier.
Due to the compact support of the parameters (a rather small subset of the cube $[0,1]^4$), we can learn the likelihood surface quite well. If the parameters had an unbounded support, very many observations-parameter pairs would  need to be generated and this would drastically increase the learning time. For example, \cite{hermans} use  $1$ million training samples. However, performing  random forests on such a large dataset would not be practical. For our Lotka-Volterra model, we trained the classifier using $m=10\, 000$   and  $m=50\,000$  (which took roughly $2.7$ hours). Additional time is needed for the actual MCMC sampling.

To see the effect of the fake data-set size $m$ on the estimator of the (log)-likelihood, we  plot the estimator obtained using  the marginal reference  \cite{hermans} in Figure \ref{fig:lliks_new} in the right panel. 
We found that for the marginal approach, the interaction terms between parameters and data are essential for obtaining good prediction. This is why we did not choose the LASSO but a non-linear random forest classifier.
For the marginal approach, we try $m\in\{5\,000,10\,000,50\,000\}$ using the default implementation of random forests (R package \texttt{randomForests}). With enough training samples, the estimator is quite smooth. However, as will be seen from histograms and traceplots (Figure \ref{fig:hists} and Figure \ref{fig:trace1} below) there is certain bias in the posterior reconstruction.
 Choosing $m=10\,000$, the estimator still peaks around the truth but is wigglier. The conditional approach of \cite{pham} also yields estimators peaked around the truth. The shape is similar to our fixed reference approach using random forests (Figure \ref{fig:varying} on the right).  However, both of these plots  yield curves that are not nearly as peaked as with the \texttt{glmnet} classifier. This has at least two implications: (1) the Metropolis-Hastings with the \texttt{glmnet} classifier will be far more sensitive to initializations where we need to perhaps run ABC or other pilot run to obtain a satisfactory guess (see Figure \ref{fig:trace3}), (2) if initialized properly and if the chain mixes well, the \texttt{glmnet} classifier might provide tighter credible intervals. The choice of the proposal distribution will be also important and it should reflect the curvature of these likelihood shapes.
 

\begin{figure}[!t]
\vspace{-1cm}\centering
\begin{subfigure}[b]{0.45\textwidth}
\scalebox{0.4}{\includegraphics{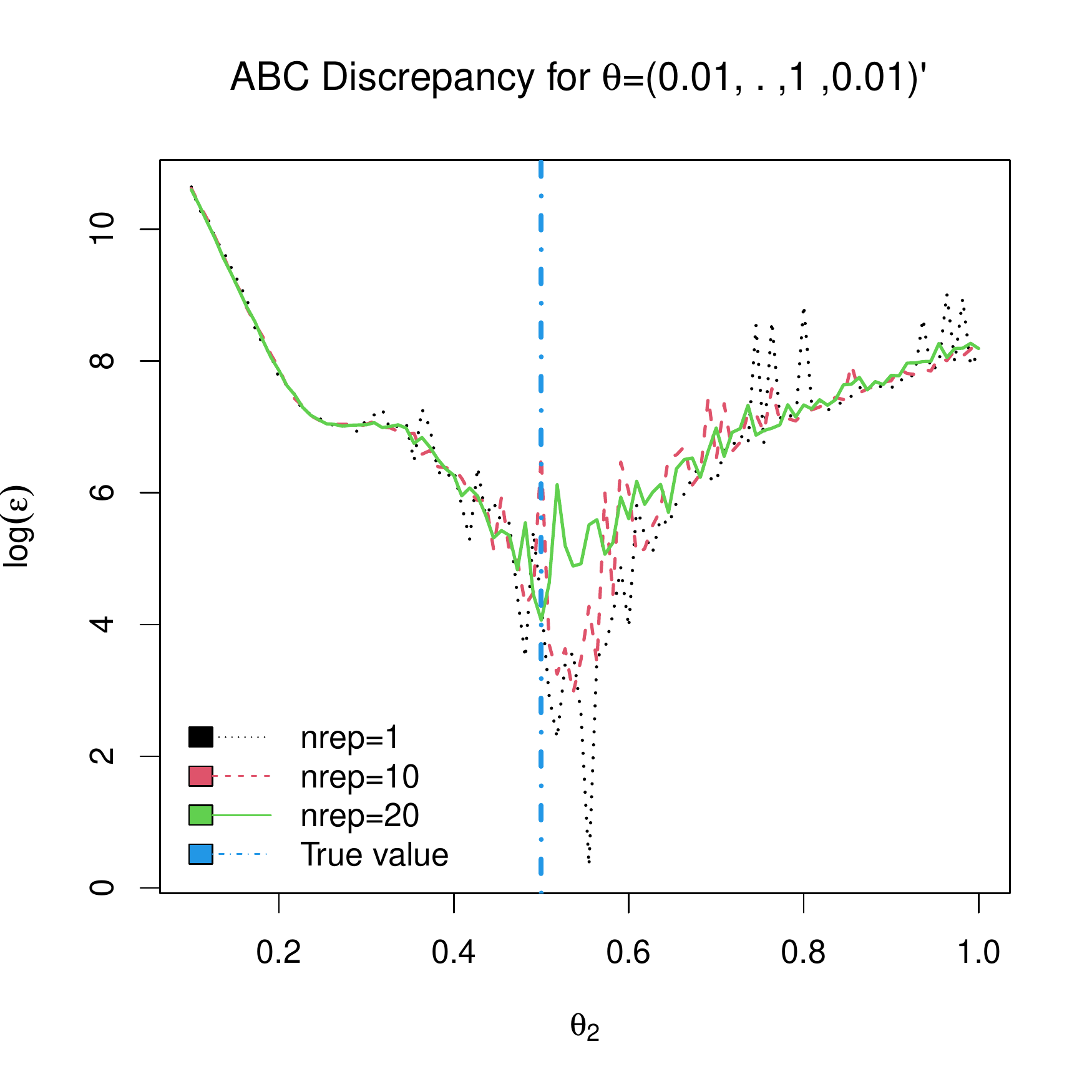}}
\caption{ABC tolerance $\varepsilon$}\label{fig:predator_abc}
\end{subfigure}
\begin{subfigure}[b]{0.45\textwidth}
\scalebox{0.4}{\includegraphics{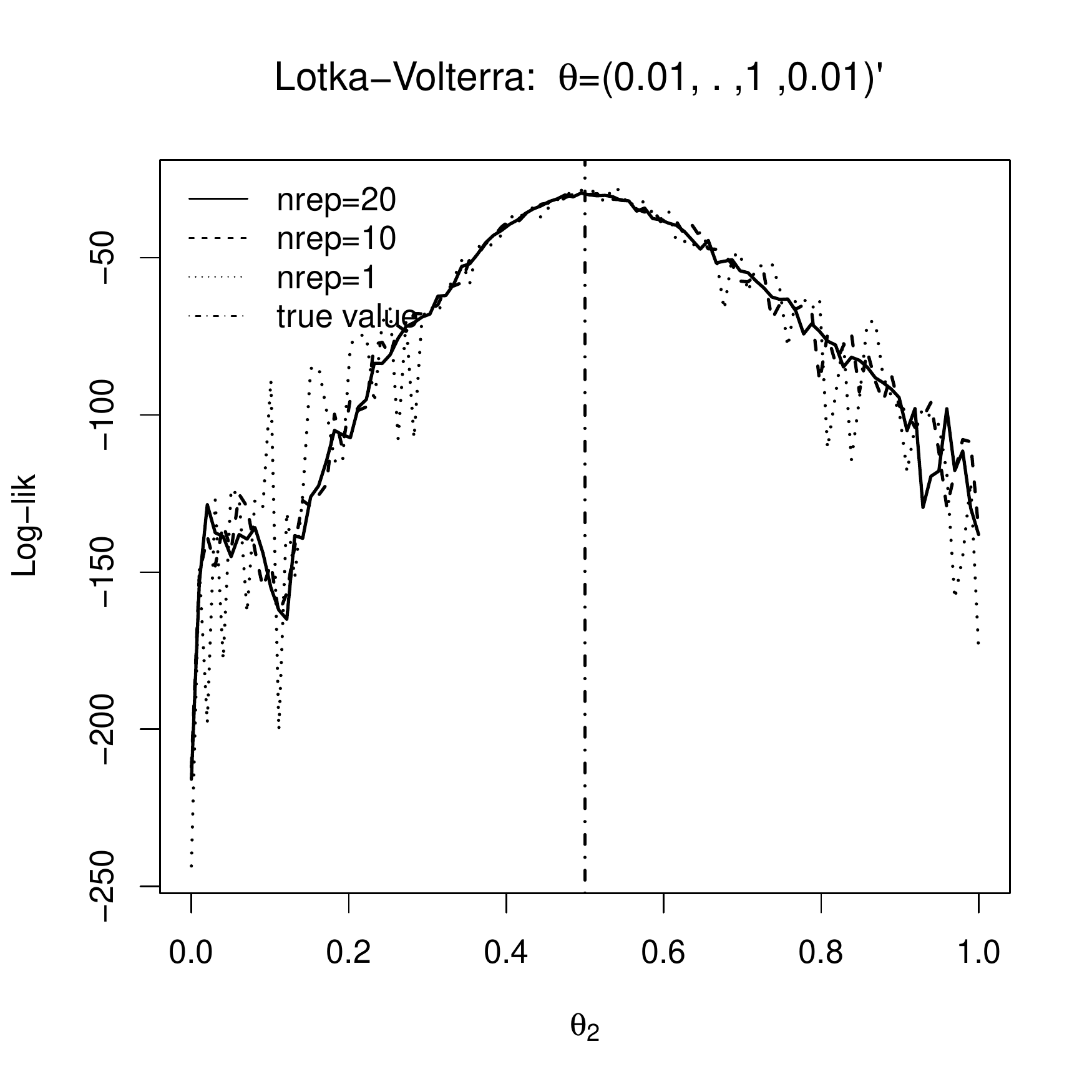}}
\caption{Log-lik  Estimator {\scriptsize (\texttt{rf})}}\label{fig:predator_liks}
\end{subfigure}
\caption{\small  Lotka-Volterra model. ABC discrepancy $\epsilon$ and the classification-based log-likelihood `estimator' $\eta$ using observed data as a reference with  the \texttt{randomForest} classifier.}\label{fig:lotka_summaries}
\end{figure}

 To see whether our ABC summary statistics are able to capture the oscillatory behavior (at different frequencies) and distinguish it from exploding population growth, we have plotted the squared $\|\cdot\|_2$ distance of the summary statistics\footnote{Out of curiosity, we have considered a single fake dataset as well as the average tolerance over $nrep$ fake data replications. } (i.e. the ABC tolerance threshold $\epsilon$) relative to the real data for a grid of values $\theta_2$, fixing the rest at the true values $\theta_1^0=0.01,\theta_3^0=1,\theta_4^0=0.01$ (see Figure \ref{fig:predator_abc}). We can see a V-shaped evolution of $\epsilon$ reaching a minimum near the true value $\theta_2^0=0.5$, especially for $nrep=20$. This creates hope that ABC based on these summary statistics has the capacity to provide a reliable posterior reconstruction. Contrastingly,  we have plotted the estimated log-likelihood  $\eta\equiv \sum_{i=1}^n\log[(1-\hat D(\bm x_i))/\hat D(\bm x_i)]$ (as a function of $\theta_2$) where $\bm x_i=(X^i_1,\dots, X^i_T, Y^i_1,\dots, Y^i_T)'$ after training the LASSO-penalized logistic regression classifier (Figure \ref{fig:varying} on the right) on $m=n$ fake data observations $\wt{\bm x}_i=
    (\wt X^i_1,\dots, \wt X^i_T, \wt Y^i_1,\dots, \wt Y^i_T)'$ for $1\leq i\leq m$ using the cross-validated penalty $\lambda$ (using the R package \texttt{glmnet}). {We also use the default implementation of random forests using the R package \texttt{randomForest} (Figure \ref{fig:lotka_summaries} on the right). We can see that random forests provide estimators which are not as sharply peaked, suggesting less sensitivity to Markov chain initialization. }

The trace-plots of MHC and the approaches of \cite{pham} and \cite{hermans}  are in Figures \ref{fig:trace1}, \ref{fig:trace2} and \ref{fig:trace3}).  
Figure \ref{fig:ABC1} portrays histograms of ABC samples (top $r=100$ out of $M=10\,000$ in the upper panel and top $r=1\,000$ out of $M=100\,000$ in the lower panel).
Finally, Figure \ref{fig:hists} shows histograms of MH samples (MHC, Classification MCMC of \cite{pham} and ALR MH approach of \cite{hermans}).
 }

 \begin{figure}[!h]
{\includegraphics[width=15cm,height=5cm]{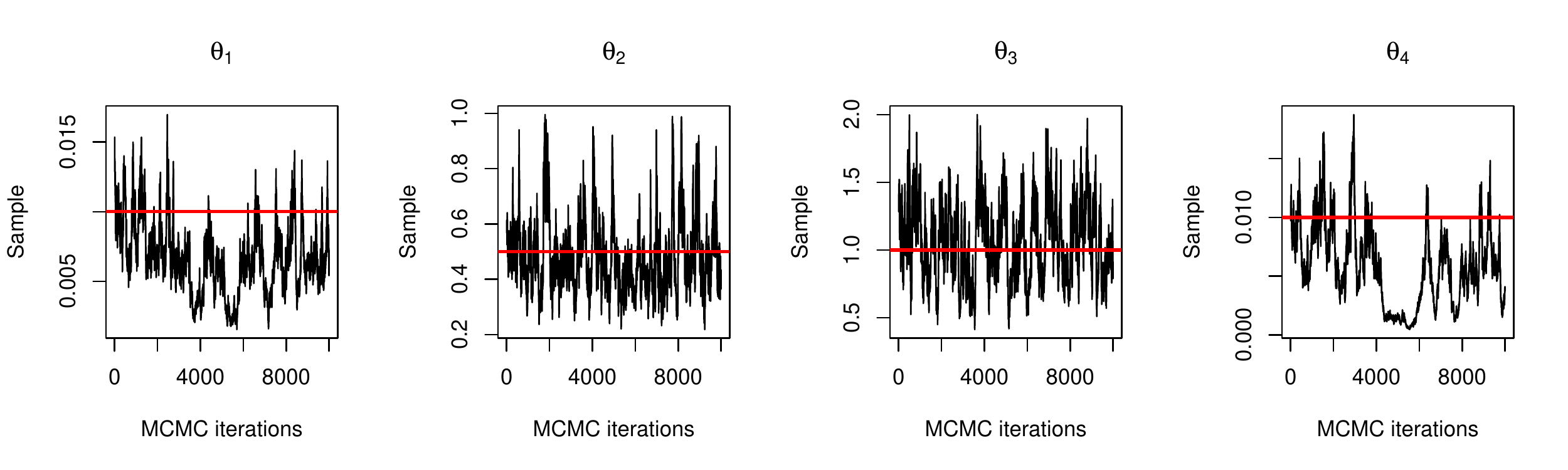}}\\
{\includegraphics[width=15cm,height=5cm]{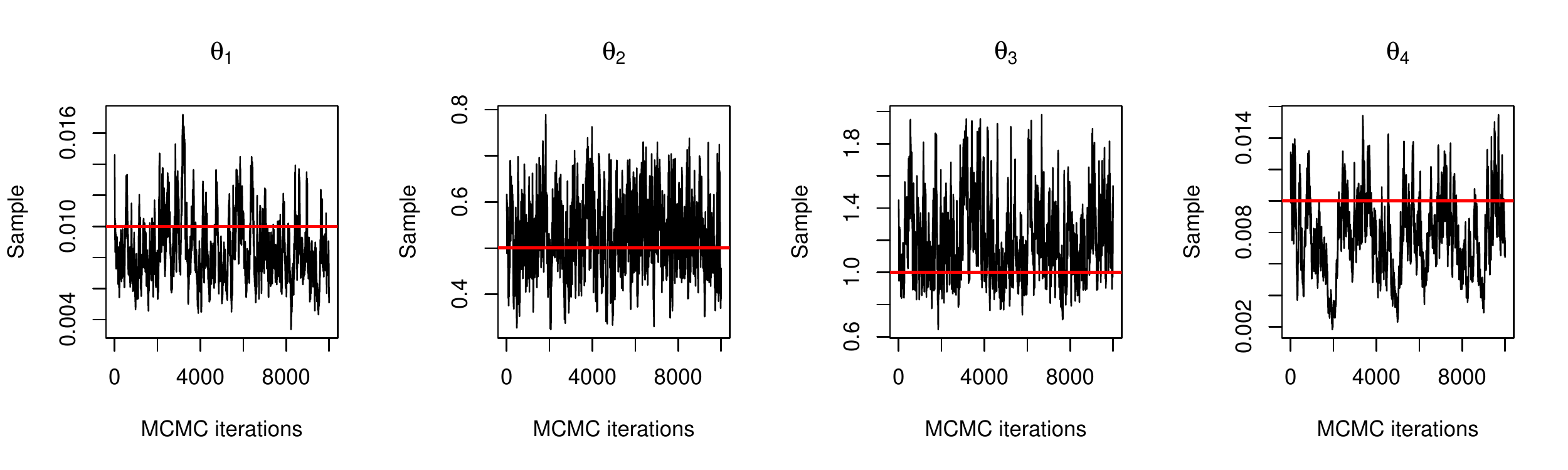}}\\
\caption{Traceplots of ALR MH of \cite{hermans} with $m=10\,000$ (top) and $m=50\,000$ (bottom)}\label{fig:trace1}
{\includegraphics[width=15cm,height=5cm]{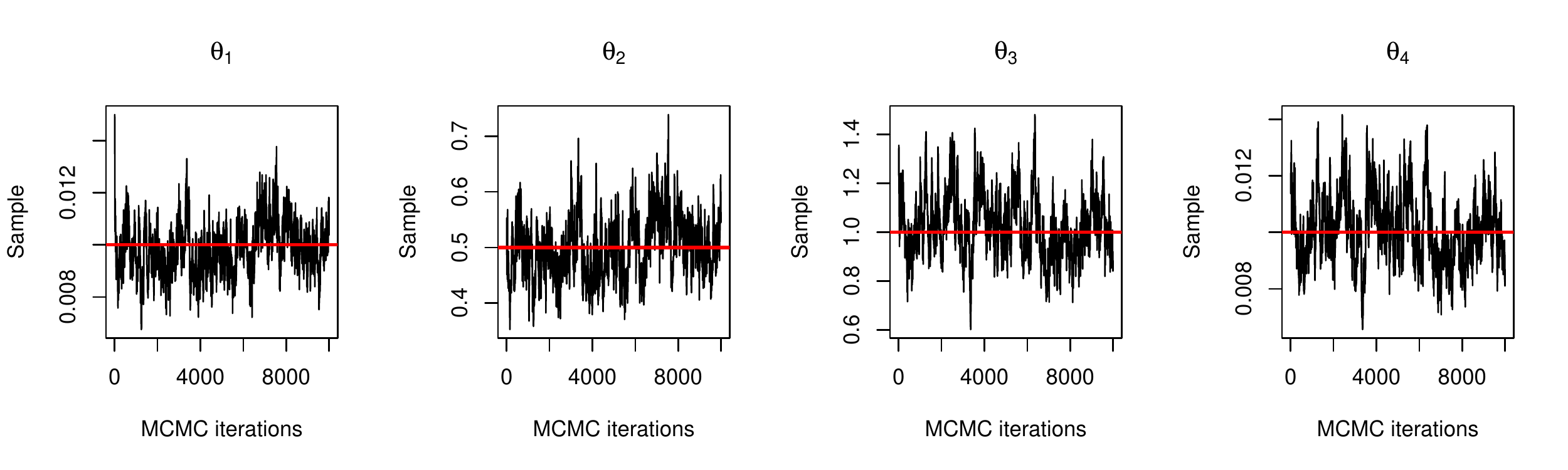}}\\
{\includegraphics[width=15cm,height=5cm]{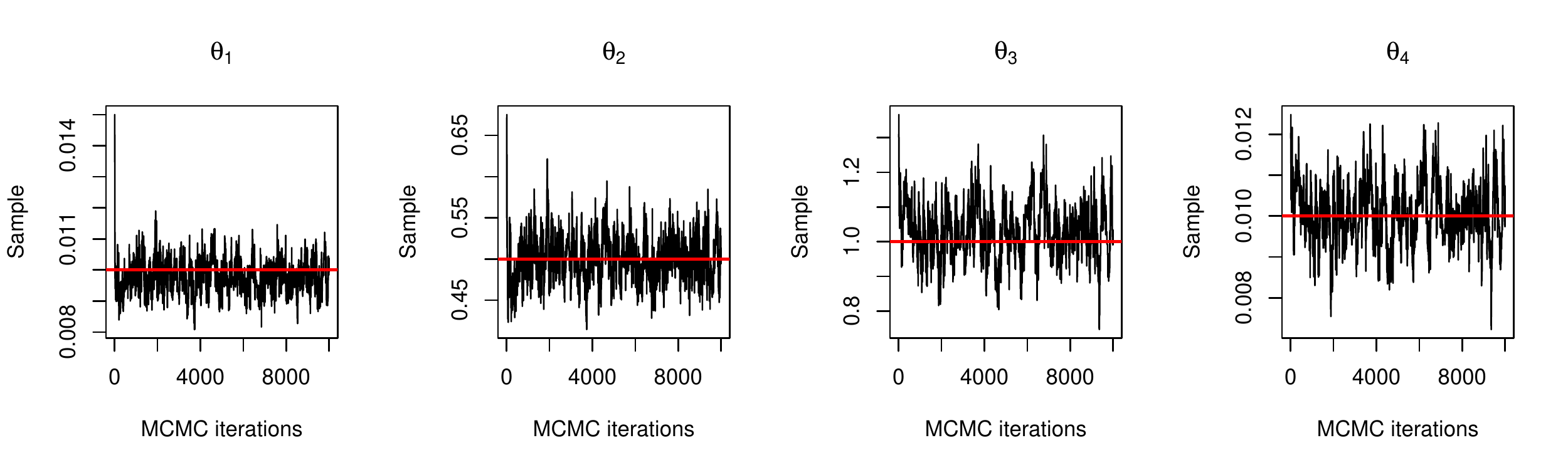}}\\
\caption{Traceplots of Classif MH of \cite{pham} with $m=20$ (top) and $m=100$ (bottom)}\label{fig:trace2}
\end{figure}
\begin{figure}[!h]
{\includegraphics[width=15cm,height=5cm]{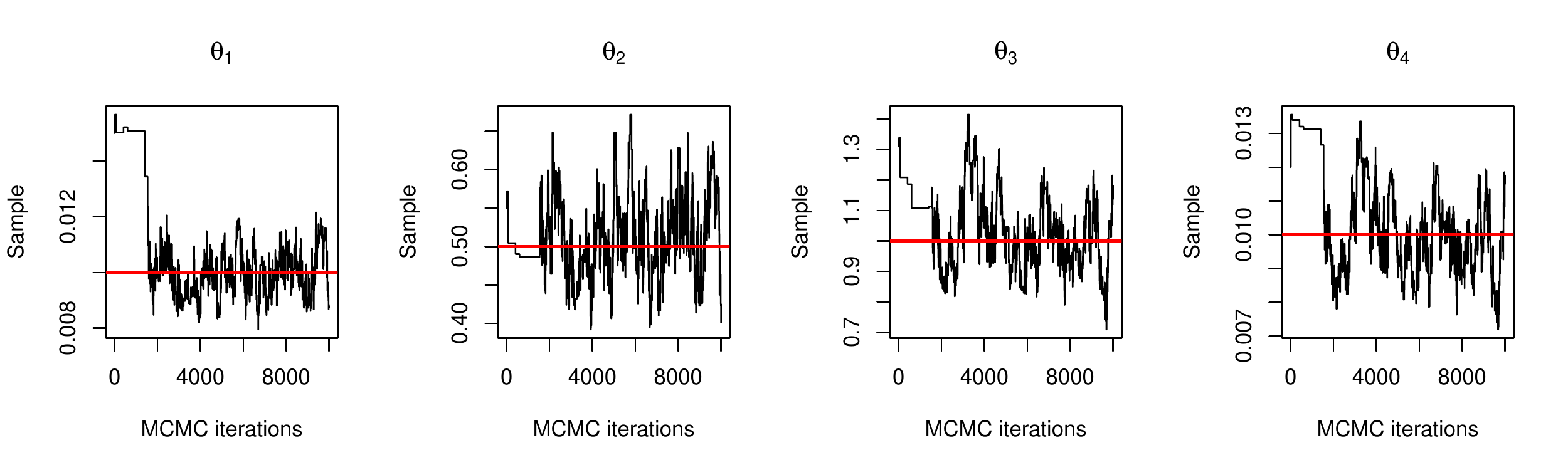}}\\
{\includegraphics[width=15cm,height=5cm]{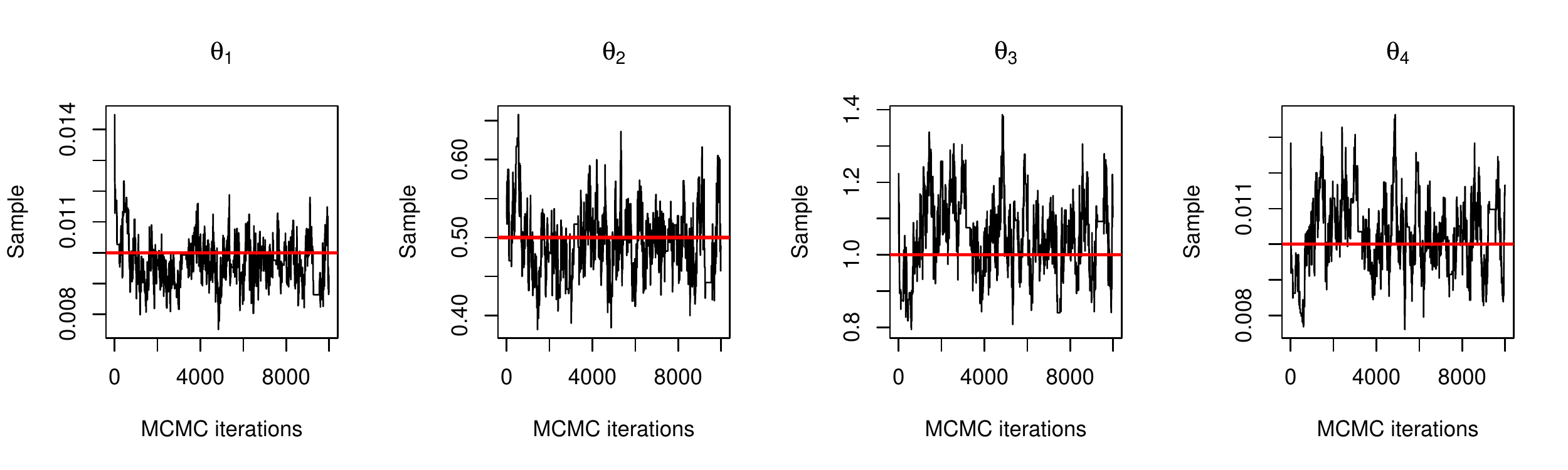}}\\
\caption{Traceplots of MHC with \texttt{glmnet} (top) and random forests (bottom)}\label{fig:trace3}
 \end{figure}

\begin{figure}[!h]
\scalebox{0.9}{\includegraphics{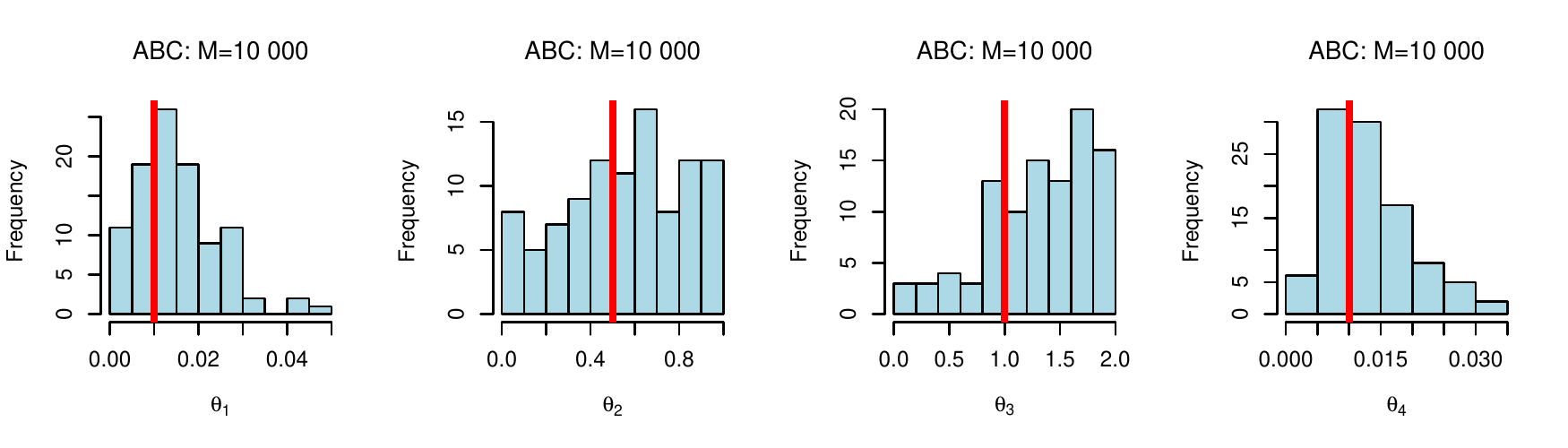}}
\scalebox{0.9}{\includegraphics{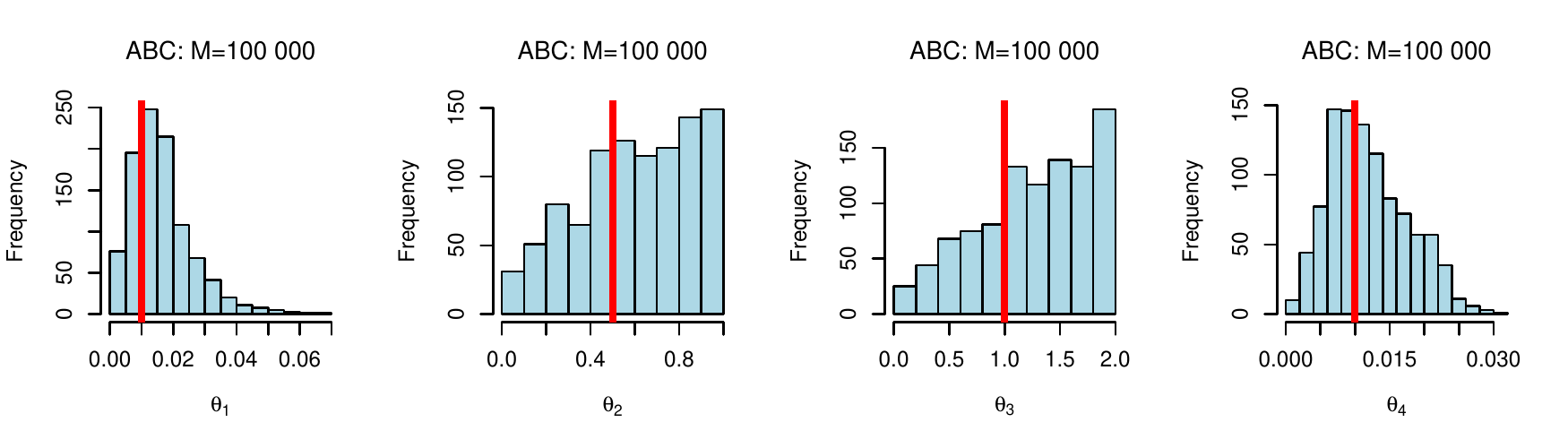}}
\caption{ \small ABC analysis of  the Lotka-Volterra model. Upper panel uses $M=10\,000$ and $r=100$ whereas the lower panel uses $M=100\,000$ and $r=1\,000$. Vertical red lines mark the true values.}\label{fig:ABC1}
\end{figure}

\begin{figure}[!h]
\scalebox{0.6}{\includegraphics{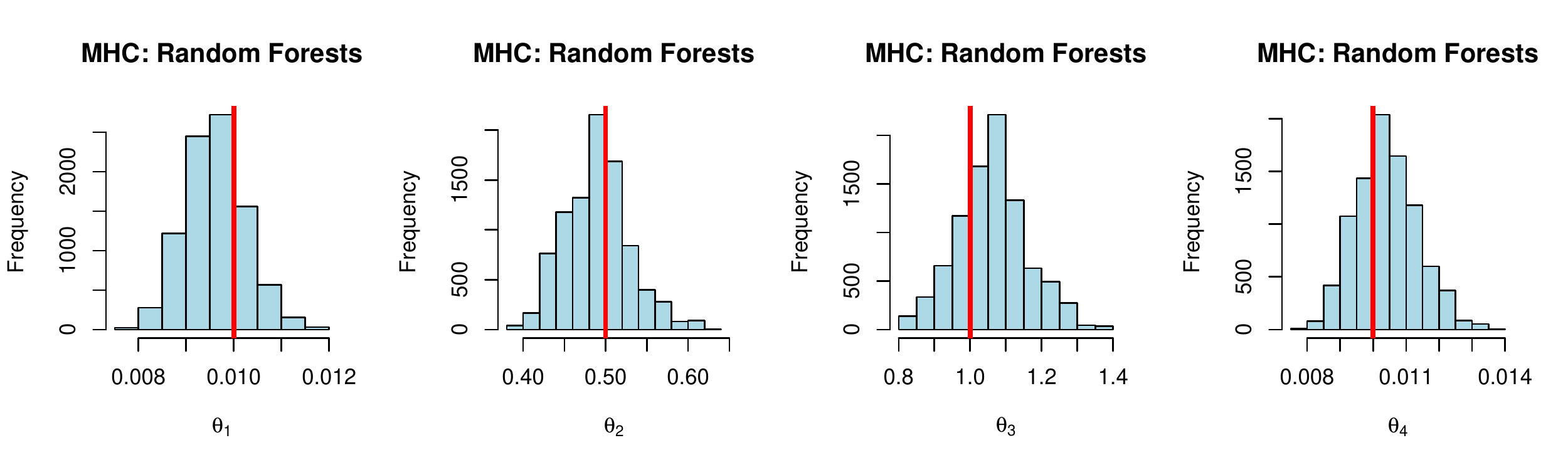}}
\scalebox{0.6}{\includegraphics{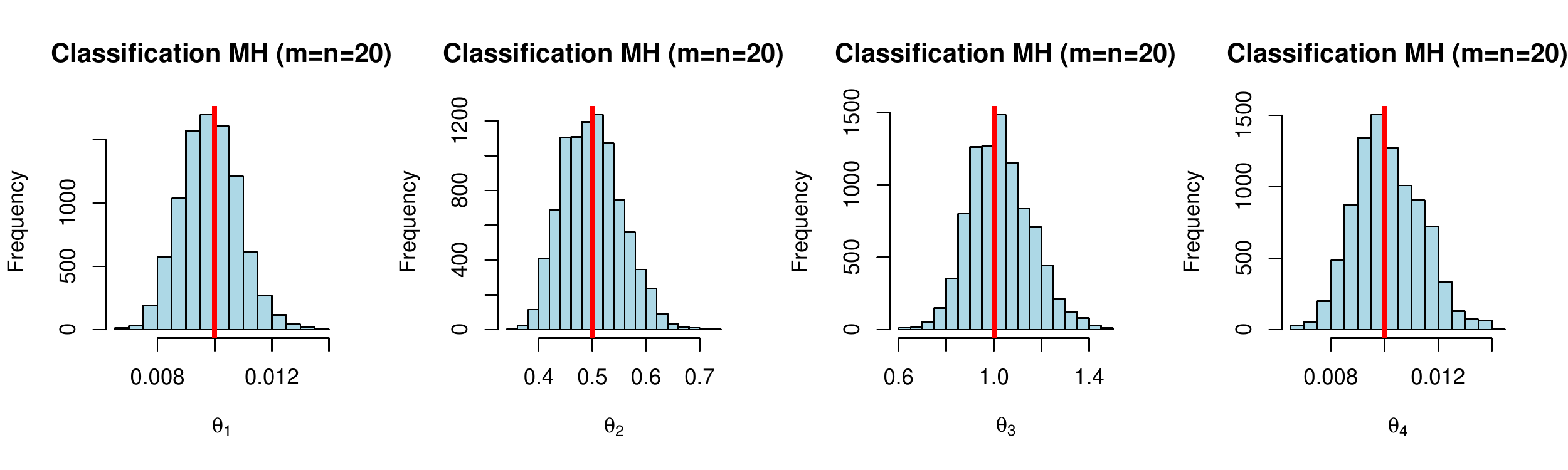}}
\scalebox{0.6}{\includegraphics{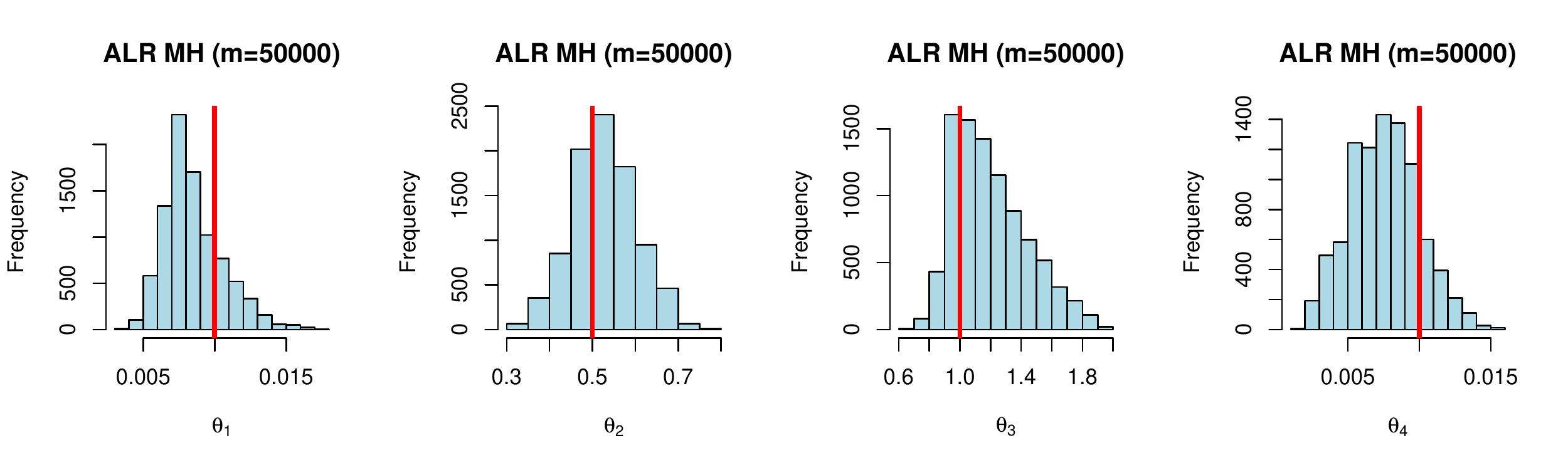}}
\caption{ \small MH analysis of  the Lotka-Volterra model ($9\,000$ MCMC iterations after $1\,000$ burnin). Upper panel shows results for MHC with random forests,  the middle panel uses the classification MCMC approach of \cite{pham} (using $m=n=20$) and the lower panel is the ALR MH approach of \cite{hermans}.  Vertical red lines mark the true values.}\label{fig:hists}
\end{figure}

\clearpage



\end{document}